\newtheorem{theorem*}{Theorem}
\newtheorem{theorem}{Theorem}[section]
\newtheorem{lemma}{Lemma}[section]
\newtheorem{proposition}{Proposition}[section]
\newtheorem{remark}{Remark}[section]
\newtheorem{Hypothesis}[theorem]{Hypothesis}
\newtheorem{definition}[theorem]{Definition}
\newtheorem{condition}{Condition}[section]
\newtheorem{example}{Example}
\newtheorem{corollary}{Corollary}[section]
\numberwithin{equation}{section}
\begin{document}

\begin{frontmatter}

\title{\bf Large deviations principle for stationary solutions of stochastic  differential equations with multiplicative noise}%\tnoteref{mytitlenote}}
%\tnotetext[mytitlenote]{Fully documented templates are available in the elsarticle package on \href{http://www.ctan.org/tex-archive/macros/latex/contrib/elsarticle}{CTAN}.}

%% Group authors per affiliation:
\author[AMSS,UCAS]{Peipei Gao}
\ead{peipeigao@amss.ac.cn}
%\fntext[myfootnote]{Since 1880.}
\author[PKU]{Yong Liu}
\ead{liuyong@math.pku.edu.cn}

\author[AMSS,UCAS]{Yue Sun\corref{cor1}}
\ead{sunyue183@mails.ucas.ac.cn}
\cortext[cor1]{Corresponding author at: Academy of Mathematics and Systems Science, Chinese Academy of Sciences, Beijing
100190, China.}
\author[HN,AMSS,UCAS]{Zuohuan Zheng}
\ead{zhzheng@amt.ac.cn}
\address[AMSS]{Academy of Mathematics and Systems Science, Chinese Academy of Sciences, Beijing 100190, China}
\address[UCAS]{School of Mathematical Sciences, University of Chinese Academy of Sciences, Beijing 100049, China}
\address[PKU]{LMAM, School of Mathematical Sciences,
Peking University, Beijing 100871, China}
\address[HN]{College of Mathematics and Statistics, Hainan Normal University, Haikou, Hainan 571158, China}

%% or include affiliations in footnotes:
%\author[mymainaddress,mysecondaryaddress]{Academy of Mathematics and Systems Science, Chinese Academy of Sciences}
%\ead[url]{www.elsevier.com}

%\author[mysecondaryaddress]{Global Customer Service\corref{mycorrespondingauthor}}
%\cortext[mycorrespondingauthor]{Corresponding author}
%\ead{support@elsevier.com}

%\address[Academy of Mathematics and Systems Science, Chinese Academy of Sciences]
%\address[mysecondaryaddress]{360 Park Avenue South, New York}

\begin{abstract}
We study the large deviations principle (LDP) for stationary solutions of a class of stochastic differential equations (SDE) in infinite time intervals by the weak convergence approach, and then establish
the LDP for the invariant measures of the SDE by the contraction principle. We further point out the equivalence of
 the rate function of the LDP for invariant measures induced  by the LDP for stationary solutions and  the rate function  defined by quasi-potential. This fact gives another view of the quasi-potential  introduced  by Freidlin and Wentzell.

%	In this paper, we study the large deviations principle (LDP) for stationary solutions of a class of stochastic differential equations (SDE) in infinite time intervals by the weak convergence approach.  We also establish
%the LDP for invariant measures of the same class of SDE by using the contraction principle, since the one dimensional distribution
%of the stationary solution always generates an invariant measure of the SDE. We discover that the rate function of the LDP for invariant measures defined by the contraction principle from the LDP for stationary solutions is equivalent %to the rate function  defined by quasi-potential.
\end{abstract}

 \begin{keyword}
    Large deviations principle;  Stationary solutions; Invariant measures; Quasi-potential
    \end{keyword}

\end{frontmatter}

\tableofcontents

\section{Introduction}

\setcounter{equation}{0}
Real world phenomena are often effected by random perturbations or under the influences of noise, random dynamical systems arise to model those problems.
The stationary solution, random periodic solution, random quasi-periodic solution (see, for example \cite{SZZ08,LZ09,ZZ17,FZ12,FZ15,FZB11,FQZ21}) are natural extensions correspond to the fixed point, periodic solution, quasi-periodic solution from deterministic dynamical
systems to random dynamical systems, which are the fundamental concepts that describe the long time behavior of random dynamical systems.
In particular, there are many phenomena with small random perturbations in nature,
and small perturbations essentially influence the long time  behaviour of the system in general. Following Freidlin and Wentzell's perspective in \cite{FW12}, we use the LDP to characterize the long time asymptotic behavior of dynamical systems as the small random perturbation converges to zero. Therefore, it is interesting to investigate the LDP for stationary solutions, random periodic solutions and random quasi-periodic solutions in infinite time intervals with small
noises.

The LDP for solutions in infinite time intervals is also an interesting problem in mathematics. The LDP for Wiener process in infinite time intervals  is shown by  Deuschel and Stroock (cf. Schilder's theorem in Section 1.3 of \cite{DS84}), the proof  depends on the properties of the Gaussian measure. Although we can get the LDP for stationary solutions of Eq. \eqref{Msode0} with additive noise by using the contraction principle, it is not feasible for multiplicative noise. The exponential tightness for stationary solutions of Eq. (\ref{Msode0}) with multiplicative noise in the infinite intervals plays an important role in the proof of LDP if we follow the approach of Deuschle and Stroock \cite{DS84}, but it is hard to obtain the exponential tightness
for the lack of certain  accurate estimates, such as the Fernique theorem for Gaussian measure (cf. Theorem 1.3.24 in \cite{DS84}). Although, the proof of exponential tightness for non-Gaussian measures in the continuous function space is an interesting problem, unfortunately we have not found a suitable way to solve it by now.

Therefore, we use another well-known method, the weak convergence approach to prove the family of stationary solutions of Eq. \eqref{Msode0} satisfies the LDP as follows.
Noticing that the large deviations
principle is equivalent to the Laplace principle (LP) in Polish space, it is sufficient to prove the LP  by the Bou\'e-Dupuis formula (see, for example, \cite{BD98,BD01,DE97,Z09}). This method is often referred as the weak convergence approach. Using the weak convergence approach and the Bou\'e-Dupuis formula, the Freidlin-Wentzell type LDP for the family of solutions of SDE or SPDE driven by Wiener process in finite intervals has been re-proved (see, for example, \cite{FW12,SP06}).
However, considering the LDP in infinite intervals, we need the Bou\'e-Dupuis formula  in infinite intervals. Since the compactness of space and the integrability of bounded functions on infinite intervals are different from those on finite intervals, we have  to extend the Bou\'e-Dupuis formula from finite intervals to infinite intervals.
As far as we know, the Bou\'e-Dupuis formula in infinite intervals has been used directly without proof in \cite{BG20} by Barashkov and Gubinelli, we have to present the proof of Bou\'e-Dupuis
formula and the weak convergence approach in infinite intervals for the completeness of  the present paper in  Appendix A.
In fact, the most difficult problem in our proof is the well-posedness with
respect to the skeleton Eq.  \eqref{skeleton equation} in infinite time intervals, as we need to prove the uniqueness for the  backward infinite horizon  integral Eq. \eqref{SK-BIHE} for the skeleton Eq. \eqref{skeleton equation} and construct the solution of integral equation \eqref{SK-BIHE}. Besides, verifying the two conditions (cf.  Condition \ref{weakcondition} in Section \ref{Preliminary knowledge for LDP} below) of the weak convergence approach in infinite time intervals is also quite technical.

The main purpose of this paper is to establish the LDP of stationary solutions for a class of stochastic differential equations in infinite time intervals.
Moreover, since the one dimensional distribution of the stationary solution always generates an invariant measure of the SDE, here we give another illuminating view of the LDP for invariant
measures by the contraction principle (cf. Section 4.2.1 in \cite{DZ07}). In fact, the invariant measure is unable to provide the
properties for the trajectories of the dynamical systems,
and there exist many systems with different dynamical behaviors but with the same invariant measure (cf. Example \ref{EX same in} below). By comparison, the stationary solution show more accurate information about dynamical systems, and the LDP for them will give more explicit characterization for the long time asymptotic behavior of random dynamical systems with small perturbations.

As far as we know, Freidlin and Wentzell \cite{FW12},  Cerrai and R\"{o}ckner \cite{CR05}, and Brze\'{z}niak and Cerrai \cite{BC17}
studied the LDP for invariant measures by taking the quasi-potential as the rate function.
Moreover, it is not difficult to prove that the rate function \eqref{I'} defined in Theorem \ref{stoi} is equivalent to the rate function defined by quasi-potential, and the rate function \eqref{I'} gives an natural explanation of quasi-potential.
Roughly speaking, Freidlin and Wentzell \cite{FW12},  Cerrai and R\"{o}ckner \cite{CR05}, and Brze\'{z}niak and Cerrai \cite{BC17}
studied the LDP for invariant measures
by using the result of the LDP of solutions in finite intervals and let the length of time intervals tends to infinity. It is different to our method that  we directly consider the asymptotic
behavior of stationary solutions for
random dynamical systems  in infinite intervals as perturbation converges to zero, and by using the contraction principle to get the LDP for invariant measures. Besides, we believe that our
methods to show the LDP for stationary solutions is also available to explore the LDP for random periodic solutions and random quasi-periodic solutions.  We give two examples (cf. Example \ref{perio} and \ref{quasi-periodic solution} below) to show the random periodic solutions of Eq. (\ref{periodic eq}) (see, for example, Feng, Liu and Zhao \cite{FZ15}) and random quasi-periodic solutions (see, for example, Hopf \cite{H56}) of equation \eqref{hopf} satisfy the LDP, and we will research the LDP for random periodic solutions and random quasi-periodic solutions for others equation in future papers.

In this paper, we consider the stochastic differential equation as follows
\begin{equation}\label{Msode0}
	\mathrm{d} X_{\varepsilon}=AX_{\varepsilon} \mathrm{d} t+F(X_{\varepsilon})\mathrm{d} t+\sqrt{\varepsilon}B(X_{\varepsilon}) \mathrm{d} W_t.
\end{equation}
All definitions of the symbols involved can be found later in Section \ref{notations}.

Liu and Zhao give the representation of the stationary solution for Burgers equation with large viscosity in \cite{LZ09}.
Under our Hypothesis \ref{Msodehy}, similar to the proof of \cite{M99} and \cite{LZ09}, we have proved that
there exists $\varepsilon_0>0$ such that for every $\varepsilon\in(0,\varepsilon_0)$, the stochastic equation \eqref{Msode0} exists a unique  stationary solution $X^{*}_{\varepsilon}(t,\omega)$ satisfies the following equation in $H$ for any $t \in \mathbb{R}$
\begin{equation}\label{MIHSIE}
	X_{\varepsilon}^{*}(t)=\int_{-\infty}^{t} S_{A}(t-s)F\left( X_{\varepsilon}^{*}\right) \mathrm{d} s+\sqrt{\varepsilon}\int_{-\infty}^{t} S_{A}(t-s) B \left( X_{\varepsilon}^{*}\right)\mathrm{d} W_{s},
\end{equation}
where $H$ be a separable Hilbert space, which is defined in Section \ref{notations}.

The first major conclusion as follows.
\begin{theorem*}(cf. Theorem \ref{stationary LDP} below)
	If the operators $A$, $F$ and $B$ satisfy Hypothesis \ref{Msodehy}, then
	the family of stationary solutions $\{X^{*}_\varepsilon:\varepsilon>0\}$ satisfies the LDP in $C(\mathbb{R};H)$ with good rate function $I$ given in (\ref{ratefunction}).
\end{theorem*}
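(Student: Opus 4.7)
Since $C(\mathbb{R}; H)$ is a Polish space, the LDP is equivalent to the Laplace principle, so the plan is to establish the latter via the weak convergence method, using the Bou\'e--Dupuis variational representation on the infinite time horizon proved in Appendix A. Perturbing the Wiener process $W$ by a predictable control $h$ in a bounded ball of $L^2(\mathbb{R};K)$ (where $K$ is the Cameron--Martin space for the cylindrical Wiener process), the stationary solution $X^{*}_{\varepsilon}$ gets replaced by a controlled stationary process $X^{*, h_\varepsilon}_{\varepsilon}$ solving a perturbed version of \eqref{MIHSIE}, and the candidate rate function naturally becomes
\[
I(\phi)=\inf\Bigl\{\tfrac{1}{2}\|h\|_{L^2(\mathbb{R};K)}^{2}:\phi=\int_{-\infty}^{\cdot} S_{A}(\cdot-s)F(\phi)(s)\,\mathrm{d}s+\int_{-\infty}^{\cdot} S_{A}(\cdot-s)B(\phi)(s)h(s)\,\mathrm{d}s\Bigr\},
\]
i.e. the minimal cost among controls driving the skeleton equation \eqref{skeleton equation} in its backward infinite horizon form \eqref{SK-BIHE} to $\phi$.

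The core task is to verify the two items of Condition \ref{weakcondition} in the infinite horizon setting. For item (i), compactness of the sublevel sets of $I$, I would first establish existence and uniqueness of a bounded solution $\phi^{h}\in C(\mathbb{R};H)$ of \eqref{SK-BIHE} for each $h\in L^{2}(\mathbb{R};K)$: Hypothesis \ref{Msodehy} provides exponential decay of $S_{A}$ together with a dissipativity/Lipschitz balance for $F$, so a Banach fixed point argument on a weighted space with exponential weight $\mathrm{e}^{-\lambda|t|}$ gives a unique globally defined solution, and the linear growth bound on $\phi^{h}$ is controlled purely by $\|h\|_{L^{2}}$. Continuity of the map $h\mapsto\phi^{h}$ from the weak $L^{2}$ topology (restricted to balls) into $C(\mathbb{R};H)$ then follows from weak-strong continuity of the stochastic-convolution-type linear term on compact windows combined with the uniform exponential tail estimate. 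Lower semicontinuity of $\|\cdot\|_{L^{2}}^{2}$ under weak limits closes the argument and makes $I$ a good rate function.

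For item (ii), the weak convergence of the controlled processes, I would, given a family $h_{\varepsilon}\to h$ in distribution in the weak topology on a ball of $L^{2}(\mathbb{R};K)$, prove that $X^{*,h_\varepsilon}_{\varepsilon}\to \phi^{h}$ in law in $C(\mathbb{R};H)$. This splits into: (a) uniform-in-$\varepsilon$ exponential moment bounds for $X^{*,h_\varepsilon}_\varepsilon$ via a backward Gronwall argument exploiting the dissipativity of $A+F$ and the boundedness of $B$; (b) tightness on each $[-T,T]$ by a Kolmogorov/Aldous-type criterion; (c) a uniform-in-$\varepsilon$ tail bound on $(-\infty,-T]$ using the exponential decay of $S_{A}$, which promotes tightness on compact intervals to tightness in $C(\mathbb{R};H)$; and (d) identification of the limit by passing to the limit in the controlled integral equation, where weak convergence of $h_\varepsilon$ together with the compactness/continuity secured in (i) identifies the limit as $\phi^{h}$.

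The main obstacle I anticipate is the well-posedness and continuous dependence for the backward infinite horizon equation \eqref{SK-BIHE}: unlike a forward Cauchy problem, there is no initial condition to pin the solution down, and one must use the backward contractivity of $S_{A}(t-s)$ for $t\ge s$ together with integrability of $h$ at $-\infty$ to rule out spurious ``homoclinic'' solutions and obtain uniqueness in the class of bounded trajectories. Carrying this through uniformly on sublevel sets of the cost, and doing so stably under weak limits of $h$, is the delicate step on which both parts of Condition \ref{weakcondition} rest; the remaining estimates on the controlled SDE are then standard infinite horizon adaptations of the finite interval Freidlin--Wentzell calculations.
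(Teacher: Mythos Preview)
Your overall strategy---the weak convergence approach via the infinite-horizon Bou\'e--Dupuis formula of Appendix~A, culminating in a verification of Condition~\ref{weakcondition}---coincides with the paper's. (Note, incidentally, that you have the two items of Condition~\ref{weakcondition} swapped: in the paper, (i) is the convergence statement and (ii) is the compactness statement.) However, your technical implementation diverges in two places, and in each case your route would require hypotheses the paper does not make.

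First, for well-posedness of the backward infinite-horizon skeleton equation \eqref{SK-BIHE}, you propose a Banach fixed-point argument on an exponentially weighted space. But Hypothesis~\ref{Msodehy} does not give a global Lipschitz bound on $F$; it gives only the one-sided dissipative inequality in~(i) and the smoothed integral estimate in~(ii), with constants that need not yield a global contraction. The paper instead constructs the solution by pullback: it takes the mild solution $X_v^n$ started from $0$ at time $-n$ and shows, via the asymptotic contraction of Corollary~\ref{asdbe} (driven by the dissipativity in~(i)), that $\{X_v^n\}$ is Cauchy in $C([-N,N];H)$ for every $N$ (Lemma~\ref{SK-existence}); uniqueness among bounded trajectories then also follows from that contraction (Lemma~\ref{SK-uniqueness}).

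Second, for the convergence item of Condition~\ref{weakcondition}, you propose a tightness-plus-identification scheme with a Kolmogorov/Aldous step on compact windows. In the infinite-dimensional state space $H$ this is delicate: Kolmogorov gives equicontinuity, but pointwise compactness of marginals requires bounds in a compactly embedded subspace, and the abstract hypotheses do not assert that $V\hookrightarrow H$ is compact. The paper bypasses tightness entirely by proving convergence \emph{in probability}: setting $w_\varepsilon=X^*_{\varepsilon,v_\varepsilon}-X^*_v$, it derives a weighted energy inequality for $e^{2\eta t}\|w_\varepsilon(t)\|_H^2$ from the dissipative pairing of Hypothesis~\ref{Msodehy}(i), localizes with stopping times $\tau^k_{N,\varepsilon}$, and closes with Gronwall and BDG (Lemmas~\ref{prior Xev} and~\ref{verify weak condition 1}). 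The compactness item (Lemma~\ref{verify weak condition 2}) is handled by a parallel deterministic energy/Gronwall argument, using that $Q^{1/2}$ is Hilbert--Schmidt to upgrade weak convergence of $v_n$ to strong convergence of $B(X)(v_n-v)$.

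In short, the scaffold of your plan is correct, but your proposed substitutes at these two junctures would need extra assumptions (a usable global contraction constant for the skeleton map, or compactness of $V\hookrightarrow H$) that the paper neither assumes nor needs.
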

We could get the following result for the family of invariant measures of  Eq. \eqref{Msode0} satisfies the LDP by using the contraction principle .
\begin{theorem*}(cf. Theorem \ref{stoi} below)
	If the operators $A$, $F$ and $B$ satisfy Hypothesis \ref{Msodehy}, then the family of invariant measures $\left\{\nu_{\varepsilon}\right\}_{\varepsilon>0}$ for Eq. \eqref{Msode0} satisfies the LDP in $H$, with good rate function given in \eqref{I'}.
\end{theorem*}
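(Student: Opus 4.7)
The plan is to deduce Theorem \ref{stoi} directly from Theorem \ref{stationary LDP} by an application of the contraction principle, exploiting the observation already emphasized in the introduction: because $X^{*}_{\varepsilon}$ is a stationary solution, its time-zero marginal is distributed according to the invariant measure $\nu_{\varepsilon}$ of Eq.~\eqref{Msode0}. Hence if one can push the LDP on the path space $C(\mathbb{R};H)$ down to $H$ along the evaluation map at $t=0$, the resulting LDP is precisely the LDP for $\{\nu_\varepsilon\}_{\varepsilon>0}$.

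Concretely, I would equip $C(\mathbb{R};H)$ with the topology of uniform convergence on compact subsets of $\mathbb{R}$ (the topology in which Theorem \ref{stationary LDP} is established) and introduce the evaluation map $\pi_0 : C(\mathbb{R};H)\to H$, $\pi_0(\varphi):=\varphi(0)$. This map is continuous, since uniform convergence on compacts implies in particular pointwise convergence at $t=0$. Stationarity yields $(\pi_0)_{\ast}\mathcal{L}(X^{*}_{\varepsilon})=\nu_\varepsilon$. By the contraction principle (cf.\ Theorem~4.2.1 in \cite{DZ07}), the LDP for $\{X^{*}_{\varepsilon}\}_{\varepsilon>0}$ with good rate function $I$ given in \eqref{ratefunction} transfers to an LDP on $H$ for $\{\nu_\varepsilon\}_{\varepsilon>0}$ with the contracted rate function
\begin{equation*}
	\tilde{I}(x)=\inf\bigl\{I(\varphi):\varphi\in C(\mathbb{R};H),\ \varphi(0)=x\bigr\},
\end{equation*}
which is automatically good, since $I$ is good and $\pi_0$ is continuous.

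The only remaining task, and the mildly technical one, is to identify $\tilde{I}$ with the explicit expression in \eqref{I'}. I would do this by substituting the definition of $I$ into the infimum and using the constraint $\varphi(0)=x$ to rewrite the minimization over controls in $L^2((-\infty,0];\mathcal{U})$ (controls on $(0,\infty)$ drop out because the endpoint is fixed at the left of time zero and any tail contribution only increases the cost), thereby reading off the quasi-potential form of \eqref{I'}. I do not expect any genuine analytic obstacle at this stage: all of the substantive work — constructing the stationary solution, establishing the skeleton equation in infinite time intervals, verifying the weak convergence conditions, and proving the path-space LDP — has already been carried out in Theorem \ref{stationary LDP}, and the passage to invariant measures is a clean consequence.
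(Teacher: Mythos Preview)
Your proposal is correct and follows essentially the same approach as the paper: both deduce the LDP for $\{\nu_\varepsilon\}$ from the path-space LDP for $\{X^*_\varepsilon\}$ via the contraction principle applied to the continuous evaluation-at-zero map. The only cosmetic difference is that the paper first restates the LDP on the one-sided space $C((-\infty,0];H)$ (Theorem~\ref{stationary LDP2}) before contracting, which makes the identification with \eqref{I'} immediate, whereas you contract directly from $C(\mathbb{R};H)$ and then argue that controls on $(0,\infty)$ can be set to zero without affecting $u(0)$; both routes give the same rate function.
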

 Noticing that under  Hypothesis \ref{Msodehy}, we actually only deal with the case where Eq. \eqref{Msode0} has only one asymptotically stable fixed point for $\varepsilon=0$. And even the stochastic Burgers equation and $2$-dimensional Navier-Stokes on tours are all satisfies Hypothesis \ref{Msodehy} (cf. Example \ref{burgers equation}, \ref{Navier-Stokes} below), but
there are also many types of stochastic partial differential equations that do not satisfy Hypothesis \ref{Msodehy}, such as the stochastic wave equations.
Therefore, there are many more complex and interesting cases that we might follow, such as the stochastic wave equations, random dynamical systems with random periodic solutions, random quasi-periodic solutions. In particular, the cases that the deterministic dynamical systems for $\varepsilon=0$ have multiple equilibrium points (see, for example, \cite{DM15}) is also an attractive problem. Furthermore, the existence of invariant measure does not imply the existence of stationary solution, unless
we consider in an extended probability space (see, for example, Arnold \cite{LA98}). The asymptotic behavior for the stationary solutions in the extended probability space is also an interesting problem.

This paper is structured as follows.
In Section \ref{sec-2}, we present the basic notations and some definitions for the stationary solution, LDP and quasi-potential. We give Hypothesis \ref{Msodehy} which the operators $A$, $F$ and $B$ satisfy throughout the paper and the definition for stationary solution in Section \ref{notations},  the preliminary knowledge for LDP in Section  \ref{Preliminary knowledge for LDP} and the definition of quasi-potential in Section \ref{The definition of quasi-potential}.

In Section \ref{LDP for stationary solution}, we prove the family of stationary solutions of Eq. \eqref{Msode0} satisfies LDP. We use the weak convergence approach to prove the LDP for stationary solution of Eq. \eqref{Msode0}, and verify that the Burger's equation satisfies the Hypothesis \ref{Msodehy} in Example \ref{burgers equation}. Moreover, we state two examples to show the LDP for random periodic solutions and random quasi-periodic solutions in Example \ref{perio} and Example \ref{quasi-periodic solution}.

In Section \ref{LDP for invariant measure}, we prove the LDP for invariant measures of Eq. \eqref{Msode0} in Theorem \ref{stoi} and prove the rate function defined in Theorem \ref{stoi} is equivalent to the rate function \eqref{V} defined by quasi-potential in Lemma \ref{quasi rate}. Moreover, we give two examples to show that the rate function \eqref{I'} defined in Theorem \ref{stoi} of the invariant measures is consistent with quasi-potential (see, for example, \cite{BC17,CR05,FW12}), and the LDP of the stationary solutions can provide more dynamic information.

In Section \ref{Supplementary proofs of skeleton equation and stationary solution},
we consider the well-posedness of the skeleton equation in Section \ref{The well-posedness of the skeleton equation}, and prove there exists a $\varepsilon_0$ such that for every $\varepsilon\in(0,\varepsilon_0)$, Eq. \eqref{Msode0} exists a unique stationary solution in Section \ref{Stationary solution}.

In Appendix A, we prove the Bou\'e-Dupuis formula and the weak convergence approach in infinite intervals.
\section{Preliminaries}\label{sec-2}
\subsection{Notations}\label{notations}
Let $V$ be a reflexive Banach space, and $V^{*}$ be the dual space of $V$. Let $H$ be a separable Hilbert space with inner product $\langle\cdot,\cdot\rangle_{H}$, $H$ and its dual space $H^{*}$ are consistent by the Riesz isomorphism. Let $V \subset H$ continuously and densely, then
$\left(V, H, V^{*}\right)$ is called a Gelfand triple.

Let
$L(H)$ be the space of all bounded linear operators from $H$ to $H$ with the operator norm $|\cdot|_{L}$.
Let $L_2(H)$ be the space of all Hilbert-Schmidt operators from $H$ to $H$ with Hilbert-Schmidt norm $|T|_{L_2}:=\sum_{k\in\mathbb{N}}
\left\|Ta_k\right\|_{H}^2$, where $\left\{a_k\right\}_{k\in\mathbb{N}}$ is an orthogonal basis of $H$.
Let $Q$ be a trace class operator, $L_{Q}(H)$ is denoted by the space of linear operators $T$ such that $T Q^{1 / 2}$ is a Hilbert-Schmidt operator from $H$ to $H$, with the norm $|T|_{L_{Q}}:=|TQ^{1/2}|_{L_2}$.

Let $H_{0}=Q^{1 / 2} H$. Then $H_{0}$ is a Hilbert space with the inner product
$$\langle u, v\rangle_{H_0}=\langle Q^{-1 / 2} u, Q^{-1 / 2} v\rangle _H ,\quad\forall u,v \in H_{0} .$$
Clearly, the imbedding of $H_{0}$ in $H$ is Hilbert-Schmidt for $Q$ is a trace class operator.

Let $\{W(t)\}_{t\in\mathbb{R}}$ be a $Q$-Wiener process on $H$ with respect to a probability space $(\Omega, \mathcal{F}^0, \mathbb{P})$, where $\mathcal{F}^0$  is  Borel
$\sigma$-field of $\Omega$, $\mathbb{P}$ is the Wiener measure on $\Omega$. Let $\mathcal{F}$ be the $\mathbb{P}$-completion of $\mathcal{F}^0$. Define
\begin{equation*}
	\mathcal{F}_{s}^{t} \equiv \sigma\left\{W_{u}-W_{v}: s \leq u, v \leq t\right\} \vee \mathcal{N},\quad
	\mathcal{F}_{-\infty}^{t} \equiv \bigvee_{s \leq t} \mathcal{F}_{s}^{t}, \quad\forall s\leq t \in \mathbb{R},
\end{equation*}
where $\mathcal{N}$ are the null sets of $\mathcal{F}$ (see, for example, \cite{LZ09}).

We consider the stochastic differential equation
\begin{equation}\label{Msode}
	\mathrm{d} X=AX \mathrm{d} t+F(X)\mathrm{d} t+\sqrt{\varepsilon} B (X) \mathrm{d} W_t,
\end{equation}
where
$
A: V \rightarrow V^{*}$, $F:V  \rightarrow H$,  $B : V  \rightarrow L(H)$
be progressively measurable
and satisfies the following Hypothesis.
\begin{Hypothesis}\label{Msodehy}
	(i). Assume there exist constants $\lambda, C_0>0$ such that
	\begin{equation*}
		_{V^*}\langle A\textbf{u}-A\textbf{v}+F(\textbf{u})-F(\textbf{v}),\textbf{u}-\textbf{v} \rangle_V
		\leq -\lambda \left\|\textbf{u}-\textbf{v}\right\|_{V}^2 +C_0\left\|\textbf{u}-\textbf{v}\right\|_{H}^2\left\|\textbf{u}\right\|_{V}^2,
	\end{equation*}
	and
	\begin{equation*}
		_{V^*}\langle AX+F(\textbf{u}),\textbf{u} \rangle_V
		\leq -\lambda \left\|\textbf{u}\right\|_{V}^2,\quad \forall \textbf{u},\textbf{v}\in V.
	\end{equation*}
	(ii).
	Let  $S_{A}(t)$ be the associated semigroup on H corresponding to $A$, for any $N\in\mathbb{N}^+$, $t_0,t\in[-N,N]$ and $t_0<t$, there exists constant $C>0$ and $-1<\alpha<0$ such that the semi-group satisfies
	\begin{equation*}
		\Big\|\int_{t_0}^{t}S_{A}(s)(F(\textbf{u})-F(\textbf{v}))\mathrm{d}s\Big\|_{H}^2\leq C (N) \int_{t_0}^{t} s^{\alpha}\|\textbf{u}-\textbf{v}\|_{H}^2\mathrm{d}s.
	\end{equation*}
	(iii). The function $ B $ will not be zero operator  and  there exist positive  constants $\beta_0,D_0$ such that
	\begin{equation*}
		| B (\textbf{u})- B (\textbf{v})|_{L}\leq \beta_0 \left\|\textbf{u}-\textbf{v}\right\|_{H},\quad\forall \textbf{u},\textbf{v}\in V,
	\end{equation*}
	and
	\begin{equation*}
		| B (\textbf{u})|_{L}\leq D_0, \quad\forall \textbf{u}\in V.
	\end{equation*}
\end{Hypothesis}
\begin{remark}\label{condition remark}
	(i). Since $V$ imbedes to $H$, there exists a constant $C_1$ such that $C_1\|\textbf{u}\|_{H}^2\leq \|\textbf{u}\|_V^2,\forall \textbf{u}\in V.$\\
	(ii). Let $\beta=\beta_0trQ$ and $D=D_0trQ$. It follows from
  $Q$ is a trace class operator and combining Hypothesis \ref{Msodehy} (iii), we could obtain that
	
\begin{align*}
		| B (\textbf{u})- B (\textbf{v})|_{L_Q}
		=|( B (\textbf{u})- B (\textbf{v}))Q^{1/2}|_{L_2}
\leq |( B (\textbf{u})- B (\textbf{v}))|_{L} |Q^{1/2}|_{L_2}\leq\beta \left\|\textbf{u}-\textbf{v}\right\|_{H},
	\end{align*}
	and
	\begin{equation*}
		| B (\textbf{u})|_{L_Q}\leq | B (\textbf{u})Q^{1/2}|_{L_2}\leq  | B (\textbf{u})|_{L}|Q^{1/2}|_{L_2} \leq D,\quad \forall \textbf{u},\textbf{v}\in V.
	\end{equation*}
	(iii). Let $\textbf{u}(t):=S_A(t)\textbf{u}_0$ be the solution of the equation $\dot{\textbf{u}}=A\textbf{u}$ with initial value $\textbf{u}_0$, where $\dot{\textbf{u}}$  be the deririaive of $\textbf{u}$ with respect to time $t$, then we have the estimates
	\begin{equation*}
		\frac{1}{2}\frac{\mathrm{d}\|\textbf{u}\|_{H}^2}{\mathrm{d}t}=_V\langle \textbf{u},A\textbf{u}\rangle_{V^{\ast}}
		\leq-\lambda\|\textbf{u}\|_{V}^2,
	\end{equation*}
	then $\frac{1}{2}\|\textbf{u}(t)\|_{H}^2+\lambda\int_0^t\|\textbf{u}\|_{V}^2\mathrm{d}s\leq\frac{1}{2}\|\textbf{u}_0\|_{H}^2$, thus
	$$\|S_A(t)\textbf{u}_0\|_{H}^2\leq \|\textbf{u}_0\|_{H}^2,\quad\int_0^{\infty}\|S_A(s)\textbf{u}_0\|_{H}^2\mathrm{d}s\leq \frac{1}{C_1}\int_0^{\infty}\|S_A(s)\textbf{u}_0\|_{V}^2\mathrm{d}s\leq\frac{1}{2\lambda C_1}\|\textbf{u}_0\|_{H}^2.$$
	For any $\textbf{u}_0\in H,\|\textbf{u}_0\|_{H}=1$, it implies that
	\begin{equation}
		|S_A(t)|_{L}\leq 1,\quad\lim_{t\rightarrow\infty}|S_A(t)|_{L}=0.
	\end{equation}
\end{remark}
\begin{definition}\label{mild solution}
	(Mild solution). For any $\varepsilon,T>0$, $\xi\in H$, an $H$-valued predictable process $X_{\varepsilon}(t,\omega;\xi)$, $t \in[0, T]$, is called a mild solution of Eq. \eqref{Msode} with initial value $\xi$ if
$$
		\begin{aligned}
			X_{\varepsilon}(t,\omega;\xi)=&S_A(t) \xi +\int_{0}^{t} S_A(t-s)F(X_{\varepsilon}(s,\omega;\xi)) \mathrm{d} s
		+\sqrt{\varepsilon}\int_{0}^{t} S_A(t-s) B (X_{\varepsilon}(s,\omega;\xi)) \mathrm{d} W_s(\omega) \quad \mathbb{P} \text {-a.s. }
		\end{aligned}
$$
	for each $t \in[0, T]$. In particular, the appearing integrals are well defined.
\end{definition}
\begin{theorem}
	For any $\varepsilon,T>0$, $\xi\in H$, there exists a unique mild solution $X_{\varepsilon}\in C([0,T];H), a.s.$ of Eq. \eqref{Msode} in the sense of definition \ref{mild solution}.
\end{theorem}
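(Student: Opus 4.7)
The plan is a standard contraction-mapping (Picard iteration) argument in a Banach space of processes, followed by an upgrade from mean-square to almost-sure sample-path continuity via the factorization method of Da Prato--Kwapie\'n--Zabczyk. Let $\mathcal{H}_T$ be the Banach space of $\mathcal{F}_t$-progressively measurable $H$-valued processes $X$ with norm $\|X\|_T := \bigl(\sup_{t\in[0,T]}\mathbb{E}\|X(t)\|_H^2\bigr)^{1/2}<\infty$, and define on $\mathcal{H}_T$ the map
\begin{equation*}
\Phi(X)(t) := S_A(t)\xi + \int_0^t S_A(t-s)F(X(s))\,\mathrm{d}s + \sqrt{\varepsilon}\int_0^t S_A(t-s)B(X(s))\,\mathrm{d}W_s.
\end{equation*}

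First I would verify that $\Phi$ maps $\mathcal{H}_T$ into itself, using the contractivity $|S_A(t)|_L\le 1$ from Remark \ref{condition remark} and the uniform bound $|B(u)|_{L_Q}\le D$ to control the stochastic convolution by $\varepsilon D^2 T$ via It\^o's isometry, together with Hypothesis \ref{Msodehy}(ii) (applied after a trivial shift by $F(0)$) to control the deterministic convolution. For the contraction step, Hypothesis \ref{Msodehy}(ii) applied to the drift difference, combined with It\^o's isometry and the Lipschitz bound $|B(u)-B(v)|_{L_Q}\le\beta\|u-v\|_H$, would yield
\begin{equation*}
\mathbb{E}\|\Phi(X)(t)-\Phi(Y)(t)\|_H^2 \le 2C(T)\int_0^t s^{\alpha}\,\mathbb{E}\|X(s)-Y(s)\|_H^2\,\mathrm{d}s + 2\varepsilon\beta^2\int_0^t \mathbb{E}\|X(s)-Y(s)\|_H^2\,\mathrm{d}s.
\end{equation*}
Since $\alpha\in(-1,0)$ the kernel $s^\alpha$ is integrable on $[0,T]$, and a weakly singular (Henry-type) Gronwall inequality shows that some iterate $\Phi^n$ is a strict contraction on $\mathcal{H}_T$; Banach's fixed-point theorem then produces a unique fixed point $X_\varepsilon$ satisfying the mild equation in $L^2(\Omega;H)$ for every $t\in[0,T]$.

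To upgrade from mean-square to $\mathbb{P}$-almost-sure sample-path continuity in $H$, the deterministic terms $t\mapsto S_A(t)\xi$ and $t\mapsto\int_0^t S_A(t-s)F(X_\varepsilon(s))\,\mathrm{d}s$ are continuous by strong continuity of $S_A$ and Hypothesis \ref{Msodehy}(ii) applied to $X_\varepsilon\in\mathcal{H}_T$. For the stochastic convolution I would apply the factorization identity with $\delta\in(0,1/2)$, rewriting it as the classical Bochner integral of $Y_\delta(s):=\int_0^s(s-r)^{-\delta}S_A(s-r)B(X_\varepsilon(r))\,\mathrm{d}W_r$ against the kernel $(t-s)^{\delta-1}$; the uniform bound on $B$ together with Burkholder's inequality yield $\sup_{s\le T}\mathbb{E}\|Y_\delta(s)\|_H^p<\infty$ for any $p\ge 2$, and a deterministic regularity lemma (choosing $p$ with $1/p<\delta$) then delivers continuous sample paths in $H$.

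The main obstacle is the interaction between the weakly singular kernel $s^\alpha$ coming from Hypothesis \ref{Msodehy}(ii) and the Gronwall step: the classical Gronwall lemma does not apply directly, so one must invoke the weakly singular Gronwall inequality, or equivalently iterate $\Phi$ sufficiently many times, in order to extract the contraction property. Once the fixed point is obtained in $\mathcal{H}_T$, the remaining work to pass from $L^2(\Omega;H)$-continuity to $\mathbb{P}$-a.s. continuity in $C([0,T];H)$ is routine via the factorization argument.
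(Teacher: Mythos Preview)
Your fixed-point argument is a legitimate route, but it is \emph{not} the paper's. The paper dispatches this theorem in two lines by observing that Hypothesis~\ref{Msodehy}(i) (the monotonicity/coercivity estimate on $A+F$) places the equation squarely in the variational framework of Liu--R\"ockner \cite{LW15}, Chapter~4, which yields a unique variational solution in $L^2([0,T];V)\cap C([0,T];H)$; Appendix~F of Pr\'ev\^ot--R\"ockner \cite{PR07} then identifies that variational solution with the mild solution. Thus the paper uses part~(i) of the hypothesis and offloads all the work to existing literature, whereas you use parts~(ii) and~(iii) and build the mild solution directly via Picard iteration plus factorization.

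A brief comparison of what each buys: the paper's variational route is shorter and, importantly, delivers $X_\varepsilon\in L^2([0,T];V)$ for free --- a regularity that is used repeatedly downstream (e.g.\ in the energy estimates of Section~\ref{Stationary solution} and in Lemma~\ref{Piror of skeleton equation}). Your approach is more self-contained and uses only the semigroup-side hypotheses, but you should be aware of one subtlety you glossed over: $F$ is only assumed to map $V\to H$, so in your contraction space $\mathcal{H}_T$ of $H$-valued processes the expression $F(X(s))$ is not a priori defined. What makes your argument salvageable is that Hypothesis~\ref{Msodehy}(ii) bounds the \emph{integrated} difference $\int S_A(\cdot)(F(u)-F(v))$ by the $H$-norm of $u-v$, so the convolution map extends by density from $V$- to $H$-valued inputs; you should say this explicitly. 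With that point addressed, the remainder (weakly singular Gronwall, factorization for path continuity) is standard and correct.
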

\begin{proof}
	It is obvious that the Hypothesis \ref{Msodehy} satisfies the Hypothesis $H1$ - $H4$ in Chapter
	4 of \cite{LW15}, which  guarantees that the variational solution of Eq. \eqref{Msode}  exists and is unique. Combining the Appendix $F$ in \cite{PR07}, we get the conclusion.
\end{proof}
To define the stationary solution of random dynamical system, let $(\Omega, \mathcal{F}, \mathbb{P})$ be a complete probability space, $\theta: \mathbb{R} \times \Omega \rightarrow \Omega$ be a group of $\mathbb{P}$-preserving ergodic transformations on $(\Omega, \mathcal{F}, \mathbb{P})$.

\begin{definition}(cf. \cite{LA98})
	A crude cocycle $(U, \theta)$ on $H$ is a $\left(\mathcal{B}\left(\mathbb{R}^{+}\right) \otimes\right.$ $\mathcal{B}(H) \otimes \mathcal{F}, \mathcal{B}(H))$-measurable random field $U: \mathbb{R}^{+} \times H \times \Omega \rightarrow H$ with the following properties:\\
	(i). $U\left(t_{1}+t_{2}, \cdot, \omega\right)=U\left(t_{2}, U\left(t_{1}, \cdot, \omega\right), \theta\left(t_{1}, \omega\right)\right)  $ for fixed $t_{1}$, and all $t_2$, $\mathbb{P}$-a.s. (where the
exceptional set $\mathcal{N}_{t_1}$ can depend on $t_1$).\\
	(ii). $U(0, \xi, \omega)=\xi$ for all $\xi \in H$, $\omega \in \Omega$.
\end{definition}
\begin{definition}\label{Defition of SS}
	An $\mathcal{F}$ measurable random variable $\textbf{v}: \Omega \rightarrow H$ is said to be a stationary solution for the crude  cocycle $(U, \theta)$ if it satisfies for any $r$
	\begin{equation}\label{defition of SS}
		U(\cdot, Y(r,\omega), \theta(r, \omega))=Y(\cdot+r,\omega)=Y(\cdot,\theta(r, \omega)),\quad a.s.
	\end{equation}
\end{definition}
\begin{remark}
	We denote by $\theta:\mathbb{R}\times \Omega\rightarrow \Omega$ be the standard $\mathbb{P}$-preserving ergodic Wiener shift on $\Omega$, $\theta(t,\omega(s)):=\omega(t+s)-\omega(t)$, $t,s\in\mathbb{R}$.  Noticing that the stationary solution defined in Definition \ref{Defition of SS} means that for any $r$, Eq. \eqref{defition of SS} holds almost surely, which is slightly different from the definition in \cite{LZ09,SZZ08} needs Eq. \eqref{defition of SS} holds for all $r$ for all $\omega\in \Omega$ with respect to the perfect cocycle (cf. \cite{LA98,LZ09,SZZ08}). We prove the unique solution of the pullback integral Eq. \eqref{MIHSIE} be the stationary solution (in the sense of Definition \ref{Defition of SS}), and the distribution of the solution for \eqref{MIHSIE} generates an invariant measure.
Our results about the LDP are independent of the perfect cocycle, since the LDP describes the properties of the  distribution of stochastic processes, but not the pathwise properties.     Even the well-posedness of the pullback infinite horizon  integral equation \eqref{MIHSIE} is enough to show the LDP  and depicts the long time asymptotic behavior of random dynamical systems with small perturbations.,

\end{remark}
Liu and Zhao \cite{LZ09} have proved the stationary solution for Burgers equation with additive noise by using the proof of Mattingly \cite{M99}. However, we consider Eq. \eqref{Msode} with multiplicative noise, we prove the following theorem  for the completeness of  the present paper in Section \ref{Stationary solution}.
\begin{theorem}\label{Mstationary solution}
	There exists $\varepsilon_0>0$, for every $\varepsilon\in(0,\varepsilon_0)$, Eq. \eqref{Msode} exists a unique  stationary solution (in the sense of Definition \ref{Defition of SS}) $X^{*}_{\varepsilon}(t,\omega) (t \in \mathbb{R})$ be a $(\mathcal{B}(\mathbb{R})\otimes \mathcal{F},\mathcal{B}(H))$-measurable, $(\mathcal{F}_{-\infty}^{t})_{t\in \mathbb{R}}$-adapted process. Moreover, $X_{\varepsilon}^{*}(t,\omega)$ satisfies the following equation in $H$ for any $t \in \mathbb{R}$
	\begin{equation}\label{MIHSIE}
		X_{\varepsilon}^{*}(t)=\int_{-\infty}^{t} S_{A}(t-s)F\left( X_{\varepsilon}^{*}\right) \mathrm{d} s+\sqrt{\varepsilon}\int_{-\infty}^{t} S_{A}(t-s) B \left( X_{\varepsilon}^{*}\right)\mathrm{d} W_{s},
	\end{equation}
	and
	\begin{equation}\label{uniqueness proof condition}
		\sup_{t\in \mathbb{R}}\mathbb{E}\left\|X_{\varepsilon}^{*}(t)\right\|_{H}^{2} < \infty.
	\end{equation}
\end{theorem}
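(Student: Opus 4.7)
I would follow the pullback construction of Mattingly \cite{M99} and Liu--Zhao \cite{LZ09}. Let $X_{\varepsilon}(t;s,\xi,\omega)$ denote the forward mild solution of \eqref{Msode} with initial datum $\xi$ at time $s$. The candidate stationary solution is $X_{\varepsilon}^{*}(t,\omega):=\lim_{s\to-\infty}X_{\varepsilon}(t;s,0,\omega)$, with the limit taken in $L^{2}(\Omega;H)$. Existence of the limit will follow from showing that the net $\{X_{\varepsilon}(t;s,0)\}_{s\le t}$ is Cauchy in $L^{2}(\Omega;H)$ as $s\to-\infty$; the identity \eqref{MIHSIE} is then obtained by passing to the limit in the mild formulation on $[s,t]$, the moment bound \eqref{uniqueness proof condition} is inherited from a uniform-in-$s$ a priori estimate, and stationarity in the sense of Definition \ref{Defition of SS} follows from the cocycle identity $X_{\varepsilon}(t+r;s+r,0,\omega)=X_{\varepsilon}(t;s,0,\theta(r,\omega))$ combined with the $\mathbb{P}$-invariance of $\theta$.

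\textbf{Key estimates.} Applying It\^o's formula to $\|X_{\varepsilon}(t;s,0)\|_{H}^{2}$ and using the coercivity $_{V^{*}}\langle A\mathbf{u}+F(\mathbf{u}),\mathbf{u}\rangle_{V}\le -\lambda\|\mathbf{u}\|_{V}^{2}$ from Hypothesis \ref{Msodehy}(i), the embedding $\|\mathbf{u}\|_{V}^{2}\ge C_{1}\|\mathbf{u}\|_{H}^{2}$ from Remark \ref{condition remark}(i), and the bound $|B(\mathbf{u})|_{L_{Q}}\le D$ from Remark \ref{condition remark}(ii), yields
$$\frac{d}{dt}\mathbb{E}\|X_{\varepsilon}(t;s,0)\|_{H}^{2}\le -2\lambda C_{1}\,\mathbb{E}\|X_{\varepsilon}(t;s,0)\|_{H}^{2}+\varepsilon D^{2},$$
hence $\sup_{t\ge s}\mathbb{E}\|X_{\varepsilon}(t;s,0)\|_{H}^{2}\le \varepsilon D^{2}/(2\lambda C_{1})$ uniformly in $s$, giving \eqref{uniqueness proof condition} in the limit. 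For the Cauchy property, fix $s_{1}<s_{2}<t$ and set $D_{\varepsilon}(r):=X_{\varepsilon}(r;s_{1},0)-X_{\varepsilon}(r;s_{2},0)$ for $r\ge s_{2}$. It\^o's formula, the locally-monotone inequality in Hypothesis \ref{Msodehy}(i), and the Lipschitz bound $|B(\mathbf{u})-B(\mathbf{v})|_{L_{Q}}\le\beta\|\mathbf{u}-\mathbf{v}\|_{H}$ produce a differential inequality whose net rate is $-2\lambda C_{1}+\varepsilon\beta^{2}+2C_{0}\|X_{\varepsilon}(r;s_{1},0)\|_{V}^{2}$. An exponential-martingale argument absorbs the random $\|X\|_{V}^{2}$ factor into its integrated mean, which by the energy identity is of order $\varepsilon$ per unit time; choosing $\varepsilon<\varepsilon_{0}$ so that this averaged exponent is strictly negative yields $\mathbb{E}\|D_{\varepsilon}(t)\|_{H}^{2}\le Ce^{-\gamma(t-s_{2})}\mathbb{E}\|D_{\varepsilon}(s_{2})\|_{H}^{2}$ for some $\gamma>0$, and the Cauchy property follows since $\mathbb{E}\|D_{\varepsilon}(s_{2})\|_{H}^{2}$ is bounded by the a priori estimate.

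\textbf{Uniqueness and main obstacle.} Uniqueness within the class satisfying \eqref{uniqueness proof condition} uses the same contraction on $[s,t]$: for two solutions $Y_{\varepsilon},\widetilde{Y}_{\varepsilon}$ of \eqref{MIHSIE} one obtains $\mathbb{E}\|Y_{\varepsilon}(t)-\widetilde{Y}_{\varepsilon}(t)\|_{H}^{2}\le Ce^{-\gamma(t-s)}\sup_{r}\mathbb{E}\|Y_{\varepsilon}(r)-\widetilde{Y}_{\varepsilon}(r)\|_{H}^{2}$, and sending $s\to-\infty$ forces equality. The decisive obstacle is the contraction step: the term $C_{0}\|\mathbf{u}-\mathbf{v}\|_{H}^{2}\|\mathbf{u}\|_{V}^{2}$ from the partial dissipativity in Hypothesis \ref{Msodehy}(i) and the correction $\varepsilon\beta^{2}\|\mathbf{u}-\mathbf{v}\|_{H}^{2}$ from the multiplicative diffusion must both be dominated by the coercive quantity $2\lambda C_{1}\|\mathbf{u}-\mathbf{v}\|_{H}^{2}$. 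Because the time-averaged $\mathbb{E}\|X_{\varepsilon}\|_{V}^{2}$ is only of order $\varepsilon$, the net rate is strictly negative precisely when $\varepsilon$ lies below a threshold $\varepsilon_{0}=\varepsilon_{0}(\lambda,C_{0},C_{1},\beta,D)$; this is the origin of the restriction in the statement. Hypothesis \ref{Msodehy}(ii), with singular exponent $\alpha\in(-1,0)$, is used to guarantee convergence of the drift pullback $\int_{-\infty}^{t}S_{A}(t-s)F(X_{\varepsilon}^{*})\,ds$ in $H$, which justifies identifying the limit as a solution of \eqref{MIHSIE} on all of $\mathbb{R}$.
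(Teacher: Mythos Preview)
Your overall strategy matches the paper's: construct $X_\varepsilon^*$ as a pullback limit of $X_\varepsilon(t;-n,0)$ as $n\to\infty$, verify the infinite-horizon integral equation, and deduce stationarity from the shift identity. The a~priori bound you state is also correct and is Lemma~\ref{bounded estimate} (case $p=1$) in the paper.

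The gap is in the contraction step. After It\^o's formula, the differential inequality for $\|D_\varepsilon\|_H^2$ contains the random coefficient $2C_0\|X_\varepsilon(r;s_1,0)\|_V^2$, and taking expectations does \emph{not} yield a closed inequality for $\mathbb{E}\|D_\varepsilon\|_H^2$: the product $\mathbb{E}\bigl[\|D_\varepsilon\|_H^2\,\|X_\varepsilon\|_V^2\bigr]$ does not factor, and $\|X_\varepsilon(r)\|_V^2$ is not pointwise bounded. Your sentence ``an exponential-martingale argument absorbs the random $\|X\|_V^2$ factor into its integrated mean'' hides the entire difficulty: to carry this out you would need either exponential moments of $\int\|X_\varepsilon\|_V^2\,ds$ (not available from the hypotheses) or a pathwise control of the time average. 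The paper takes the second route, which is precisely Mattingly's technique: one first applies Gronwall \emph{pathwise} (Lemma~\ref{the lower bound}) to obtain
\[
\|\rho_\varepsilon(t_0+n)\|_H^2\le e^{-n\Gamma(n,t_0;X_\varepsilon)}\bigl(\|\rho_\varepsilon(t_0)\|_H^2+2\sqrt{\varepsilon}\,n^\delta\bigr)
\]
with a random exponent $\Gamma$ involving $\tfrac{1}{n}\int_{t_0}^{t_0+n}\|X_\varepsilon\|_V^2\,ds$, and then uses the energy identity plus a Borel--Cantelli argument on the martingale remainder (Lemmas~\ref{Bounding Lemma}--\ref{stopping time}) to show that this time average is almost surely eventually below a fixed threshold, so that $\Gamma>\lambda_0$ for all $n$ past a finite random time (Lemma~\ref{long-time}). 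The outcome is \emph{almost-sure} convergence of $X_\varepsilon^n$ in $C([-N,N];H)$ (Lemma~\ref{Mstationary}), not $L^2(\Omega;H)$ convergence; the pathwise mode is also what is used to verify the integral equation and the stationarity identity $X_\varepsilon^*(\cdot+l,\omega)=X_\varepsilon^*(\cdot,\theta(l,\omega))$ a.s.\ for each $l$.

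A minor correction: Hypothesis~\ref{Msodehy}(ii) is not used to make the infinite-horizon drift $\int_{-\infty}^{t}S_A(t-s)F(X_\varepsilon^*)\,ds$ converge; that follows from the semigroup decay in Remark~\ref{condition remark}(iii) together with the uniform moment bound. Hypothesis~\ref{Msodehy}(ii) enters only when passing to the limit $X_\varepsilon^n\to X_\varepsilon^*$ inside the \emph{finite-interval} drift $\int_{t_0}^{t}S_A(t-s)F(X_\varepsilon^n)\,ds$ (Step~1 of Lemma~\ref{Mstationary}).
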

\subsection{Preliminary knowledge for LDP}\label{Preliminary knowledge for LDP}
We recall some standard definitions and results of the large deviations theory (cf. \cite{BD98,DE97,DZ07}). Let $E$ be a Polish space, $\{X^{*}_\varepsilon\}$ be a family
of $E$-valued random variables defined on a probability space $(\Omega, \mathcal{F},\mathbb{P})$.
The large deviation theory concerns the exponential decay of the probability measures
of rare events. The rate of such exponential decay is expressed by the rate
function.
\begin{definition}(Rate function).
	A function $I$ mapping $E$ to $[0,+\infty)$ is called a rate function if $I$ is lower semi-continuous. A rate function $I$ is called a good rate function if for each $M<\infty$, the level set $\{x\in E:I(x)\leq M\}$ is compact.
\end{definition}
\begin{definition}(Large deviation principle).
	The sequence $\{X^{*}_\varepsilon\}$ is said to satisfies the LDP with rate function $I$ if for each Borel subset $A$ of $E$,
	\begin{equation*}
		-\inf_{x\in A^{\circ}}I(x)\leq\liminf_{\varepsilon\longrightarrow0}\varepsilon\log \mathbb{P}(X^{*}_\varepsilon\in A)\leq\limsup_{\varepsilon\longrightarrow0}\varepsilon\log \mathbb{P}(X^{*}_\varepsilon\in A)\leq-\inf_{x\in \bar{A}}I(x),
	\end{equation*}
	where $A^{\circ}$ and $\bar{A}$ are the interior and the closure of $A$ in $E$, respectively.
\end{definition}
Since we can not prove the exponential tightness for stationary solutions of Eq. \eqref{Msode} with multiplicative noise in infinite intervals, which  extremely depends on the properties of the Gaussian measure.
We choose the weak convergence method to study the LDP of stationary solution for Eq. \eqref{Msode}. Some fundamental concepts and results
about the weak convergence method are stated as follows.
\begin{definition}(Laplace principle).
	The sequence $\{X_\varepsilon\}$ is said to satisfy the Laplace principle with a
	rate function $I$ if for each bounded continuous real-valued function $h$ defined on $E$, we have
	\begin{equation}\label{LPeq}	\lim_{\varepsilon\rightarrow0}\varepsilon\log\mathbb{E}
		\Big\{\sup\Big[-\frac{1}{\varepsilon}h(X_\varepsilon)\Big]\Big\}=-\inf_{x\in E}\{h(x)+I(x)\}.
	\end{equation}
\end{definition}

We consider the stationary solution $X^{*}_{\varepsilon}$ of Eq. \eqref{Msode} satisfies LDP in the Polish space $E:=C(\mathbb{R};H)$ with the norm
\begin{equation}\label{Cnorm}
	\|f\|_{C(\mathbb{R};H)}:=
	\|f\|_{\mathcal{C}}=
	\sum_{k=1}^{\infty}2^{-k}\left(\sup_{s\in[-k,k]}\left\|
	f(s)\right\|_{H}\wedge1\right), \quad  f\in C(\mathbb{R};H).
\end{equation}
According to Theorems 1.2.1 and 1.2.3 in Dupuis and
Ellis \cite{DE97}, if $E$ is a Polish space and $I$ is a good rate function, then the LDP and Laplace
principle are equivalent, which is the basis for the weak convergence approach.  Moreover, we need the Bou\'e-Dupuis formula in infinite intervals to prove the weak convergence approach in infinite intervals.
Although the Bou\'e-Dupuis formula in infinite intervals has been used in \cite{BG20} by Barashkov and Gubinelli, for the completeness of the article, we still give the proof of Bou\'e-Dupuis formula and the weak convergence approach in infinite intervals in Appendix A, and illustrate the results in the following.\\
Let
$$
\begin{aligned}
\mathcal{A}=&\left\{v:\quad v \text{~is the}~ H_0\text{-valued} ~ \mathcal{F}_t\text{-predictable process and~}\int_{-\infty}^{+\infty}\|v(s)\|_{H_0}^2\mathrm{d}s<\infty~a.s. \right\},
\end{aligned}
$$
and
\begin{align*}
	S_{M}:&=\left\{v\in L^2(\mathbb{R};H_0):\int_{-\infty}^{\infty}
	\|v(s)\|_{H_0}^2\mathrm{d}s\leq M\right\}.
\end{align*}
It is similar to prove the set $S_M$ endowed with the weak topology is a Polish space (cf. \cite{KF57}, Theorem III.1'). In this paper, except for special instructions, the topology of $S_M$ is always weak topology.
We also define
$$\mathcal{A}_{M}:=\Big\{v\in\mathcal{A},\quad v(\omega)\in S_M,\quad \mathbb{P}\text{-}a.s.\Big\}.$$

We provide the sufficient condition for the Laplace principle (equivalently, the LDP), which is similar to Assumption 4.3 in \cite{BD01}, in the follwoing.
\begin{condition}\label{weakcondition}
	There exists a measurable map  $\ \mathcal{G}^0:C(\mathbb{R};H_0)\longrightarrow E$ such that the following hold:\\
	(i). Let $\{v^\varepsilon:\varepsilon>0\}\subset\mathcal{A}_M$ for some $M<\infty$. If $v^{\varepsilon}$ converges  to $v$ in distribution as $S_M$-valued random elements, then $\mathcal{G}^\varepsilon(W(\cdot)+\frac{1}	{\sqrt{\varepsilon}}\int_{-\infty}^{\cdot}v^{\varepsilon}\mathrm{d}s)$ converges to $\mathcal{G}^0(\int_{-\infty}^{\cdot}v(s)\mathrm{d}s)$ in distribution as $\varepsilon\rightarrow 0$.\\
	(ii). For every $M<\infty$, the set $K_M=\{\mathcal{G}^0(\int_{-\infty}^{\cdot}v(s)\mathrm{d}s):v\in S_M\}$ is a compact subset of $E$.
\end{condition}
Similar to the proof of Budhiraja and Dupuis in \cite{BD98}, we prove the following result in  Appendix \ref{LP proof}.
\begin{theorem}\label{LP}
	If $X_\varepsilon=\mathcal{G}^\varepsilon(W(\cdot))$  satisfies Condition \ref{weakcondition}, then the family $\{X_\varepsilon:\varepsilon>0\}$ satisfies the Laplace principle in $E$ with good rate function
	\begin{equation}\label{1.15}
		I(f)=\inf_{\left\{v\in L^2(\mathbb{R};H_0):f			=\mathcal{G}^0(\int_{-\infty}^{\cdot}v(s)\mathrm{d}s)\right\}}		\left\{\frac{1}{2}\int_{-\infty}^{+\infty}\|v(s)\|_{H_0}^2\mathrm{d}s\right\}.
	\end{equation}
	where the infimum over an empty set is taken as $+\infty$.
\end{theorem}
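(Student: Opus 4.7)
The plan is to adapt the Budhiraja--Dupuis weak-convergence proof to the infinite horizon, relying on the Bou\'e--Dupuis variational formula in infinite intervals (which is proved separately in Appendix~A). That formula gives
$$
-\varepsilon\log\mathbb{E}\Big[\exp\Big(-\tfrac{1}{\varepsilon}h(X_\varepsilon)\Big)\Big]
=\inf_{u\in\mathcal{A}}\mathbb{E}\Big[\tfrac{1}{2}\int_{-\infty}^{+\infty}\|u(s)\|_{H_0}^2\mathrm{d}s+h\Big(\mathcal{G}^\varepsilon\big(W+\tfrac{1}{\sqrt{\varepsilon}}\textstyle\int_{-\infty}^{\cdot}u(s)\mathrm{d}s\big)\Big)\Big].
$$
Since $E$ is Polish and the Laplace principle is equivalent to the LDP, it suffices to show that $I$ is a good rate function and that the two Laplace bounds (the $\liminf$ and $\limsup$ of the left-hand side above, as $\varepsilon\to 0$) both equal $\inf_{x\in E}\{h(x)+I(x)\}$.

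\emph{Goodness of $I$.} Bounded sets of $L^2(\mathbb{R};H_0)$ are weakly compact, so $S_M$ is a compact Polish space in the weak topology. For a sequence $f_n\to f$ in $E$ with $I(f_n)\leq \alpha$, pick $v_n\in L^2$ with $f_n=\mathcal{G}^0(\int_{-\infty}^{\cdot}v_n\mathrm{d}s)$ and $\frac{1}{2}\|v_n\|_{L^2}^2\leq \alpha+1/n$; by weak compactness extract $v_{n_k}\rightharpoonup v$. Applying Condition~\ref{weakcondition}(i) with the deterministic controls $v^\varepsilon\equiv v_{n_k}$ (and any $\varepsilon\to 0$) shows $\mathcal{G}^0(\int v_{n_k})\to \mathcal{G}^0(\int v)$ in $E$, so $f=\mathcal{G}^0(\int v)$; weak lower semicontinuity of the $L^2$-norm gives $I(f)\leq \alpha$, proving lower semicontinuity. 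Goodness then follows because $\{I\leq \alpha\}\subset K_{2\alpha}$ is closed in the compact set $K_{2\alpha}$ of Condition~\ref{weakcondition}(ii).

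\emph{Laplace upper bound (i.e.\ $\liminf\geq\inf\{h+I\}$).} For each $\varepsilon$ take a near-minimizer $v^\varepsilon\in\mathcal{A}$ in the variational formula; since $h$ is bounded, $\mathbb{E}\int\|v^\varepsilon(s)\|_{H_0}^2\mathrm{d}s\leq 4\|h\|_\infty+1$, and after a standard localization at the level set $\{\omega:\int\|v^\varepsilon\|_{H_0}^2\leq M\}$ (combined with the elementary estimate that the truncation error is $O(\varepsilon)$ small uniformly) I may assume $v^\varepsilon\in\mathcal{A}_M$ for some $M$ independent of $\varepsilon$. Compactness of $S_M$ in the weak topology lets me extract (along a subsequence) a limit in distribution $v^\varepsilon\Rightarrow v$, and Condition~\ref{weakcondition}(i) then yields $\mathcal{G}^\varepsilon(W+\varepsilon^{-1/2}\int v^\varepsilon)\Rightarrow \mathcal{G}^0(\int v)$. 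By Skorohod representation, the weak lower-semicontinuity of $\|\cdot\|_{L^2}^2$, Fatou's lemma, and continuity of $h$,
$$
\liminf_{\varepsilon\to 0}\mathbb{E}\Big[\tfrac{1}{2}\textstyle\int\|v^\varepsilon\|_{H_0}^2+h(\mathcal{G}^\varepsilon(\cdots))\Big]
\geq \mathbb{E}\Big[\tfrac{1}{2}\textstyle\int\|v\|_{H_0}^2+h\big(\mathcal{G}^0(\textstyle\int v)\big)\Big]
\geq \inf_{x\in E}\{h(x)+I(x)\}.
$$

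\emph{Laplace lower bound (i.e.\ $\limsup\leq\inf\{h+I\}$).} Given $\eta>0$, pick $\tilde x\in E$ with $h(\tilde x)+I(\tilde x)\leq \inf+\eta$ (the case $I(\tilde x)=\infty$ is trivial), and pick a deterministic $\tilde v\in L^2(\mathbb{R};H_0)$ with $\tilde x=\mathcal{G}^0(\int\tilde v)$ and $\frac{1}{2}\|\tilde v\|_{L^2}^2\leq I(\tilde x)+\eta$. Using the constant control $\tilde v\in\mathcal{A}_{2(I(\tilde x)+\eta)}$ as an admissible competitor in Bou\'e--Dupuis and invoking Condition~\ref{weakcondition}(i) together with bounded convergence for the continuous bounded $h$ gives
$$
\limsup_{\varepsilon\to 0}-\varepsilon\log\mathbb{E}\Big[e^{-h(X_\varepsilon)/\varepsilon}\Big]
\leq \tfrac{1}{2}\|\tilde v\|_{L^2}^2+h(\tilde x)\leq \inf_{x\in E}\{h(x)+I(x)\}+2\eta,
$$
and letting $\eta\downarrow 0$ closes the proof. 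The main technical obstacle is the $\liminf$ step: the truncation of $v^\varepsilon$ into $\mathcal{A}_M$ and the subsequent use of Skorohod representation on the weak-$S_M$ space must be carried out on the whole real line, where one no longer has uniform compactness in the strong topology; everything ultimately rests on Condition~\ref{weakcondition}(i), which is precisely what makes weak convergence of controls the right tool in the infinite-interval setting.
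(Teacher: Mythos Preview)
Your proposal is correct and follows essentially the same Budhiraja--Dupuis scheme as the paper: the Bou\'e--Dupuis variational formula in infinite time, near-minimizers plus truncation into $\mathcal{A}_M$ for the lower bound, a deterministic near-optimal control $\tilde v$ for the upper bound, and Condition~\ref{weakcondition} throughout. Two small inaccuracies worth fixing: (1) the truncation error is not ``$O(\varepsilon)$'' but $O(1/N)$ in the truncation level $N$ (the paper uses the stopping time $\tau_N^\varepsilon$ and Markov's inequality to bound $\mathbb{P}\{v^\varepsilon\neq v^{\varepsilon,N}\}\leq (2\|h\|_\infty+\delta)/N$, absorbing this into the final $\delta$); (2) your lower-semicontinuity argument for $I$ invokes Condition~\ref{weakcondition}(i) with deterministic $v^\varepsilon\equiv v_{n_k}$, but that only gives $\mathcal{G}^\varepsilon(W+\varepsilon^{-1/2}\!\int v_{n_k})\to\mathcal{G}^0(\!\int v)$, not $\mathcal{G}^0(\!\int v_{n_k})\to\mathcal{G}^0(\!\int v)$ directly---you need a short diagonal argument (or simply cite Condition~\ref{weakcondition}(ii) as the paper does) to close this.
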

In order to verify  Condition \ref{weakcondition} for the stationary solution $X^{*}_{\varepsilon}$ of Eq. \eqref{Msode} in space $C(\mathbb{R};H)$, we need to consider  $\mathcal{G}^0$, which is defined as the unique solution of the skeleton equation \eqref{skeleton equation}. Furthermore, we need to prove the well-posedness with
respect to Eq. \eqref{skeleton equation} in infinite time intervals.
\subsection{Definition of the quasi-potential}\label{The definition of quasi-potential}
It is well known that \cite{FW12} and \cite{CR05} have proved the family of invariant measures for stochastic equations satisfies LDP  with  the quasi-potential as rate function. We  give the definition of quasi-potential below, and compare
the rate function of the LDP for invariant measures induced by the LDP for stationary
solutions and the rate function defined by quasi-potential in Section \ref{LDP for invariant measure}.

For any $-\infty\leq t_1<t_2\leq +\infty$ and $v\in L^2(t_1,t_2;H_0)$, we denote by $u^x_{t_1}(v)$ any solution belonging to $C(t_1,t_2;H)$ of the control equation
\begin{equation}\label{ce}
	\frac{\mathrm{d}u}{\mathrm{d}t}=Au+F(u)+ B (u)v,\quad u(t_1)=x\in H.
\end{equation}
And we define the action functionals by
\begin{equation*}
	S_{t_{0}, t_{1}}(u):=
	\frac{1}{2}\inf\left\{ \int_{t_{0}}^{t_{1}}\left\| v(t)\right\|_{H_0}^{2} \mathrm{d} t;\quad u=u(v)\right\},
\end{equation*}
where $u(v)$ is the solution of Eq. \eqref{ce} in the intervals $[t_1,t_2]$ corresponding to the control $v$, and $\inf\varnothing=+\infty.$

Moreover, we denote
$$
S_{-T}:=S_{-T, 0}, \quad S_{T}:=S_{0, T}, \quad \text { for every } T>0.
$$
In particular, when $t_0=-\infty$ and $t_1=0$, we set
$$
S_{-\infty}(u):=\frac{1}{2} \inf \left\{ \int_{-\infty}^{0}\left\| v(t)\right\|_{H_0}^{2} \mathrm{d} t;\quad u=u(v)\right\}.
$$

Similar to \cite{FW12} and \cite{CR05}, we define the quasi-potential $V$ associated with Eq. \eqref{Msode}, by setting
$$
\begin{aligned}
	V(x) &:=\inf \left\{S_{T}(u): T>0, u \in C([0, T] ; H),u(0)=0,u(T)=x\right\} \\
	&=\inf \left\{S_{-T}(u): T>0, u \in C([-T, 0] ; H),u(-T)=0,u(0)=x\right\}, \quad x \in H.
\end{aligned}
$$
It follows from the Proposition 5.4 in \cite{CR05}, which is also mentioned in Chapter 4 of \cite{FW12}, quasi-potential $V$ has a good characterization as follows.
\begin{equation}\label{V}
	V(x)=\inf \Big\{S_{-\infty}(u):u \in C((-\infty, 0] ; H),u(0)=x,\lim_{t\rightarrow -\infty }\|u(t)\|_{H}=0\Big\},
\end{equation}
for all $x\in H$, $V(x)<\infty.$

We will prove that the rate function \eqref{I'} defined in Theorem \ref{stoi} is equivalent to the rate function \eqref{V} defined by quasi-potential for invariant measures.

\section{Large deviation principle for stationary solution}\label{LDP for stationary solution}
\subsection{The proof of LDP for Large deviation principle for stationary solution}
We verify that the stationary solution $X^{*}_{\varepsilon}$ of Eq. \eqref{Msode} satisfies  Condition \ref{weakcondition}, which is a sufficient condition to  $X^{*}_{\varepsilon}$ satisfies LDP in this section.

To define $\mathcal{G}^0$ in Condition \ref{weakcondition}, we consider the following control equation
\begin{equation}\label{skeleton equation}
	\frac{\mathrm{d}X(t)}{\mathrm{d}t} = AX(t)+F(X(t))+  B  (X)v.
\end{equation}
We illustrate the following result for
the skeleton equation \eqref{skeleton equation}, and
Theorem \ref{SE IF TH} will be proved in Section \ref{The well-posedness of the skeleton equation}.
\begin{theorem}\label{SE IF TH}
	For  some finite $M>0$ and $v\in S_M$, under Hypothesis \ref{Msodehy}, there exists a unique  solution $X^*_v$ of the backward infinite horizon integral equation
	\begin{equation}\label{SK-BIHE0}
		X^*_v(t,\omega) = \int_{-\infty}^{t}S_A(t-r)F(X^*_v(r,\omega))\mathrm{d}r + \int_{-\infty}^{t}S_{A}(t-r) B (X^*_v(r,\omega))v(r)\mathrm{d}r,
	\end{equation}
	and satisfies
	\begin{equation}\label{SK-uniqueness proof condition0}
		\sup_{t\in \mathbb{R}}\left\|X^*_v(t)\right\|_{H}^{2} < \infty.
	\end{equation}
\end{theorem}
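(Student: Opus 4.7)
The plan is to prove existence by finite-horizon approximation coupled with a dissipative energy estimate, and uniqueness by a Gronwall argument on $(-\infty, t]$. Both parts rest on the strict dissipativity built into Hypothesis \ref{Msodehy}(i), the exponential contraction $|S_A(t)|_L \leq e^{-\lambda C_1 t}$ that follows from combining the embedding $C_1\|\cdot\|_H^2 \leq \|\cdot\|_V^2$ of Remark \ref{condition remark}(i) with the dissipative identity of Remark \ref{condition remark}(iii), and the pointwise estimate $\|B(u)v\|_H \leq D\|v\|_{H_0}$ extracted from Remark \ref{condition remark}(ii).

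For existence, I would for each integer $N \geq 1$ first solve the forward mild problem
$$
X_N(t) = \int_{-N}^{t} S_A(t-r) F(X_N(r))\, \mathrm{d}r + \int_{-N}^{t} S_A(t-r) B(X_N(r)) v(r)\, \mathrm{d}r, \quad t \geq -N,
$$
with initial datum $X_N(-N) = 0$; the deterministic counterpart of the variational theory invoked just after Definition \ref{mild solution} yields a unique $X_N \in C([-N,T]; H) \cap L^2([-N,T]; V)$ for each $T > -N$. Testing against $X_N$, applying the second inequality of Hypothesis \ref{Msodehy}(i), and using Young on $\langle B(X_N)v, X_N\rangle$, one obtains
$$
\frac{\mathrm{d}}{\mathrm{d}t}\|X_N(t)\|_H^2 + \lambda\|X_N(t)\|_V^2 \leq \frac{D^2}{\lambda C_1}\|v(t)\|_{H_0}^2,
$$
whence $\sup_N\sup_{t \geq -N}\|X_N(t)\|_H^2 \leq C(M, D, \lambda, C_1)$ together with $\sup_N \int_{-N}^{\infty}\|X_N\|_V^2\, \mathrm{d}r < \infty$.

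Next I would show $\{X_N\}$ is Cauchy in $C([T_1, T_2]; H)$ on every compact interval. For $N_2 > N_1$, the energy bound evaluated at $t = -N_1$ gives $\|X_{N_2}(-N_1)\|_H^2 \leq C\int_{-\infty}^{-N_1}\|v(s)\|_{H_0}^2\,\mathrm{d}s \to 0$ as $N_1 \to \infty$, while the difference $W = X_{N_2} - X_{N_1}$ on $[-N_1, t]$ satisfies, by Hypothesis \ref{Msodehy}(i),(iii),
$$
\frac{\mathrm{d}}{\mathrm{d}t}\|W(t)\|_H^2 + 2\lambda\|W(t)\|_V^2 \leq 2C_0\|X_{N_2}(t)\|_V^2\|W(t)\|_H^2 + 2\beta\|v(t)\|_{H_0}\|W(t)\|_H^2.
$$
Gronwall, combined with the uniform $L^2$ controls on $\|X_{N_2}\|_V^2$ and $\|v\|_{H_0}^2$, provides convergence of $X_N$ to some $X^*_v \in C(\mathbb{R}; H)$ uniformly on compact sets; passing to the limit in the mild equation -- using Hypothesis \ref{Msodehy}(ii) on bounded pieces and the decay of $S_A$ to kill the tail $\int_{-\infty}^{-N_1}$ -- yields \eqref{SK-BIHE0} and \eqref{SK-uniqueness proof condition0}.

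For uniqueness, let $X^*_v$ and $Y^*_v$ be two bounded solutions. Their difference $W = X^*_v - Y^*_v$ satisfies
$$
\frac{\mathrm{d}}{\mathrm{d}t}\|W(t)\|_H^2 \leq \bigl(-2\lambda C_1 + 2C_0\|X^*_v(t)\|_V^2 + 2\beta\|v(t)\|_{H_0}\bigr)\|W(t)\|_H^2,
$$
so that for any $s < t$,
$$
\|W(t)\|_H^2 \leq \|W(s)\|_H^2\,\exp\!\Bigl(-2\lambda C_1(t-s) + 2C_0\!\int_s^t\!\|X^*_v\|_V^2\,\mathrm{d}r + 2\beta\sqrt{(t-s)M}\Bigr).
$$
Since $\|W(s)\|_H$ is uniformly bounded by \eqref{SK-uniqueness proof condition0}, $\int_{-\infty}^t\|X^*_v\|_V^2\,\mathrm{d}r < \infty$ by the same dissipative energy identity as above, and the $\sqrt{t-s}$ correction from Cauchy--Schwarz on $\|v\|_{H_0}$ is dominated by the linear drift $-2\lambda C_1(t-s)$, the right-hand side vanishes as $s \to -\infty$, forcing $W \equiv 0$. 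The main obstacle is precisely this infinite-horizon Gronwall step: the coefficient $C_0\|X^*_v\|_V^2$ is only a posteriori integrable on $(-\infty, t]$ through the dissipative identity, and the various sublinear and linear-in-$(t-s)$ contributions must be carefully balanced against the strictly negative drift $-2\lambda C_1(t-s)$, which is the core technical difficulty of establishing well-posedness over the full real line.
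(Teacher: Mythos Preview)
Your proposal is correct and follows essentially the same architecture as the paper's proof in Section~\ref{The well-posedness of the skeleton equation}: finite-horizon approximants $X_N$ started from $0$ at time $-N$, a uniform a~priori energy bound (the paper's Lemma~\ref{Piror of skeleton equation}), a contraction estimate on differences that makes $\{X_N\}$ Cauchy on compacta (the paper's Lemma~\ref{ltDBE} and Corollary~\ref{asdbe}), passage to the limit in the mild formulation (Lemma~\ref{SK-existence}), and uniqueness by sending the initial time to $-\infty$ (Lemma~\ref{SK-uniqueness}). The only technical variation is in the difference estimate: the paper bounds $\langle (B(X)-B(\hat X))v,\rho\rangle_H$ via the uniform bound $|B|_{L_Q}\le D$, producing an additive constant $\tfrac{2D^2M}{\eta}$ that is then killed by the exponential decay, whereas you use the Lipschitz bound on $B$, producing a multiplicative factor $\exp\bigl(2\beta\int\|v\|_{H_0}\bigr)$ whose sublinear growth $O(\sqrt{t-s})$ you correctly balance against the linear drift $-2\lambda C_1(t-s)$---both routes are valid and entirely equivalent in strength.
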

Therefore, we could define the measurable map $\mathcal{G}^0:C(\mathbb{R};H_0)\longrightarrow C(\mathbb{R};H)$ by
\begin{equation*}
	\mathcal{G}^{0}\Big(\int_{-\infty}^{\cdot}v(s)\mathrm{d}s\Big):=X_v^{*}(\cdot),
\end{equation*}
where $X_v^{*}$ be the unique solution of Eq. \eqref{SK-BIHE0} with control term $v$ and satisfies \eqref{SK-uniqueness proof condition0}.

We consider the LDP for the family of stationary solutions of Eq. \eqref{Msode}
in space $C(\mathbb{R};H)$, which norm is defined by (\ref{Cnorm}).
According to Theorem \ref{Mstationary solution}, we have proved that there exists $\varepsilon_0>0$ such that for any $\varepsilon\in(0,\varepsilon_0)$, there exists a unique stationary solution $X^{*}_{\varepsilon}$ of Eq. \eqref{Msode} and satisfies the integral equation
\begin{equation}\label{backward infinite horizon stochastic intergal equation'}
	X^*_{\varepsilon}(t,\omega) = \int_{-\infty}^{t}S_{A}(t-r)F(X^*_{\varepsilon}(r,\omega))\mathrm{d}r + \sqrt{\varepsilon} \int_{-\infty}^{t}S_{A}(t-r) B \left(X^*_{\varepsilon}(r,\omega)\right)\mathrm{d}W(r).
\end{equation}
Then we could define $\mathcal{G}^{\varepsilon}:C(\mathbb{R};H_0)\rightarrow C(\mathbb{R};H)$ by $\mathcal{G}^{\varepsilon}(W(\cdot)):=X_{\varepsilon}^{*}(\cdot)$.
We define $X_{\varepsilon,v_{\varepsilon}}^*$ by $$\mathcal{G}^{\varepsilon}\left(W(\cdot)+\frac{1}{\sqrt{\varepsilon}}
\int_{-\infty}^{\cdot}v_{\varepsilon}(s)\mathrm{d}s\right),$$  with the help of the Girsanov transform, $X_{\varepsilon,v_{\varepsilon}}^*$ be the unique stationary solution of the control equation
\begin{equation}\label{control}
	\mathrm{d}X_{\varepsilon,v_{\varepsilon}}^*=A X_{\varepsilon,v_{\varepsilon}}^*(t,x) \mathrm{d}t +F(X_{\varepsilon,v_{\varepsilon}}^*(t,x))\mathrm{d}t + \sqrt{\varepsilon} B  (X_{\varepsilon,v_{\varepsilon}}^*)\mathrm{d}W(t)
	+ B (X_{\varepsilon,v_{\varepsilon}}^*)v_{\varepsilon}\mathrm{d}t.
\end{equation}

Before we prove the stationary solution $X^{*}_{\varepsilon}$ of Eq. \eqref{Msode} satisfies  Condition \ref{weakcondition}, we first give the following Lemma, which will be used in the proof of Lemma \ref{verify weak condition 1}.
\begin{lemma}\label{prior Xev}
	If $\{v^{\varepsilon}:\varepsilon>0\}\subset\mathcal{A}_M$, for some $M<\infty$, then for every fixed $k\in\mathbb{N}^+$, there exists $\varepsilon_0,\eta>0$ and a constant $C$, which depend only on $M,\varepsilon_0,k,D$ such that the solution $X_{\varepsilon,v_{\varepsilon}}^*$ of Eq. \eqref{control} satisfies
	\begin{equation*}
		\mathbb{E}\sup_{t\in(-\infty,k)}e^{2\eta t} \|X_{\varepsilon,v_{\varepsilon}}^*\|_{H}^{2}\leq C,\quad\forall \varepsilon\in(0,\varepsilon_0),
	\end{equation*}
	where $D$ is defined by Remark \ref{condition remark} (ii).
\end{lemma}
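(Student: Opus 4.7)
The plan is to apply It\^o's formula to $t\mapsto e^{2\eta t}\|X_{\varepsilon,v_{\varepsilon}}^{*}(t)\|_{H}^{2}$ on a finite sub-interval $[T,k]$ with $T<0$, derive a $T$-independent bound, and then let $T\to-\infty$. After differentiation the right-hand side splits into four pieces: a drift term, a control term, an It\^o correction from the noise, and a martingale. Combining the second inequality of Hypothesis \ref{Msodehy}(i) with the embedding constant $C_1$ of Remark \ref{condition remark}(i) yields
\begin{equation*}
2\eta e^{2\eta s}\|X\|_H^2+2e^{2\eta s}\,{}_{V^{*}}\!\langle AX+F(X),X\rangle_{V}\leq(2\eta-2\lambda C_{1})e^{2\eta s}\|X\|_H^2,
\end{equation*}
so choosing $\eta$ small enough that $4\eta<\lambda C_{1}$ leaves strictly negative dissipation and room to absorb the cross terms below.

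Next, the operator bound $|B(X)|_{L_{Q}}\leq D$ from Remark \ref{condition remark}(ii) combined with Cauchy--Schwarz and Young's inequality gives $2|\langle X,B(X)v_{\varepsilon}\rangle_{H}|\leq\lambda C_{1}\|X\|_{H}^{2}+\tfrac{D^{2}}{\lambda C_{1}}\|v_{\varepsilon}\|_{H_{0}}^{2}$; the first summand is absorbed by the dissipation while the second, weighted by $e^{2\eta s}$ and integrated over $(-\infty,k]$, is at most $\tfrac{e^{2\eta k}D^{2}M}{\lambda C_{1}}$ because $v_{\varepsilon}\in\mathcal{A}_{M}$. The It\^o correction is pointwise controlled by $\varepsilon D^{2}e^{2\eta s}$ and integrates to at most $\tfrac{\varepsilon_{0}D^{2}e^{2\eta k}}{2\eta}$. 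For the martingale $2\sqrt{\varepsilon}\int_{T}^{t}e^{2\eta s}\langle X,B(X)\mathrm{d}W_s\rangle_{H}$ the quadratic variation is dominated by $4\varepsilon D^{2}e^{4\eta s}\|X\|_{H}^{2}\mathrm{d}s$; the Burkholder--Davis--Gundy inequality, the factorisation $\int_{T}^{k}e^{4\eta s}\|X\|_{H}^{2}\mathrm{d}s\leq(\sup_{[T,k]}e^{2\eta s}\|X\|_{H}^{2})\cdot\tfrac{e^{2\eta k}}{2\eta}$, and Young's inequality produce a contribution $\tfrac{1}{2}\mathbb{E}\sup_{t\in[T,k]}e^{2\eta t}\|X\|_{H}^{2}$ that can be moved to the left-hand side.

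The remaining piece is the boundary term $\mathbb{E}\,e^{2\eta T}\|X_{\varepsilon,v_{\varepsilon}}^{*}(T)\|_{H}^{2}$. A parallel It\^o plus Gronwall computation on the unweighted norm $\|X\|_{H}^{2}$, using the same dissipation together with the global bound $\int_{\mathbb{R}}\|v_{\varepsilon}\|_{H_{0}}^{2}\,\mathrm{d}s\leq M$, provides a time-uniform estimate $\sup_{s\in\mathbb{R}}\mathbb{E}\|X_{\varepsilon,v_{\varepsilon}}^{*}(s)\|_{H}^{2}<\infty$ for all $\varepsilon\in(0,\varepsilon_{0})$, since $X_{\varepsilon,v_{\varepsilon}}^{*}$ is constructed as the unique solution of a backward infinite-horizon integral equation for the controlled equation (analogous to Theorem \ref{Mstationary solution}). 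Consequently $e^{2\eta T}\mathbb{E}\|X(T)\|_{H}^{2}\to 0$ as $T\to-\infty$, and monotone convergence then yields the claimed inequality with a constant depending only on $M,\varepsilon_{0},k,D$ (the parameters $\eta,\lambda,C_{1}$ being determined by the coefficients of the equation). The main obstacle I anticipate is precisely this time-uniform second-moment bound for the controlled process, since Theorem \ref{Mstationary solution} is stated only for the original equation; one has to redo that construction in the presence of the extra drift $B(X)v_{\varepsilon}$, and also to keep $\varepsilon_{0}$ small enough that the It\^o correction $\varepsilon D^{2}$ remains dominated by the linear dissipation in the unweighted Gronwall inequality.
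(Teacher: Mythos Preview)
Your argument is correct and the core estimates---It\^o's formula applied to $e^{2\eta t}\|X\|_{H}^{2}$, the dissipation from Hypothesis~\ref{Msodehy}(i), the Young-inequality treatment of the control term via $|B|_{L_Q}\le D$, and the Burkholder--Davis--Gundy inequality with the factorisation $e^{4\eta s}=e^{2\eta s}\cdot e^{2\eta s}$---coincide with the paper's proof. The one substantive difference is in how the lower endpoint is handled: the paper applies It\^o's formula directly on $(-\infty,t]$, writing $e^{2\eta t}\|X_{\varepsilon,v_\varepsilon}^{*}(t)\|_H^{2}=\int_{-\infty}^{t}(\cdots)\,\mathrm{d}s+\int_{-\infty}^{t}(\cdots)\,\mathrm{d}W_s$ with no boundary term (implicitly using that $X_{\varepsilon,v_\varepsilon}^{*}$ is the solution of a backward infinite-horizon integral equation), while you work on finite windows $[T,k]$ and send $T\to-\infty$. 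Your route is more explicitly rigorous but, as you correctly flag, it forces you to first establish the time-uniform moment bound $\sup_{s}\mathbb{E}\|X_{\varepsilon,v_\varepsilon}^{*}(s)\|_{H}^{2}<\infty$ for the \emph{controlled} process; the paper's route bypasses this auxiliary estimate entirely. Either way the final constant and the smallness requirement on $\varepsilon_{0}$ (needed so that the BDG contribution can be absorbed on the left) are the same.
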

\begin{proof}
	By using the It\^{o} formula, Hypothesis \ref{Msodehy} (i) , Remark \ref{condition remark} (ii) and Young inequality, we get
	\begin{align*}
		&e^{2\eta t} \|X_{\varepsilon,v_{\varepsilon}}^*\|_{H}^{2}\\
		=&2\eta\int_{-\infty}^{t}e^{2\eta s} \|X_{\varepsilon,v_{\varepsilon}}^*\|_{H}^{2}\mathrm{d}s+2\int_{-\infty}^{t} e^{2\eta s}   {_{V^*}}\left\langle AX_{\varepsilon,v_{\varepsilon}}^*+ F(X_{\varepsilon,v_{\varepsilon}}^*), X_{\varepsilon,v_{\varepsilon}}^*(s)\right\rangle \mathrm{d} s\\
		&+2\int_{-\infty}^{t}e^{2\eta s}\left\langle B \left( X_{\varepsilon,v_{\varepsilon}}^*(s)\right) v_{\varepsilon}(s), X_{\varepsilon,v_{\varepsilon}}^*(s)\right\rangle_H \mathrm{d} s \\
		&+2\sqrt{\varepsilon} \int_{-\infty}^{t}e^{2\eta s}\langle X_{\varepsilon,v_{\varepsilon}}^*(s),  B \left( X_{\varepsilon,v_{\varepsilon}}^* (s)\right) \mathrm{d} W(s)\rangle_H+\varepsilon \int_{-\infty}^{t}e^{2\eta s}\left| B \left( X_{\varepsilon,v_{\varepsilon}}^* (s)\right)\right|_{L_Q}^{2} \mathrm{~\mathrm{d}} s\\
		\leq&2\eta\int_{-\infty}^{t}e^{2\eta s} \|X_{\varepsilon,v_{\varepsilon}}^*\|_{H}^{2}\mathrm{d}s-2\lambda\int_{-\infty}^{t}e^{2\eta s} \|X_{\varepsilon,v_{\varepsilon}}^*\|_{V}^{2}\mathrm{d}s+\delta\int_{-\infty}^{t}e^{2\eta s} \|X_{\varepsilon,v_{\varepsilon}}^*\|_{H}^{2}\mathrm{d}s\\
		&+\frac{D^2}{\delta}\int_{-\infty}^{t}e^{2\eta s}\|v_{\varepsilon}(s)\|_{H_0}^2\mathrm{d}s+2\sqrt{\varepsilon} \int_{-\infty}^{t}e^{2\eta s}\left\langle X_{\varepsilon,v_{\varepsilon}}^*(s),  B \left( X_{\varepsilon,v_{\varepsilon}}^* (s)\right) \mathrm{d} W(s)\right\rangle_H+\varepsilon \int_{-\infty}^{t}e^{2\eta s}D^2 \mathrm{~\mathrm{d}} s.
	\end{align*}
	After choosing $\eta,\delta$ small enough, it follows from Remark \ref{condition remark} (i) that
	\begin{align*}
		e^{2\eta t} \|X_{\varepsilon,v_{\varepsilon}}^*\|_{H}^{2}\leq & \frac{D^2}{\delta}\int_{-\infty}^{t}e^{2\eta s}\|v_{\varepsilon}(s)\|_{H_0}^2\mathrm{d}s+\varepsilon \int_{-\infty}^{t}e^{2\eta s}D^2 \mathrm{~\mathrm{d}} s\\
		&+2\sqrt{\varepsilon} \int_{-\infty}^{t}e^{2\eta s}\left\langle X_{\varepsilon,v_{\varepsilon}}^*(s),  B \left( X_{\varepsilon,v_{\varepsilon}}^* (s)\right) \mathrm{d} W(s)\right\rangle_H,
	\end{align*}
	thus for any $k\in\mathbb{N}^+$, we can obtain that
	\begin{align*}
		\mathbb{E}\sup_{t\in(-\infty,k)}e^{2\eta t} \|X_{\varepsilon,v_{\varepsilon}}^*\|_{H}^{2}
\leq & \frac{D^2e^{2\eta k}}{\delta}\mathbb{E}\int_{-\infty}^{+\infty}\|v_{\varepsilon}(s)\|_{H_0}^2\mathrm{d}s+\varepsilon \int_{-\infty}^{k}e^{2\eta s}D^2  \mathrm{~\mathrm{d}} s \\&+2\sqrt{\varepsilon}\mathbb{E}\sup_{t\in(-\infty,k)} \int_{-\infty}^{t}e^{2\eta s}\left\langle X_{\varepsilon,v_{\varepsilon}}^*(s),  B \left( X_{\varepsilon,v_{\varepsilon}}^*(s)\right) \mathrm{d} W(s)\right\rangle_H.
	\end{align*}
	
	It follows from the Burkholder-Davis-Gundy inequality (cf. Proposition 3.26 in \cite{KS88}), Young inequality and Remark \ref{condition remark} (ii) that
	\begin{align*}
		&2\sqrt{\varepsilon}\mathbb{E}\left\{\sup_{t\in(-\infty,k]} \int_{-\infty}^{t}e^{2\eta s}\left\langle X_{\varepsilon,v_{\varepsilon}}^*(s),  B \left( X_{\varepsilon,v_{\varepsilon}}^* (s)\right) \mathrm{d} W(s)\right\rangle_H\right\}^{\frac{1}{2}}\\
		\leq&2\sqrt{\varepsilon}\mathbb{E}\left\{\int_{-\infty}^{k}e^{4\eta s}\|X_{\varepsilon,v_{\varepsilon}}^*\|_{H}^2| B \left( X_{\varepsilon,v_{\varepsilon}}^*\right)|_{L_Q}^2\mathrm{d}s\right\}^{\frac{1}{2}}\\
		\leq&2\sqrt{\varepsilon}\mathbb{E}\left\{\sup_{t\in(-\infty,k]}e^{\eta t}\|X_{\varepsilon,v_{\varepsilon}}^*\|_{H}\left\{\int_{-\infty}^{k}e^{2\eta s}| B \left( X_{\varepsilon,v_{\varepsilon}}^*\right)|_{L_Q}^2\mathrm{d}s\right\}^{\frac{1}{2}}\right\}\\
		\leq&\sqrt{\varepsilon}\mathbb{E}\left\{\sup_{t\in(-\infty,k]}
		e^{2\eta t}\|X_{\varepsilon,v_{\varepsilon}}^*\|_{H}^2+D^2\int_{-\infty}^{k}e^{2\eta s}\mathrm{d}s\right\},\\
	\end{align*}
	thus for every $\varepsilon<\frac{1}{4}$, we could have
	\begin{equation}\label{pq1}
		\begin{aligned}
			\mathbb{E}\sup_{t\in(-\infty,k)}e^{2\eta t} \|X_{\varepsilon,v_{\varepsilon}}^*\|_{H}^{2}\leq & \frac{2D^2e^{2\eta k}}{\delta}\mathbb{E}\int_{-\infty}^{+\infty}\|v_{\varepsilon}(s)\|_{H_0}^2\mathrm{d}s+2\varepsilon \int_{-\infty}^{k}e^{2\eta s}D^2  \mathrm{~\mathrm{d}} s \\&+2\sqrt{\varepsilon}D^2\int_{-\infty}^{k}e^{2\eta s}\mathrm{d}s.
		\end{aligned}
	\end{equation}
	For every $\varepsilon>0$, it follows from (\ref{pq1}) and $v_{\varepsilon}\in \mathcal{A}_M$, there exists $\varepsilon_0\in(0,\frac{1}{4})$  such that
	\begin{equation*}
		\mathbb{E}\sup_{t\in(-\infty,k)}e^{2\eta t} \|X_{\varepsilon,v_{\varepsilon}}^*\|_{H}^{2}\leq C(M,\varepsilon_0,D,k),\quad\forall \varepsilon\in(0,\varepsilon_0).
	\end{equation*}
	
\end{proof}
The following lemmas
will verify the stationary solution $X^{*}_{\varepsilon}$ of Eq. \eqref{Msode} satisfies  Condition \ref{weakcondition} in space $C(\mathbb{R};H)$, which is a sufficient condition to $X^{*}_{\varepsilon}$ satisfies the LDP in space $C(\mathbb{R};H)$.

\begin{lemma}\label{verify weak condition 1}
	Let $\{v^{\varepsilon}:\varepsilon>0\}\subset\mathcal{A}_M$, for some $M<\infty$. If $v^{\varepsilon} $ converges to $v$ as $S_M$-valued random element in distribution, then
	$
	X_{\varepsilon,v_{\varepsilon}}^* \rightarrow X_v^{*}
	$
	in distribution as $\varepsilon\rightarrow 0$.
\end{lemma}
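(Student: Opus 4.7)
The plan is to prove tightness of the family $\{(X^*_{\varepsilon,v_\varepsilon}, v^\varepsilon)\}_{\varepsilon>0}$ in $C(\mathbb{R};H)\times S_M$, and then identify every subsequential limit in distribution with $X^*_v$ via the uniqueness statement of Theorem \ref{SE IF TH}. The second component is tight automatically since $S_M$ with the weak topology is a compact Polish space, so it suffices to prove tightness of $\{X^*_{\varepsilon,v_\varepsilon}\}$ in $C(\mathbb{R};H)$. For this I would work on each compact slab $[-k,k]$: Lemma \ref{prior Xev} provides the uniform bound on $\mathbb{E}\sup_{t\in[-k,k]}\|X^*_{\varepsilon,v_\varepsilon}(t)\|_H^2$, while the mild formulation between $s$ and $t$ in $[-k,k]$ reads
\begin{align*}
&X^*_{\varepsilon,v_\varepsilon}(t) - S_A(t-s)X^*_{\varepsilon,v_\varepsilon}(s) \\
&\quad = \int_s^t S_A(t-r)F(X^*_{\varepsilon,v_\varepsilon})\,\mathrm{d}r + \sqrt{\varepsilon}\int_s^t S_A(t-r)B(X^*_{\varepsilon,v_\varepsilon})\,\mathrm{d}W(r) + \int_s^t S_A(t-r)B(X^*_{\varepsilon,v_\varepsilon})v^\varepsilon\,\mathrm{d}r.
\end{align*}
Bounding the three terms by Hypothesis \ref{Msodehy}(ii), by the It\^o isometry with Remark \ref{condition remark}(ii), and by Cauchy--Schwarz with $\|v^\varepsilon\|_{L^2(H_0)}^2\le M$, respectively, yields a modulus-of-continuity estimate that, combined with an Arzel\`a--Ascoli criterion, gives tightness in each $C([-k,k];H)$ and hence in $C(\mathbb{R};H)$ under the metric \eqref{Cnorm}.

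For any subsequence, Prokhorov's theorem produces a further subsequence along which $(X^*_{\varepsilon_n,v_{\varepsilon_n}}, v^{\varepsilon_n}, \sqrt{\varepsilon_n}\,W)$ converges in distribution. Invoking the Skorokhod representation theorem I would lift this to almost sure convergence on a common probability space, obtaining $(\tilde X_n, \tilde v_n, \tilde M_n)\to(\tilde X, \tilde v, 0)$ with $\tilde v$ distributed as $v$. Passing to the limit in the mild equation, the stochastic integral vanishes in $L^2$ due to the $\sqrt{\varepsilon_n}$ prefactor and the bound $|B|_{L_Q}\le D$; the drift term converges via Hypothesis \ref{Msodehy}(ii) and dominated convergence, using that the uniform bound from Lemma \ref{prior Xev} is inherited by $\tilde X_n$; and the control term converges by the argument below. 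The limiting $\tilde X$ lies in $C(\mathbb{R};H)$ and satisfies $\sup_t\|\tilde X(t)\|_H^2<\infty$, so the uniqueness part of Theorem \ref{SE IF TH} forces $\tilde X = \mathcal{G}^0(\int_{-\infty}^\cdot \tilde v\,\mathrm{d}s)$ almost surely. Since every subsequential limit is identified this way, $X^*_{\varepsilon,v_\varepsilon}\to X^*_v$ in distribution in $C(\mathbb{R};H)$.

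The main obstacle is the passage to the limit in the control term, since $\tilde v_n$ converges only weakly in $S_M$ while $B(\tilde X_n)\to B(\tilde X)$ strongly by Hypothesis \ref{Msodehy}(iii). On a fixed interval $[-k,t]$ I would use
\begin{align*}
&\int_{-k}^t S_A(t-r)B(\tilde X_n)\tilde v_n\,\mathrm{d}r - \int_{-k}^t S_A(t-r)B(\tilde X)\tilde v\,\mathrm{d}r \\
&\quad = \int_{-k}^t S_A(t-r)\bigl(B(\tilde X_n)-B(\tilde X)\bigr)\tilde v_n\,\mathrm{d}r + \int_{-k}^t S_A(t-r)B(\tilde X)(\tilde v_n-\tilde v)\,\mathrm{d}r,
\end{align*}
bounding the first term by $\beta\,\|\tilde X_n-\tilde X\|_{C([-k,t];H)}\,M^{1/2}$ via the Lipschitz estimate of Remark \ref{condition remark}(ii), and treating the second as $\mathcal{K}(\tilde v_n-\tilde v)$ for the bounded linear map $\mathcal{K}\colon L^2([-k,t];H_0)\to H$ given by $\mathcal{K}w=\int_{-k}^t S_A(t-r)B(\tilde X(r))w(r)\,\mathrm{d}r$, which is weak-to-strong continuous (indeed Hilbert--Schmidt thanks to the presence of $Q^{1/2}$ in the definition of $H_0$). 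The tails on $(-\infty,-k]$ are controlled uniformly in $n$ by the exponential weight from Lemma \ref{prior Xev} together with the contraction $|S_A(t)|_L\le 1$ of Remark \ref{condition remark}(iii), and the identification is completed by letting $k\to\infty$.
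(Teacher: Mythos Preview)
Your tightness-plus-identification route is natural, but the tightness step has a genuine gap. In $C([-k,k];H)$ with $H$ infinite-dimensional, the Arzel\`a--Ascoli criterion requires not only equicontinuity but also pointwise relative compactness in $H$; a uniform bound on $\mathbb{E}\sup_{t}\|X^*_{\varepsilon,v_\varepsilon}(t)\|_H^2$ together with a modulus-of-continuity estimate is insufficient, since closed bounded sets in $H$ are not compact. Hypothesis~\ref{Msodehy} does not assume that the embedding $V\hookrightarrow H$ is compact, so within the stated abstract framework you have no mechanism to produce the needed spatial compactness. (In the concrete examples the embedding is compact and your argument could be rescued with an additional $L^2_tV$ bound, but that goes beyond what the lemma asserts.) A secondary issue: to invoke the uniqueness part of Theorem~\ref{SE IF TH} you must check $\sup_{t\in\mathbb{R}}\|\tilde X(t)\|_H^2<\infty$ a.s.\ for the limit, and the weighted bound of Lemma~\ref{prior Xev}, whose constant grows like $e^{2\eta k}$ in $k$, does not yield this directly.

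The paper bypasses both difficulties by proving convergence in probability directly. Setting $w_\varepsilon:=X^*_{\varepsilon,v_\varepsilon}-X^*_v$, It\^o's formula for $e^{2\eta t}\|w_\varepsilon\|_H^2$ together with the dissipativity in Hypothesis~\ref{Msodehy}(i) gives, after localising by a stopping time $\tau^k_{N,\varepsilon}$ that caps $\int_{-\infty}^t\|X^*_v\|_V^2\,\mathrm{d}s$ and $\sup_{s\le t}e^{2\eta s}\|X^*_{\varepsilon,v_\varepsilon}\|_H^2$, a Gronwall inequality for $M_T:=\sup_{t\le T\wedge\tau^k_{N,\varepsilon}}e^{2\eta t}\|w_\varepsilon\|_H^2$. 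The right-hand side contains only the martingale term (handled by BDG and the $\sqrt{\varepsilon}$ prefactor), an $O(\varepsilon)$ term, and the cross term $\int\|B(X^*_v)(v_\varepsilon-v)\|_H^2\,\mathrm{d}s$, treated via the Hilbert--Schmidt property of $Q^{1/2}$ along the lines you outline. Since Lemma~\ref{prior Xev} and Lemma~\ref{Piror of skeleton equation} give $\mathbb{P}(\tau^k_{N,\varepsilon}=k)\to 1$ as $N\to\infty$ uniformly in small $\varepsilon$, one concludes $\sup_{t\in[-k,k]}\|w_\varepsilon\|_H\to 0$ in probability for each $k$, hence convergence in the metric~\eqref{Cnorm}. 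No tightness in $C(\mathbb{R};H)$ is ever needed.
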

\begin{proof}
	Since $X_{\varepsilon,v_{\varepsilon}}^*$ converges to $X_v^*$ in probability  could deduce  $X_{\varepsilon,v_{\varepsilon}}^*$ converges to $X^*_v$ in distribution, it is sufficient to prove $X_{\varepsilon,v_{\varepsilon}}^*$ converges to $X^*_v$ in probability.
	For convenience, let $w_{\varepsilon}:=X_{\varepsilon,v_{\varepsilon}}^*-X_v^*$. Since
	for any $\delta>0$, there exists $N>0$ such that  $\sum_{k=N+1}^{\infty}\frac{1}{2^k}<\frac{\delta}{2}$, then it follows from the definition of the norm (\ref{Cnorm}) in space $C(\mathbb{R};H)$,
	\begin{align*}
		\mathbb{P}(\|w_{\varepsilon}\|_{\mathcal{C}}>\delta)\leq& \mathbb{P}\left(\sum_{k=1}^{N}\frac{\sup_{t\in[-k,k]}\| w_{\varepsilon}(t)\|_{H}\wedge1}{2^k}>\frac{\delta}{2}\right)\\
		\leq&\sum_{k=1}^{N}\mathbb{P}\left(\frac{\sup_{t\in[-k,k]}\| w_{\varepsilon}(t)\|_{H}\wedge1}{2^k}>\frac{\delta}{2N}\right).
	\end{align*}
	Thus it is sufficient to prove that $\sup_{t\in[k,k]}\|w_{\varepsilon}(t)\|_{H}\rightarrow 0$ as $\varepsilon\rightarrow 0$ in probability for every $k\in\mathbb{N}^+$.
	
	By using the It\^{o} formula, Hypothesis \ref{Msodehy} (i) and Remark \ref{condition remark} (ii), we can get that
	\begin{align*}
		\frac{1}{2}e^{2\eta t} \|w_{\varepsilon}\|_{H}^{2}
		=&\eta\int_{-\infty}^{t}e^{2\eta s} \|w_{\varepsilon}\|_{H}^{2}\mathrm{d}s+\int_{-\infty}^{t} e^{2\eta s}   {_{V^*}}\left\langle Aw_{\varepsilon}+ F(X_{\varepsilon,v_{\varepsilon}}^*)-F(X_v^*), w_{\varepsilon}(s)\right\rangle \mathrm{d} s\\
		&+\int_{-\infty}^{t}e^{2\eta s}\left\langle B \left( X_{\varepsilon,v_{\varepsilon}}^*(s)\right) v_{\varepsilon}(s)- B \left( X_v^*(s)\right)v(s), w_{\varepsilon}(s)\right\rangle_H \mathrm{d} s \\
		&+\sqrt{\varepsilon} \int_{-\infty}^{t}e^{2\eta s}\left\langle w_{\varepsilon}(s),  B \left( X_{\varepsilon,v_{\varepsilon}}^* (s)\right) \mathrm{d} W(s)\right\rangle_H+\frac{\varepsilon}{2} \int_{-\infty}^{t}e^{2\eta s}\left| B \left( X_{\varepsilon,v_{\varepsilon}}^* (s)\right) Q^{1 / 2}\right|_{L_2}^{2} \mathrm{~\mathrm{d}} s \\
		\leq & \eta\int_{-\infty}^{t}e^{2\eta s} \|w_{\varepsilon}\|_{H}^{2}\mathrm{d}s-\lambda \int_{-\infty}^{t}e^{2\eta s}\left\|w_{\varepsilon}(s)\right\|_{V}^2
		+C_0\int_{-\infty}^{t}e^{2\eta s}
		\left\|w_{\varepsilon}(s)\right\|_{H}^2\left\|X_v^*(s)\right\|_{V}^2 \mathrm{d} s \\
		&+\int_{-\infty}^{t}e^{2\eta s}\left| B \left( X_{\varepsilon,v_{\varepsilon}}^* (s)\right)- B \left( X_v^*(s)\right) Q^{1 / 2}\right|_{L_2}\left\|v_{\varepsilon}(s)\right\|_{H_0}\left\|w_{\varepsilon}(s)\right\|_H \mathrm{d} s \\
		&+\int_{-\infty}^{t}e^{2\eta s}\left\| B  (X_v^*(s))(v_{\varepsilon}(s)-v(s))\right\|_{H} \left\| w_{\varepsilon}(s)\right\|_{H} \mathrm{d} s \\
		&+\sqrt{\varepsilon} \int_{-\infty}^{t}e^{2\eta s}\left\langle w_{\varepsilon}(s),  B \left( X_{\varepsilon,v_{\varepsilon}}^* (s)\right) \mathrm{d} W(s)\right\rangle_H+\frac{\varepsilon}{2} \int_{-\infty}^{t}e^{2\eta s}D^2 \mathrm{~\mathrm{d}} s .\\
	\end{align*}
	It follows from the Young inequality, Remark \ref{condition remark} (i) and (ii), after choosing $\eta$ small enough, we can obtain that
	\begin{align*}
		&\frac{1}{2}e^{2\eta t} \|w_{\varepsilon}\|_{H}^{2}+\frac{3\lambda}{4} \int_{-\infty}^{t}e^{2\eta s}\left\|w_{\varepsilon}(s)\right\|_{V}^{2} \mathrm{~\mathrm{d}} s \\
		\leq &  C_0\int_{-\infty}^{t}e^{2\eta s}\left\|w_{\varepsilon}(s)\right\|_{H}^2\left\|X_v^*(s)\right\|_{V}^2 \mathrm{~\mathrm{d}} s+\beta \int_{-\infty}^{t}e^{2\eta s}\left\|w_{\varepsilon}(s)\right\|_{H}\left\|v_{\varepsilon}(s)\right\|_{H_0}\left\|w_{\varepsilon}(s)\right\|_{H} \mathrm{d} s \\
		&+\int_{-\infty}^{t}e^{2\eta s}\left\| B  (X_v^*(s))(v_{\varepsilon}(s)-v(s))\right\|_{H} \left\| w_{\varepsilon}(s)\right\|_{H} \mathrm{d} s \\
		&+\sqrt{\varepsilon} \int_{-\infty}^{t}e^{2\eta s}\left\langle w_{\varepsilon}(s),  B \left( X_{\varepsilon,v_{\varepsilon}}^* (s)\right) \mathrm{d} W(s)\right\rangle_H+\frac{\varepsilon}{2} \int_{-\infty}^{t}e^{2\eta s}D^2 \mathrm{~\mathrm{d}} s \\
		\leq&C_0
		\int_{-\infty}^{t}e^{2\eta s}\left\|w_{\varepsilon}(s)\right\|_{H}^2\left\|X_v^*(s)\right\|_{V}^2 \mathrm{~\mathrm{d}} s+\frac{\lambda}{4} \int_{-\infty}^{t}e^{2\eta s}\left\|w_{\varepsilon}(s)\right\|_{V}^2 \mathrm{d} s+\frac{2\beta^2}{\lambda C_1^2} \int_{-\infty}^{t}e^{2\eta s}\left|v_{\varepsilon}(s)\right|_{H_0}^2\|w_{\varepsilon}\|_{H}^2 \mathrm{d} s  \\
		&+\frac{\lambda}{4} \int_{-\infty}^{t}e^{2\eta s}\left\|w_{\varepsilon}(s)\right\|_{V}^2 \mathrm{d} s +\frac{2}{\lambda C_1^2}
		\int_{-\infty}^{t}e^{2\eta s}\left\| B  (X_v^*(s))(v_{\varepsilon}(s)-v(s))\right\|_{H}^2 \mathrm{d} s \\
		&+\sqrt{\varepsilon} \int_{-\infty}^{t}e^{2\eta s}\left\langle w_{\varepsilon}(s),  B \left( X_{\varepsilon,v_{\varepsilon}}^* (s)\right) \mathrm{d} W(s)\right\rangle_H+\frac{\varepsilon}{2} \int_{-\infty}^{t}e^{2\eta s}D^2 \mathrm{~\mathrm{d}} s .\\
	\end{align*}
	
	Define
	\begin{align*}
		\tau_{N,\varepsilon}^k:=k\wedge\inf\Big\{&t:\int_{-\infty}^{t}\left\|X_v^*(s)\right\|_{V}^2\mathrm{d}s>N
		\quad\text{\rm or}\quad\sup_{s\in(-\infty,t]}\|X_v^*(s)\|_H^2>N\\
		&\text{\rm or}\quad\sup_{s\in(-\infty,t]}e^{2\eta s}\|X_{\varepsilon,v_{\varepsilon}}^*(s)\|_H^2>N
		\Big\},
	\end{align*} $\tau_{N,\varepsilon}^k$ is a continuous random process for $\|X_v^*\|_{H}$ and $\|X_{\varepsilon,v_{\varepsilon}}^*\|_{H}$ are continuous (cf. Theorem \ref{Mstationary} or Theorem 2.2 in \cite{LZ09}).
	
	Then for any fixed $k\in\mathbb{N}^+$, $T\in[-k,k]$, there exists a constant $C$, which only depend on $\lambda, C_0, \beta, D, C_1$ such that
	\begin{align*}
		&\sup_{t\in(-\infty,T\wedge\tau_{N,\varepsilon}^k]}e^{2\eta t} \|w_{\varepsilon}(t)\|_{H}^{2}+ \int_{-\infty}^{T\wedge\tau_{N,\varepsilon}^k}e^{2\eta s}\left\|w_{\varepsilon}(s)\right\|_{V}^{2} \mathrm{~\mathrm{d}} s \\
		\leq&C\left\{\int_{-\infty}^{T\wedge\tau_{N,\varepsilon}^k}e^{2\eta s}\left\|w_{\varepsilon}(s)\right\|_{H}^2
		\left\|X_v^*(s)\right\|_{V}^2 \mathrm{~\mathrm{d}} s+
		\int_{-\infty}^{T\wedge\tau_{N,\varepsilon}^k}e^{2\eta s}\|v_{\varepsilon}\|_{H_0}^2\left\| w_{\varepsilon}(s)\right\|_{H}^2 \mathrm{d} s\right.\\
		&+ \int_{-\infty}^{T\wedge\tau_{N,\varepsilon}^k}e^{2\eta s}\left\| B  (X_v^*(s))(v_{\varepsilon}(s)-v(s))\right\|_{H}^2 \mathrm{d} s+\varepsilon \int_{-\infty}^{T\wedge\tau_{N,\varepsilon}^k}e^{2\eta s}D^2 \mathrm{~\mathrm{d}} s
		\\
		&\left.+\sqrt{\varepsilon}\sup_{t\in(-\infty,T\wedge\tau_{N,\varepsilon}^k]} \int_{-\infty}^{t}e^{2\eta s}\left\langle w_{\varepsilon}(s), B \left( X_{\varepsilon,v_{\varepsilon}}^* (s)\right) \mathrm{d} W(s)\right\rangle_H\right\}.\\
	\end{align*}
	
	Let $M_t:=\sup_{s\in(-\infty,t]}e^{2\eta s} \|w_{\varepsilon}(s)\|_{H}^{2}$, then we have
	\begin{align*}
		M_{T\wedge\tau_{N,\varepsilon}^k}\leq &C\left\{\int_{-\infty}^{T\wedge\tau_{N,\varepsilon}^k}M_s
		\left\|X_v^*(s)\right\|_{V}^2 \mathrm{~\mathrm{d}} s+ \int_{-\infty}^{T\wedge\tau_{N,\varepsilon}^k}M_s\left|v_{\varepsilon}(s)\right|_{H_0}^2 \mathrm{d} s\right.\\
		& +
		\int_{-\infty}^{T\wedge\tau_{N,\varepsilon}^k}\left\| B\left(X_v^*(s)\right)(v_{\varepsilon}(s)-v(s))\right\|_{H}^2 \mathrm{d} s +\varepsilon \int_{-\infty}^{T\wedge\tau_{N,\varepsilon}^k}e^{2\eta s}D^2 \mathrm{~\mathrm{d}} s\\
		&\left.+\sqrt{\varepsilon}\sup_{t\in(-\infty,T\wedge\tau_{N,\varepsilon}^k]} \int_{-\infty}^{t}e^{2\eta s}\left\langle w_{\varepsilon}(s), B \left( X_{\varepsilon,v_{\varepsilon}}^* (s)\right) \mathrm{d} W(s)\right\rangle_H \right\}.\\
	\end{align*}
	By using the Burkholder-Davis-Gundy inequality, Young inequality and Remark \ref{condition remark} (ii), there exists a constant $C$ such that
	\begin{align}\label{w1m}
		&\mathbb{E}\left\{\sup_{t\in(-\infty,T\wedge\tau_{N,\varepsilon}^k]} \int_{-\infty}^{t}e^{2\eta s}\left\langle w_{\varepsilon}(s), B \left( X_{\varepsilon,v_{\varepsilon}}^* (s)\right) \mathrm{d} W(s)\right\rangle_H\right\}^{\frac{1}{2}}\nonumber\\
		\leq&C\mathbb{E}\left\{\int_{-\infty}^{T\wedge\tau_{N,\varepsilon}^k}e^{4\eta s}\|w_{\varepsilon}\|_{H}^2| B \left( X_{\varepsilon,v_{\varepsilon}}^*\right)|_{L_Q}^2\mathrm{d}s\right\}^{\frac{1}{2}}\nonumber\\
		\leq&C\mathbb{E}\left\{\sup_{t\in(-\infty,T\wedge\tau_{N,\varepsilon}^k]}e^{\eta t}\|w_{\varepsilon}\|_{H}\left\{\int_{-\infty}^{T\wedge\tau_{N,\varepsilon}^k}e^{2\eta s}| B \left( X_{\varepsilon,v_{\varepsilon}}^*\right)|_{L_Q}^2\mathrm{d}s\right\}
		^{\frac{1}{2}}\right\}\nonumber\\
		\leq&\frac{C}{2}\mathbb{E}\left\{\sup_{t\in(-\infty,T\wedge\tau_{N,\varepsilon}^k]}
		e^{2\eta t}\|w_{\varepsilon}\|_{H}^2+D^2\int_{-\infty}^{T\wedge\tau_{N,\varepsilon}^k}e^{2\eta s}\mathrm{d}s\right\}<\infty.\nonumber\\
	\end{align}
	Then using the Gronwall inequality, we could obtain that
	\begin{equation}\label{w1g}
		\begin{aligned}
			M_{T\wedge\tau_{N,\varepsilon}^k}&\leq C e^{C(N+M)}\left\{\sqrt{\varepsilon}\sup_{t\in(-\infty,T\wedge\tau_{N,\varepsilon}^k]} \int_{-\infty}^{t}e^{2\eta s}\left\langle w_{\varepsilon}(s), B \left( X_{\varepsilon,v_{\varepsilon}}^* (s)\right) \mathrm{d} W(s)\right\rangle_H\right.\\
			&\left.+\varepsilon \int_{-\infty}^{T\wedge\tau_{N,\varepsilon}^k}e^{2\eta s}D^2 \mathrm{~\mathrm{d}} s
			+\int_{-\infty}^{T\wedge\tau_{N,\varepsilon}^k}\left\| B  (X_v^*(s))(v_{\varepsilon}(s)-v(s))\right\|_{H}^2 \mathrm{d} s\right\}.
		\end{aligned}
	\end{equation}
	
	It follows from $ B  (\cdot)Q^{1/2}$
	is a Hilbert-Schmidt operator on $H$, Remark \ref{condition remark} (ii), and $v_{\varepsilon}$ converges to $v$ as $S_M$-value random variable in distribution,
	\begin{equation}\label{w1v}
		\lim_{\varepsilon\rightarrow 0}\mathbb{E}\int_{-\infty}^{+\infty}\left\| B  (X_v^*(s))(v_{\varepsilon}(s)-v(s))\right\|_{H}^2 \mathrm{d} s\leq \lim_{\varepsilon\rightarrow 0}D^2\mathbb{E}\int_{-\infty}^{+\infty}\left\|(v_{\varepsilon}(s)-v(s))\right\|_{H_0}^2 \mathrm{d} s =0.
	\end{equation}
	It follows from  Lemma \ref{Piror of skeleton equation}, Lemma \ref{prior Xev}, and the Chebyshev inequality, for any fixed $k$, it is easy to obtain that there exists $\varepsilon_0$, which is defined by Lemma \ref{prior Xev}, and a constant $C$ such that
	$$\mathbb{P}\{\tau_{N,\varepsilon}^k= k\}\geq1-\frac{C}{N},\quad\forall \varepsilon\in(0,\varepsilon_0),$$
	it implies that $\lim\limits_{N\rightarrow \infty}\tau_{N,\varepsilon}^k=k,~ a.e.~ \forall \varepsilon\in(0,\varepsilon_0)$. Thus combining inequalitys (\ref{w1m}), (\ref{w1g})  and (\ref{w1v}), let $N\rightarrow \infty$ and $\varepsilon\rightarrow 0$, we get that
	\begin{equation}\label{pp1}
		\sup_{t\in(-\infty,k]}e^{2\eta t}\|w_{\varepsilon}(t)\|_{H}^2\rightarrow 0
	\end{equation}
	in probability. Moreover, for any fixed $k$, it follows from
	\begin{equation*}
		e^{-\eta k}\sup_{t\in[-k,k]}\|w_{\varepsilon}(t)\|_{H}^2\leq \sup_{t\in[-k,k]}e^{2\eta t}\|w_{\varepsilon}(t)\|_{H}^2\leq \sup_{t\in(-\infty,k]}e^{2\eta t}\|w_{\varepsilon}(t)\|_{H}^2,
	\end{equation*}
	and (\ref{pp1}) to obtain that $\sup_{t\in[-k,k]}\|w_{\varepsilon}(t)\|_{H}^2\rightarrow 0$ in probability as $\varepsilon\rightarrow 0$.
\end{proof}
\begin{lemma}\label{verify weak condition 2}
	For  any fixed finite positive number $M$, the set $K_M=\Big\{\mathcal{G}^{0}(\int_{-\infty}^{\cdot}v(s)\mathrm{d}s);v\in S_M\Big\}$ is a compact subset of $C((-\infty,0];H)$.
\end{lemma}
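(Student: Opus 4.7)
The plan is to write $K_M$ as the continuous image of the compact set $S_M$ under the solution map $\mathcal{G}^0$; compactness then follows automatically. Since $L^2(\mathbb{R};H_0)$ is a separable Hilbert space, its closed bounded convex subset $S_M$ equipped with the weak topology is compact and metrizable (indeed a Polish space, as already noted in Section \ref{Preliminary knowledge for LDP}). It therefore suffices to establish continuity of the map $v \mapsto X_v^*$ from $(S_M,\text{weak})$ into $C((-\infty,0];H)$; equivalently, whenever $v_n \rightharpoonup v$ in $S_M$ one needs $\sup_{t\in[-k,0]}\|X_{v_n}^*(t)-X_v^*(t)\|_H \to 0$ for every $k\in\mathbb{N}$.

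Write $X_n := X_{v_n}^*$, $X := X_v^*$, $w_n := X_n - X$. Using the mild form of \eqref{SK-BIHE0}, the crucial linear term in $v_n-v$ is
\[
I_n(t) := \int_{-\infty}^t S_A(t-s)\,B(X(s))\,(v_n(s)-v(s))\,\mathrm{d}s.
\]
The main obstacle, which is the heart of the argument, is to upgrade the mere weak convergence $v_n-v \rightharpoonup 0$ to strong convergence $I_n(t)\to 0$ in $H$. My plan is to observe that, for each fixed $t\leq 0$, the linear map $u \mapsto \int_{-\infty}^{t} S_A(t-s)B(X(s))u(s)\,\mathrm{d}s$ from $L^2(\mathbb{R};H_0)$ to $H$ is Hilbert-Schmidt, since its squared Hilbert-Schmidt norm equals
\[
\int_{-\infty}^{t}\bigl|S_A(t-s)\,B(X(s))\,Q^{1/2}\bigr|_{L_2}^2\,\mathrm{d}s \;\leq\; D^2\int_{-\infty}^{t}|S_A(t-s)|_L^2\,\mathrm{d}s \;<\;\infty,
\]
where finiteness uses Remark \ref{condition remark}(ii) together with the exponential decay $|S_A(\tau)|_L \leq e^{-\lambda C_1 \tau}$ that follows from Remark \ref{condition remark}(i) and (iii). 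Hilbert-Schmidt operators are compact, so weak-to-strong conversion delivers $I_n(t) \to 0$ for every $t\leq 0$. Uniformity on $[-k,0]$ is then obtained from Arzel\`a-Ascoli once I verify equicontinuity of $\{I_n\}$ via a direct kernel estimate, whose modulus of continuity in $|t-t'|$ depends only on $\|v_n-v\|_{L^2}\leq 2\sqrt{M}$ and on the strong continuity of $S_A$.

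To close the argument, the remaining nonlinear terms in the mild representation of $w_n$, namely $\int S_A(t-s)[F(X_n)-F(X)]\,\mathrm{d}s$ and $\int S_A(t-s)[B(X_n)-B(X)]v_n\,\mathrm{d}s$, are handled by adapting the deterministic analog of the energy estimate in the proof of Lemma \ref{verify weak condition 1} (all stochastic terms being absent), using Hypothesis \ref{Msodehy}(i), the Lipschitz bound of Remark \ref{condition remark}(ii), and the uniform a priori estimate $\sup_n \bigl(\sup_{t\leq 0} e^{2\eta t}\|X_n(t)\|_H^2 + \int_{-\infty}^0 e^{2\eta s}\|X_n(s)\|_V^2\,\mathrm{d}s\bigr) < \infty$, which is the deterministic counterpart of Lemma \ref{prior Xev}. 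A Gronwall-type closure exploiting $\int\|X\|_V^2<\infty$ and $\|v_n\|_{L^2}^2\leq M$ then yields $\sup_{t\in[-k,0]}\|w_n(t)\|_H^2 \leq C(M,k)\sup_{t\in[-k,0]}\|I_n(t)\|_H^2 \to 0$, establishing continuity of $v\mapsto X_v^*$ and hence compactness of $K_M$.
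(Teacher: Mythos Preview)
Your overall strategy---realize $K_M$ as the continuous image of the weakly compact $S_M$---coincides with the paper's, and your Hilbert--Schmidt argument for the pointwise convergence $I_n(t)\to 0$ in $H$ is correct. The equicontinuity step, however, is not justified by ``strong continuity of $S_A$'' alone: writing $I_n(t)-I_n(t')=(S_A(t-t')-I)I_n(t')+\int_{t'}^{t}S_A(t-s)B(X(s))(v_n-v)(s)\,ds$ for $t>t'$, the second piece is uniformly $O(\sqrt{|t-t'|})$, but for the first you need $(S_A(h)-I)\to 0$ uniformly on $\{I_n(t'):n\in\mathbb{N},\,t'\in[-k,0]\}$, and strong continuity delivers this only on \emph{compact} subsets of $H$, not on bounded ones. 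You would have to establish in addition that $t'\mapsto T_{t'}$ is continuous in operator (say Hilbert--Schmidt) norm in order to conclude that this set is relatively compact; this is doable via dominated convergence but is a nontrivial extra step your sketch omits.

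More seriously, your final closure does not follow. If you run the deterministic analogue of the energy estimate in Lemma~\ref{verify weak condition 1} on $w_n=X_n-X$, the forcing term that survives after Young's inequality is $\int_{-\infty}^{t}e^{2\eta s}\|B(X(s))(v_n(s)-v(s))\|_H^2\,ds$, \emph{not} $\sup_t\|I_n(t)\|_H^2$, and there is no evident inequality bounding the former by the latter. This integral is exactly what the paper attacks directly (equations~\eqref{4}--\eqref{6}): the paper never introduces $I_n$, but approximates $v_n,v$ by simple functions and uses the Hilbert--Schmidt property of $Q^{1/2}$ on $H$ to drive the integral to zero, after which Gronwall closes. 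If you insist on routing through $I_n$, the natural move is to do the energy estimate on $z_n:=w_n-I_n$ instead, but then Hypothesis~\ref{Msodehy}(i) no longer applies cleanly (the test function is $z_n$, not $X_n-X$), and you acquire cross-terms such as $_{V^*}\langle Aw_n+F(X_n)-F(X),\,I_n\rangle_V$ that require $V$-regularity of $I_n$ which your sketch does not supply.
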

\begin{proof}
	Let $\big\{\mathcal{G}^{0}(\int_{-\infty}^{\cdot}v_n
	(s)\mathrm{d}s)\big\}_{n=1}^{\infty}$ be a sequence in $K_M$, for convenience let $X^n:=\mathcal{G}^{0}\big(\int_{-\infty}^{\cdot}v_n(s)\mathrm{d}s\big)$ where $X^n$ corresponds to the solution of Eq. \eqref{skeleton equation} with $v_n\in S_M$ in place of $v$. By weak compactness of $S_M$, there exists a subsequence of $\{v_n\}$ which converges to a limit $v$ weakly in $L^2(\mathbb{R};H_0)$. The subsequence is indexed by $n$ for ease of notation. Let  $X:=\mathcal{G}^{0}\big(\int_{-\infty}^{\cdot}v(s)\mathrm{d}s\big)$ be the solution of Eq. \eqref{skeleton equation} responding to $v$ and $w^n=X^n-X$. Then we obtain
	\begin{equation*}
		\frac{\mathrm{d}w^n}{\mathrm{d}t}=A w_n+F(X^n)-F(X)+ B (X^n)v_n- B (X)v,
	\end{equation*}
	it follows from Hypothesis \ref{Msodehy} (i) and Remark \ref{condition remark} (i), (ii) that
	$$
	\begin{aligned}
		\frac{1}{2}\left\|w^{n}(t)\right\|_{H}^{2}
		=& \int_{-\infty}^{t}{_{V^*}}\langle A w_n+F(X^n)-F(X),w^n\rangle_V\\
		&+ \int_{-\infty}^{t}\Big\{\langle\left( B \left( X^n(s)\right)- B ( X(s)\right) v_{n}(s), w^n(s)\rangle_H\\
		&+\langle B ( X(s))\left(v_{n}(s)-v(s)\right), w^n(s)\rangle_H\Big\} \mathrm{d} s\\
		\leq&-\lambda\int_{-\infty}^{t}\left\|w^{n}(s)\right\|_{V}^{2}\mathrm{d} s+C_0\int_{-\infty}^{t}\left\|w^n(s)\right\|_{H}^2\|X(s)\|_{V}^2 \mathrm{d} s \\
		&+\frac{\beta}{C_1}\int_{-\infty}^{t} \left\|w^n(s)\right\|_{V}\left\|w^n(s)\right\|_{H}\left\|v_{n}(s)\right\|_{H_0} \mathrm{~\mathrm{d}} s\\
		&+\int_{-\infty}^{t}\left\| B (X(s))(v_{n}(s)-v(s))\right\|_{H}\left\|w^n(s)\right\|_{H} \mathrm{d} s.
	\end{aligned}
	$$
	For any $k\in\mathbb{N}^+$ and $T\in[-k,k]$,
	by using the Young inequality and Remark \ref{condition remark} (i), we have
	$$
	\begin{aligned}
		\frac{1}{2}\sup_{t\in(-\infty,T]}\left\|w^{n}(t)\right\|_{H}^{2} + \frac{\lambda}{2} \int_{-\infty}^{T}\left\|w^n(s)\right\|_{V}^{2} \mathrm{~\mathrm{d}} s \leq& C \int_{-\infty}^{T}\left\|w^n(s)\right\|_{H}^{2}\left(\|X(s)\|_{V}^2+\left\|v_{n}(s)\right\|_{H_0}^2\right) \mathrm{~\mathrm{d}} s \\
		&+\int_{-\infty}^{+\infty}\left\| B (X(s))(v_{n}(s)-v(s))\right\|_{H}^2 \mathrm{d} s,
	\end{aligned}
	$$
	where $C$ is a constant only depend on $\lambda,C_0,C_1,\beta$.
	
	It follows from the Gronwall inequality that
	\begin{equation}\label{4}
		\begin{aligned}
			&\frac{1}{2}\sup_{t\in(-\infty,k]}\left\|w^{n}(t)\right\|_{H}^{2}\\
			\leq & C\int_{-\infty}^{+\infty}\left\| B (X(s))(v_{n}(s)-v(s))\right\|_{H}^2 \mathrm{d} s\cdot
			\exp\left\{\int_{-\infty}^{k}\left(\|X(s)\|_{V}^2
			+\left\|v_{n}(s)\right\|_{H_0}^2\right) \mathrm{~\mathrm{d}} s \right\}.
		\end{aligned}
	\end{equation}
	It follows from $v_n\in S_M$, and Lemma \ref{Piror of skeleton equation} that
	\begin{equation}\label{5}
		\exp\left\{\int_{-\infty}^{k}\left(\|X(s)\|_{V}^2
		+\left\|v_{n}(s)\right\|_{H_0}^2\right) \mathrm{~\mathrm{d}} s \right\}<\infty.
	\end{equation}
	Since $v_n,v\in S_M$, there exits simple function sequences $\tilde{v}_{n_k}$ and $\tilde{v}_k$ strong convergence to $v_n$ and $v$ respectively, we can choose a subsequence still record it as $n_k$, such that
	\begin{equation}\label{1}
		\lim_{n\rightarrow\infty}\lim_{k\rightarrow\infty}\int_{-\infty}^{+\infty}\|\tilde{v}_{n_k}(s)-v_n(s)\|_{H_0}^2\mathrm{d}s=0,
	\end{equation}
	since $v_n-v$ weak converges to $0$ in $S_M$,  $\tilde{v}_{n_k}-\tilde{v}_k$ also weak converges to $0$. By using Remark \ref{condition remark} (ii) and Hypothesis \ref{Msodehy} (iii), we can obtain that
	\begin{align}\label{2}
		&\int_{-\infty}^{+\infty}\left\| B (X(s))(v_{n}(s)-v(s))\right\|_{H}^2 \mathrm{d}s\nonumber\\
		\leq&\int_{-\infty}^{+\infty}\left\| B (X(s))(v_{n}(s)-\tilde{v}_{n_k}(s))\right\|_{H}^2 \mathrm{d} s +\int_{-\infty}^{+\infty}\left\| B (X(s))(\tilde{v}_{n_k}(s)-\tilde{v}_{k}(s))\right\|_{H}^2 \mathrm{d} s\\ &+\int_{-\infty}^{+\infty}\left\| B (X(s))(\tilde{v}_{k}(s)-v(s))\right\|_{H}^2 \mathrm{d} s\nonumber\\
		\leq& D^2\int_{-\infty}^{+\infty}\left\|v_{n}(s)-\tilde{v}_{n_k}(s)\right\|_{H_0}^2
		\mathrm{d} s
		+D^2\int_{-\infty}^{+\infty}\left\|\tilde{v}_{k}(s)-v(s)\right\|_{H_0}^2 \mathrm{d} s\nonumber\\
		+&D_0^2\int_{-\infty}^{+\infty}\left\|Q^{1/2}Q^{-1/2}(\tilde{v}_{n_k}(s)-\tilde{v}_{k}(s))\right\|_{H}^2 \mathrm{d} s\nonumber.
	\end{align}
	It follows from  $Q^{1/2}$ is
	a Hilbert-Schmidt operator on $H$ and $\tilde{v}_{n_k}-\tilde{v}_k$ weak converges to $0$ in $S_M$
	that there exists a subsequence, still set $n_k$, such that
	\begin{equation}\label{3}
		\lim_{k\rightarrow\infty}\int_{-\infty}^{+\infty}\left\|Q^{1/2}Q^{-1/2}(\tilde{v}_{n_k}(s)-\tilde{v}_{k}(s))\right\|_{H}^2 \mathrm{d} s=0.
	\end{equation}
	
	It follows from (\ref{1}), (\ref{2}), (\ref{3}) and $\tilde{v}_k$ strong converges to  $v$ in $S_M$ that
	\begin{equation}\label{6}
		\lim_{n\rightarrow\infty}\int_{-\infty}^{+\infty}\left\| B (X(s))(v_{n}(s)-v(s))\right\|_{H}^2 \mathrm{d}s=0.
	\end{equation}
	Combining (\ref{4}), (\ref{5}) and (\ref{6}), we could get that for any $k\in\mathbb{N}^+$,
	\begin{equation*}
		\lim_{n\rightarrow \infty}\sup_{t\in(-\infty,k]}\left\|w^{n}(t)\right\|_{H}^{2}=0.
	\end{equation*}
	By using the dominated convergence theorem, we obtain that
	\begin{align*}
		\lim_{n\rightarrow \infty}\left\|X^n-X\right\|_{\mathcal{C}}
		&=\lim_{n\rightarrow \infty}\sum_{k=1}^{\infty}\frac{\sup_{t\in[-k,k]}
			\|X^n-X\|_{H}\wedge 1}{2^k}\leq \lim_{n\rightarrow \infty}\sum_{k=1}^{\infty}
		\frac{\sup_{t\in(-\infty,k]}\|X^n-X\|_{H}\wedge 1}{2^k}\\
		&=\sum_{k=1}^{\infty}\frac{\lim_{n\rightarrow \infty}\sup_{t\in(-\infty,k]}\|X^n-X\|_{H}\wedge 1}{2^k}=0,
	\end{align*}
	it implies that $K_M$ is compact.
\end{proof}
Lemma \ref{verify weak condition 1} and \ref{verify weak condition 2} have proved that the stationary solution $X^{*}_\varepsilon$ of Eq. \eqref{Msode} satisfies Condition \ref{weakcondition} in Polish Space $C(\mathbb{R};H)$. By using Theorem \ref{LP} and the equivalence of LDP and LP in Polish space, the following theorem holds.
\begin{theorem}\label{stationary LDP}
	Under the Hypothesis \ref{Msodehy},
	the family $\{X^{*}_\varepsilon:\varepsilon>0\}$ satisfies the LDP in $C(\mathbb{R};H)$ with rate function
	\begin{equation}\label{ratefunction}
		I(f)=\inf_{\left\{v\in L^2(\mathbb{R};H_0):f
			=\mathcal{G}^0(\int_{-\infty}^{\cdot}v(s)\mathrm{d}s)\right\}}
		\left\{\frac{1}{2}\int_{-\infty}^{+\infty}\|v(s)\|_{H_0}^2\mathrm{d}s\right\},
	\end{equation}
	where the infimum over an empty set is taken as $+\infty$.
\end{theorem}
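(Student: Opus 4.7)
The plan is to reduce the proof to a direct application of Theorem \ref{LP} by assembling the pieces already established in the preceding lemmas. The Polish space we work in is $E := C(\mathbb{R};H)$ with the norm \eqref{Cnorm}, and the measurable maps $\mathcal{G}^\varepsilon$ and $\mathcal{G}^0$ are well-defined: Theorem \ref{Mstationary solution} provides a unique stationary solution $X^*_\varepsilon$ of Eq. \eqref{Msode} for every $\varepsilon \in (0,\varepsilon_0)$, giving $\mathcal{G}^\varepsilon(W(\cdot)) := X^*_\varepsilon(\cdot)$, while Theorem \ref{SE IF TH} gives the unique solution $X^*_v$ of the skeleton integral equation \eqref{SK-BIHE0}, giving $\mathcal{G}^0(\int_{-\infty}^\cdot v(s)\,\mathrm{d}s) := X^*_v(\cdot)$. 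A Girsanov-type change of measure identifies $\mathcal{G}^\varepsilon\bigl(W(\cdot)+\tfrac{1}{\sqrt{\varepsilon}}\int_{-\infty}^\cdot v_\varepsilon(s)\,\mathrm{d}s\bigr)$ with the unique stationary solution $X^*_{\varepsilon,v_\varepsilon}$ of the control equation \eqref{control}.

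First I would verify part (i) of Condition \ref{weakcondition}: given $\{v^\varepsilon\}_{\varepsilon>0} \subset \mathcal{A}_M$ with $v^\varepsilon \to v$ in distribution as $S_M$-valued random elements, the controlled trajectories satisfy $X^*_{\varepsilon,v_\varepsilon} \to X^*_v$ in distribution in $C(\mathbb{R};H)$. This is precisely the content of Lemma \ref{verify weak condition 1}. Next I would verify part (ii), namely that for every finite $M$ the level set
\begin{equation*}
K_M = \Bigl\{\mathcal{G}^0\bigl(\textstyle\int_{-\infty}^\cdot v(s)\,\mathrm{d}s\bigr) : v \in S_M\Bigr\}
\end{equation*}
is a compact subset of $C(\mathbb{R};H)$; this is supplied by Lemma \ref{verify weak condition 2}. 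With both parts of Condition \ref{weakcondition} in hand, Theorem \ref{LP} immediately yields the Laplace principle for $\{X^*_\varepsilon\}$ with rate function \eqref{1.15}, which matches the expression \eqref{ratefunction} in the statement.

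To conclude, I would invoke the fact that $C(\mathbb{R};H)$ is a Polish space and $I$ is a good rate function, since its level sets $\{I \le M\}$ are contained in the compact sets $K_{2M}$ by the variational formula and the lower semicontinuity of $v \mapsto \tfrac{1}{2}\int \|v\|_{H_0}^2\,\mathrm{d}s$ under weak convergence. The equivalence of the Laplace principle and the LDP in Polish spaces, established in Theorems 1.2.1 and 1.2.3 of Dupuis and Ellis \cite{DE97}, then upgrades the Laplace principle to the full LDP with the same rate function $I$, finishing the proof.

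In this formulation the theorem itself is essentially a one-line assembly step; the real work has already been done, and the anticipated difficulties lie entirely inside Lemmas \ref{verify weak condition 1} and \ref{verify weak condition 2}. The hardest issues that had to be overcome were controlling the $(-\infty,t]$ stochastic integrals via exponential weights $e^{2\eta t}$ and localizing by the stopping times $\tau^k_{N,\varepsilon}$ (to avoid a Gronwall blow-up driven by $\int \|X^*_v\|_V^2\,\mathrm{d}s$), together with converting the weak convergence of $v_n - v$ in $L^2(\mathbb{R};H_0)$ into strong convergence of $B(X^*_v)(v_n-v)$ in $L^2(\mathbb{R};H)$ via the Hilbert--Schmidt nature of $Q^{1/2}$ and a simple-function approximation. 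Once these infinite-horizon estimates are in place, the final deduction of Theorem \ref{stationary LDP} reduces to citing Theorem \ref{LP} and the LP--LDP equivalence.
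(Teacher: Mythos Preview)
Your proposal is correct and follows essentially the same route as the paper: the paper's proof of Theorem \ref{stationary LDP} is a one-line assembly step stating that Lemmas \ref{verify weak condition 1} and \ref{verify weak condition 2} verify Condition \ref{weakcondition}, after which Theorem \ref{LP} and the LP--LDP equivalence in Polish spaces yield the result. Your write-up is slightly more detailed (spelling out the well-definedness of $\mathcal{G}^\varepsilon$, $\mathcal{G}^0$ and the goodness of $I$), but the structure and cited ingredients match the paper exactly.
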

\subsection{Some Examples}
Next, we will give an example that satisfies the Hypothesis \ref{Msodehy}.
\begin{example}\label{burgers equation}
	We consider the stochastic Burgers equation with multiplicative noise as follows,
	\begin{equation}\label{MSBE1}
		\left\{
		\begin{array}{l}
			\mathrm{d}\textbf{u}(t,x) =  \Delta \textbf{u}(t,x)\mathrm{d}t + \frac{1}{2}\partial _{x}[\textbf{u}(t,x)]^2\mathrm{d}t + \sqrt{\varepsilon} B  (\textbf{u})\mathrm{d}W(t),t\geqslant s, x\in(0,1),\\
			\textbf{u}(s,x) = \textbf{u}_{s}(x),\\
			\textbf{u}(t,1)=\textbf{u}(t,0)=0.
		\end{array}
		\right.
	\end{equation}
	To correspond to the notation of Hypothesis \ref{Msodehy}, let $A\textbf{u}+F(\textbf{u}):=\Delta \textbf{u}(t,x) + \frac{1}{2}\partial _{x}[\textbf{u}(t,x)]^2$.\\
	Let
	\begin{align*}
		&H:=\left\{f:[0,1]\rightarrow \mathbb{R}^1:f(0)=f(1)=0,\quad\int_{0}^{1}f^2(x)\mathrm{d}x<\infty\right\},\\
		&V:=\left\{f\in H: \int_{0}^{1}(\partial_xf(x))^2\mathrm{d}x<\infty\right\}.
	\end{align*}
	
	We will verify $A\textbf{u}$ and $F(\textbf{u})$ satisfies Hypothesis \ref{Msodehy} (i) and (ii) below.\\
	By using integration by parts, we can obtain that
	\begin{align*}
		\int_{0}^{1}\textbf{u}^2(t,x)\partial _{x}(\textbf{u}(t,x))\mathrm{d}x &= 0, \\	\int_{0}^{1}\textbf{u}(t,x)\textbf{w}(t,x)\partial _{x}(\textbf{w}(t,x))\mathrm{d}x &= -\frac{1}{2}\int_{0}^{1}\textbf{w}^2(t,x)\partial _{x}\textbf{u}(t,x)\mathrm{d}x,
	\end{align*}
	for any $\textbf{u},\textbf{w}\in V$, then
	\begin{equation}\label{burgers1}
		\begin{aligned}
			_{V^*}\langle F(\textbf{u}(t,x))- F(\textbf{w}(t,x)),\textbf{u}-\textbf{w}\rangle_V
			=&\langle \textbf{u}\partial_x\textbf{u} - \textbf{w}\partial_x\textbf{w},\textbf{u}-\textbf{w}\rangle_{H}\\
			=& \langle \textbf{u}\partial_x(\textbf{u}-\textbf{w})+(\textbf{u}-\textbf{w})\partial_x\textbf{u}-(\textbf{u}-\textbf{w})\partial_x(\textbf{u}-\textbf{w}),\textbf{u}-\textbf{w}\rangle_{H}\\
			=&\frac{1}{2}\int_{0}^{1}\textbf{w}^2(t,x)\partial _{x}\textbf{u}(t,x)\mathrm{d}x.
		\end{aligned}
	\end{equation}
	
	According to Lemma A.1 in \cite{LZ09}, we could have the following property.
	\begin{equation}\label{gamma}
		\left|\int_{0}^{1} \textbf{v}(x) \partial_x\textbf{w}(x) \textbf{u}(x) \mathrm{d} x\right|^{2} \leq \gamma^{2}\|\textbf{v}\|_{H}\|\textbf{v}\|_{V}
		\|\textbf{w}\|_{V}^{2}\|\textbf{u}\|_{H}\|\textbf{u}\|_{V},
		\quad\forall \textbf{u}, \textbf{v}, \textbf{w} \in V,
	\end{equation}
	where $\gamma$ is the minimal constant such that Sobolev's inequality,
	$$
	\max _{x \in[0,1]}|\textbf{u}(x)| \leq \gamma\|\textbf{u}\|_{V}, \quad \forall u \in V
	$$
	holds.
	
	For linear operator $A$, we have known that
	\begin{equation}\label{buregrsA}
		_{V^*}\langle A\textbf{u},\textbf{u}\rangle_{V}=_{V^*}\langle \Delta \textbf{u},\textbf{u}\rangle_{V}\leq -\|\textbf{u}\|_{V}^2,\quad\forall \textbf{u}\in V.
	\end{equation}
	It follows from \eqref{burgers1}, \eqref{gamma}, \eqref{buregrsA} and the Young inequality that
	\begin{align*}
		{_{V^*}}\langle A\textbf{u}-A\textbf{w}+F(\textbf{v})-F(\textbf{w}),\textbf{v}-\textbf{w}\rangle_V
		=&{_{V^*}}\langle\Delta (\textbf{u}-\textbf{w}),\textbf{u}-\textbf{w}\rangle_V+\frac{1}{2}\int_{0}^{1}\textbf{w}^2(t,x)\partial _{x}\textbf{u}(t,x)\mathrm{d}x\\
		\leq&-\|\textbf{u}-\textbf{w}\|_{V}^2+\frac{\gamma}{2}\|\textbf{u}-\textbf{w}\|_{H}\|\textbf{u}-\textbf{w}\|_{V}\|\textbf{u}\|_{V}\\
		\leq&-\frac{1}{2}\|\textbf{u}-\textbf{w}\|_{V}^2+\frac{\gamma^2}{8}\|\textbf{u}-\textbf{w}\|_{H}^2\|\textbf{u}\|_{V}^2,
	\end{align*}
	it implies that Eq. \eqref{MSBE1} satisfies Hypothesis \ref{Msodehy} (i).
	
	Let $S_{A}$ be the heat semigroup, through the  similar calculation  to Lemma 3.3 in \cite{LZ09}, for any $N\in\mathbb{N}^+$, $t_0<t\in[-N,N]$, we can get
	\begin{equation*}
		\left\|\int_{t_0}^{t}S_{A}(s)(F(\textbf{v})-F(\textbf w))\mathrm{d}s\right\|_{H}^2\leq C (N) \int_{t_0}^{t} s^{-\frac{3}{4}}\|\textbf{v}-\textbf{w}\|_{H}^2\mathrm{d}s,
	\end{equation*}
	it deduces that Eq. \eqref{MSBE1} satisfies Hypothesis \ref{Msodehy} (ii).
	
	Therefore, assume that the function $ B $ satisfies Hypothesis \ref{Msodehy} (iii). It follows from Lemma \ref{Mstationary solution} that there exists $\varepsilon_0>0$ such that for any $\varepsilon\in(0,\varepsilon_0)$,
	the stationary solution $\textbf{u}^*_{\varepsilon}$ of Burgers equation exists and is unique, which satisfies the following equation in $H$ for any $t \in \mathbb{R}$,
	\begin{equation*}
		\textbf{u}^*_{\varepsilon}(t,\omega) = \frac{1}{2}\int_{-\infty}^{t}S_{A}(t-r)\partial _{x}[\textbf{u}^*_{\varepsilon}(r,\omega)]^2\mathrm{d}r +\sqrt{\varepsilon} \int_{-\infty}^{t}S_{A}(t-r) B (\textbf{u}^*_{\varepsilon}(r,\omega))\mathrm{d}W(r),
	\end{equation*}
	and
	\begin{equation*}
		\sup_{t\in \mathbb{R}}\mathbb{E}\left\|\textbf{u}^*_{\varepsilon}(t)\right\|_{H}^{2} < \infty.
	\end{equation*}
	
	The skeleton equation of Burgers equation is
	\begin{equation}\label{SKMSBE1}
		\left\{
		\begin{array}{l}
			\frac{\mathrm{d}\textbf{u}(t,x)}{\mathrm{d}t }=  \Delta \textbf{u}(t,x) + \frac{1}{2}\partial _{x}[\textbf{u}(t,x)]^2 + B  (\textbf{u})v(t),\quad t\geqslant  s, x\in(0,1),\\
			\textbf{u}(s,x) = \textbf{u}_{s}(x),\\
			\textbf{u}(t,1)=\textbf{u}(t,0)=0.
		\end{array}
		\right.
	\end{equation}
	It follows from Lemma \ref{SK-existence} and Lemma \ref{SK-uniqueness}, for any $v\in L^2(\mathbb{R};H_0)$,  there exits a unique solution $\textbf{u}^{*}_{v}$ of Eq. \eqref{SKMSBE1}, which satisfies the following equation in $H$ for any $t \in \mathbb{R}$,
	\begin{equation*}
		\textbf{u}^{*}_{v}(t,\omega) = \frac{1}{2} \int_{-\infty}^{t}S_{A}(t-r)\partial_x[\textbf{u}^{*}_{v}(r,\omega)]^2\mathrm{d}r + \int_{-\infty}^{t}S_{A}(t-r) B (\textbf{u}^{*}_{v}(r,\omega))v(r)\mathrm{d}r.
	\end{equation*}
	We define $\mathcal{G}^0:C(\mathbb{R};H_0)\longrightarrow C(\mathbb{R};H)$ by
	\begin{equation*}
		\mathcal{G}^{0}\left(\int_{-\infty}^{\cdot}v(s)\mathrm{d}s\right):=\textbf{u}^{*}_{v}(\cdot).
	\end{equation*}
	
	By using Theorem \ref{stationary LDP},
	the family  $\{\textbf{u}^{*}_\varepsilon:\varepsilon>0\}$ satisfies the LDP in $C(\mathbb{R};H)$ with rate function
	\begin{equation}\label{ratefunction}
		I(f)=\inf_{\left\{v\in L^2(\mathbb{R};H_0):f
			=\mathcal{G}^0(\int_{-\infty}^{\cdot}v(s)\mathrm{d}s)\right\}}
		\left\{\frac{1}{2}\int_{-\infty}^{+\infty}\|v(s)\|_{H_0}^2\mathrm{d}s\right\},
	\end{equation}
	where the infimum over an empty set is taken as $+\infty$.
\end{example}
The following two examples show that we can also study the LDP for random periodic solutions and random quasi-periodic solutions. We give the definition of random periodic solution (see, for example, \cite{FZB11}).
\begin{definition}
	A random periodic solution of period $T$ of the random dynamical system $\Phi: \mathbb{R}^{+} \times$ $\Omega \times \mathbb{X} \rightarrow \mathbb{X}$ is an $\mathcal{F}$-measurable map $Y: \mathbb{R} \times \Omega \rightarrow \mathbb{X}$ such that for almost all $\omega \in \Omega$,
	\begin{equation}\label{rpd}
		\Phi(t, \theta(s,\omega) ) Y(s, \omega)=Y(t+s, \omega), Y(s+T, \omega)=Y(s, \theta(T,\omega)),
	\end{equation}
	for any $t \in \mathbb{R}^{+}, s \in \mathbb{R}$. It is called a random periodic solution with the minimal period $T$ if $T>0$ is the smallest number such that \eqref{rpd} holds.
\end{definition}
The following lemma is the result of the LDP for $n$-dimensional Brownian motion.\\
For convenience, let $E_n$ be the space $C(\mathbb{R};\mathbb{R}^n)$ with norm
\begin{equation}\label{norm1}
	\|B\|_{\mathcal{E}_n}:=\sup_{t\in\mathbb{R}}
	\frac{\|B_t\|_{\mathbb{R}^n}}{1+|t|},
\end{equation}
where $\|x\|_{\mathbb{R}^n}^2:=\sum_{i=1}^{n}|x_i|^2, x=(x_1, x_2, \cdots, x_n)\in\mathbb{R}^n$.
\begin{lemma}(cf. Schilder's theorem in section 1.3 of \cite{DS84})
	$n$-dimensional Brownian motion $B$ satisfies the LDP in space $E_1$ with rate function
	\begin{equation}\label{J}
		J(\phi)=\frac{1}{2}\int_{-\infty}^{+\infty}\|\dot{\phi}(s)\|_{\mathbb{R}^n}^2\mathrm{d}s, \quad  \phi\in C(\mathbb{R};\mathbb{R}^n),
	\end{equation}
	where $\dot{\phi}$  be the derivative of $\phi$ with respect to time $t$.
\end{lemma}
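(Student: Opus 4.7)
The plan is to apply the classical Schilder-type theorem for Gaussian measures, as established in Deuschel-Stroock \cite{DS84}, to the centered Gaussian measure induced by $n$-dimensional Brownian motion on the weighted path space $E_n$. Since the rate function $J(\phi) = \frac{1}{2}\int_{-\infty}^{+\infty}\|\dot{\phi}(s)\|_{\mathbb{R}^n}^2\mathrm{d}s$ is quadratic, the LDP here is to be interpreted for the rescaled family $\{\sqrt{\varepsilon}B\}_{\varepsilon>0}$ in the small-noise limit $\varepsilon \to 0$.

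First, I would verify that the law of $B$ defines a well-posed centered Gaussian measure $\mu$ on the Banach space $(E_n, \|\cdot\|_{\mathcal{E}_n})$. By the law of the iterated logarithm, $\|B_t\|_{\mathbb{R}^n} = O(\sqrt{|t|\log\log|t|})$ as $|t|\to\infty$ almost surely, so $\|B\|_{\mathcal{E}_n} < \infty$ a.s., and in fact $\|B_t\|_{\mathbb{R}^n}/(1+|t|) \to 0$ as $|t|\to\infty$. Combined with path continuity, this places $B$ in $E_n$ a.s.

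Next, I would establish exponential tightness of $\{\sqrt{\varepsilon}B\}_{\varepsilon>0}$ in $E_n$; this is the technical core. By Fernique's theorem (Theorem 1.3.24 in \cite{DS84}) applied to the Gaussian $\mu$, there exists $\alpha > 0$ such that $\mathbb{E}\exp(\alpha\|B\|_{\mathcal{E}_n}^2) < \infty$. A compact set in $E_n$ can be characterized by combining equicontinuity on every compact subinterval $[-N,N]$ (Arzelà-Ascoli) with a uniform vanishing condition $\|\phi_t\|_{\mathbb{R}^n}/(1+|t|)\to 0$ as $|t|\to\infty$. Gaussian tail estimates on the modulus of continuity and on $\sup_{|t|>N}\|B_t\|_{\mathbb{R}^n}/(1+|t|)$ then allow one to construct, for each $\rho > 0$, a compact $K_\rho\subset E_n$ with $\limsup_{\varepsilon\to 0}\varepsilon\log\mathbb{P}(\sqrt{\varepsilon}B\notin K_\rho) \leq -\rho$.

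Finally, the rate function is identified through the Cameron-Martin structure: the reproducing kernel Hilbert space $\mathcal{H}_\mu$ of $\mu$ consists of absolutely continuous $\phi:\mathbb{R}\to\mathbb{R}^n$ with $\phi(0)=0$ and $\dot{\phi}\in L^2(\mathbb{R};\mathbb{R}^n)$, equipped with inner product $\langle\phi,\psi\rangle_{\mathcal{H}_\mu} = \int_{\mathbb{R}}\langle\dot{\phi}(s),\dot{\psi}(s)\rangle\mathrm{d}s$. Invoking the general abstract Schilder theorem for centered Gaussian measures on a separable Banach space (Section 3.4 in \cite{DS84}) yields the LDP with good rate function $\frac{1}{2}\|\phi\|_{\mathcal{H}_\mu}^2$, which coincides with $J(\phi)$ (taking the value $+\infty$ outside $\mathcal{H}_\mu$). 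The main obstacle will be the exponential tightness step, since compactness in the weighted norm $\|\cdot\|_{\mathcal{E}_n}$ on the non-compact time axis requires simultaneous control of equicontinuity on every compact window and of the decay rate at $|t|=\infty$, both uniformly in $\varepsilon$; this is precisely where the Gaussian structure is essential and is exactly the difficulty the authors cited earlier as obstructing the analogous result for the multiplicative-noise stationary solutions.
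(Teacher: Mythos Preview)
Your outline is correct and faithfully reconstructs the standard Gaussian-measure argument from Deuschel--Stroock \cite{DS84}: one places the law of $B$ on the weighted path space, invokes Fernique's theorem for exponential tightness, and reads off the rate function from the Cameron--Martin structure. There is nothing to compare against, however, because the paper does not supply its own proof of this lemma---it is stated with the parenthetical ``(cf.\ Schilder's theorem in section 1.3 of \cite{DS84})'' and used as a black box from the literature. Your sketch is therefore more detailed than what the paper provides, and your closing remark is exactly right: this Gaussian-specific exponential tightness argument is precisely what fails for the multiplicative-noise stationary solutions, which is why the authors abandon this route in favor of the weak convergence approach for their main results.
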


\begin{example}\label{perio}
	Consider the one-dimension equation shown by Feng, Liu and Zhao in \cite{FZ15},
	\begin{equation}\label{periodic eq}
		\mathrm{d}x=-5x\mathrm{d}t+(\sin(x) + 0.3\sin(2\pi t))\mathrm{d}t+\sqrt{\varepsilon} \mathrm{d}B_t,
	\end{equation}
	where $B_t$ is one-dimension Brownian motion.\\
 According to \cite{FZ15}, for every $\varepsilon>0$, the random periodic solution is
	\begin{equation*}
		\tilde{x}_{\varepsilon}(t)=\int_{-\infty}^{t}e^{-5(t-s)}(\sin(\tilde{x}_{\varepsilon}) + 0.3\sin(2\pi t))\mathrm{d}s+\sqrt{\varepsilon}\int_{-\infty}^{t}e^{-5(t-s)} \mathrm{d}B_s.
	\end{equation*}
	
	Let $G_1$ be the space $C(\mathbb{R};\mathbb{R})$ with another norm
	\begin{equation}\label{norm2}
		\|g\|_{\mathcal{C}_1}=
		\sum_{k=1}^{\infty}2^{-k}\left(\sup_{s\in[-k,k]}\left\|
		g(s)\right\|_{\mathbb{R}}\wedge1\right),\quad  g\in C(\mathbb{R};\mathbb{R}).
	\end{equation}
	Let  $\tilde{x}:=\tilde{x}_{\varepsilon=1}$, and $f:E_1\rightarrow G_1$ defined by $$f(\omega)(t):=\int_{-\infty}^{t}e^{-5(t-s)}
	(\sin(\tilde{x}) + 0.3\sin(2\pi t))\mathrm{d}s+\int_{-\infty}^{t}e^{-5(t-s)} \mathrm{d}\omega(s).$$
	For any  $\omega_1,\omega_2\in E_1$, then after simple calculation, we obtain that
	\begin{align*}
		&|f(\omega_1)(t)-f(\omega_2)(t)|\\
		\leq&\left\{\left|\int_{-\infty}^{t}e^{-5(t-r)}
		[\sin(\tilde{x}(r,\omega_1))-\sin(\tilde{x}(r,\omega_2))]\mathrm{d}r\right|\right.\\
		&+\left.\left|\int_{-\infty}^{t}e^{-5(t-r)}\mathrm{d}\omega_1(r)-\int_{-\infty}^{t}e^{-5(t-r)}\mathrm{d}\omega_2(r)\right|\right\}\\
		\leq& \left\{\left|\int_{-\infty}^{t}e^{-5(t-r)}\left\{
		[\tilde{x}(r,\omega_1)-\tilde{x}(r,\omega_2)]\right\}\mathrm{d}r\right|\right.\\
		&+\left.\left|\int_{-\infty}^{t}e^{-5(t-r)}\mathrm{d}\omega_1(r)-\int_{-\infty}^{t}e^{-5(t-r)}\mathrm{d}\omega_2(r)\right|\right\},\\
	\end{align*}
	by using the Grownall inequality, there exists a constant $C$ such that
	\begin{equation}\label{p gn}
		|F(\omega_1)(t)-F(\omega_2)(t)|
		\leq C \left|\int_{-\infty}^{t}e^{-5(t-r)}\mathrm{d}\omega_1(r)-\int_{-\infty}^{t}e^{-5(t-r)}\mathrm{d}\omega_2(r)\right|.
	\end{equation}
	
	It follows from the integration by parts that
	\begin{align*}
		&\left|\int_{-\infty}^{t}e^{-5(t-r)}\mathrm{d}\omega_1(r)-\int_{-\infty}^{t}e^{-5(t-r)}\mathrm{d}\omega_2(r)\right|\\
		%=&\sum_{n}\sqrt{\eta_n}\left(\int_{-k}^{t}e^{A(t-r)}e_nd\beta_n(r,\omega_1)
		%-\int_{-k}^{t}e^{A(t-r)}e_nd\beta_n(r,\omega_2)\right)\\
		=&\left|(\omega_1(t)-\omega_2(t))-5\int_{-\infty}^{t} e^{-5(t-r)}\omega_1(r)\mathrm{d}r
		+5\int_{-\infty}^{t} e^{-5(t-r)}\omega_2(r)\mathrm{d}r\right|\\
		\leq&|\omega_1(t)-\omega_2(t)|
		+5
		\int_{-\infty}^{t} e^{5(t-r)}|\omega_1(r)-\omega_2(r)|\mathrm{d}r,
	\end{align*}
	for any $N\in\mathbb{N}^+$, it implies that there exists a constant only depends on $N$ such that
	\begin{equation}\label{oc}
		\sup_{t\in[-N,N]}\left|\int_{-\infty}^{t}e^{-5(t-r)}d\omega_1(r)
		-\int_{-\infty}^{t}e^{-5(t-r)}d\omega_2(r)\right|^2\leq C(N)\|\omega_1-\omega_2\|_{\mathcal{E}_1}.
	\end{equation}
	For any $\eta>0$, we choose $N$ such that $\sum_{k=N}^{+\infty}2^{-k}\leq \frac{\eta}{2}$,  it follows from \eqref{p gn} and \eqref{oc} that
	\begin{align*}
		\|f(\omega_1)-f(\omega_2)\|_{\mathcal{C}_1}\leq&\sum_{k=1}^{N}2^{-k}\left(\sup _{s \in[-k, k]}|f(\omega_1)(s)-f(\omega_2)(s)| \wedge 1\right)+\sum_{k=N}^{+\infty}2^{-k}\\
		\leq &\sup _{s \in[-N, N]}|f(\omega_1)(s)-f(\omega_2)(s)| \wedge 1+\frac{\eta}{2}\\
		\leq & C(N)\|\omega_1-\omega_2\|_{\mathcal{C}_1}\wedge 1+\frac{\eta}{2},
	\end{align*}
	then there exists $\delta\leq \frac{\eta}{2C(N)}$ such that for any $\|\omega_1-\omega_2\|_{\mathcal{E}_1}\leq \delta$, it deduces that $\|f(\omega_1)-f(\omega_2)\|_{\mathcal{C}_1}\leq \eta$. It imply that $f$ is continuous, by using the contraction principle, we obtain that the  random periodic solutions of Eq. \eqref{periodic eq}
	satisfies the LDP in space $G_1$ with rate function
	\begin{equation*}
		I(u)=\frac{1}{2} \int_{-\infty}^{+\infty}\left| \dot{u}(t)+5u(t)-\sin(u(t)) - 0.3\sin(2\pi t)\right|^{2} d t,
	\end{equation*}
	where $\dot{u}$  be the derivative of $u$ with respect to time $t$.
	
	The following graphs with respect to the numerical of random periodic solutions of Eq. \eqref{periodic eq} with $\varepsilon=0,0.01,1$.\\
	\includegraphics[width=5cm,height=5cm]{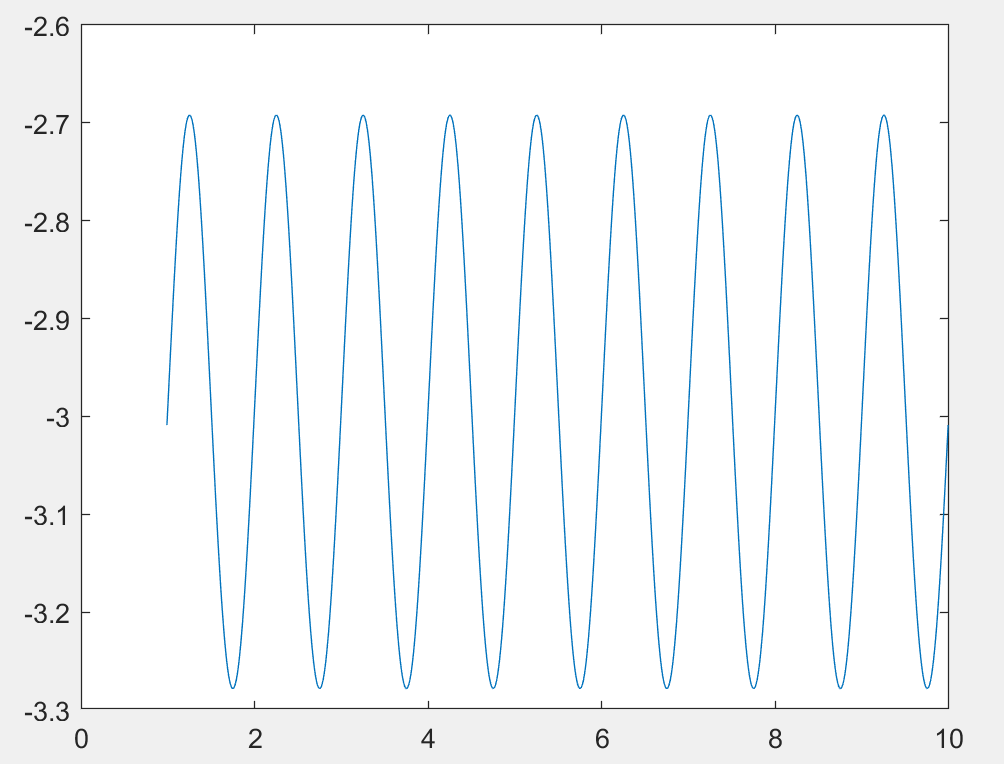}
	\includegraphics[width=5cm,height=5cm]{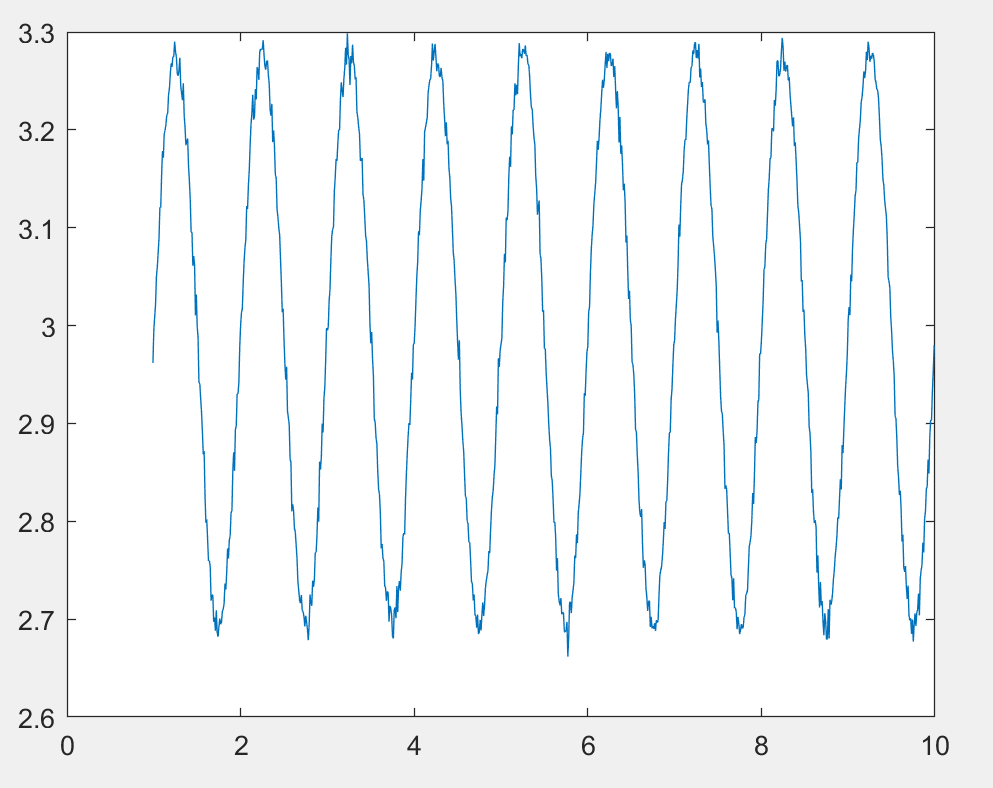}
	\includegraphics[width=5cm,height=5cm]{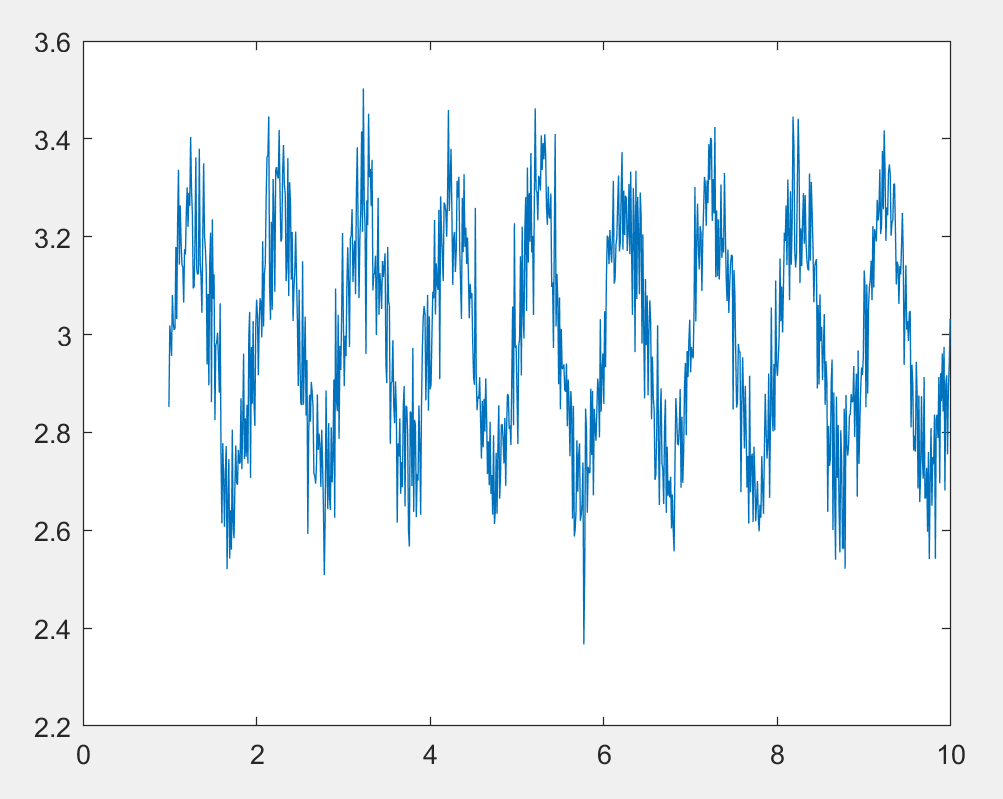}\\
\end{example}
The following example is to show that we can also study the LDP for random quasi-periodic solutions. We give the definition of random quasi-periodic solution (see, for example, \cite{FQZ21}).
\begin{definition}
	Let $F({\omega},z)$ be a measurable map from $\Omega \times T^{d}$ to $H$, and $\phi(t, \omega)$ be a random dynamical systems from $H$ to $H$. Let $\alpha=\left(\alpha_{1}, \cdots, \alpha_{d}\right)$ be a d-dimension vector, which is rationally independent. Then, we say $\phi(t, \omega)$ has a random quasi periodic solution $F({\omega},\alpha \cdot t)$, if they satisfy\\
	(i). (shift invariant of orbit) $\phi(t, \omega) F(\omega,z)=F(\theta({t} ,\omega),z)$.\\
	(ii). (quasi periodic property) $\phi(t, \omega) F(\omega,z)=F(\theta({t} ,\omega),\alpha \cdot t+z)\quad a . s .$
\end{definition}

\begin{example}\label{quasi-periodic solution}
	We consider a random Hopf's
	bifurcation  model for turbulence.
	The original  deterministic model given by Hopf in \cite{H56} is
	\begin{equation}\label{hopf}
		\frac{\partial \rho(x, t)}{\partial t}=I(\rho)+L(\rho)+\mu \frac{\partial^{2} \rho(x, t)}{\partial x^{2}},
	\end{equation}
	where $\rho=u+\mathrm{i} v$, and
	$$
	\begin{aligned}
		&L(\rho)=\frac{1}{2 \pi} \int_{0}^{2 \pi} \rho(x+y) \bar{F}(y) \mathrm{d} y ,\\
		&I(\rho)=-\frac{1}{4 \pi^{2}} \int_{0}^{2 \pi} \int_{0}^{2 \pi} \rho(y) \rho\left(y^{\prime}\right) \bar{\rho}\left(y+y^{\prime}-x\right) \mathrm{d} y \mathrm{d} y^{\prime}.
	\end{aligned}
	$$
	We denote $\overline{u+\mathrm{i} v}=u-\mathrm{i} v$. Here, $F$ is regarded as the external force acting on the ``velocity field" $\rho$.  By the Fourier transform, we know that
	$$
	\rho(x, t)=\sum_{n=-\infty}^{\infty} \rho_{n}(t){\rm e}_n(x), \text { and } F(x)=\sum_{n=-\infty}^{\infty} F_{n}e_n(x),
	$$
	where $\{e_n(x)=e^{{\rm i}nx},x\in S^1,n\in\mathbb{Z}\}$ is the orthonormal basis of $L^2(S^1)$, and $L^2(S^1)$ with norm
	\begin{equation*}
		\|h\|_{L^2(S^1)}:=\sum_{n=-\infty}^{\infty}| h_{n}|,\quad  h=\sum_{n=-\infty}^{\infty}h_n {\rm e}_n(x)\in L^2(S^1).
	\end{equation*}
	
	So, we get
	$$
	\frac{\mathrm{d} \rho_{n}(t)}{d t}=-\rho_{n}^{2} \overline{\rho_{n}}+\left(\overline{F_{n}}-\mu n^{2}\right) \rho_{n}, \quad F_{n}=a_{n}-\mathrm{i} b_{n}.
	$$
	Thus, we consider the following equation
	$$
	\frac{\mathrm{d} \rho(t)}{d t}=-\rho^{2} \bar{\rho}+(\bar{F}-\nu) \rho, \quad F=a-\mathrm{i} b ,
	$$
	let $\rho=r e^{i\theta}$, it is equivalent to
	\begin{equation}\label{hopf1}
		\left\{\begin{array}{l}
			\frac{\mathrm{d} r (t)}{\mathrm{d} t}=\left(a-\nu-r ^{2}\right) r  ,\\
			\frac{\mathrm{d} \theta(t)}{\mathrm{d} t}=b,
		\end{array}\right.
	\end{equation}
	where
	\begin{align*}
		\nu=\mu n^{2}, \quad a=a_{n}=\operatorname{Re} F_{n}, \quad b=b_{n}=-\operatorname{Im} F_{n}, \quad
		r =r _{n}, \quad \theta=\theta_{n}, \quad \rho_{n}=r _{n} e^{{\rm i} \theta_{n}}.
	\end{align*}
	
	Now, we consider that  $\rho$ is perturbed by random external force $F$. For simplicity and illustrating our ideas, we assume that  $F=\sum_{n} (a_n+{\rm i}b_n)\mathrm{e}_n + W$, where  $W_{t}(x)=\sum_{n}c_{n} B_{n}(t) \mathrm{e}_{n}(x)$ and $\{B_{n}\}$ is a sequence independent Brownian motion, then  Hopf's model \eqref{hopf} or \eqref{hopf1} with random external force is equivalent to the following stochastic equation
\begin{numcases}{}
	\frac{\mathrm{d} r (t)}{\mathrm{d} t}=\left(a-\nu+\frac{1}{2}c^2-r ^{2}\right) r  +cr \mathrm{d}B(t),\label{hopf1}\\
			\mathrm{d} \theta(t)=b\mathrm{d} t\label{hopf2},
\end{numcases}
	where
	\begin{align*}
		\nu=\mu n^{2}, \quad a=a_{n}, \quad b=b_{n}, \quad c=c_n, \quad
		r =r _{n}, \quad \theta=\theta_{n}, \quad \rho_{n}=r _{n} e^{{\rm i} \theta_{n}} ,\quad B(t)=B_n(t).
	\end{align*}
	For some more complex random Hopf's models, please see \cite{Bai19}.
	
	The Eq. \eqref{hopf1}
	has the non-trivial stationary solution
	$$
	r ^{*}(\omega)=\left(\int_{-\infty}^{0} e^{2(a-\nu) s+2 c B(s)(\omega)} \mathrm{d} s\right)^{-\frac{1}{2}}, \quad  \text { if } a>\nu,
	$$
	and unique trivial stationary solution
	$$
	r ^{*}(\omega)=0, \quad  \text { if } a\leq\nu.
	$$
	%$$
	%r ^{*}(\omega)= \begin{cases}0 & \text { if } a\leq\nu ,\\ \left(2 \int_{-\infty}^{0} e^{2(a-\nu) s+2 c B(s)} \mathrm{d} %s\right)^{-\frac{1}{2}} & \text { if } a>\nu.\end{cases}
	%$$
	
	Moreover, assume $F_{n}=0, n \leq 0$, and $\frac{a_{n}}{n^{2}} \downarrow 0(n>0)$, $b_{1}, b_{2}, \cdots, b_{n}, \cdots$  is rationally
	independent. Let $\mu_{n}=\frac{a_{n}}{n^{2}}(n>0)$.
	Set $\theta_{k}=b_{k} t+\theta^0_{k}$, if $\mu_{m}>\mu \geq \mu_{m+1}$ for some fixed $ m\in\mathbb{N}^+$, then the solution  $\rho(x, t)$ of Eq. \eqref{hopf} converges to a random quasi-periodic solution $\rho\left(\theta_j, 1\leq j\leq m; \mu,\omega\right)=\sum_{j=1}^mr_j^*(t,\omega)e^{{\rm i}(\theta^{0}_j+b_j t)}{\rm e}_j(x)$  with the angle variables $\theta$ form a manifold of the type of a $m$-d torus $\mathbb{T}^m$.
	Specially, we can choose  $c_j=\sqrt{\varepsilon},1\leq j\leq m$, $c_{m+1}=c_{m+2}=\cdots=0$, $a_j-j^2\mu=\frac{3}{2}-\frac{1}{2}c^2,1\leq j\leq m$,
	let
	$$
	r _{\varepsilon,j}^{*}(t,\omega)=\frac{1}{\left[2 \int_{-\infty}^{t} e^{(3-\varepsilon) s+2\sqrt{\varepsilon} B_j(s)(\omega)} \mathrm{d} s\right]^{\frac{1}{2}}},
	$$
where $B_j(s)(\omega)=\omega_j(s)$, $\omega=(\omega_1,\omega_1,\cdots,\omega_m)$, $j=1,2,\cdots,m$.
	Then for every $\varepsilon>0$, the solution  $\rho_{\varepsilon}(x, t)$ of Eq. \eqref{hopf} converges to a random quasi-periodic solution $\rho_{\varepsilon}:\Omega\times\mathbb{T}^m\rightarrow L^2{(S^1)}$
	\begin{equation*}
		\rho_{\varepsilon}\left(\theta_{j}, 1\leq j\leq m; \mu,\omega\right):=\sum_{j=1}^m r _{\varepsilon,j}^*(t,\omega)e^{{\rm i}(\theta^{0}_j+b_j t)}{\rm e}_j(x),
	\end{equation*}
	where  $\|\rho_{\varepsilon}\left(\theta_{j},1\leq j\leq m; \mu,\omega\right)\|_{L^2{(S^1)}}=\sum_{j=1}^m|r _{\varepsilon,j}^*(t,\omega)|$.
	
	Let $G_2$ be the space $C(\mathbb{R};L^2(S^1))$ with another norm
	\begin{equation}\label{norm3}
		\|h\|_{\mathcal{C}_2}=
		\sum_{k=1}^{\infty}2^{-k}\left(\sup_{s\in[-k,k]}\left\|
		h(s)\right\|_{L^2(S^1)}\wedge1\right),\quad  h\in C(\mathbb{R};L^2(S^1)).
	\end{equation}
	Let  $r ^*_i:=r ^*_{\varepsilon=1,i},i=1,2,...,m$, and $g:E_m\rightarrow G_2$ defined by
	$$g(\omega)(t):=\sum_{j=1}^m r _{j}^*(t,\omega)e^{{\rm i}(\theta^{0}_j+b_j t)}{\rm e}_j(x),$$
	where $\omega=(\omega_1,\omega_2\cdots,\omega_m)\in E_m$.
	We prove the function $g$ is continuous in the following.
	
	For any  $\omega^1=(\omega_1^1,\omega_2^1,\cdots,\omega_m^1),\omega^2=(\omega_1^2,\omega_2^2,\cdots,\omega_m^2)\in E_m$, then after simple calculation, we obtain that
	\begin{equation}\label{Hq1}
		\begin{aligned}
			\|g(\omega^1)(t)-g(\omega^2)(t)\|_{L^2{(S^1)}}=\sum_{j=1}^m
			\left|r _j^*(t,\omega_j^1)-r _j^*(t,\omega_j^2) \right|.
		\end{aligned}
	\end{equation}
	For any $\eta>0$, we choose $N$ big enough such that $\sum_{k=N}^{+\infty}2^{-k}\leq \frac{\eta}{2}$,  it follows from Eqs. \eqref{norm3} and \eqref{Hq1} that
	\begin{align*}
		\|g(\omega_1)-g(\omega_2)\|_{\mathcal{C}_2}\leq&\sum_{k=1}^{N}2^{-k}\left(\sup _{s \in[-k, k]}\|g(\omega_1)(s)-g(\omega_2)(s)\|_{L^2{(S^1)}} \wedge 1\right)+\sum_{k=N}^{+\infty}2^{-k}\\
\leq&\sup _{s \in[-N, N]}\|g(\omega_1)(s)-g(\omega_2)(s)\|_{L^2{(S^1)}} \wedge 1+\sum_{k=N}^{+\infty}2^{-k}\\
		\leq &\sum_{j=1}^m\sup _{s \in[-N, N]}\left(\left|r _j^*(t,\omega_j^1)-r _j^*(t,\omega_j^2) \right|\right) \wedge 1+\frac{\eta}{2}.
	\end{align*}
Define $(\mathbb{R}^{+})^m$ be the product space of $\mathbb{R}^{+}$ times $m$,
	let $f:E_m\rightarrow (\mathbb{R}^{+})^m$ be
	$$
\begin{aligned} f(\omega)(t)=&(f_1(\omega)(t),f_2(\omega)(t),\cdots,f_m(\omega)(t))\\
: =& \left( \int_{-\infty}^{t} e^{2 s+2 B_1(s)(\omega)} \mathrm{d} s,\int_{-\infty}^{t} e^{2 s+2 B_2(s)(\omega)} \mathrm{d} s,\cdots,\int_{-\infty}^{t} e^{2 s+2 B_m(s)(\omega)} \mathrm{d} s\right),
\end{aligned}
	$$
it is sufficient to prove that $f|_{[-N,N]}$ is a  continuous function from $E_m$ to space $C([-N,N];(\mathbb{R}^{+})^m)$ with norm $\|f\|_{C([-N,N];\mathbb{R}^+\times \mathbb{R}^+)}:=\sum_{j=1}^{m}\sup_{t\in[-N,N]}|f_j(t)|$, for any $N\in\mathbb{N}^+$.

	For any $N\in\mathbb{N}^+$, $\delta>0$, $\omega^1\in E_m$ satisfies $\|\omega^1\|_{\mathcal{E}_m}\leq K$ and $\|\omega^1-\omega^2\|_{\mathcal{E}_m}\leq \delta$, we have
	\begin{equation}\label{Hf}
		\begin{aligned}
			&|f_i(\omega^1)(t)-f_i(\omega^2)(t)|=\left|\int_{-\infty}^{t} e^{2 s+2 \omega_i^1(s)} \mathrm{d} s-\int_{-\infty}^{t} e^{2 s+2 \omega_i^2(s)} \mathrm{d} s\right|\\
			=&\left|\int_{-\infty}^{t\wedge 0} e^{2 s+2 \omega_i^1(s)} \mathrm{d} s-\int_{-\infty}^{t\wedge 0} e^{2 s+2 \omega_i^2(s)} \mathrm{d} s\right|+\left|\int_{0}^{t\vee 0} e^{2 s+2 \omega_i^1(s)} \mathrm{d} s-\int_{0}^{t\vee 0} e^{2 s+2 \omega_i^2(s)} \mathrm{d} s\right|\\
			:=&I_i^1+I_i^2,\quad i=1,2,\cdots,m.
		\end{aligned}
	\end{equation}
	We first estimate $I_i^1,i=1,2,\cdots,m$, choose $\delta$ small enough such that $e^{K-1+2\delta}-e^{K-1+\delta}<1$ then
	\begin{equation}\label{HI1}
		\begin{aligned}
			I_i^1=&\left|\int_{-\infty}^{t\wedge 0} e^{2 s}\left[\exp\left\{\frac{ \omega_i^1(s)}{(1-s)}\right\}-\exp\left\{\frac{ \omega_i^2(s)}{(1-s)}\right\}\right]^{2(1-s)} \mathrm{d} s\right|\\
			\leq&\left|\int_{-\infty}^{t\wedge 0} e^{2 s}\left[\exp\left\{\frac{ \omega_i^2(s)}{(1-s)}+\delta\right\}-\exp\left\{\frac{ \omega_i^2(s)}{(1-s)}\right\}\right]^{2(1-s)} \mathrm{d} s\right|\\
			\leq&\left|\int_{-\infty}^{t\wedge 0} e^{2 s}\left[e^{K+2\delta}-e^{K+\delta}\right]^{2(1-s)} \mathrm{d} s\right|\\
			=&\left[e^{K+2\delta}-e^{K+\delta}\right]^2\left|\int_{-\infty}^{t\wedge 0} \left[e^{K-1+2\delta}-e^{K-1+\delta}\right]^{-2s} \mathrm{d} s\right|\\
			=&\frac{1}{2}\left(e^{K+2\delta}-e^{K+\delta}\right)^2\left(e^{K-1+2\delta}-e^{K-1+\delta}\right)^{-2(t\wedge 0)}\left[\ln \left(e^{K-1+2\delta}-e^{K-1+\delta}\right)\right]^{-1}\\
			\leq&\frac{1}{2}\left(e^{K+2\delta}-e^{K+\delta}\right)^2\left[\ln \left(e^{K-1+2\delta}-e^{K-1+\delta}\right)\right]^{-1}.
		\end{aligned}
	\end{equation}
	Similar estimate for $I_i^2,i=1,2,\cdots,m$, choose $\delta$ small enough such that $e^{K+1+2\delta}-e^{K+1+\delta}<1$, we get that
	\begin{equation}\label{HI2}
		\begin{aligned}
			I_i^2=&\left|\int_{0}^{t\vee 0} e^{2 s}\left[\exp\left\{\frac{ \omega_i^1(s)}{(1+s)}\right\}-\exp\left\{\frac{ \omega_i^2(s)}{(1+s)}\right\}\right]^{2(1+s)} \mathrm{d} s\right|\\
			\leq&\left|\int_{0}^{t\vee 0} e^{2 s}\left[\exp\left\{\frac{ \omega_i^2(s)}{(1+s)}+\delta\right\}-\exp\left\{\frac{ \omega_i^2(s)}{(1+s)}\right\}\right]^{2(1+s)} \mathrm{d} s\right|\\
			\leq&\left|\int_{0}^{t\vee 0} e^{2 s}\left[e^{K+2\delta}-e^{K+\delta}\right]^{2(1+s)} \mathrm{d} s\right|\\
			=&\left[e^{K+2\delta}-e^{K+\delta}\right]^2\left|\int_{0}^{t\vee 0} \left[e^{K+1+2\delta}-e^{K+1+\delta}\right]^{2s} \mathrm{d} s\right|\\
			=&\frac{1}{2}\left(e^{K+2\delta}-e^{K+\delta}\right)^2\left[\left(e^{K+1+2\delta}-e^{K+1+\delta}\right)^{2(t\wedge 0)}-1\right]\left[\ln \left(e^{K+1+2\delta}-e^{K+1+\delta}\right)\right]^{-1}\\
			\leq&\frac{1}{2}\left(e^{K+2\delta}-e^{K+\delta}\right)^2\left[\ln \left(e^{K+1+2\delta}-e^{K+1+\delta}\right)\right]^{-1}.
		\end{aligned}
	\end{equation}
	It follows from  \eqref{Hf}, \eqref{HI1} and \eqref{HI2} that $f|_{[-N,N]}$ is a  continuous function from $E_m$ to space $C([-N,N];(\mathbb{R}^+)^m)$, thus $g$ is a continuous function from $E_m$ to $G_2$.
	
	By using the contraction principle, we obtain that the random quasi-periodic solutions of Eq. \eqref{hopf}
	satisfies the LDP in space $G_2$ with rate function
	\begin{equation*}
		\begin{aligned}
			I(\rho)=\inf\left\{\frac{1}{2}\int_{-\infty}^{+\infty}\|\dot{\phi}(s)\|_{\mathbb{R}^m}^2\mathrm{d}s:\quad \phi\in E_m, \quad\rho=g(\phi)\right\}.
		\end{aligned}
	\end{equation*}
\end{example}

\section{Invariant measure, rate function and quasi-potential }\label{LDP for invariant measure}
\subsection{LDP for invariant measure}
In this section, we will use the contraction principle to prove that the LDP for the family of stationary solutions $\{X^{*}_{\varepsilon}\}_{\varepsilon>0}$ deduce the LDP for the family of invariant measures $\{\nu_{\varepsilon}\}_{\varepsilon>0}$ for \eqref{Msode}.

It follows from Lemma \ref{Mstationary solution} that there exists $\varepsilon_0>0$ such that
for $\varepsilon\in(0,\varepsilon_0)$,  \eqref{Msode} exists a unique  stationary solution $X^{*}_{\varepsilon}$. Let $\nu_{\varepsilon}$ be the distribution of the stationary solution $X^{*}_{\varepsilon}(\cdot,\omega)$ at time $0$, i.e.
\begin{equation*}
	\nu_{\varepsilon}(A):=\mathbb{P}
	\left\{\omega;X^{*}_{\varepsilon}(0,\omega)\in A\right\},\quad\forall A\in \mathcal{B}(H),
\end{equation*}
then $\nu_{\varepsilon}$ is an invariant measure of Eq. \eqref{Msode} in $H$.
Moreover, Lemma \ref{long-time} easily implies the invariant measure of Eq. \eqref{Msode} is unique in $H$, for any $\varepsilon\in(0,\varepsilon_0)$. Thus $\nu_{\varepsilon}$ is the unique invariant measure of Eq. \eqref{Msode} in $H$.

For any $\varepsilon\in(0,\varepsilon_0)$, $X^{*}_{\varepsilon}(0,\omega)$ is an $\mathcal{F}_{-\infty}^0$-measurable random variable, therefore, in this section, we only consider the LDP of stationary solutions for Eq. \eqref{Msode} in space $C((-\infty,0];H)$ with the norm
\begin{equation}\label{Cnorm2}
	\|\cdot\|_{C((-\infty,0];H)}=
	\sum_{k=1}^{\infty}2^{-k}\left(\sup_{s\in[-k,0]}\left\|
	\cdot\right\|_{H}\wedge1\right).
\end{equation}
Similar to the proof in Section \ref{LDP for stationary solution}, we could have the following theorem.
\begin{theorem}\label{stationary LDP2}
	Under the Hypothesis \ref{Msodehy},
	the family $\{X^{*}_\varepsilon:\varepsilon>0\}$ satisfies the LDP in $C((-\infty,0];H)$ with good rate function
	\begin{equation}\label{ratefunction2}
		\tilde{I}(f)=\inf_{\left\{v\in L^2((-\infty,0];H_0):f
			=\mathcal{G}^0(\int_{-\infty}^{\cdot}v(s)\mathrm{d}s)\right\}}
		\left\{\frac{1}{2}\int_{-\infty}^{0}\|v(s)\|_{H_0}^2\mathrm{d}s\right\},
	\end{equation}
	where the infimum over an empty set is taken as $+\infty$.
\end{theorem}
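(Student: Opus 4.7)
The plan is to mirror the weak convergence argument from the proof of Theorem \ref{stationary LDP}, now working in the Polish space $C((-\infty,0];H)$ equipped with the norm \eqref{Cnorm2}. First I would define the maps $\mathcal{G}^\varepsilon: C(\mathbb{R};H_0) \to C((-\infty,0];H)$ by $\mathcal{G}^\varepsilon(W(\cdot)) := X^*_\varepsilon(\cdot)|_{(-\infty,0]}$ and $\mathcal{G}^0(\int_{-\infty}^{\cdot} v(s)\,\mathrm{d}s) := X^*_v(\cdot)|_{(-\infty,0]}$, where $X^*_\varepsilon$ and $X^*_v$ are the unique solutions of \eqref{backward infinite horizon stochastic intergal equation'} and \eqref{SK-BIHE0} respectively. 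By Theorem \ref{LP} together with the equivalence of LDP and Laplace principle in Polish spaces, it suffices to verify Condition \ref{weakcondition} in this new target space.

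For item (i) of Condition \ref{weakcondition}, the key observation is that the estimate \eqref{pp1} established inside the proof of Lemma \ref{verify weak condition 1} already gives $\sup_{t\in(-\infty,k]} e^{2\eta t}\|w_\varepsilon(t)\|_H^2\to 0$ in probability for every $k\in\mathbb{N}^+$. Absorbing the weighting factor $e^{-\eta k}$ as at the end of that lemma yields $\sup_{t\in[-k,0]}\|w_\varepsilon(t)\|_H \to 0$ in probability for every $k$, and summing against the dyadic weights in \eqref{Cnorm2} by dominated convergence produces convergence of $X^*_{\varepsilon,v_\varepsilon}|_{(-\infty,0]}$ to $X^*_v|_{(-\infty,0]}$ in probability in $C((-\infty,0];H)$. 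For item (ii), the estimate \eqref{4} in the proof of Lemma \ref{verify weak condition 2} applied with $T=0$ directly yields $\sup_{t\in(-\infty,0]}\|w^n(t)\|_H^2\to 0$, and the final dominated convergence step of that lemma transfers this to the norm \eqref{Cnorm2}, giving compactness of the corresponding $K_M\subset C((-\infty,0];H)$.

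Applying Theorem \ref{LP} then produces the LDP in $C((-\infty,0];H)$ with good rate function $I(f) = \inf\{\frac{1}{2}\int_{-\infty}^{+\infty} \|v(s)\|_{H_0}^2\,\mathrm{d}s : v\in L^2(\mathbb{R};H_0),\ f = \mathcal{G}^0(\int_{-\infty}^{\cdot} v(s)\,\mathrm{d}s)\}$. To identify this with $\tilde I$ in \eqref{ratefunction2}, I would exploit that by the backward integral equation \eqref{SK-BIHE0}, $X^*_v(t)$ depends only on $v|_{(-\infty,t]}$, so for $t\leq 0$ it depends only on $v|_{(-\infty,0]}$. Given any $\tilde v\in L^2((-\infty,0];H_0)$ admissible in \eqref{ratefunction2}, extending by zero on $(0,+\infty)$ produces a $v\in L^2(\mathbb{R};H_0)$ admissible for the above rate with identical cost; conversely, for any admissible $v\in L^2(\mathbb{R};H_0)$, its restriction to $(-\infty,0]$ remains admissible in \eqref{ratefunction2} with cost no larger. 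The two infima therefore coincide.

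The bulk of the technical content has already been carried out in Section \ref{LDP for stationary solution}; the only genuinely new work here is restricting the time domain and reconciling the two representations of the rate function. The only subtlety I expect is verifying that the backward-in-time nature of \eqref{SK-BIHE0} makes the restriction to $(-\infty,0]$ compatible with the Bou\'e--Dupuis cost functional, but this is precisely the causal-dependence observation described above.
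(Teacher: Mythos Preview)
Your proposal is correct and matches the paper's approach: the paper itself offers no explicit proof of Theorem \ref{stationary LDP2}, stating only that it follows ``similar to the proof in Section \ref{LDP for stationary solution}''. Your write-up supplies precisely those details, and your additional step reconciling the $L^2(\mathbb{R};H_0)$ and $L^2((-\infty,0];H_0)$ forms of the rate function via the causal dependence of \eqref{SK-BIHE0} is a point the paper leaves implicit.
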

\begin{comment}
Firstly, we will introduce the contraction principle in the following theorem, which refers to Theorem 4.2.1 in \cite{DZ07}.
\begin{theorem}\label{contraction principle}
(contraction principle) Let $\mathcal{X}$ and $\mathcal{Y}$ be Hausdorff topological spaces and $f: \mathcal{X} \rightarrow \mathcal{Y}$ a continuous function. Consider a good rate function $I: \mathcal{X} \rightarrow[0, \infty]$.\\
(a) For each $y \in \mathcal{Y}$, define
$$
\begin{gathered}
I^{\prime}(y) \triangleq \inf \{I(x): x \in \mathcal{X}, \quad y=f(x)\} .
\end{gathered}
$$
Then $I^{\prime}$ is a good rate function on $\mathcal{Y}$, where as usual the infimum over the empty set is taken as $+\infty$.
(b) If I controls the LDP associated with a family of probability measures $\left\{\mu_{\epsilon}\right\}$ on $\mathcal{X}$, then $I^{\prime}$ controls the LDP associated with the family of probability measures $\left\{\mu_{\epsilon} \circ f^{-1}\right\}$ on $\mathcal{Y}$.
\end{theorem}
\end{comment}
We could get the following Theorem by using the contraction principle (cf. Theorem 4.2.1 in \cite{DZ07}).
\begin{theorem}\label{stoi}
	Under the Hypothesis \ref{Msodehy}, the family of invariant measures $\left\{\nu_{\varepsilon}\right\}_{\varepsilon>0}$ for Eq. \eqref{Msode} satisfies the LDP in $H$, with good rate function $I'$
	\begin{equation}\label{I'}
		I'(x)=\inf
		\left\{\frac{1}{2}\int_{-\infty}^{0}\|v(s)\|_{H_0}^2\mathrm{d}s:u
		=\mathcal{G}^0\left(\int_{-\infty}^{\cdot}v(s)\mathrm{d}s\right),u(0)=x\right\}.
	\end{equation}
\end{theorem}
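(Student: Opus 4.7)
The plan is to deduce Theorem \ref{stoi} from Theorem \ref{stationary LDP2} by a direct application of the contraction principle (Theorem 4.2.1 in \cite{DZ07}). The natural continuous map to use is the time-zero evaluation $\pi_0 : C((-\infty,0];H) \to H$ defined by $\pi_0(f) := f(0)$. Since the norm on $C((-\infty,0];H)$ in \eqref{Cnorm2} dominates the uniform norm on $[-1,0]$ (up to the factor $2$), convergence in $\|\cdot\|_{C((-\infty,0];H)}$ implies uniform convergence on $[-1,0]$, and in particular pointwise convergence at $t=0$; hence $\pi_0$ is continuous. Next, by definition $\nu_\varepsilon$ is the law of $X_\varepsilon^*(0,\cdot) = \pi_0(X_\varepsilon^*)$, so $\nu_\varepsilon$ is the push-forward of the law of $X_\varepsilon^*$ on $C((-\infty,0];H)$ by $\pi_0$.

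Given these two observations, the contraction principle applied to the LDP in Theorem \ref{stationary LDP2} immediately produces an LDP on $H$ for $\{\nu_\varepsilon\}_{\varepsilon>0}$ with good rate function
\begin{equation*}
 J(x) \;:=\; \inf\bigl\{\tilde I(f):\, f \in C((-\infty,0];H),\ f(0)=x\bigr\}.
\end{equation*}
Unwinding the definition \eqref{ratefunction2} of $\tilde I$ and swapping the two infima over $f$ and $v$ gives
\begin{equation*}
 J(x) \;=\; \inf\left\{\tfrac{1}{2}\int_{-\infty}^{0}\|v(s)\|_{H_0}^2\,\mathrm{d}s :\ v\in L^2((-\infty,0];H_0),\ \mathcal{G}^0\!\left(\textstyle\int_{-\infty}^{\cdot}v(s)\,\mathrm{d}s\right)(0)=x\right\},
\end{equation*}
which, writing $u = \mathcal{G}^0(\int_{-\infty}^{\cdot}v(s)\,\mathrm{d}s)$, is exactly the expression $I'(x)$ in \eqref{I'}. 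Goodness of $J$ is automatic from the contraction principle since $\tilde I$ is a good rate function and $\pi_0$ is continuous.

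The only non-routine points are the continuity of $\pi_0$ and the identification $J = I'$; both are direct, and the continuity is where one must use that the norm \eqref{Cnorm2} controls the supremum over $[-k,0]$ for every $k$. There is no genuine analytic obstacle here, because all the work (well-posedness of the skeleton equation, verification of Condition \ref{weakcondition}, and the upgrade from $C(\mathbb{R};H)$ to $C((-\infty,0];H)$) is already contained in Theorems \ref{Mstationary solution}, \ref{SE IF TH}, \ref{stationary LDP}, and \ref{stationary LDP2}. Finally, uniqueness of $\nu_\varepsilon$ as the invariant measure of \eqref{Msode0} follows from the uniqueness of the stationary solution (Theorem \ref{Mstationary solution}) together with Lemma \ref{long-time}, so the LDP indeed applies to the full family of invariant measures and not merely to a distinguished choice.
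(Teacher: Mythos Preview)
Your proposal is correct and follows essentially the same route as the paper: apply the contraction principle (Theorem 4.2.1 in \cite{DZ07}) to the LDP of Theorem \ref{stationary LDP2} via the continuous time-zero evaluation map $G(f)=f(0)$, then unwind the resulting infimum to obtain \eqref{I'}. Your write-up is in fact slightly more detailed than the paper's (you justify continuity of $\pi_0$ and remark on goodness and uniqueness of $\nu_\varepsilon$), but the argument is the same.
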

\begin{proof}
	It follows from Theorem \ref{stationary LDP2} that the stationary solution family   $\left\{X^{*}_{\varepsilon}(\cdot,\omega)\right\}_{\varepsilon>0}$ for Eq. \eqref{Msode} satisfies the LDP in $C((-\infty,0];H)$ with rate function $\tilde{I}$.
	
	Let $G: C((-\infty,0];H)\longrightarrow H$ by $G(f)=f(0)$,
	it is obvious that $G$ is continuous, then it follows from the contraction principle that the invariant measure family $\left\{\nu_{\varepsilon}\right\}_{\varepsilon>0}$ of Eq. \eqref{Msode} satisfies the LDP in $H$, with rate function
	\begin{align*}
		I'(x)=&\inf_{\{x=G(u)\}}\{\tilde{I}(u)\}
		=
		\inf
		\left\{\frac{1}{2}\int_{-\infty}^{0}\|v(s)\|_{H_0}^2\mathrm{d}s:u
		=\mathcal{G}^0\left(\int_{-\infty}^{\cdot}v(s)\mathrm{d}s\right),u(0)=x\right\}.
	\end{align*}
\end{proof}
\subsection{Rate function and quasi-potential}
It is well known that \cite{FW12} and \cite{CR05} have proved the family of invariant measures for stochastic equations satisfies LDP  with  the quasi-potential as rate function. The definition of quasi-potential is given in Section \ref{The definition of quasi-potential}, we will prove the rate function $I'$ defined in \eqref{I'} and quasi-potential are equivalent below.

\begin{comment}
And according to \cite{CR05}
, we will illustrate the results for the LDP of the invariant measure in the following Lemma.
\begin{lemma}\label{VLDP}
The family of the invariant measures $\left\{\nu_{\varepsilon}\right\}_{\varepsilon>0}$ for the stochastic equation \eqref{Msode} satisfies the LDP in $H$, with rate function quasi-potential $V$
\begin{equation}\label{U}
V(x)=\inf\left\{S_{-\infty}(u);u\in C((-\infty,0];H),u(0)=x,
\lim_{t\rightarrow-\infty}
\|u(t)\|_{H}=0\right\}.
\end{equation}
\end{lemma}
\end{comment}
\begin{lemma}\label{quasi rate}
	The rate function \eqref{I'} defined in Theorem \ref{stoi} are equivalent to the rate function \eqref{V} defined by quasi-potential of the LDP for invariant measures of Eq. \eqref{Msode}.
\end{lemma}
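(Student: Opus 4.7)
The plan is to show that the (trajectory, control) feasible sets underlying $I'$ in \eqref{I'} and the quasi-potential $V$ in \eqref{V} coincide. Since in each case the cost functional is $\tfrac{1}{2}\int_{-\infty}^{0}\|v(s)\|_{H_{0}}^{2}\,\mathrm{d}s$, this set-equality will immediately give $I'(x)=V(x)$.

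First I would prove the inclusion ``$V$-feasible $\subseteq$ $I'$-feasible'', yielding $I'(x)\leq V(x)$. Let $u\in C((-\infty,0];H)$ with $u(0)=x$, $\lim_{t\to-\infty}\|u(t)\|_{H}=0$, solve the control equation \eqref{ce} with control $v\in L^{2}((-\infty,0];H_{0})$. For any $t_{0}\leq t\leq 0$ I would write the mild representation
\begin{equation*}
u(t)=S_{A}(t-t_{0})u(t_{0})+\int_{t_{0}}^{t}S_{A}(t-r)F(u(r))\,\mathrm{d}r+\int_{t_{0}}^{t}S_{A}(t-r)B(u(r))v(r)\,\mathrm{d}r,
\end{equation*}
and pass $t_{0}\to -\infty$: the semigroup term vanishes by Remark \ref{condition remark}(iii) combined with $\|u(t_{0})\|_{H}\to 0$, and the two integrals converge to $\int_{-\infty}^{t}$ by Hypothesis \ref{Msodehy}(ii), the uniform bound $|B(u)|_{L}\leq D_{0}$, and $v\in L^{2}$. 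Thus $u$ satisfies \eqref{SK-BIHE0}; since $u$ is continuous on $(-\infty,0]$ with vanishing limit at $-\infty$, one has $\sup_{t\leq 0}\|u(t)\|_{H}^{2}<\infty$, so the uniqueness clause of Theorem \ref{SE IF TH} identifies $u$ with $X^{*}_{v}=\mathcal{G}^{0}\bigl(\int_{-\infty}^{\cdot}v(s)\,\mathrm{d}s\bigr)$.

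For the reverse inclusion I would verify that any $u=\mathcal{G}^{0}\bigl(\int_{-\infty}^{\cdot}v(s)\,\mathrm{d}s\bigr)$ with $v\in L^{2}((-\infty,0];H_{0})$ automatically satisfies $\lim_{t\to-\infty}\|u(t)\|_{H}=0$; combined with $u(0)=x$ and the fact that $u$ solves the control equation in mild form, this places $(u,v)$ in the $V$-feasible set and yields $V(x)\leq I'(x)$. To get the vanishing limit I would run the deterministic analogue of Lemma \ref{prior Xev}: differentiate $e^{2\eta t}\|u(t)\|_{H}^{2}$, apply Hypothesis \ref{Msodehy}(i), Remark \ref{condition remark}(i)--(ii), and Young's inequality to the cross term $\langle B(u)v,u\rangle_{H}$, and for $\eta,\delta>0$ sufficiently small obtain
\begin{equation*}
e^{2\eta t}\|u(t)\|_{H}^{2}\leq C\int_{-\infty}^{t}e^{2\eta s}\|v(s)\|_{H_{0}}^{2}\,\mathrm{d}s\leq C\,e^{2\eta t}\int_{-\infty}^{t}\|v(s)\|_{H_{0}}^{2}\,\mathrm{d}s,
\end{equation*}
where the boundary contribution at $-\infty$ disappears thanks to $\sup_{t}\|X^{*}_{v}(t)\|_{H}^{2}<\infty$. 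Dividing through by $e^{2\eta t}$ and using $v\in L^{2}$ yields the desired decay.

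The key technical obstacle is the limit passage in the first step: the tail integrals $\int_{-\infty}^{t}S_{A}(t-r)F(u(r))\,\mathrm{d}r$ and $\int_{-\infty}^{t}S_{A}(t-r)B(u(r))v(r)\,\mathrm{d}r$ are not absolutely integrable under merely $|S_{A}|_{L}\leq 1$ and the $L^{2}$-norm of $v$. I would overcome this by upgrading Remark \ref{condition remark}(iii) to the exponential-decay bound $|S_{A}(s)|_{L}\leq e^{-\lambda C_{1}s}$, which follows from the same variational inequality together with $C_{1}\|u\|_{H}^{2}\leq \|u\|_{V}^{2}$. Cauchy--Schwarz then dominates the $B$-integral by $C\|v\|_{L^{2}((-\infty,t];H_{0})}$, and the $F$-integral is controlled analogously by combining Hypothesis \ref{Msodehy}(ii) on finite intervals with the $e^{2\eta t}$-weighted bound on $\|u(r)\|_{H}$ established above.
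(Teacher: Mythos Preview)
Your proposal is correct and follows the same underlying idea as the paper, namely that the feasible set $\{(u,v): u=\mathcal{G}^{0}(\int_{-\infty}^{\cdot}v)\}$ coincides with $\{(u,v): u=u(v),\ \lim_{t\to-\infty}\|u(t)\|_{H}=0\}$; in fact your two-inclusion argument (with the exponential semigroup bound and the weighted energy estimate) supplies exactly the justification that the paper's one-line proof, which simply writes the chain of equalities ``by definition of $\mathcal{G}^{0}$ and \eqref{V}'', leaves implicit.
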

\begin{proof}
	It follows from the definition of $\mathcal{G}^0$ and \eqref{V} that
	\begin{equation}\label{I eq V}
		\begin{aligned}
			I'(x) =&\inf
			\left\{\frac{1}{2}\int_{-\infty}^{0}\|v(s)\|_{H_0}^2\mathrm{d}s:u
			=\mathcal{G}^0\left(\int_{-\infty}^{\cdot}v(s)\mathrm{d}s\right),u(0)=x\right\}\\
			=&\inf\left\{\frac{1}{2}\int_{-\infty}^{0}\left\| v(t)\right\|_{H_0}^{2} \mathrm{d} t;u\in C((-\infty,0];H), u=u(v),u(0)=x,
			\lim_{t\rightarrow\infty}
			\|u(t)\|_{H}=0\right\}\\
			=&\inf\left\{S_{-\infty}(u);u\in C((-\infty,0];H),u(0)=x,
			\lim_{t\rightarrow-\infty}
			\|u(t)\|_{H}=0\right\}\\
			=&V(x),\quad\forall x\in H,\  V(x)<\infty.
		\end{aligned}
	\end{equation}
\end{proof}

\begin{remark}
	Theorem \ref{stoi} states that the LDP for the family of stationary solutions can deduce the LDP for the family of invariant measures of Eq. \eqref{Msode}.
	And for the rate function $I'$ consistent with  quasi-potential $V$, $I'$ defined in Theorem \ref{stoi} gives another explain of quasi-potential.
\end{remark}
The next example illustrate that there exists two systems with different dynamical behaviors but with the same invariant measure. It implies that the LDP for the stationary solutions gives more dynamical information than the LDP of invariant measures.
\begin{example}\label{EX same in}
	Define two $2\times 2$ metrics $A_1$ and $A_2$ by
	$$
	A_1=\begin{bmatrix}
		-\lambda& 0 \\
		0 & -\lambda
	\end{bmatrix},\quad
	A_2=\begin{bmatrix}
		-\lambda& -\beta \\
		\beta & -\lambda
	\end{bmatrix}.
	$$
	We consider two stochastic equations
	\begin{equation}\label{A1}
		\mathrm{d}X=A_1X\mathrm{d}t +\sqrt{\varepsilon}\mathrm{d}B_t,
	\end{equation}
	and
	\begin{equation}\label{A2}
		\mathrm{d}X=A_2X\mathrm{d}t +\sqrt{\varepsilon}\mathrm{d}B_t.
	\end{equation}
	\begin{comment}
	Let $Z_{\varepsilon}:=\int_{\mathbb{R}^2}
	e^{\frac{-\lambda(x_1^2+x_2^2)}{\varepsilon}}\mathrm{d}x_1\mathrm{d}x_2$, then it is easy to verify that $$\nu_{\varepsilon}(\cdot)=\frac{1}{Z_{\varepsilon}}
	\int_{\cdot}e^{\frac{-\lambda(x_1^2+x_2^2)}
	{\varepsilon}}\mathrm{d}x_1\mathrm{d}x_2$$ be the unique invariant measure of equation (\ref{A}) even if $A_1$ and $A_2$ are different. With the help of the theorem of Gaussian measure satisfies LDP, we can prove that $\nu_{\varepsilon}$ satisfies LDP with rate function $\tilde{U}(X)=\lambda (x_1^2+x_2^2)$.\\
	\end{comment}
	It follows from Theorem \ref{stoi} that the family of invariant measures of Eqs. \eqref{A1} and \eqref{A2} satisfies LDP with good rate function as follows
	\begin{equation*}
		V_i(X)=\inf\left\{\frac{1}{2} \int_{-\infty}^{0}\left\| \dot{Y}(t)-A_iY(t)\right\|_{\mathbb{R}^2}^{2} \mathrm{d} t:Y(0)=X,\lim_{t\rightarrow -\infty}\|Y(t)\|_{\mathbb{R}^2}=0\right\},\quad i=1,2,
	\end{equation*}
	where $X=(x_1,x_2)$ with norm $\|X\|_{\mathbb{R}^2}^2:=x_1^2+x_2^2$, and $\dot{Y}$  be the derivative of $Y$ with respect to time $t$.\\
By using  Theorem 3.1 in \cite{FW12}, we can get that
\begin{comment}	
	Define two metrics by
	$$
	B_1=\begin{bmatrix}
		\lambda& 0 \\
		0 & \lambda
	\end{bmatrix},\quad
	B_2=\begin{bmatrix}
		\lambda& \beta \\
		-\beta & \lambda
	\end{bmatrix}.
	$$
	 %there exists $Y_i,i=1,2$
	%satisfies
%	\begin{equation*}
%		\dot{Y}_i=B_iY_i,\quad i=1,2,
%	\end{equation*}
%	and $Y(0)=X$, $\lim_{t\rightarrow -\infty}\|Y(t)\|_{\mathbb{R}^2}=0$ such that
\end{comment}
	\begin{equation*}
		V_i(X)=\lambda (x_1^2+x_2^2),\quad i=1,2.
	\end{equation*}
	It implies that the rate functions for invariant measures of  two Eqs. \eqref{A1} and \eqref{A2} are equivalent. Furthermore, let $Z_{\varepsilon}:=\int_{\mathbb{R}^2}
	e^{-\frac{\lambda(x_1^2+x_2^2)}{\varepsilon}}\mathrm{d}X$, the two Eqs. \eqref{A1} and \eqref{A2} have the same invariant measure $\nu_{\varepsilon}(\cdot)=\frac{1}{Z_{\varepsilon}}\int_{\cdot}e^{-\frac{\lambda(x_1^2+x_2^2)}{\varepsilon}}\mathrm{d}X$, for every $\varepsilon$. However, we have known that the determinate equations $\mathrm{d}X=A_1X\mathrm{d}t$ and $\mathrm{d}X=A_2X\mathrm{d}t$ with respect to Eqs. \eqref{A1} and \eqref{A2} have different asymptotic behavior.
	
	Define the stationary solutions of Eqs. \eqref{A1} and \eqref{A2} by
	\begin{equation*}
		X^{*}_{i,\varepsilon}(\cdot)=\sqrt{\varepsilon}
		\int_{-\infty}^{\cdot}e^{A_i(\cdot-s)}\mathrm{d}B_s,\quad i=1,2.
	\end{equation*}
	It follows from Theorem \ref{LP} that the family of the stationary solutions $\{X^{*}_{i,\varepsilon}\}_{\varepsilon>0}$ of Eqs. \eqref{A1} and \eqref{A2} satisfies LDP in space $C(\mathbb{R};\mathbb{R}^2)$ with different rate function
	\begin{equation*}
		I_i(Y)=\frac{1}{2} \int_{-\infty}^{+\infty}\left\| \dot{Y}(t)-A_iY(t)\right\|_{\mathbb{R}^2}^{2} \mathrm{d} t,Y\in C(\mathbb{R};\mathbb{R}^2),\quad i=1,2.
	\end{equation*}
	It implies that it is meaningful to research the LDP for stationary solutions, which  gives more dynamical information. The numerical simulation in the following graphs will give a more intuitive explanation.
	We choose $\lambda=0.3,\beta =2$, the following three graphs are the numerical approximate of solutions for Eq. \eqref{A1} correspond to the cases $\varepsilon=0,\varepsilon=0.01,\varepsilon=1$.\\
	\includegraphics[width=5cm,height=5cm]{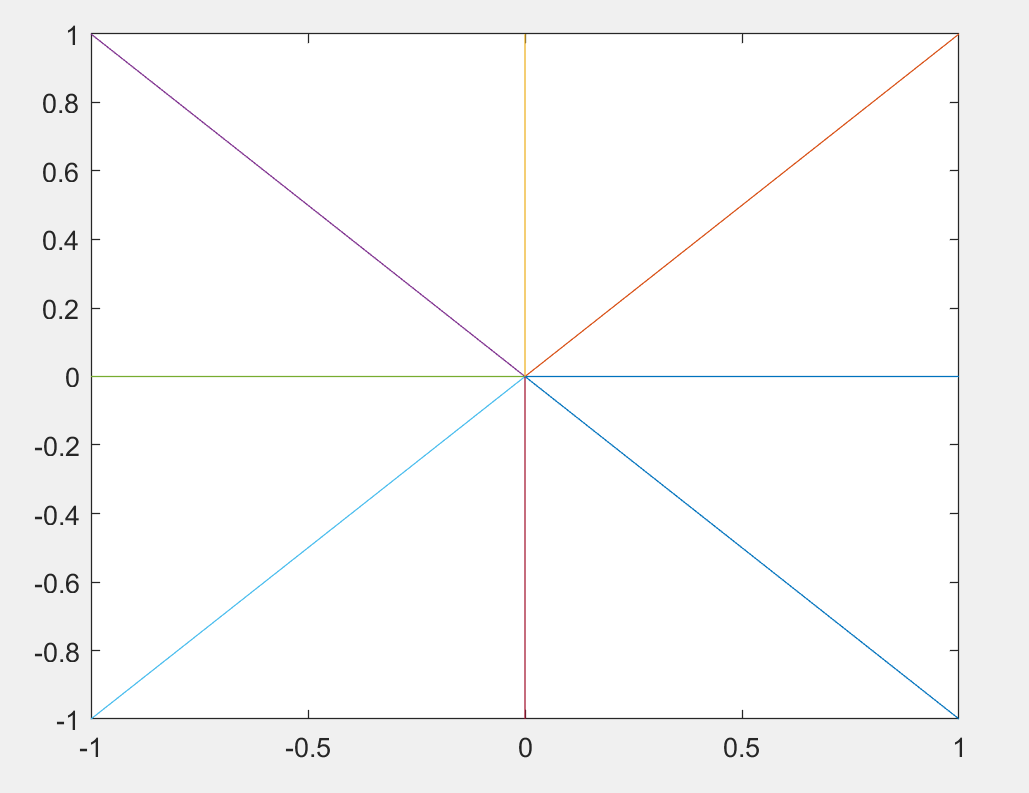}
	\includegraphics[width=5cm,height=5cm]{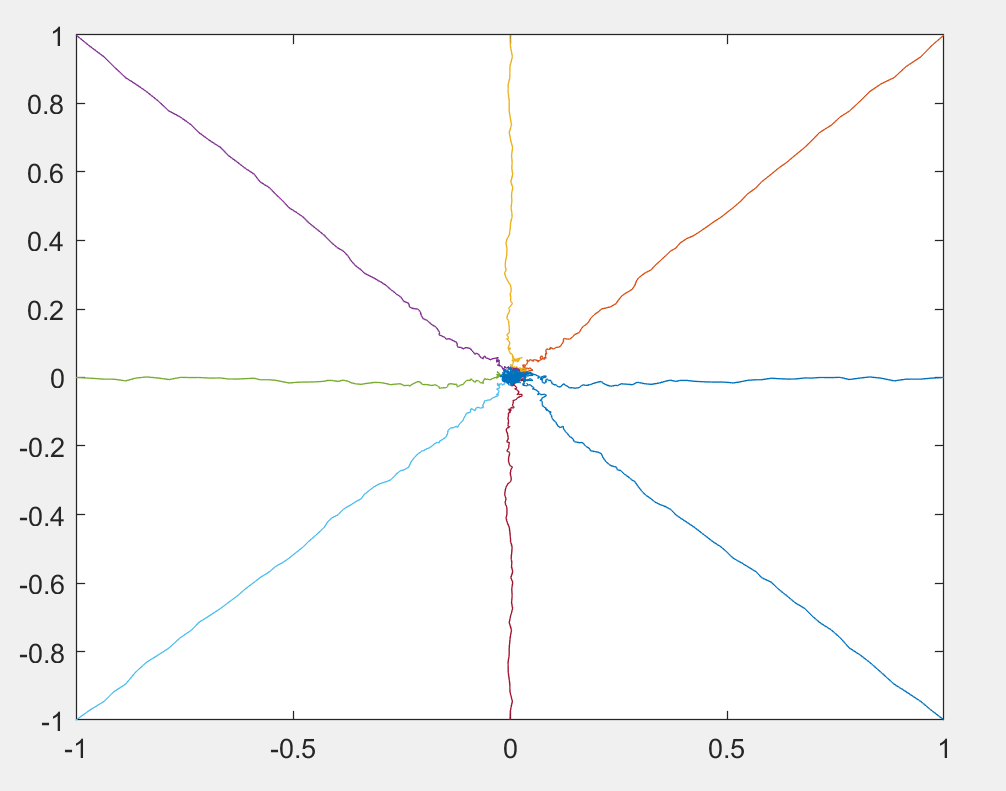}
	\includegraphics[width=5cm,height=5cm]{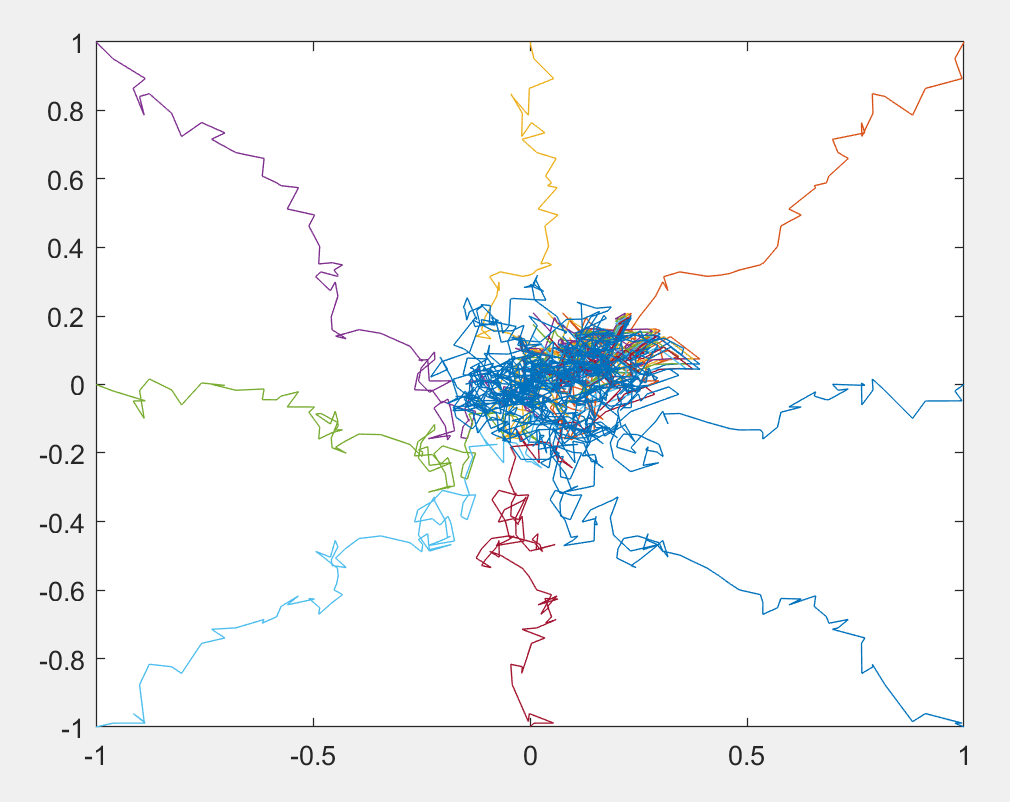}\\
	The following three graphs are the numerical approximate of solutions for Eq. \eqref{A1} correspond to the cases $\varepsilon=0,\varepsilon=0.01,\varepsilon=1$.\\
	\includegraphics[width=5cm,height=5cm]{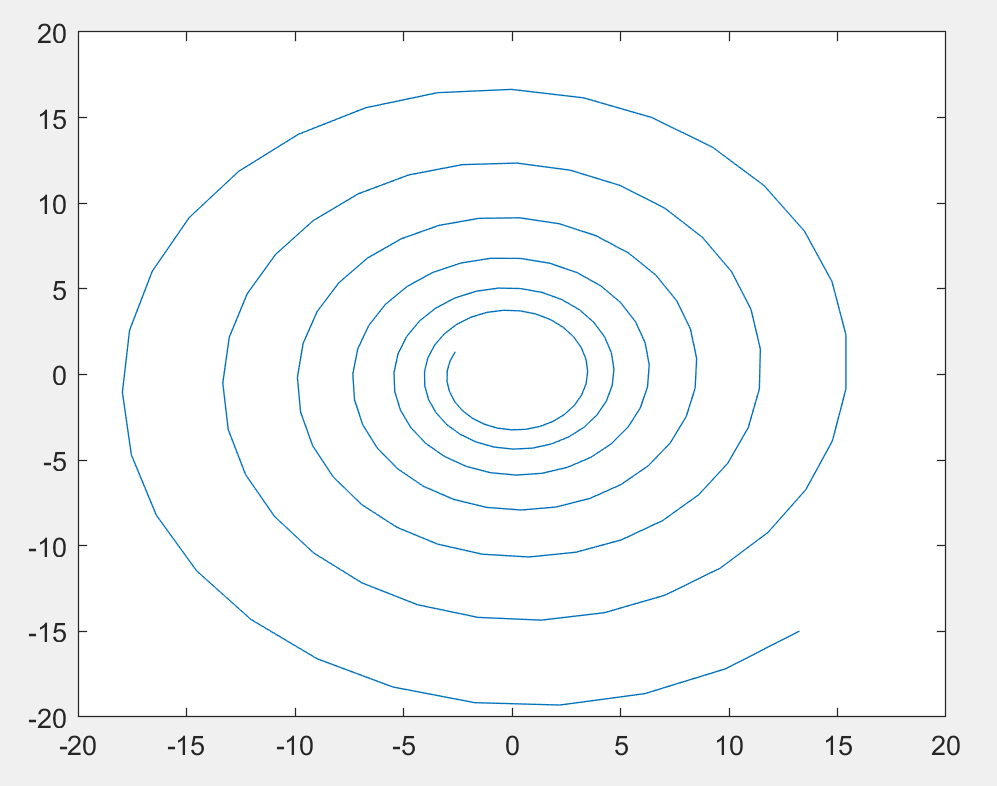}
	\includegraphics[width=5cm,height=5cm]{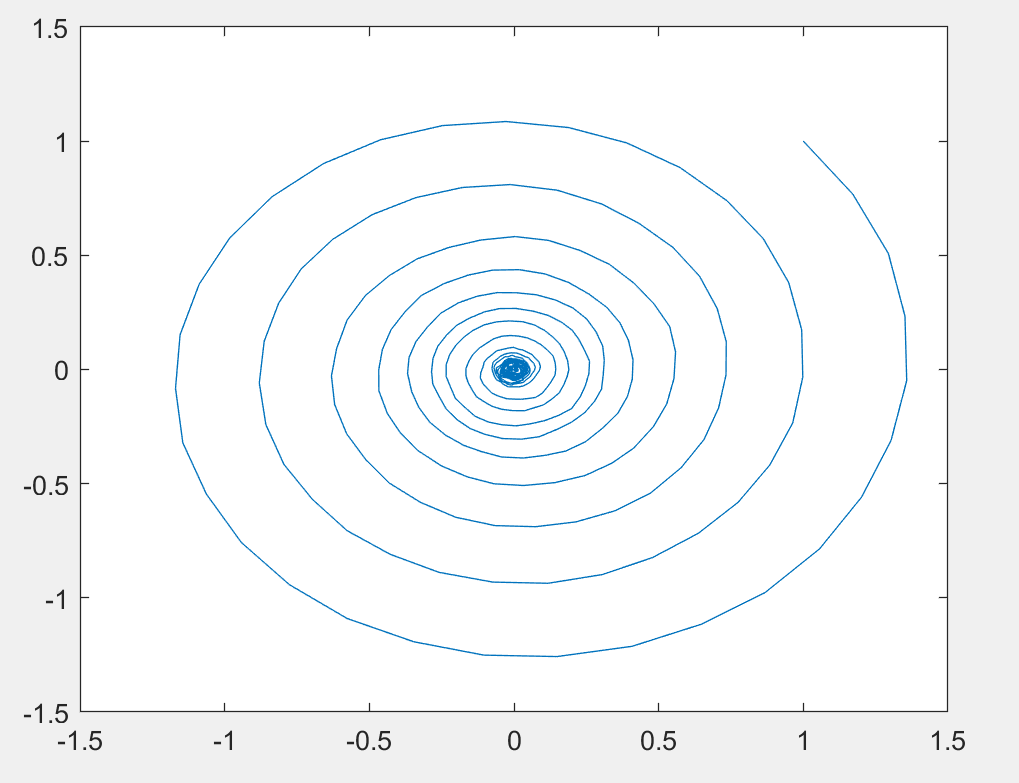}
	\includegraphics[width=5cm,height=5cm]{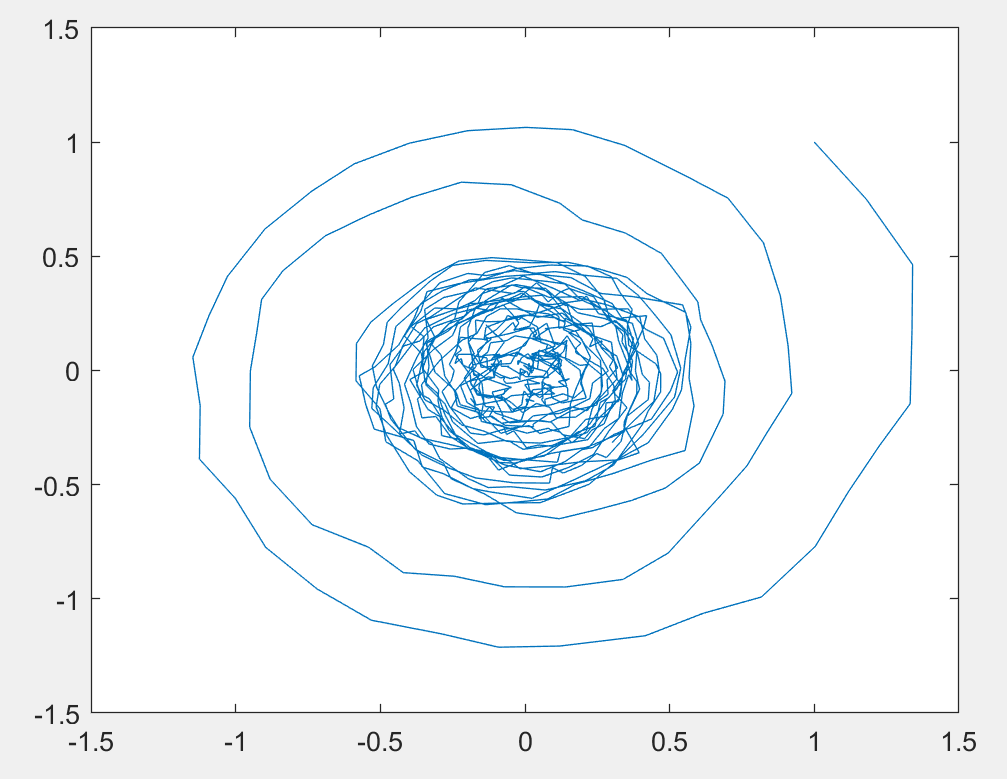}
\end{example}
Brze\'{z}niak and  Cerrai have researched the LDP for the invariant measures of the $2$-dimensional stochastic Navier-Stokes
equations on a torus in \cite{BC17}. We will consider the LDP for the solutions  of  the pullback integral equations of the $2$-dimensional stochastic Navier-Stokes
equations in the following.
\begin{example}\label{Navier-Stokes}
For convenience, we write $2$-dimensional stochastic Navier-Stokes equations perturbed by a small additive noise in a functional form shown by Brze\'{z}niak and  Cerrai in \cite{BC17}, as
\begin{equation}\label{NS}
	\mathrm{d} \textbf{u}(t)+\mathrm{A} \textbf{u}(t) \mathrm{d} t+\mathrm{B}(\textbf{u}(t), \textbf{u}(t)) \mathrm{d} t=\sqrt{\varepsilon} \mathrm{d} w_t, \quad \textbf{u}(0)=\textbf{u}_{0},
\end{equation}
for $0<\varepsilon<<1$ on a two-dimensional torus $\mathbb{T}^{2}$.

Let us recall that $A$ is the Stokes operator,  roughly speaking, equal to the Laplace operator composed with the Leray-Helmholtz projection $P$, the convection $\mathrm{B}(\textbf{u}, \textbf{u})$ is equal to $P(\textbf{u} \nabla \textbf{u})$, and $w(t)$ is a $Q$ Wiener process.\\
The space $\mathrm{H}$ is defined by
$$
\mathrm{H}=\left\{\textbf{u} \in L^{2}\left(\mathbb{T}^{2}\right),\quad \int_{\mathbb{T}^{2}} \textbf{u}(x) \mathrm{d} x=0\right\}.
$$
We also define the space $\mathrm{V}$ by setting
$$
\mathrm{V}=\left\{\textbf{u} \in \mathrm{H}: D_{j} \textbf{u} \in L^{2}\left(\mathbb{T}^{2}, \mathbb{R}^{2}\right)\right\},
$$
where $D_{j}, j=1,2$, are the 1st order weak derivatives of the torus.

The proof is similar to the proof of Mattingly in \cite{M99} and Appendix \ref{Stationary solution}, that there exists $\varepsilon_0>0$ such that for any $\varepsilon\in(0,\varepsilon_0)$, the solution of the following pullback integral equation exists and is unique.
\begin{equation}\label{NS PIE}
	\textbf{u}_{\varepsilon}^{*}(r)=\int_{-\infty}^{r} S_{A}(t-s)B\left( \textbf{u}_{\varepsilon}^{*}(s),\textbf{u}_{\varepsilon}^{*}(s)\right) \mathrm{d} s+\sqrt{\varepsilon}\int_{-\infty}^{r} S_{A}(t-s) \mathrm{d} w_{s}.
\end{equation}
Similar to Example  \ref{burgers equation}, it follows from Lemma \ref{SK-existence} and Lemma \ref{SK-uniqueness}, for any $v\in L^2(\mathbb{R};H_0)$, that there exits a unique solution $\textbf{u}^{*}_{v}$ of skeleton Eq. \eqref{SKMSNS1}, which satisfies the following equation in $H$ for any $t \in \mathbb{R}$,
\begin{equation}\label{SKMSNS1}
	\textbf{u}^{*}_{v}(t,\omega) = \int_{-\infty}^{t}S_{A}(t-r)B\left( \textbf{u}^{*}_v(s),\textbf{u}^{*}_v(s)\right) \mathrm{d}r + \int_{-\infty}^{t}S_{A}(t-r)v(r)\mathrm{d}r.
\end{equation}
We define $\mathcal{G}^0:C(\mathbb{R};H_0)\longrightarrow C(\mathbb{R};H)$ by
\begin{equation*}
	\mathcal{G}^{0}\left(\int_{-\infty}^{\cdot}v(s)\mathrm{d}s\right):
	=\textbf{u}^{*}_{v}(\cdot).
\end{equation*}

By using Theorem \ref{stationary LDP},
the family  $\{\textbf{u}^{*}_\varepsilon:\varepsilon>0\}$ satisfies the LDP in $C(\mathbb{R};H)$ with rate function
\begin{equation*}
	I(f)=\inf_{\left\{v\in L^2(\mathbb{R};H_0):f
		=\mathcal{G}^0(\int_{-\infty}^{\cdot}v(s)\mathrm{d}s)\right\}}
	\left\{\frac{1}{2}\int_{-\infty}^{+\infty}\|v(s)\|_{H_0}^2\mathrm{d}s\right\},
\end{equation*}
where the infimum over an empty set is taken as $+\infty$.

For any $\varepsilon\in(0,\varepsilon_0)$, although we have not proved the solutions of stochastic Navier-Stokes
Eq. \eqref{NS} form a $C^{k}$ perfect cocycle,  the distribution of the solution for Eq. \eqref{NS PIE} is also the unique invariant measure of Eq. \eqref{NS}.
Moreover, it follows from Theorem \ref{stoi} that the family of invariant measures of Eq. \eqref{NS} satisfies LDP with rate function
\begin{equation}\label{NSI'}
	\begin{aligned}
		I'(x)=\inf
		\left\{\frac{1}{2}\int_{-\infty}^{0}\|v(s)\|_{H_0}^2\mathrm{d}s:\textbf{u}
		=\mathcal{G}^0\left(\int_{-\infty}^{\cdot}v(s)\mathrm{d}s\right),\textbf{u}(0)=x\right\}.
	\end{aligned}
\end{equation}

It follows from the definition of $\mathcal{G}^0$ that $\textbf{u}^{*}_{v}(\cdot)$ is a very weak solution (cf. Definition 3.4 in \cite{BC17}) of Eq. \eqref{NS}. And similar to the Definition 3.6 of \cite{BC17}, we define $\mathcal{H}(\textbf{u})$ by
\begin{equation}\label{hu}
	[\mathcal{H}(\textbf{u})](t):=\dot{\textbf{u}}^{*}_{v}(t)+\mathrm{A} \textbf{u}^{*}_{v}(t)+\mathrm{B}(\textbf{u}^{*}_{v}(t), \textbf{u}^{*}_{v}(t)), \quad t \in\mathbb{R},
\end{equation}
where $\dot{\textbf{u}}^{*}_{v}(t)$  be the derivative of $\textbf{u}^{*}_{v}$ with respect to time $t$.\\
And
\begin{equation}\label{NS S}
	S_{-\infty}(\textbf{u}):=\frac{1}{2}\int_{-\infty}^{0}\|\mathcal{H}(\textbf{u})(s)\|_{H_0}^2\mathrm{d}s.
\end{equation}

Combining Eqs. \eqref{NSI'}, \eqref{hu} and \eqref{NS S}, we get that
\begin{equation*}
	I'(x)=\inf
	\left\{\frac{1}{2}\int_{-\infty}^{0}\|\mathcal{H}(\textbf{u})(s)\|_{H_0}^2
	\mathrm{d}s:\textbf{u}(0)=x\right\}=\inf
	\Big\{S_{-\infty}(\textbf{u}):\textbf{u}(0)=x\Big\},
\end{equation*}
which is equal to the rate function defined by quasi-potential in (4.13) of \cite{BC17}.
\end{example}
\section{Proofs of the well-posedness of skeleton equation in infinite intervals and stationary solution}\label{Supplementary proofs of skeleton equation and stationary solution}
This section is divided into two subsections, in the first subsection we prove Theorem \ref{SE IF TH} about the well-posedness of the skeleton Eq. \eqref{skeleton equation}. In the second subsection we prove Theorem \ref{Mstationary solution} about the existence and uniqueness of stationary solution $X^{*}_{\varepsilon}$ for  Eq. \eqref{Msode}.
\subsection{The well-posedness of
	the skeleton equation}\label{The well-posedness of
	the skeleton equation}
Similar to Theorem 4.4 of Sritharan and Sundar \cite{SP06}, we could have the following lemma.
\begin{lemma}
	For any $T>0$, $X(0)\in H$ and $v\in S_M$, for some $M<\infty$, under Hypothesis \ref{Msodehy}, there exists a unique mild solution $X$  of Eq. \eqref{skeleton equation} with initial value $X(0)$ such that
	\begin{equation*}
		X(t) =S_A(t)X(0)+ \int_{0}^{t}S_A(t-r)F(X(r))\mathrm{d}r + \int_{0}^{t}S_{A}(t-r) B (X(r))v(r)\mathrm{d}r, \quad\forall t\in [0,T]
	\end{equation*}
	in space $C([0,T];H)$.
\end{lemma}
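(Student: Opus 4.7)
The plan is to follow the variational-and-semigroup framework of Sritharan--Sundar \cite{SP06}, adapted to the abstract setting of Hypothesis \ref{Msodehy}. The idea is to construct the solution at the variational level in $L^2(0,T;V) \cap C([0,T];H)$ via a Galerkin approximation and then to verify that this variational solution also satisfies the mild formulation displayed in the statement. Fix a Galerkin basis of $V$, let $P_n$ denote the projections onto the finite-dimensional subspaces $V_n$, and solve the finite-dimensional ODE
\begin{equation*}
\dot X_n(t) = P_n A X_n(t) + P_n F(X_n(t)) + P_n B(X_n(t)) v(t), \quad X_n(0) = P_n X(0),
\end{equation*}
which is well-posed on $V_n$ by Cauchy--Lipschitz, since $B$ is Lipschitz and bounded by Hypothesis \ref{Msodehy}(iii). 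Testing against $X_n$ and using the coercivity in Hypothesis \ref{Msodehy}(i), together with the bound $\|B(X_n)v\|_H \leq D \|v\|_{H_0}$ coming from Remark \ref{condition remark}(ii), yields the uniform a priori estimate
\begin{equation*}
\sup_{t\in[0,T]} \|X_n(t)\|_H^2 + 2\lambda \int_0^T \|X_n(r)\|_V^2\, \mathrm{d} r \leq C(\|X(0)\|_H, M, T),
\end{equation*}
which in particular extends the local solution to all of $[0,T]$.

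The next step is the passage to the limit $n \to \infty$. Weak compactness in $L^2(0,T;V)$ and weak-$*$ compactness in $L^\infty(0,T;H)$ furnish a candidate limit $X$; Aubin--Lions promotes this to strong convergence in $L^2(0,T;H)$, which is exactly what is needed to pass to the limit in $F(X_n)$ through Hypothesis \ref{Msodehy}(ii) and in $B(X_n)v$ through Hypothesis \ref{Msodehy}(iii) combined with the weak convergence of the products. The limit satisfies the equation in the $V^*$-sense, and the conversion to the mild form is then routine: convolve the variational equation with $S_A$, apply Fubini, and use the semigroup identity $S_A(t-r) = S_A(t-s) S_A(s-r)$, giving precisely the identity stated.

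For uniqueness, take two solutions $X_1, X_2$ with the same initial value, set $w := X_1 - X_2$, and invoke the first inequality in Hypothesis \ref{Msodehy}(i):
\begin{equation*}
\tfrac{1}{2}\frac{\mathrm{d}}{\mathrm{d}t}\|w\|_H^2 + \lambda \|w\|_V^2 \leq C_0 \|w\|_H^2 \|X_1\|_V^2 + \|(B(X_1) - B(X_2))v\|_H \|w\|_H.
\end{equation*}
The last term is controlled by $\beta \|w\|_H^2 \|v\|_{H_0}$ via Hypothesis \ref{Msodehy}(iii) and Remark \ref{condition remark}(ii) (after Young's inequality absorbs a $\|w\|_V$ into the dissipative term, as in Lemma \ref{verify weak condition 2}). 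Since the weight $\|X_1\|_V^2 + \|v\|_{H_0}^2$ lies in $L^1(0,T)$ by the a priori estimate together with $v \in S_M$, Gronwall yields $w \equiv 0$.

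The step I expect to be the main obstacle is the passage to the limit in the nonlinear drift $F(X_n) \to F(X)$: $F$ is only defined on $V$, and a priori we only have weak convergence in $L^2(0,T;V)$. Hypothesis \ref{Msodehy}(ii) is designed exactly to bypass this issue, since after convolution with the smoothing semigroup $S_A$, the map $F$ becomes Lipschitz continuous in the strong $L^2(0,T;H)$ topology up to the integrable singular weight $s^\alpha$, $-1<\alpha<0$. This simultaneously closes the fixed-point iteration in $C([0,T_0];H)$ on short intervals $[0,T_0]$ (giving an alternative, shorter route to existence under the same hypothesis) and identifies the Galerkin limit with the mild solution, completing the proof.
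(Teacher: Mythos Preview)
Your proposal is correct and aligns with the paper's approach: the paper does not give an explicit proof but simply invokes ``Similar to Theorem 4.4 of Sritharan and Sundar \cite{SP06}'', and your Galerkin scheme with the coercivity estimate from Hypothesis \ref{Msodehy}(i), compactness passage, and Gronwall uniqueness is precisely the Sritharan--Sundar template specialized to this abstract setting. The only minor wrinkle is the parenthetical about absorbing a $\|w\|_V$ in the uniqueness step---since $\|(B(X_1)-B(X_2))v\|_H\|w\|_H \le \beta\|w\|_H^2\|v\|_{H_0}$ already involves no $V$-norm, you can feed $C_0\|X_1\|_V^2 + \beta\|v\|_{H_0}$ (which is in $L^1(0,T)$) directly into Gronwall without that extra Young step.
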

Before proving Theorem \ref{SE IF TH}, we illustrate some lemmas below.
\begin{lemma}\label{Piror of skeleton equation}
	(Priori estimate of skeleton equation) For any $t_0\in\mathbb{R}$,  $X(t_0)\in H$, $v\in S_M$, for some $M<\infty$, under the Hypothesis \ref{Msodehy}, there exists a constant $C$, which only depend on $X(t_0)$, $M$, $\lambda$, $C_1$, $D$ such that the unique mild solution $X$  of Eq. \eqref{skeleton equation} with initial value $X(0)$ at time $t_0$ satisfies
	\begin{equation*}
		\|X(t)\|_{H}^2+\lambda\int_{t_0}^{t}\|X\|_{V}^2\mathrm{d}s\leq C,\quad\forall t\geq t_0,
	\end{equation*}
	where $\lambda$ is defined in Hypothesis \ref{Msode} (i), $C_1$ and $D$ are defined in Remark \ref{condition remark} (i) and (ii) respectively.
\end{lemma}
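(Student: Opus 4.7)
The plan is to carry out a standard energy estimate in the Gelfand triple $(V, H, V^*)$. Since the skeleton Eq.~\eqref{skeleton equation} is deterministic and the noise-free counterpart of Eq.~\eqref{Msode}, the chain rule for $\|X(t)\|_H^2$ is available (this is the deterministic analogue of the It\^o formula already used in Lemma~\ref{prior Xev}). Writing $X$ as the mild solution of \eqref{skeleton equation}, I would first test the equation against $X(t)$ in the $V^*\text{-}V$ duality to obtain
\begin{equation*}
\frac{1}{2}\frac{\mathrm{d}}{\mathrm{d}t}\|X(t)\|_H^2
= {}_{V^*}\langle AX(t)+F(X(t)),\,X(t)\rangle_V + \langle B(X(t))v(t),\,X(t)\rangle_H.
\end{equation*}

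Next I would apply the second inequality of Hypothesis~\ref{Msodehy}(i) to bound the first term by $-\lambda\|X(t)\|_V^2$, and for the control term I would use Cauchy--Schwarz together with Remark~\ref{condition remark}(ii):
\begin{equation*}
\langle B(X)v,X\rangle_H \le |B(X)|_{L_Q}\|v\|_{H_0}\|X\|_H \le D\,\|v\|_{H_0}\|X\|_H.
\end{equation*}
A Young-inequality split $D\|v\|_{H_0}\|X\|_H \le \tfrac{\lambda C_1}{2}\|X\|_H^2 + \tfrac{D^2}{2\lambda C_1}\|v\|_{H_0}^2$ combined with the Poincar\'e-type embedding $C_1\|X\|_H^2 \le \|X\|_V^2$ from Remark~\ref{condition remark}(i) then absorbs half of the $\|X\|_V^2$ coefficient, giving
\begin{equation*}
\frac{\mathrm{d}}{\mathrm{d}t}\|X(t)\|_H^2 + \lambda\|X(t)\|_V^2 \le \frac{D^2}{\lambda C_1}\|v(t)\|_{H_0}^2.
\end{equation*}

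Integrating this differential inequality from $t_0$ to any $t\ge t_0$, and using the fact that $v\in S_M$ forces $\int_{t_0}^t\|v(s)\|_{H_0}^2\,\mathrm{d}s \le M$, I obtain the claimed uniform estimate with
\begin{equation*}
C := \|X(t_0)\|_H^2 + \frac{D^2 M}{\lambda C_1}.
\end{equation*}
There is no genuine obstacle here beyond bookkeeping: because Hypothesis~\ref{Msodehy}(iii) already delivers a \emph{uniform} bound $D$ on $|B(\mathbf u)|_{L_Q}$ (rather than linear growth), no Gronwall step is required and the constant depends only on $X(t_0)$, $M$, $\lambda$, $C_1$ and $D$, as claimed. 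The only subtlety worth remarking is the justification of the chain rule for $\|X\|_H^2$ when $X$ is obtained as a mild solution; this can be done either by passing through the variational formulation guaranteed by Hypothesis~\ref{Msodehy} and the Krylov--Rozovskii framework of \cite{LW15}, or by first establishing the estimate for a Galerkin/regularized approximation of $v$ and then passing to the limit using weak lower semicontinuity of the $V$-norm.
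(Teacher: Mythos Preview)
Your proof is correct and follows essentially the same energy-estimate strategy as the paper: differentiate $\|X\|_H^2$, invoke the second inequality of Hypothesis~\ref{Msodehy}(i), bound the control term via $|B(X)|_{L_Q}\le D$ and Young's inequality, and use the embedding $C_1\|X\|_H^2\le\|X\|_V^2$. The only cosmetic difference is that the paper keeps a generic Young parameter $\delta$, applies Gronwall, and then observes the exponent $\delta-\lambda C_1$ is negative for $\delta$ small; your choice of the Young constant as $\tfrac{\lambda C_1}{2}$ absorbs the $\|X\|_H^2$ term directly and renders the Gronwall step unnecessary, which is a slight streamlining but not a different idea.
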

\begin{proof}
	It follows from the Hypothesis \ref{Msodehy} (i), Remark \ref{condition remark} (ii) and Young inequality,
	\begin{align*}
		&\frac{\mathrm{d}\|X\|_{H}^2}{\mathrm{d}t}
		=2\Big\langle X,\frac{\mathrm{d}X}{\mathrm{d}t}\Big\rangle_{H}
		=2_{V^*}\langle AX+F(X),X \rangle_V+2\langle  B (X)v,X\rangle_H\\
		\leq&-2\lambda\|X\|_{V}^2+2| B (X)|_{L_Q}\|v\|_{H_0}\|X\|_{H}
		\leq-2\lambda\|X\|_{V}^2+2D\|v\|_{H_0}\|X\|_{H}\\
		\leq& -\lambda\|X\|_{V}^2+(\delta-\lambda C_1)\|X\|_{H}^2
		+\frac{D^2}{\delta}\|v\|_{H_0}^2,
	\end{align*}
	by taking integral and since $v\in S_M$, it implies that
	\begin{equation*}
		\begin{aligned}
			\|X(t)\|_{H}^2+\lambda\int_{t_0}^{t}\|X\|_{V}^2\mathrm{d}s
			\leq&\|X(t_0)\|_{H}^2  +(\delta-\lambda C_1)\int_{t_0}^{t}\|X\|_{H}^2\mathrm{d}s
			+\frac{D^2}{\delta}\int_{-\infty}^{+\infty}\|v\|_{H_0}^2\mathrm{d}s\\
			\leq& \|X(t_0)\|_{H}^2+ \frac{D^2M}{\delta}+(\delta-\lambda C_1)\int_{t_0}^{t}\|X\|_{H}^2\mathrm{d}s.
		\end{aligned}
	\end{equation*}
	By using the Gronwall inequality, we then obtain
	\begin{equation*}
		\begin{aligned}
			\|X(t)\|_{H}^2+\lambda\int_{t_0}^{t}\|X\|_{V}^2\mathrm{d}s\leq &\Big(\left\|X(t_0)\right\|_{H}^2
			+\frac{D^2M}{\delta}\Big)\exp\{(\delta-\lambda C_1)(t-t_0)\}.
		\end{aligned}
	\end{equation*}
	After choosing $\delta$ small enough such that
	$\delta<\lambda C_1$, then
	\begin{equation*}
		\|X(t)\|_{H}^2+\int_{t_0}^{t}\|X\|_{V}^2\mathrm{d}s\leq \left\|X(t_0)\right\|_{H}^2+\frac{D^2M}{\delta}.
	\end{equation*}
\end{proof}
The following lemmas are aim to study the asymptotic stability of dynamical systems, which is the basis of the definition  $\mathcal{G}^0$.
For any  $M>0$, $t_0\in\mathbb{R}$, $t\geq t_0$, $X_0,\hat{X}_0 \in H$ and $v\in S_M$. Let $X(t,t_0;\omega)X_0$ and $\hat{X}(t,t_0;\omega)\hat{X}_0$ denote the solutions of Eq. \eqref{skeleton equation} starting at different initial value $X_0$ and $\hat{X}_0 $ at time $t_0$,  respectively. For convenience, let $\rho(t,t_0;X_0,\hat{X}_0) = X(t,t_0;\omega)X_0-\hat{X}(t,t_0;\omega)\hat{X}_0$.
\begin{lemma}\label{ltDBE}
	Let $\Gamma(l,t_0;X) = \lambda C_1-\eta -C_0(\frac{1}{l}\int_{t_0}^{t_0+l}\left\|X(s)\right \|_{V}^2\mathrm{d}s)$, then
	\begin{equation*}
		\left \|\rho(t,t_0;X_0,\hat{X}_0)\right\|
		_{H}^2 \leq e^{-2\Gamma(t-t_0,t_0;X)(t-t_0)} \Big(\Big\|X_0 -\hat{X}_0 \Big\|_{H}^2+\frac{2D^2M}{\eta}\Big),
	\end{equation*}
	where $\lambda$ and $C_0$ are defined in Hypothesis \ref{Msode} (i), $C_1$ and $D$ are defined in Remark \ref{condition remark} (i) and (ii) respectively.
\end{lemma}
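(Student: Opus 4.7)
The plan is to derive a first-order differential inequality for $\|\rho(t)\|_H^2$ by subtracting the two skeleton equations, and then to apply a Gronwall-type argument to obtain the exponential decay estimate. Subtracting the equations for $X$ and $\hat X$, the difference $\rho = X - \hat X$ satisfies
\begin{equation*}
\frac{\mathrm{d}\rho}{\mathrm{d}t} = A\rho + \bigl(F(X) - F(\hat X)\bigr) + \bigl(B(X) - B(\hat X)\bigr) v, \qquad \rho(t_0) = X_0 - \hat X_0,
\end{equation*}
in the Gelfand triple $(V, H, V^*)$, with the same control $v \in S_M$.

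Differentiating $\|\rho(t)\|_H^2$ via the chain rule for the Gelfand triple, I would handle the drift contribution using the first bound in Hypothesis \ref{Msodehy} (i), which yields
\begin{equation*}
{_{V^*}}\langle A\rho + F(X) - F(\hat X), \rho\rangle_V \leq -\lambda \|\rho\|_V^2 + C_0 \|\rho\|_H^2 \|X\|_V^2.
\end{equation*}
For the forcing contribution I would use the uniform bound $|B(u)|_{L_Q} \leq D$ from Remark \ref{condition remark} (ii), which gives $|B(X) - B(\hat X)|_{L_Q} \leq 2D$, together with Cauchy--Schwarz and Young's inequality with parameter $\eta > 0$:
\begin{equation*}
\langle (B(X) - B(\hat X))v, \rho\rangle_H \leq 2D\|v\|_{H_0}\|\rho\|_H \leq \eta\|\rho\|_H^2 + \frac{D^2}{\eta}\|v\|_{H_0}^2.
\end{equation*}
Combining these estimates and using $\|\rho\|_V^2 \geq C_1\|\rho\|_H^2$ from Remark \ref{condition remark} (i) produces the differential inequality
\begin{equation*}
\frac{\mathrm{d}}{\mathrm{d}t}\|\rho(t)\|_H^2 \leq -2\bigl(\lambda C_1 - \eta - C_0\|X(t)\|_V^2\bigr)\|\rho(t)\|_H^2 + \frac{2D^2}{\eta}\|v(t)\|_{H_0}^2.
\end{equation*}

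Finally, setting $\gamma(s) = 2(\lambda C_1 - \eta - C_0\|X(s)\|_V^2)$, I would multiply by the integrating factor $\exp\bigl(\int_{t_0}^t \gamma(s)\,\mathrm{d}s\bigr)$ and integrate over $[t_0, t]$, then use $v \in S_M$ so that $\int_{t_0}^t \|v(s)\|_{H_0}^2\,\mathrm{d}s \leq M$. Since by construction $\int_{t_0}^t \gamma(s)\,\mathrm{d}s = 2\Gamma(t-t_0, t_0; X)\cdot(t-t_0)$, the exponential decay factor emerges in exactly the stated form. The delicate point is the bookkeeping that isolates the single factor $e^{-2\Gamma(t-t_0,t_0;X)(t-t_0)}$ in front of both the initial difference $\|X_0 - \hat X_0\|_H^2$ and the constant $\frac{2D^2 M}{\eta}$: this requires absorbing $\int_{t_0}^t e^{-\int_s^t \gamma(r)\,\mathrm{d}r}\|v(s)\|_{H_0}^2\,\mathrm{d}s$ into $e^{-\int_{t_0}^t \gamma(r)\,\mathrm{d}r}\cdot M$ times a harmless constant. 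In the decay regime $\Gamma > 0$ (which is the case relevant for the asymptotic-stability applications that follow) the rearrangement is immediate, while for $\Gamma \leq 0$ the exponential factor is at least $1$ and the inequality reduces to the a priori estimate of Lemma \ref{Piror of skeleton equation}.
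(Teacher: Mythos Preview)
Your derivation of the differential inequality is correct and essentially identical to the paper's: same use of Hypothesis~\ref{Msodehy}~(i), the embedding $\|\rho\|_V^2\ge C_1\|\rho\|_H^2$, the bound $|B(X)-B(\hat X)|_{L_Q}\le 2D$, and Young's inequality with parameter~$\eta$.

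The gap is in your last step. After the integrating factor you have
\[
\|\rho(t)\|_H^2 \;\le\; e^{-\int_{t_0}^t\gamma}\,\|\rho(t_0)\|_H^2 \;+\; \frac{2D^2}{\eta}\int_{t_0}^t e^{-\int_s^t\gamma(r)\,\mathrm{d}r}\,\|v(s)\|_{H_0}^2\,\mathrm{d}s,
\]
and to reach the stated form you need $\int_{t_0}^t e^{\int_{t_0}^s\gamma}\,\|v(s)\|_{H_0}^2\,\mathrm{d}s\le M$. In the ``decay regime'' $\Gamma>0$ the weight $e^{\int_{t_0}^s\gamma}$ is on average $\ge 1$, so this can fail (take $\gamma$ constant and positive with $\|v\|_{H_0}^2$ concentrated near $s=t$); the rearrangement is \emph{not} immediate there. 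Your fallback to Lemma~\ref{Piror of skeleton equation} when $\Gamma\le 0$ does not give the stated bound either, since the a~priori constant in that lemma is unrelated to $\|X_0-\hat X_0\|_H^2+2D^2M/\eta$.

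The paper sidesteps this completely by reversing the order of operations: it first integrates the differential inequality over $[t_0,t]$ \emph{without} an integrating factor, then immediately uses $v\in S_M$ to replace the forcing integral by the constant $\frac{2D^2}{\eta}\int_{-\infty}^{\infty}\|v\|_{H_0}^2\,\mathrm{d}s\le \frac{2D^2M}{\eta}$, obtaining
\[
\|\rho(t)\|_H^2 \;\le\; \Big(\|\rho(t_0)\|_H^2+\tfrac{2D^2M}{\eta}\Big)\;-\;2\int_{t_0}^{t}\!\big(\lambda C_1-\eta-C_0\|X(s)\|_V^2\big)\|\rho(s)\|_H^2\,\mathrm{d}s.
\]
A single application of Gronwall to this integral inequality (constant plus $\int a(s)u(s)\,\mathrm{d}s$) then yields the factorised bound $e^{-2\Gamma(t-t_0,t_0;X)(t-t_0)}\big(\|\rho(t_0)\|_H^2+\tfrac{2D^2M}{\eta}\big)$ directly, with no further bookkeeping. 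Absorbing the forcing into the constant \emph{before} Gronwall is the move you are missing.
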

\begin{proof}
	It follows from the Hypothesis \ref{Msodehy} (i), Remark \ref{condition remark} (i), (ii) and Young inequality,
	\begin{align*}
		\frac{1}{2}\frac{\mathrm{d}}{\mathrm{d}t}\left\|\rho(t)\right\|_{H}^2&=_V\langle \rho, AX-A\hat{X}+F(X)-F(\hat{X})\rangle_{V^*}+\langle B (X)v- B (\hat{X})v,\rho\rangle_H\nonumber\\
		&\leq - \lambda\left\|\rho\right\|_{V}^2 +C_0\left\|\rho\right\|_{H}^2\left\|X\right\|_{V}^2 +2D\|v\|_{H_0}\|\rho\|_{H}\nonumber\\
		&\leq -\lambda C_1\left\|\rho\right\|_{H}^2 + C_0\left\|\rho\right\|_{H}^2\left\|X\right\|_{V}^2 +\eta\|\rho\|_{H}^2+\frac{D^2}{\eta}\|v\|_{H_0}^2 \nonumber \\
		&\leq -\Big[\lambda C_1 - C_0\left\|X\right\|_{V}^2-\eta\Big]\left\|\rho\right\|_{H}^2+\frac{D^2}{\eta}\|v\|_{H_0}^2, \nonumber\\        		
	\end{align*}
	after taking integral, it follows that
	\begin{equation*}
		\left\|\rho(t)\right\|_{H}^2\leq \left\|\rho(0)\right\|_{H}^2+\frac{2D^2}{\eta}\int_{-\infty}^{+\infty}\|v\|_{H_0}^2\mathrm{d}s
		-2\int_{t_0}^{t}\Big[\lambda C_1 - C_0\left\|X\right\|_{V}^2-\eta\Big]\left\|\rho\right\|_{H}^2\mathrm{d}s,
	\end{equation*}
	by the Gronwall inequality, we can get the desired result.
\end{proof}
\begin{corollary}\label{asdbe}
	For any  $\lambda_0\in(0,\lambda C_1)$, there exists a constant $N_0$ such that for any $t-t_0>N_0$,
	\begin{equation*}\label{ec}
		\left\|X(t,t_0;X_0)-\hat{X}(t,t_0;\hat{X}_0)\right\|_{H}^2    =\left \|\rho(t,t_0;X_0,\hat{X}_0)\right\|
		_{H}^2\leq e^{-2\lambda_0(t-t_0)}\Big( \left\|X_0 -\hat{X}_0 \right\|_{H}^2+\frac{2D^2M}{\eta}\Big),
	\end{equation*}
	which imply that
	\begin{equation*}\label{ltc}
		\lim_{t\rightarrow +\infty}\left\|X(t,t_0;X_0)-\hat{X}(t,t_0;\hat{X}_0)\right\|_{H}    =0.
	\end{equation*}
\end{corollary}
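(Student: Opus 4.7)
The plan is to deduce the corollary from Lemma \ref{ltDBE} by showing that the pathwise Lyapunov-type exponent $\Gamma(t-t_0, t_0; X)$ eventually exceeds $\lambda_0$. Concretely, Lemma \ref{ltDBE} already yields
\begin{equation*}
\|\rho(t,t_0;X_0,\hat{X}_0)\|_H^2 \leq e^{-2\Gamma(t-t_0,t_0;X)(t-t_0)}\Big(\|X_0-\hat{X}_0\|_H^2+\frac{2D^2M}{\eta}\Big),
\end{equation*}
so it suffices to prove that, for a suitable choice of the free parameter $\eta>0$ and all sufficiently large $l:=t-t_0$,
\begin{equation*}
\Gamma(l,t_0;X) \;=\; \lambda C_1 - \eta - \frac{C_0}{l}\int_{t_0}^{t_0+l}\|X(s)\|_V^2\,\mathrm{d}s \;\geq\; \lambda_0.
\end{equation*}

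First I would fix $\lambda_0 \in (0,\lambda C_1)$ and pick $\eta>0$ small enough that $\lambda C_1 - \eta - \lambda_0 > 0$; call this positive gap $\kappa$. The key input is the a priori estimate from Lemma \ref{Piror of skeleton equation}, which provides a constant $C=C(X_0,M,\lambda,C_1,D)$ with
\begin{equation*}
\lambda\int_{t_0}^{t}\|X(s)\|_V^2\,\mathrm{d}s \leq C \qquad \text{for every } t\geq t_0.
\end{equation*}
Consequently $\frac{1}{l}\int_{t_0}^{t_0+l}\|X(s)\|_V^2\,\mathrm{d}s \leq \frac{C}{\lambda l}$, which tends to $0$ as $l\to\infty$. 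Choosing $N_0 := \frac{C_0 C}{\lambda \kappa}$ guarantees that for every $l>N_0$ the time-averaged energy term is bounded by $\kappa/C_0$, hence $\Gamma(l,t_0;X)\geq \lambda_0$.

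Substituting this lower bound into the estimate from Lemma \ref{ltDBE} immediately produces
\begin{equation*}
\|\rho(t,t_0;X_0,\hat{X}_0)\|_H^2 \leq e^{-2\lambda_0(t-t_0)}\Big(\|X_0-\hat{X}_0\|_H^2+\frac{2D^2M}{\eta}\Big),
\end{equation*}
which is the claimed inequality. The final asymptotic statement $\lim_{t\to\infty}\|\rho(t,t_0;X_0,\hat{X}_0)\|_H=0$ follows at once by letting $t\to\infty$, since $\lambda_0>0$ and the prefactor on the right is a finite constant depending only on the initial data and on $M,D,\eta$.

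There is really no hard step here; the entire proof amounts to combining two previously established lemmas. The only mild subtlety to be careful about is the order in which parameters are chosen: one must first fix $\lambda_0$, then $\eta$ (making the gap $\kappa$ strictly positive), and only afterwards select $N_0$ using the a priori bound, whose constant depends on $\eta$ through its appearance in Lemma \ref{Piror of skeleton equation}'s proof via Young's inequality. Once this ordering is respected, the constants in the final estimate are well defined.
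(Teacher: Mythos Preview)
Your proposal is correct and follows essentially the same approach as the paper: combine the a priori estimate of Lemma \ref{Piror of skeleton equation} with the decay bound of Lemma \ref{ltDBE}, choose $\eta$ small and then $N_0$ large so that $\Gamma(l,t_0;X)\geq\lambda_0$ for all $l>N_0$. One minor remark: the constant $C$ in Lemma \ref{Piror of skeleton equation} does not actually depend on the $\eta$ of Lemma \ref{ltDBE} (the Young-inequality parameter there is an independent internal $\delta$), so your caveat about the ordering of parameters is unnecessary, but this does not affect the validity of your argument.
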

\begin{proof}
	Combining Lemma \ref{Piror of skeleton equation} and Lemma \ref{ltDBE}, we could choose $\eta$ small enough and $N_0$ big enough such that $\Gamma(t-t_0,t_0;X)>\lambda_0, \forall t-t_0>N_0$, it implies the conclusion.
\end{proof}
We will give the following two lemmas to prove there exist a unique solution of the backward infinite horizon integral equation for Eq. \eqref{skeleton equation}. And then we define $\mathcal{G}^0$ as the unique solution of the backward infinite horizon  integral equation for Eq. \eqref{skeleton equation}.
\begin{lemma}\label{SK-uniqueness}
	For any $v\in L^2(\mathbb{R};H_0)$, assume that  for any $N\in \mathbb{Z}^{+},Y(t,\omega)|_{t\in[-N,N]}\in C([-N,N];H).$ Moreover, if $Y(t,\omega)$ satisfies the following equation in $H$ for any $t \in \mathbb{R}$,
	\begin{equation}\label{SK-BIHE}
		Y(t,\omega) = \int_{-\infty}^{t}S_A(t-r)F(Y(r,\omega))\mathrm{d}r + \int_{-\infty}^{t}S_{A}(t-r) B (Y(r,\omega))v(r)\mathrm{d}r,
	\end{equation}
	and
	\begin{equation}\label{SK-uniqueness proof condition}
		\sup_{t\in \mathbb{R}}\left\|Y(t)\right\|_{H}^{2} < \infty,
	\end{equation}
	then $Y(t)$ is unique.
\end{lemma}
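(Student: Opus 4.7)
\medskip

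\noindent\textbf{Proof proposal.} My plan is to establish uniqueness by a pathwise Gronwall argument on the difference of two solutions, then pull the initial time to $-\infty$ and exploit the exponential dissipativity built into Hypothesis \ref{Msodehy}(i) together with the uniform-in-time bound \eqref{SK-uniqueness proof condition}. Concretely, suppose $Y_{1},Y_{2}$ both satisfy \eqref{SK-BIHE} and \eqref{SK-uniqueness proof condition}; set $K:=\sup_{t\in\mathbb{R}}(\|Y_{1}(t)\|_{H}\vee\|Y_{2}(t)\|_{H})<\infty$ and $w:=Y_{1}-Y_{2}$. For any $t_{0}\leq t$, the semigroup property applied to \eqref{SK-BIHE} shows that $Y_{i}$ is a mild solution of the skeleton equation on $[t_{0},t]$ with initial value $Y_{i}(t_{0})$, so Lemma \ref{Piror of skeleton equation} yields the uniform bound $\int_{t_{0}}^{t}\|Y_{i}(s)\|_{V}^{2}\mathrm{d}s\leq C(K,M)$ with $C(K,M)$ independent of $t_{0}$.

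Next I would write the variational form of the equation for $w$ and apply the chain rule to $\|w\|_{H}^{2}$, using Hypothesis \ref{Msodehy}(i) on the drift and the Lipschitz bound from Remark \ref{condition remark}(ii) on $B$:
\begin{equation*}
\frac{\mathrm{d}}{\mathrm{d}t}\|w(t)\|_{H}^{2}\leq -2\lambda\|w\|_{V}^{2}+2C_{0}\|Y_{1}\|_{V}^{2}\|w\|_{H}^{2}+2\beta\|v\|_{H_{0}}\|w\|_{H}^{2}.
\end{equation*}
Using Remark \ref{condition remark}(i) to replace $-2\lambda\|w\|_{V}^{2}$ by $-2\lambda C_{1}\|w\|_{H}^{2}$ and invoking Gronwall's inequality gives
\begin{equation*}
\|w(t)\|_{H}^{2}\leq \|w(t_{0})\|_{H}^{2}\exp\!\left(\int_{t_{0}}^{t}\bigl(-2\lambda C_{1}+2C_{0}\|Y_{1}(s)\|_{V}^{2}+2\beta\|v(s)\|_{H_{0}}\bigr)\mathrm{d}s\right).
\end{equation*}

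The final step is to let $t_{0}\to -\infty$ for $t$ fixed. The prefactor is harmless because $\|w(t_{0})\|_{H}^{2}\leq 4K^{2}$ uniformly in $t_{0}$ by \eqref{SK-uniqueness proof condition}. In the exponent, the term $\int_{t_{0}}^{t}\|Y_{1}\|_{V}^{2}\mathrm{d}s$ stays bounded by $C(K,M)$ uniformly by the preceding step, while Cauchy--Schwarz and $v\in L^{2}(\mathbb{R};H_{0})$ give $\int_{t_{0}}^{t}\|v\|_{H_{0}}\mathrm{d}s\leq\sqrt{M\,(t-t_{0})}$; both are dominated by the linear term $-2\lambda C_{1}(t-t_{0})$, so the exponent tends to $-\infty$. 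Hence $\|w(t)\|_{H}=0$ for every $t$, which is the desired uniqueness.

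The main technical point is the uniform control of $\int_{t_{0}}^{t}\|Y_{1}\|_{V}^{2}\mathrm{d}s$ as $t_{0}\to-\infty$: without the a priori bound \eqref{SK-uniqueness proof condition} coupled with Lemma \ref{Piror of skeleton equation}, the exponential factor could blow up and destroy the argument. The sub-linear growth of the $v$-contribution (thanks to the $L^{2}$-integrability of $v$ on the whole line) is the other place where it is essential that we work in the pullback, not the forward, direction; everything else is a routine energy estimate.
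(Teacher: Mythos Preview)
Your proposal is correct and follows essentially the same route as the paper: reduce \eqref{SK-BIHE} to the mild formulation on $[t_0,t]$ with data $Y_i(t_0)$, run an energy/Gronwall estimate on the difference, and send $t_0\to-\infty$ using the uniform bound \eqref{SK-uniqueness proof condition} together with the $V$-integral control from Lemma~\ref{Piror of skeleton equation}. The only cosmetic difference is that the paper packages the Gronwall step as Lemma~\ref{ltDBE}/Corollary~\ref{asdbe} and bounds the $B$-term via $|B(\cdot)|_{L_Q}\le D$ (producing an additive constant $2D^2M/\eta$), whereas you use the Lipschitz bound on $B$ and absorb the resulting $\int\|v\|_{H_0}$ term in the exponent via Cauchy--Schwarz; both variants give the same decay.
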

\begin{proof}
	For any $t' < t$, it follows from Eq. \eqref{SK-BIHE} that
	\begin{equation*}
		Y(t,\omega)
		= S_{A}(t-t')Y(t',\omega) +\int_{t'}^{t}S_{A}(t-r)F(Y(r,\omega))\mathrm{d}r + \int_{t'}^{t}S_{A}(t-r) B (Y(r,\omega))v(r)\mathrm{d}r.
	\end{equation*}
	Therefore, for any $t>t'\in\mathbb{R}$, $Y(\cdot,\omega)$ is the mild solution of Eq. \eqref{skeleton equation} with initial value $Y(t',\omega)$.
	We will show the uniqueness of Eq. \eqref{SK-BIHE}. Assume $Y(\cdot,\omega)$ and $Z(\cdot,\omega)$ are two solutions of Eq. \eqref{SK-BIHE}, then for any $n\in \mathbb{Z}^{+},-n<t$,
	\begin{eqnarray}
		Y(t,\omega) = S_{A}(t+n)Y(-n,\omega) +\int_{-n}^{t}S_{A}(t-r)F(Y(r,\omega))\mathrm{d}r + \int_{-n}^{t}S_{A}(t-r) B (Y(r,\omega))v(r)\mathrm{d}r,\nonumber\\
		Z(t,\omega) = S_{A}(t+n)Z(-n,\omega) +\int_{-n}^{t}S_{A}(t-r)F(Z(r,\omega))\mathrm{d}r + \int_{-n}^{t}S_{A}(t-r) B (Z(r,\omega))v(r)\mathrm{d}r.\nonumber\\
		\nonumber
	\end{eqnarray}
	It implies that $Y(\cdot,\omega)$ and $Z(\cdot,\omega)$ are mild solutions of Eq. \eqref{skeleton equation} with initial value $Y(-n,\omega)$ and $Z(-n,\omega)$ at time $-n$, then by using Corollary \ref{asdbe} and assumption $\sup_{t\in \mathbb{R}}\left\|Y(t)\right\|_{H}^{2} < \infty$ and $\sup_{t\in \mathbb{R}}\left\|Z(t)\right\|_{H}^{2} $ $<\infty$, let $n\rightarrow \infty$, we can have
	\begin{equation*}
		\left\|Y(l,\omega) -Z(l,\omega)\right\|_{H}^2 \rightarrow 0, \quad \forall l>0.
	\end{equation*}
	The uniqueness have been proved.
\end{proof}
Finally,  we will construct the solution of  \eqref{SK-BIHE}. For any $n\in\mathbb{Z}^{+}$ and  fixed $v\in L^2(\mathbb{R};H_0)$, for convenience we set $ X_v(t,s;X_s)$ be the mild solution of  \eqref{skeleton equation} with initial value $X_s$ at initial time $s$, let
\begin{equation*}
	X_v^n(t) =
	\begin{cases}
		X_v(t,-n;0) ,& t > -n, \\
		0,&t\leq -n.\\	
	\end{cases}
\end{equation*}
\begin{lemma} \label{SK-existence}
	Assume that $v\in L^2(\mathbb{R};H_0)$, for any $N\in \mathbb{Z}^{+}$, $X_v^n(\cdot)\rightarrow X_v^{*}(\cdot)$ in $C([-N,N];H)$ as $n\rightarrow \infty$. Moreover, $X_v^{*}$ satisfies the backward infinite horizon integral Eq. \eqref{SK-BIHE} and \eqref{SK-uniqueness proof condition}.
\end{lemma}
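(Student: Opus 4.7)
The plan is to first establish that $\{X_v^n\}$ is a Cauchy sequence in $C([-N,N];H)$ for each $N$ using the exponential contraction from Corollary \ref{asdbe}, then pass to the limit in the mild formulation over finite windows $[s,t]$, and finally send $s\to -\infty$ to recover \eqref{SK-BIHE}.

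Set $M:=\int_{\mathbb{R}}\|v(s)\|_{H_0}^2\mathrm{d}s$. By Lemma \ref{Piror of skeleton equation} applied to $X_v^n$ starting from $0$ at time $-n$, there is a constant $C$ independent of $n$ such that $\|X_v^n(t)\|_H^2\leq C$ for all $t\geq -n$. For $m>n$, both $X_v^m$ and $X_v^n$ solve the skeleton equation on $[-n,\infty)$ with initial data $X_v^m(-n)$ and $0$ at time $-n$; fixing $\lambda_0\in(0,\lambda C_1)$, Corollary \ref{asdbe} yields
\begin{equation*}
\|X_v^m(t)-X_v^n(t)\|_H^2\leq e^{-2\lambda_0(t+n)}\Big(\|X_v^m(-n)\|_H^2+\tfrac{2D^2M}{\eta}\Big)\leq C'\,e^{-2\lambda_0(t+n)}
\end{equation*}
once $t+n>N_0$. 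For $t\in[-N,N]$ and $n>N+N_0$, the exponent tends to $\infty$ uniformly in $t$, so $\{X_v^n\}$ is Cauchy in $C([-N,N];H)$. Its limit $X_v^*$ is consistent across $N$, belongs to $C([-N,N];H)$ for every $N$, and inherits $\sup_{t\in\mathbb{R}}\|X_v^*(t)\|_H^2\leq C$, giving \eqref{SK-uniqueness proof condition}.

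Next, for any $s<t$ and $n$ with $-n<s$, the mild formulation on $[s,t]$ reads
\begin{equation*}
X_v^n(t)=S_A(t-s)X_v^n(s)+\int_s^t S_A(t-r)F(X_v^n(r))\mathrm{d}r+\int_s^t S_A(t-r)B(X_v^n(r))v(r)\mathrm{d}r.
\end{equation*}
Passing to the limit $n\to\infty$, the left side converges to $X_v^*(t)$; the boundary term converges since $|S_A|_L\leq 1$; the $F$-integral converges thanks to Hypothesis \ref{Msodehy}(ii) combined with the uniform convergence $X_v^n\to X_v^*$ on $[s,t]$; and the $B$-integral converges by Hypothesis \ref{Msodehy}(iii) together with $v\in L^2(\mathbb{R};H_0)$ via dominated convergence. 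I then let $s\to -\infty$: the computation in Remark \ref{condition remark}(iii) combined with Remark \ref{condition remark}(i) delivers $|S_A(t-s)|_L\leq e^{-\lambda C_1(t-s)}$, so $\|S_A(t-s)X_v^*(s)\|_H\to 0$ by the uniform bound on $X_v^*$; the $B$-integral extends to $(-\infty,t]$ by dominated convergence via $\|S_A(t-r)B(X_v^*(r))v(r)\|_H\leq D e^{-\lambda C_1(t-r)}\|v(r)\|_{H_0}$ together with Cauchy--Schwarz and $v\in L^2(\mathbb{R};H_0)$; and since the remaining two terms in the identity admit limits as $s\to -\infty$, the $F$-integral over $[s,t]$ must also converge in $H$, and its limit is naturally denoted $\int_{-\infty}^t S_A(t-r)F(X_v^*(r))\mathrm{d}r$. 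This yields \eqref{SK-BIHE}.

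The main obstacle is the convergence of the nonlinear $F$-integral on the finite window $[s,t]$, since $F$ is generally unbounded on $H$ so brute dominated convergence does not apply. The remedy is to exploit the regularizing action of the semigroup encoded in Hypothesis \ref{Msodehy}(ii), which trades the roughness of $F$ for an integrable weight and reduces the estimate to the $H$-difference $\|X_v^n-X_v^*\|_H$ already controlled by the Cauchy argument; the infinite-horizon extension of the $F$-integral is then obtained for free by rearranging the three-term identity on $[s,t]$ once the boundary term and the $B$-integral are shown to have limits.
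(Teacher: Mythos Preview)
Your proof is correct and follows essentially the same two-step structure as the paper's: the Cauchy property via Corollary~\ref{asdbe} and Lemma~\ref{Piror of skeleton equation}, passage to the limit in the mild formulation on a finite window $[s,t]$ using Hypothesis~\ref{Msodehy}(ii)--(iii), and extension to $(-\infty,t]$ by killing the boundary term $S_A(t-s)X_v^{*}(s)$. The only cosmetic difference is that you invoke the exponential decay $|S_A(t)|_{L}\le e^{-\lambda C_1 t}$ (which is indeed derivable from Remark~\ref{condition remark}(i) and (iii)) to control the $B$-integral directly and then deduce convergence of the $F$-integral by rearrangement, whereas the paper treats the combined $F+Bv$ integral as a Cauchy sequence using only $|S_A(t)|_{L}\to 0$.
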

\begin{proof}
	It follows from Corollary \ref{asdbe} that $X_v^{n}$ is a Cauchy sequence in $C([-N,N];H)$. Since the space $C([-N,N];H)$ is complete, there exists $X_v^{*}$ such that $\lim\limits_{n\rightarrow\infty}X_v^{n} = X_v^{*}$ in $C([-N,N];H)$. For $N$ is arbitrary, $X_v^{*}(\cdot)$ is defined for all time, and from  Lemma \ref{Piror of skeleton equation}, we have $\sup_{n}\sup_{t\in\mathbb{R}}\left\|X_v^{n}(t)\right\|_{H}^{2} < \infty$, this implies that
	\begin{equation}\label{X** bounded estimate}			\sup_{t\in\mathbb{R}}\left\|X_v^{*}(t)\right\|_{H}^{2} < \infty.
	\end{equation}
	Finally  we will show $X_v^{*}$ satisfies Eq. \eqref{SK-BIHE}.
	
	\textbf{Step 1}: For any $t\in\mathbb{R}$ and $t_0<t$, we will show that $X_v^{*}$ satisfies
	\begin{eqnarray}\label{constructed mild solution}
		X_v^{*}(t) = S_{A}(t-t_0)X_v^{*}(t_0) +\int_{t_0}^{t}S_{A}(t-r)F(X_v^{*}(r))^2\mathrm{d}r + \int_{t_0}^{t}S_{A}(t-r) B (X_v^{*}(r))v(r)\mathrm{d}r.\nonumber\\
	\end{eqnarray}
For any $t_0<t\in \mathbb{R}$, we can find $N\in\mathbb{N}^+$ such that $t_0,t\in[-N,N]$. Fixed $N$, it follows from Hypothesis \ref{Msodehy} (ii), we obtain that
\begin{eqnarray*}
	&&\left\| \int_{t_0}^{t}S_{A}(t-r)F(X_v^{*}(r))\mathrm{d}r - \int_{t_0}^{t}S_{A}(t-r)F(X_v^{n}(r))\mathrm{d}r \right\|^2_{H} \nonumber\\
	&\leq& C(N)\int_{t_0}^{t}(t-r)^{\alpha}\left \|X_v^{*}(r)-X_v^{n}(r) \right \|_H^2\mathrm{d}r \nonumber \\
	&\leq&C(N)\sup_{r\in[-N,N]}\left \|X_v^{*}(r)-X_v^{n}(r) \right \|_H^2\int_{t_0}^{t}(t-r)^{\alpha}\mathrm{d}r,
\end{eqnarray*}
Since $X_v^{n}(\cdot)\rightarrow X_v^{*}(\cdot)$ in $C([-N,N];H)$, it deduces that
\begin{equation}
	\left\| \int_{t_0}^{t}S_{A}(t-r)F(X_v^{*}(r))\mathrm{d}r - \int_{t_0}^{t}S_{A}(t-r)F(X_v^{n}(r))\mathrm{d}r \right \|^2_H\rightarrow 0. \nonumber
\end{equation}
It follows from  Remark \ref{condition remark} (ii) and (iii),
\begin{equation*}
	\begin{aligned}
		&\left\| \int_{t_0}^{t}S_{A}(t-r) B (X_v^{*}(r))v(r)\mathrm{d}r - \int_{t_0}^{t}S_{A}(t-r) B (X_v^{n}(r))v(r)\mathrm{d}r \right\|_{H}^2\\
		\leq& C \int_{t_0}^{t}\left\| B \left(X_v^{*}(r)\right)v(r)- B (X_v^{n}(r))v(r)\right\|_{H}^2\mathrm{d}s\\
		\leq&C\beta^2\int_{t_0}^{t}\|X_v^{*}(r)-X_v^{n}(r)\|_{H}^2\|v\|_{H_0}^2\mathrm{d}s\\
		\leq&C\beta^2\sup_{t\in[-N,N]}\|X_v^{*}(r)-X_v^{n}(r)\|_{H}^2\int_{t_0}^{t}\|v\|_{H_0}^2\mathrm{d}s.\\
	\end{aligned}
\end{equation*}
Since $X_v^{n}(\cdot)\rightarrow X_v^{*}(\cdot)$ in $C([-N,N];H)$, for any $v\in L^2(\mathbb{R};H_0)$, it implies that
\begin{equation*}
	\lim_{n\rightarrow\infty} \int_{t_{0}}^{t} S_{A}(t-r) B \left(X_v^{n}\right)v(r)\mathrm{d}r= \int_{t_{0}}^{t}S_{A}(t-r)  B \left( X_v^{*}(r)\right) v(r)\mathrm{d}r,\quad\text{in} ~ C([-N,N];H).
\end{equation*}
At the same time it follows form Remark \ref{condition remark} (iii) that $X_v^{n}(t)$, $S_{A}(t-t_0)X_v^{n}(t_0)$ converge strongly to $X_v^{*}$ and $S_{A}(t-t_0)X_v^{*}(t_0)$ in $H$
respectively, hence Eq. (\ref{constructed mild solution}) holds.

\textbf{Step 2}: We next prove that $X_v^{*}$ satisfies Eq. \eqref{SK-BIHE}. From Eq. (\ref{constructed mild solution}), it is easy to know that for any $0<m<n$,
\begin{equation*}		\int_{-n}^{-m}S_{A}(-r)[F(X_v^{*}(r))+ B (X_v^{*}(r))v(r)]\mathrm{d}r = -S_{A}(n)X_v^{*}(-n)+S_{A}(m)X_v^{*}(-m),
\end{equation*}
moreover,
$$\|S_{A}(n)X_v^{*}(-n)\|_{H} \leq |S_{A}(n)|_{L}\left\|X_v^{*}(-n)\right\|_H,\quad\|S_{A}(m)X_v^{*}(-m)\|_{H} \leq |S_{A}(m)|_{L}\left\|X_v^{*}(-m)\right\|_H^2.$$
Thus combining
(\ref{X** bounded estimate}) and  Remark \ref{condition remark} (iii), we obtain that  $\|S_{A}(n)X_v^{*}(-n)\|_{H}$ and $\|S_{A}(m)X_v^{*}(-m)\|_{H}$ converge to 0 as $m,n \rightarrow\infty$. Therefore,
\begin{equation*}
	\int_{-n}^{t}S_{A}(-r)[F(X_v^{*}(r))+ B (X_v^{*}(r))v(r)]\mathrm{d}r
\end{equation*}
is a Cauchy sequence in $H$ with respect to $n$ for any $t\in\mathbb{R}$. Let $n\rightarrow \infty$, we can obtain that
\begin{eqnarray*} \int_{-n}^{t}S_{A}(-r)[F(X_v^{*}(r))+ B (X_v^{*}(r))v(r)]\mathrm{d}r\rightarrow \int_{-\infty}^{t}S_{A}(-r)[F(X_v^{*}(r))+ B (X_v^{*}(r))v(r)]\mathrm{d}r. \nonumber\\
\end{eqnarray*}
Moreover, by using Remark \ref{condition remark} (iii),  $\left\|S_{A}(t+n)X_v^{*}(-n) \right\|_H^2\leq |S_{A}(t+n)|_{L}\left\|X_v^{*}(-n)\right\|_H^2\rightarrow 0,~\text{as}~ n\rightarrow\infty.$ Thus it follows from Eq. (\ref{constructed mild solution}) that $X_v^{*}$ satisfies Eq. \eqref{SK-BIHE}.
\end{proof}
The proof of Theorem \ref{SE IF TH} in the following.
\begin{proof}
Combining Lemma \ref{SK-existence} and Lemma \ref{SK-uniqueness}, we have known that the solution of the backward infinite horizon integral Eq. \eqref{SK-BIHE0} for skeleton Eq. \eqref{skeleton equation} exists and is unique, and satisfies \eqref{SK-uniqueness proof condition0}.
\end{proof}

\subsection{Stationary solution}\label{Stationary solution}
Similar to the proofs of Mattingly \cite{M99} and Liu and Zhao \cite{LZ09}, we prove Theorem \ref{Mstationary solution} in this subsection. This subsection under Hypothesis \ref{Msodehy},
and we first give the energy estimate for  Eq. \eqref{Msode} in the following lemma.
\begin{lemma}\label{energy estimate} \textbf{(Energy Estimate)}
For convenience, we fixed $t_0$ and denote $X_{\varepsilon}(t) = X_{\varepsilon}(t,t_0;X_0)$ be the solution of Eq. \eqref{Msode} with initial value $X_0$ at time $t_0$, then we have
\begin{equation}\label{energy estimate'}
	\begin{aligned}
		\mathbb{E}\left\| X_{\varepsilon}(t)\right \|_{H}^{2p} \leq& \mathbb{E}\left\| X_0\right \|_{H}^{2p} -2 \lambda p \mathbb{E}\int_{t_0}^{t}\left\| X_{\varepsilon}(s)\right \|_{H}^{2(p-1)}\left\| X_{\varepsilon}(s)\right \|_{V}^{2}\mathrm{d}s \\ \nonumber
		&+ \mathbb{E}\int_{t_0}^{t}2\varepsilon p(p-1)\left\| X_{\varepsilon}(s)\right \|_{H}^{2(p-2)}\left ( \|X_{\varepsilon}(s)\|_H^2| B (X_{\varepsilon}(s))|_{L_Q}^2\right ) \mathrm{d}s \\ \nonumber &+\mathbb{E} \int_{t_0}^{t}\varepsilon p\left\| X_{\varepsilon}(s)\right \|_{H}^{2(p-1)}| B (X_{\varepsilon}(s))|_{L_Q}^2 \mathrm{d}s,\quad\forall \varepsilon>0.
	\end{aligned}
\end{equation}
\end{lemma}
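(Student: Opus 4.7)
The plan is to apply the It\^o formula in the variational framework to the functional $f(x)=\|x\|_H^{2p}$ evaluated along the solution $X_\varepsilon(\cdot)$, and then estimate each of the resulting terms by means of Hypothesis \ref{Msodehy}(i) and Cauchy--Schwarz. Since the well-posedness of Eq. \eqref{Msode} has already been placed in the Gelfand triple $(V,H,V^{\ast})$ (cf. the discussion after Definition \ref{mild solution}), I would invoke the It\^o formula of Krylov--Rozovskii type (as in Chapter~4 of \cite{LW15} or Theorem 4.2.5 of \cite{PR07}) for $\|X_\varepsilon(t)\|_H^{2p}$, whose derivatives are $Df(x)\cdot h=2p\|x\|_H^{2(p-1)}\langle x,h\rangle_H$ and $D^2f(x)(h,k)=2p\|x\|_H^{2(p-1)}\langle h,k\rangle_H+4p(p-1)\|x\|_H^{2(p-2)}\langle x,h\rangle_H\langle x,k\rangle_H$.

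Applying this formula to Eq. \eqref{Msode} yields
\begin{align*}
\|X_\varepsilon(t)\|_H^{2p}
=\ &\|X_0\|_H^{2p}+2p\int_{t_0}^{t}\|X_\varepsilon(s)\|_H^{2(p-1)}\,{}_{V^{\ast}}\!\langle AX_\varepsilon(s)+F(X_\varepsilon(s)),X_\varepsilon(s)\rangle_V\,\mathrm{d}s\\
&+2p\sqrt{\varepsilon}\int_{t_0}^{t}\|X_\varepsilon(s)\|_H^{2(p-1)}\langle X_\varepsilon(s),B(X_\varepsilon(s))\,\mathrm{d}W(s)\rangle_H\\
&+\varepsilon p\int_{t_0}^{t}\|X_\varepsilon(s)\|_H^{2(p-1)}|B(X_\varepsilon(s))|_{L_Q}^2\,\mathrm{d}s\\
&+2\varepsilon p(p-1)\int_{t_0}^{t}\|X_\varepsilon(s)\|_H^{2(p-2)}\sum_{k}\langle X_\varepsilon(s),B(X_\varepsilon(s))Q^{1/2}e_k\rangle_H^{2}\,\mathrm{d}s,
\end{align*}
where $\{e_k\}$ is an orthonormal basis of $H$. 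The drift term is bounded by $-\lambda\|X_\varepsilon(s)\|_V^2$ using the second inequality of Hypothesis \ref{Msodehy}(i), and a simple Cauchy--Schwarz estimate gives $\sum_k\langle X_\varepsilon(s),B(X_\varepsilon(s))Q^{1/2}e_k\rangle_H^{2}\leq\|X_\varepsilon(s)\|_H^{2}|B(X_\varepsilon(s))|_{L_Q}^{2}$, which is exactly the form appearing in the statement.

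To remove the stochastic integral in expectation, I would introduce the stopping times $\tau_N:=\inf\{t\geq t_0:\|X_\varepsilon(t)\|_H\geq N\}$ and work on $[t_0,t\wedge\tau_N]$. Using Hypothesis \ref{Msodehy}(iii) (which gives $|B(X_\varepsilon(s))|_{L_Q}\leq D$ via Remark \ref{condition remark}(ii)), the integrand in the stochastic integral is bounded on $\{s\leq\tau_N\}$, so it is a true martingale with zero mean. Taking expectation and then sending $N\to\infty$ by Fatou's lemma (for the left-hand side and the nonnegative term $\|X_\varepsilon\|_H^{2(p-1)}\|X_\varepsilon\|_V^{2}$) together with the monotone/dominated convergence theorem for the two non-negative correction integrals produces the claimed inequality.

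The main technical obstacle is the justification of It\^o's formula for the non-$C^2$ and possibly unbounded functional $x\mapsto\|x\|_H^{2p}$ in the variational setting; this requires either a cut-off/mollification argument or direct appeal to the version in Chapter~4 of \cite{LW15}, and one has to check a priori that $\|X_\varepsilon\|_V^{2}$ is integrable on $[t_0,t\wedge\tau_N]$ so that the drift pairing ${}_{V^{\ast}}\!\langle\cdot,\cdot\rangle_V$ is well defined. The rest of the argument is routine once the It\^o expansion is in place.
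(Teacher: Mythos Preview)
Your proposal is correct and follows essentially the same route as the paper. The paper organizes the computation slightly differently---first writing $\mathrm{d}\|X_\varepsilon\|_H^{2}$ and then applying the one-dimensional It\^o formula to $y\mapsto y^{p}$---and uses a generic localizing sequence $\{T_n\}$ for the stochastic integral rather than your hitting times $\tau_N$, but the substance (It\^o expansion, Hypothesis~\ref{Msodehy}(i) for the drift, the Cauchy--Schwarz bound $\sum_k\langle X,BQ^{1/2}e_k\rangle_H^2\le\|X\|_H^2|B|_{L_Q}^2$, and a Fatou argument to pass to the limit) is identical.
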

\begin{proof}
For every $\varepsilon>0$, it follows from the It\^{o} formula that
\begin{equation*}
	\mathrm{d}\|X_{\varepsilon}(t)\|_H^2=2  {_{V^{*}}}\langle AX_{\varepsilon},X_{\varepsilon}\rangle_V\mathrm{d}t+2\langle F(X_{\varepsilon}),X_{\varepsilon}\rangle_H\mathrm{d}t+2\sqrt{\varepsilon}\langle X_{\varepsilon}, B (X_{\varepsilon})\mathrm{d}W\rangle_H+\varepsilon| B (X_{\varepsilon})|_{L_Q}^2 \mathrm{d}t.
\end{equation*}
For $p\geq 1$, it follows from the It\^{o} formula and Hypothesis \ref{Msodehy} (i) that
\begin{eqnarray}
	\mathrm{d}\left\| X_{\varepsilon}(t)\right \|_{H}^{2p} &=& p\left\| X_{\varepsilon}(t)\right \|_{H}^{2(p-1)}\mathrm{d}\left\| X_{\varepsilon}(t)\right \|_{H}^{2} + \frac{1}{2}p(p-1)\left\| X_{\varepsilon}(t)\right \|_{H}^{2(p-2)}{\left \langle \mathrm{d}\left\| X_{\varepsilon}(t)\right \|_{H}^{2} \right \rangle}_{t} \nonumber \\
	&\leq& p\left\| X_{\varepsilon}(t)\right \|_{H}^{2(p-1)}\left[-2\lambda\|X_{\varepsilon}\|_{V}+2\sqrt{\varepsilon}\langle X_{\varepsilon}, B (X_{\varepsilon})\mathrm{d}W\rangle_H\right] \nonumber \\
	&& +2\varepsilon p(p-1)\left\| X_{\varepsilon}(t)\right \|_{H}^{2(p-2)}\left ( \|X_{\varepsilon}\|_H^2| B (X_{\varepsilon})|_{L_Q}^2\right ) \mathrm{d}t \nonumber\\
	&&+\varepsilon p\left\| X_{\varepsilon}(t)\right \|_{H}^{2(p-1)}| B (X_{\varepsilon})|_{L_Q}^2 \mathrm{d}t.\nonumber
\end{eqnarray}

For convenience set  $M_{\varepsilon}(t):=M_{\varepsilon}(t,t_0;X_0)=2\sqrt{\varepsilon}p\int_{t_0}^{t}\left\| X_{\varepsilon}\right \|_{H}^{2(p-1)}\langle X_{\varepsilon}, B (X_{\varepsilon})\mathrm{d}W\rangle_H$, then for $M_{\varepsilon}(t)$ is a local martingale there exists a sequence of stopping time $\left\{T_n\right\}$, with $\left\{T_n\right\} \rightarrow +\infty$ as $n\rightarrow +\infty$, such that $M_{\varepsilon}({t\wedge T_n})$ is a  martingale, so the Optional Stopping Time Theorem implies $\mathbb{E}M_{\varepsilon}({t\wedge T_n}) = 0$, we denote $f_n(t)$ by
\begin{eqnarray}
	f_n(t)&=&\left\| X_0\right \|_{H}^{2p} + \int_{t_0}^{t}2\varepsilon p(p-1)\left\| X_{\varepsilon}\right \|_{H}^{2(p-2)}\left ( \|X_{\varepsilon}\|_H^2| B (X_{\varepsilon})|_{L_Q}^2\right ) \mathrm{d}s \nonumber\\
	&&+ \int_{t_0}^{t}p\left\| X_{\varepsilon}(t)\right \|_{H}^{2(p-1)}| B (X_{\varepsilon})|_{L_Q}^2 \mathrm{d}s\nonumber + M_{\varepsilon}({t\wedge T_n}).\nonumber
\end{eqnarray}
When $t\leq T_n$, $f_n(t) \geq \left\| X_{\varepsilon}(t)\right \|_{H}^{2p} +\lambda \int_{t_0}^{t}2p\left\| X_{\varepsilon}(s)\right \|_{H}^{2(p-1)}\left\| X_{\varepsilon}(s)\right \|_{V}^{2}\mathrm{d}s>0$. And when $t> T_n$, $f_n(t) - f(T_n) = \int_{T_n}^{t}2\varepsilon p(p-1)\left\| X_{\varepsilon}\right \|_{H}^{2(p-2)}\left ( \|X_{\varepsilon}\|_H^2| B (X_{\varepsilon})|_{L_Q}^2\right ) \mathrm{d}s + \int_{T_n}^{t}p\left\| X_{\varepsilon}\right \|_{H}^{2(p-1)}| B (X_{\varepsilon})|_{L_Q}^2 \mathrm{d}s>0$. So, $f_n(t)$ is non-negative for all $t$. Thus we can use Fatou's Lemma to get
\begin{eqnarray}
	\mathbb{E}\left\| X_{\varepsilon}(t)\right \|_{H}^{2p} + \mathbb{E}\int_{t_0}^{t}2\lambda p\left\| X_{\varepsilon}(s)\right \|_{H}^{2(p-1)}\left\| X_{\varepsilon}(s)\right \|_{V}^{2}\mathrm{d}s
	= \mathbb{E}\lim_{n\rightarrow +\infty}f_n \leq \lim_{n\rightarrow +\infty}\mathbb{E}f_n, \nonumber
\end{eqnarray}
which imply the conclusion.
\end{proof}
Set $\mathbb{E}_{p,{\varepsilon}}(t):= \mathbb{E}\left\| X_{\varepsilon}(t)\right \|_{H}^{2p}$,  we prove the following lemma by using Lemma \ref{energy estimate}.
\begin{lemma}\label{bounded estimate}
Assume the initial condition satisfies $\mathbb{E}\left\|X(t_0)\right\|_{H}^{2p}<\infty$ for some fixed $p\geq 1$, then for any $\varepsilon_0>0$, there exists a constant $C$ such that
\begin{equation*}
	\mathbb{E}\left\|X_{\varepsilon}(t,t_0;X(t_0))\right\|_{H}^{2p} \leq C,\quad\forall t\geq t_0,\  \varepsilon\in(0,\varepsilon_0),
\end{equation*}
where C depend on $\mathbb{E}\left\|X(t_0)\right\|_{H}^{2j},1\leq j \leq p,~j\in \mathbb{Z}$ and $p,\varepsilon_0,\lambda,C_1,D$, where $\lambda$ is defined in Hypothesis \ref{Msode} (i), $C_1$ and $D$ are defined in Remark \ref{condition remark} (i) and (ii) respectively.
\end{lemma}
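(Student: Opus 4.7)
The plan is to feed Lemma \ref{energy estimate} through the coercivity $\|u\|_V^2\geq C_1\|u\|_H^2$ from Remark \ref{condition remark}(i) and the uniform bound $|B(u)|_{L_Q}\leq D$ from Remark \ref{condition remark}(ii). Substituting these into the energy estimate and collecting the two It\^o correction terms (which both carry $|B|_{L_Q}^2$ and the same power $\|X_\varepsilon\|_H^{2(p-1)}$) produces the integral inequality
\begin{equation*}
\mathbb{E}_{p,\varepsilon}(t)+2\lambda C_1 p\int_{t_0}^{t}\mathbb{E}_{p,\varepsilon}(s)\,\mathrm{d}s\leq \mathbb{E}\|X_0\|_{H}^{2p}+\varepsilon p(2p-1)D^{2}\int_{t_0}^{t}\mathbb{E}_{p-1,\varepsilon}(s)\,\mathrm{d}s.
\end{equation*}
This recursion is the key structural observation: the dissipation controls the $p$-th moment, while the noise contribution is expressed in terms of the $(p-1)$-th moment. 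So the natural strategy is induction on $p$.

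For the base case $p=1$, we have $\mathbb{E}_{0,\varepsilon}\equiv 1$, so the last integral is $\varepsilon D^{2}(t-t_0)$. Applying the integral Gronwall inequality (equivalently, comparing with the ODE $g'=-2\lambda C_1 g+\varepsilon D^{2}$) gives
\begin{equation*}
\mathbb{E}_{1,\varepsilon}(t)\leq e^{-2\lambda C_1(t-t_0)}\mathbb{E}\|X_0\|_{H}^{2}+\frac{\varepsilon_0 D^{2}}{2\lambda C_1},\qquad \forall\, t\geq t_0,\ \varepsilon\in(0,\varepsilon_0),
\end{equation*}
which is the required uniform bound $C_1$. For the inductive step, assume $\sup_{s\geq t_0,\varepsilon\in(0,\varepsilon_0)}\mathbb{E}_{p-1,\varepsilon}(s)\leq C_{p-1}$. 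Then the right-hand side of the recursion is bounded above by $\mathbb{E}\|X_0\|_{H}^{2p}+\varepsilon_0 p(2p-1)D^{2}C_{p-1}(t-t_0)$, and Gronwall once more yields
\begin{equation*}
\mathbb{E}_{p,\varepsilon}(t)\leq e^{-2\lambda C_1 p(t-t_0)}\mathbb{E}\|X_0\|_{H}^{2p}+\frac{(2p-1)\varepsilon_0 D^{2}C_{p-1}}{2\lambda C_1}=:C_p,
\end{equation*}
uniformly in $t\geq t_0$ and $\varepsilon\in(0,\varepsilon_0)$. Iterating this from $p=1$ to the desired exponent gives the claim with a constant $C$ depending only on $\mathbb{E}\|X_0\|_{H}^{2j}$, $1\leq j\leq p$, and on $p,\varepsilon_0,\lambda,C_1,D$.

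There is no substantive obstacle: the global bound $|B|_{L_Q}\leq D$ from Hypothesis \ref{Msodehy}(iii) makes the noise contribution linear in lower-order moments, and the dissipation term $-2\lambda C_1 p\int_{t_0}^{t}\mathbb{E}_{p,\varepsilon}$ is strong enough to absorb it uniformly in time. The only mild technical care needed is to justify Gronwall in integral rather than differential form, since Lemma \ref{energy estimate} is stated as an integral inequality; this is standard. Had $B$ been merely of linear growth, uniformity in $t$ would have required balancing the noise against the dissipation and possibly a smallness condition on $\varepsilon$, but under the present hypotheses the argument is entirely routine.
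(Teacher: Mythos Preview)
Your proposal is correct and follows essentially the same route as the paper: both substitute $\|X_\varepsilon\|_V^2\geq C_1\|X_\varepsilon\|_H^2$ and $|B|_{L_Q}\leq D$ into the energy estimate of Lemma~\ref{energy estimate}, obtain a recursion expressing $\mathbb{E}_{p,\varepsilon}$ in terms of $\mathbb{E}_{p-1,\varepsilon}$, and close by induction via a Gronwall/integrating-factor argument. Your constant $\varepsilon p(2p-1)D^{2}$ is exactly the paper's $2\varepsilon p(p-1)D^{2}+\varepsilon pD^{2}$, and your care in using the integral form of Gronwall is if anything slightly more precise than the paper's differential shorthand.
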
        		
\begin{proof}
For any $t\geq t_0$, it follows from the Energy  Estimate Lemma \ref{energy estimate} and Remark \ref{condition remark} (i),(ii) that
\begin{equation*}
	\mathrm{d}\mathbb{E}_{p,{\varepsilon}}(t) \leq -2 p\lambda C_1  \mathbb{E}_{p,{\varepsilon}}(t)  +\left(2\varepsilon p(p-1)D^2+\varepsilon  p D^2\right)\mathbb{E}_{p-1,\varepsilon}(t),
\end{equation*}
and
\begin{align*}
	\mathrm{d}\left( e^{2 p\lambda C_1 t}\mathbb{E}_{p,{\varepsilon}}(t)\right) = &2 p\lambda C_1 e^{2 p\lambda C_1t}\mathbb{E}_{p,{\varepsilon}}(t)\mathrm{d}t +  e^{2 p\lambda C_1 t}\mathrm{d}\mathbb{E}_{p,{\varepsilon}}(t).
\end{align*}

Thus
\begin{equation*}
	\mathbb{E}_{p,{\varepsilon}}(t) \leq e^{-2 p\lambda C_1  (t-t_0)}\mathbb{E}_{p,{\varepsilon}}(t_0) %-2\upsilon p\lambda_1\int_{t_0}^{t} e^{2\upsilon \lambda_1 s}\mathbb{E}_{p,{\varepsilon}}(s)\mathrm{d}s + 2\upsilon \lambda_1\int_{t_0}^{t}
	% e^{2\upsilon \lambda_1 s}\mathbb{E}_{p,{\varepsilon}}(s)
	+\left(2\varepsilon p(p-1)D^2+\varepsilon  p D^2\right)\int_{t_0}^{t}e^{-2 p\lambda C_1 (t-s)}\mathbb{E}_{p-1,\varepsilon}(s)\mathrm{d}s,
\end{equation*}
 then  $\mathbb{E}_{1,\varepsilon}(t) \leq \mathbb{E}_{1,\varepsilon}(t_0)+\varepsilon   D^2 \int_{t_0}^{t}e^{-2 \lambda  C_1  (t-s)}\mathrm{d}s\leq\mathbb{E}_1(t_0)+\frac{\varepsilon   D^2}{2 \lambda  C_1 }$, set $\mathbb{E}_1^{\max}(t_0) = \mathbb{E}_1(t_0)+ \frac{\varepsilon_0 D^2 }{2 \lambda C_1 }$, then by induction we get that
$$\mathbb{E}_{p,{\varepsilon}}(t) \leq \mathbb{E}_{p,{\varepsilon}}(t_0) + C_p^{'}\mathbb{E}_{p-1}^{\max}(t_0) = \mathbb{E}_{p,{\varepsilon}}^{\max}(t_0),$$
where $C_p^{'}$ is constant only depends on $p,\varepsilon_0,\lambda,C_1,D$, which imply that $\mathbb{E}_{p,{\varepsilon}}^{\max}$ is just the linear combination of the moments of  order less that or equal to $p$ of $\mathbb{E}_{p,{\varepsilon}}^{\max}(t_0)$.
\end{proof}
Let $\{X_n\}$ be a sequence of real random variables. Let $g:\mathbb{R}^+\rightarrow\mathbb{R}^+$. Define the random variable  $T_{bound}\left(\{X_n\},g\right)$ to be the smallest positive integer such that for almost every $\omega$,
$$m>T_{bound}\left(\{X_n\},g\right)(\omega) \Rightarrow |X_m(\omega)| \leq g(m),~\quad\forall \omega\in\Omega.$$
Mattingly has proved the following Bounding Lemma in \cite{M99}.
\begin{lemma}\label{Bounding Lemma}\textbf{(Bounding Lemma)}
Assume that
$$\mathbb{P}(|X_n|\geq \varepsilon n^{\delta}) \leq \frac{\mathbb{E}|X_n|^p}{n^{p\delta}\varepsilon^p} \leq \frac{C}{n^{p\delta-r}\varepsilon^p},$$
for some $\varepsilon,\delta,p,C>0$ and $r \geq 0$, then \\
(i). if $p\delta > 1+r$ then $T_{bound}\left(\{X_n\},\varepsilon n^{\delta}\right) < +\infty$ a.s.$\omega$.\\
(ii). $\mathbb{E}\left[T_{bound}\left(\{X_n\},\varepsilon n^{\delta}\right)\right]^q$ is finite for  $q\in(0,p\delta -(1+r))$.
\end{lemma}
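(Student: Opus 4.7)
The plan is to view this as a quantitative Borel--Cantelli argument combined with the layer-cake formula. I would begin by setting $A_n := \{|X_n| \geq \varepsilon n^{\delta}\}$, so that the standing hypothesis reads $\mathbb{P}(A_n) \leq C/(n^{p\delta - r}\varepsilon^p)$, and observe from the definition of $T_{bound}$ that
$$\{T_{bound}(\{X_n\},\varepsilon n^{\delta}) \geq m+1\} \subseteq \bigcup_{n>m} A_n,$$
since on this event there must exist $n>m$ with $|X_n|>\varepsilon n^{\delta}$.

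For part (i), the hypothesis $p\delta > 1+r$ gives $p\delta - r > 1$, so $\sum_{n\geq 1}\mathbb{P}(A_n)<\infty$. The first Borel--Cantelli lemma then yields $\mathbb{P}(\limsup_n A_n)=0$; that is, for almost every $\omega$ only finitely many $A_n$ occur, which by definition is exactly $T_{bound}(\{X_n\},\varepsilon n^{\delta})(\omega)<+\infty$.

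For part (ii), I would first extract a tail estimate: by the containment above and comparing with an integral, for all $m$ sufficiently large,
$$\mathbb{P}(T_{bound} \geq m+1) \leq \frac{C}{\varepsilon^p}\sum_{n>m}\frac{1}{n^{p\delta-r}} \leq \frac{C_1}{\varepsilon^p\, m^{p\delta-r-1}},$$
for a constant $C_1$ depending only on $C$ and $p\delta-r$. Then I would apply the layer-cake identity for integer-valued non-negative random variables,
$$\mathbb{E}[T_{bound}^q] = \sum_{m=1}^{\infty}\bigl(m^q-(m-1)^q\bigr)\,\mathbb{P}(T_{bound}\geq m),$$
together with $m^q-(m-1)^q = O(m^{q-1})$, to reduce matters to
$$\mathbb{E}[T_{bound}^q] \leq C_2 + C_3\sum_{m\geq M} m^{q-1}\cdot m^{-(p\delta-r-1)} = C_2 + C_3\sum_{m\geq M} m^{q-p\delta+r},$$
where $M$ is large enough for the tail estimate to apply. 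This last series converges exactly when $q-p\delta+r<-1$, i.e.\ $q<p\delta-(1+r)$, which is the range claimed.

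There is no serious obstacle here; the argument is essentially bookkeeping around standard Borel--Cantelli. The only mildly delicate point is making sure the tail estimate $\sum_{n>m}n^{-(p\delta-r)}\lesssim m^{-(p\delta-r-1)}$ is applied only for $m$ large enough (so that $p\delta-r>1$ makes the partial sums behave like the integral), and then absorbing the finitely many initial terms into an additive constant.
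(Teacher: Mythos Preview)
Your proof is correct and, for part (i), identical to the paper's (commented-out) argument via Borel--Cantelli. For part (ii) you take a slightly longer route than the paper: you pass through the tail bound $\mathbb{P}(T_{bound}\geq m+1)\leq \sum_{n>m}\mathbb{P}(A_n)\lesssim m^{-(p\delta-r-1)}$ and then apply the layer-cake formula, whereas the paper uses the more direct containment $\{T_{bound}=n\}\subseteq A_n$ (valid since $T_{bound}=n$ forces $|X_n|>\varepsilon n^\delta$) to write
\[
\mathbb{E}[T_{bound}^q]=\sum_{n\geq 1} n^q\,\mathbb{P}(T_{bound}=n)\leq \sum_{n\geq 1} n^q\,\mathbb{P}(A_n)\leq \frac{C}{\varepsilon^p}\sum_{n\geq 1}\frac{1}{n^{p\delta-r-q}},
\]
which converges under the same condition $q<p\delta-(1+r)$. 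Both arguments are standard and equivalent; the paper's saves one summation step, while yours has the minor advantage of not needing to separately justify $\{T_{bound}=n\}\subseteq A_n$ at $n=1$.
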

\begin{comment}
\begin{proof}
1. If $p\delta > 1+r$, then $\sum_{n=1}\mathbb{P}(|X_n|\geq \varepsilon n^{\delta}) \leq \sum_{n=1}\frac{C}{n^{p\delta-r}\varepsilon^p} < +\infty$. Thus by the Borel-Cantelli Lemma, there exists a random variable $n^{*}(\omega)$, which is almost surely finite, such that
$$m>n^{*} \Rightarrow |X_m|\leq \varepsilon m^{\delta},\quad a.s.$$

2. Due to
\begin{eqnarray*}
	\mathbb{E}(n^{*})^q &=& \sum_{n=1}n^q \mathbb{P}(n^{*}=n) \leq \sum_{n=1} n^q \mathbb{P}(|X_n|>\varepsilon n^{\delta})\\ \nonumber
	&\leq& \sum_{n=1}\frac{C}{n^{p\delta - (r+q)}\varepsilon^p},\nonumber	
\end{eqnarray*}
so if $p\delta - (r+q)>1$, i.e. $q\in(0,p\delta -(1+r))$, $\mathbb{E}\left[T_{bound}\left(\{X_n\},\varepsilon n^{\delta}\right)\right]^q$ is finite.
\end{proof}
\end{comment}
Let $X_{\varepsilon}(t,t_0;\omega,X_0)$ and $\tilde{X}_{\varepsilon}(t,t_0;\omega,\tilde{X}_0)$ be the solutions of Eq. \eqref{Msode} starting from different initial value $X_0,\tilde{X}_0$ respectively. Define $\rho_{\varepsilon}(t,t_0;X_0,\tilde{X}_0) = X_{\varepsilon}(t,t_0;\omega,X_0)-\tilde{X}_{\varepsilon}(t,t_0;\omega,\tilde{X}_0)$, we will consider the asymptotically stable property of Eq. \eqref{Msode} in the following lemmas.
\begin{lemma}\label{the lower bound}
Let $\Gamma(l,t_0;X_{\varepsilon}) =2\lambda C_1-\varepsilon\beta^2 -2C_0\Big(\frac{1}{l}\int_{t_0}^{t_0+l}\left\|X_{\varepsilon}(s)\right \|_{V}^2\mathrm{d}s\Big)$,  for any $\varepsilon_0>0$, there exists $\delta>1$ such that for any  $\varepsilon\in(0,\varepsilon_0)$ there exists a almost finite random variable $\tau_\varepsilon$ satisfies
\begin{equation*}
	\left \|\rho_{\varepsilon}(t_0+n,t_0;X_0,\tilde{X}_0)\right\|_{H}^2 \leq e^{-n\Gamma(n,t_0;X_{\varepsilon})} \Big(\left\|X_0 -\tilde{X}_0 \right\|_{H}^2+2\sqrt{\varepsilon}n^\delta\Big),
\end{equation*}
for every  $n>\tau_\varepsilon$.
\end{lemma}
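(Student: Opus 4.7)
The plan is to mimic the proof of Lemma \ref{ltDBE}, but now the multiplicative noise forces a martingale remainder into the pathwise estimate, which must be controlled almost surely by the Bounding Lemma \ref{Bounding Lemma}. First, I would apply It\^o's formula to $\|\rho_\varepsilon(t)\|_H^2$ and use Hypothesis \ref{Msodehy}(i), together with the It\^o correction bound $\varepsilon|B(X_\varepsilon)-B(\tilde{X}_\varepsilon)|_{L_Q}^2\le\varepsilon\beta^2\|\rho_\varepsilon\|_H^2$ from Remark \ref{condition remark}(ii) and the embedding $\|\rho_\varepsilon\|_V^2\ge C_1\|\rho_\varepsilon\|_H^2$ from Remark \ref{condition remark}(i), to obtain
\[
\mathrm{d}\|\rho_\varepsilon\|_H^2\le\bigl[-2\lambda C_1+2C_0\|X_\varepsilon\|_V^2+\varepsilon\beta^2\bigr]\|\rho_\varepsilon\|_H^2\,\mathrm{d}t+2\sqrt{\varepsilon}\bigl\langle\rho_\varepsilon,(B(X_\varepsilon)-B(\tilde{X}_\varepsilon))\,\mathrm{d}W\bigr\rangle_H.
\]
The bracket is precisely minus the derivative of the integrating factor $\phi(t):=\int_{t_0}^t[2\lambda C_1-2C_0\|X_\varepsilon(s)\|_V^2-\varepsilon\beta^2]\,\mathrm{d}s$, which satisfies $\phi(t_0+n)=n\Gamma(n,t_0;X_\varepsilon)$. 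Applying It\^o's product formula to $e^{\phi(t)}\|\rho_\varepsilon(t)\|_H^2$ and discarding the non-positive residue $-2\lambda e^{\phi}(\|\rho_\varepsilon\|_V^2-C_1\|\rho_\varepsilon\|_H^2)\le 0$ yields
\[
e^{n\Gamma(n,t_0;X_\varepsilon)}\|\rho_\varepsilon(t_0+n)\|_H^2\le\|X_0-\tilde{X}_0\|_H^2+M_n,
\]
where $M_n:=2\sqrt{\varepsilon}\int_{t_0}^{t_0+n}e^{\phi(s)}\langle\rho_\varepsilon(s),(B(X_\varepsilon)-B(\tilde{X}_\varepsilon))\,\mathrm{d}W(s)\rangle_H$. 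The lemma therefore reduces to showing the almost-sure bound $M_n\le 2\sqrt{\varepsilon}\,n^{\delta}$ for every $n$ larger than some almost-surely finite $\tau_\varepsilon$.

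The main obstacle is to obtain a usable moment bound on $M_n$, since its integrand carries the random exponential factor $e^{\phi(s)}$. The key observation is that $Y(s):=e^{\phi(s)}\|\rho_\varepsilon(s)\|_H^2$ is a nonnegative supermartingale; its drift equals $-2\lambda e^{\phi(s)}(\|\rho_\varepsilon\|_V^2-C_1\|\rho_\varepsilon\|_H^2)\le 0$. Applying It\^o's formula to $Y^{2p}$ and using the Lipschitz bound $|B(X_\varepsilon)-B(\tilde{X}_\varepsilon)|_{L_Q}\le\beta\|\rho_\varepsilon\|_H$ from Remark \ref{condition remark}(ii) together with the uniform-in-time moment estimates on $\mathbb{E}\|X_\varepsilon(t)\|_H^{2p}$ from Lemma \ref{bounded estimate}, I would control $\mathbb{E}Y(s)^{2p}$ and then invoke the Burkholder-Davis-Gundy inequality combined with Jensen's inequality to arrive at an estimate of the form $\mathbb{E}|M_n|^{2p}\le C(p,\varepsilon_0)\,n^{r(p)}$ for a suitable exponent $r(p)$.

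Finally, the Chebyshev inequality yields
\[
\mathbb{P}\bigl(|M_n|>2\sqrt{\varepsilon}\,n^{\delta}\bigr)\le\frac{\mathbb{E}|M_n|^{2p}}{(2\sqrt{\varepsilon})^{2p}n^{2p\delta}}\le\frac{C(p,\varepsilon_0)}{n^{2p\delta-r(p)}}.
\]
Choosing $\delta>1$ and $p$ large enough that $2p\delta-r(p)>1$, the Bounding Lemma \ref{Bounding Lemma} produces an almost-surely finite random variable $\tau_\varepsilon$ such that $M_n\le 2\sqrt{\varepsilon}\,n^{\delta}$ for all $n>\tau_\varepsilon$. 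Substituting this back into the inequality of the first paragraph and dividing through by $e^{n\Gamma(n,t_0;X_\varepsilon)}$ completes the proof.
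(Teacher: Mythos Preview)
Your derivation of the differential inequality for $\|\rho_\varepsilon\|_H^2$ matches the paper's, but the order in which you introduce the exponential weight creates a real obstruction. By multiplying through by $e^{\phi(s)}$ \emph{before} isolating the stochastic term, you are forced to control
\[
M_n=2\sqrt{\varepsilon}\int_{t_0}^{t_0+n}e^{\phi(s)}\bigl\langle\rho_\varepsilon,(B(X_\varepsilon)-B(\tilde X_\varepsilon))\,\mathrm{d}W\bigr\rangle_H,
\]
whose integrand carries the random factor $e^{\phi(s)}$. Your claimed polynomial bound $\mathbb{E}|M_n|^{2p}\le C\,n^{r(p)}$ is not attainable: with the Lipschitz estimate, the quadratic variation of $M$ is dominated by $4\varepsilon\beta^2\int_0^n Y(t_0+s)^2\,\mathrm{d}s$, and although $Y=e^{\phi}\|\rho_\varepsilon\|_H^2$ is a nonnegative supermartingale (so $\mathbb{E}Y(s)\le\|\rho_0\|_H^2$), the It\^o correction in $\mathrm{d}Y^{2p}$ gives $\tfrac{\mathrm{d}}{\mathrm{d}s}\mathbb{E}Y^{2p}\le 4p(2p-1)\varepsilon\beta^2\,\mathbb{E}Y^{2p}$, so $\mathbb{E}Y(s)^{2p}$ grows \emph{exponentially} in $s$. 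Using instead the boundedness $|B(X_\varepsilon)-B(\tilde X_\varepsilon)|_{L_Q}\le 2D$ does not help, since the quadratic variation then involves $e^{\phi(s)}Y(s)$ and the deterministic bound $e^{\phi(t_0+s)}\le e^{2\lambda C_1 s}$ again forces exponential growth. In either case the tail probabilities $\mathbb{P}(|M_n|>2\sqrt{\varepsilon}\,n^\delta)$ are not summable in $n$, and the Bounding Lemma cannot be invoked. The moment bounds on $\|X_\varepsilon\|_H^{2p}$ from Lemma~\ref{bounded estimate} that you cite are of no use here: the weight $e^{\phi}$ depends on $\int\|X_\varepsilon\|_V^2$, not on $\|X_\varepsilon\|_H$.

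The paper avoids this by reversing the order. It first integrates the differential inequality \emph{without} any weight, obtaining
\[
\|\rho_\varepsilon(t_0+l)\|_H^2\le\|\rho_0\|_H^2+2\sqrt{\varepsilon}\,M_\varepsilon(l)-\int_{t_0}^{t_0+l}\bigl(2\lambda C_1-2C_0\|X_\varepsilon\|_V^2-\varepsilon\beta^2\bigr)\|\rho_\varepsilon\|_H^2\,\mathrm{d}s,
\]
with the \emph{unweighted} martingale $M_\varepsilon(l)=\int_{t_0}^{t_0+l}\langle\rho_\varepsilon,(B(X_\varepsilon)-B(\tilde X_\varepsilon))\,\mathrm{d}W\rangle_H$. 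Bounding $|B(X_\varepsilon)-B(\tilde X_\varepsilon)|_{L_Q}\le 2D$ and using only the uniform estimate $\sup_s\mathbb{E}\|\rho_\varepsilon(s)\|_H^2<\infty$ from Lemma~\ref{bounded estimate}, BDG gives $\mathbb{E}\bigl(\sup_{l\le n}M_\varepsilon(l)\bigr)^2\le C\,n$, hence $\mathbb{P}(|M^n_\varepsilon|\ge n^\delta)\le Cn^{1-2\delta}$, summable for any $\delta>1$. The Bounding Lemma then furnishes $\tau_\varepsilon$, and with the martingale term replaced by the constant $2\sqrt{\varepsilon}\,n^\delta$ for all $l\le n$, a Gronwall argument yields the stated bound. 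Only second moments are needed, and no supermartingale analysis of $Y^{2p}$ enters.
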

\begin{proof}
It follows from  the It\^{o} formula and Hypothesis \ref{Msodehy} (i) and Remark \ref{condition remark} (i),(ii) that
\begin{equation}\label{q1}
	\begin{aligned}		\mathrm{d}\left\|\rho_{\varepsilon}(t)\right\|_{H}^2 = &2 {_{V^*}}\langle AX_{\varepsilon}-A\tilde{X}_{\varepsilon}+ F(X_{\varepsilon})-F(\tilde{X}_{\varepsilon}),\rho_{\varepsilon}\rangle_{V} \\&+2\sqrt{\varepsilon}\langle \rho_{\varepsilon},(B (X_{\varepsilon})- B (\tilde{X}_{\varepsilon}))\mathrm{d}W_t\rangle_H
		+\varepsilon| B (X_{\varepsilon})- B (\tilde{X}_{\varepsilon})|_{L_Q}^2\\
		\leq&-2\lambda\|\rho_{\varepsilon}\|_{V}^2+2C_0\left\|\rho_{\varepsilon}\right\|_{H}^2
		\left\|X_{\varepsilon}\right\|_{V}^2+\varepsilon \beta^2\|\rho_{\varepsilon}\|_{H}^2 +2\sqrt{\varepsilon}\langle \rho_{\varepsilon},( B (X_{\varepsilon})- B (\tilde{X}_{\varepsilon}))\mathrm{d}W_t\rangle_H\\
		\leq &-\Big(2\lambda C_1 -2C_0\|X_{\varepsilon}\|_{V}^2-\varepsilon\beta^2\Big)\|\rho_{\varepsilon}\|_{H}^2+2\sqrt{\varepsilon}\langle \rho_{\varepsilon},( B (X_{\varepsilon})- B (\tilde{X}_{\varepsilon}))\mathrm{d}W_t\rangle_H.
	\end{aligned}
\end{equation}

Set $M_{\varepsilon}(l,t_0;\rho):=\int_{t_0}^{t_0+l}\Big\langle \rho_{\varepsilon},\big[ B (X_{\varepsilon})- B (\tilde{X}_{\varepsilon})\big]\mathrm{d}W_t\Big\rangle_H $,
it follows from the Burkholder-Davis-Gundy inequality, Remark \ref{condition remark} (ii) and Lemma \ref{bounded estimate}, for any $\varepsilon_0>0$ there exists a constant $C$ such that
\begin{eqnarray*}
	\mathbb{E}\left(\sup_{t_0\leq s \leq l}M_{\varepsilon}(s,t_0;\rho)\right)^{2} \leq C \mathbb{E}\left(\left \langle M (\cdot,t_0;\rho)\right \rangle_l\right)
	\leq  CD^2 \int_{t_0}^{t_0 + l}\mathbb{E}\left\|\rho_{\varepsilon}(s)\right\|_{H}^{2}\mathrm{d}s
	\leq CD^2 l\mathbb{E}_{1,\varepsilon}^{\max},
\end{eqnarray*}
for every $\varepsilon\in(0,\varepsilon_0)$.

Let $M^n_{\varepsilon} = \sup_{t_0\leq s\leq n}M_{\varepsilon}(s,t_0;\rho)$, by the Chebyshev's inequality, we have
\begin{equation*}
	\mathbb{P}(|M^n_{\varepsilon}|\geq  n^{\delta}) \leq\frac{\mathbb{E}|M^n_{\varepsilon}|^{2}}{n^{2\delta}}\leq \frac{CD^2 \mathbb{E}_{1,\varepsilon}^{\max}}{n^{2\delta-1}},
\end{equation*}
for every $\varepsilon\in (0,\varepsilon_0)$ and $\delta>1$,	it follows from  Bounding Lemma \ref{Bounding Lemma},  $\tau_\varepsilon:=T_{bound}(\left\{M^n_{\varepsilon}\right\}, n^\delta)$ is almost finite.
Combining (\ref{q1}), when $l>\tau_\varepsilon$
\begin{equation*}
	\|\rho_{\varepsilon}(t_0+l)\|_{H}^2\leq \|\rho_{\varepsilon}(t_0)\|_{H}^2+2\sqrt{\varepsilon}l^\delta
	-\int_{t_0}^{t_0+l}\Big(\frac{2\lambda}{ C_1} -2C_0\|X_{\varepsilon}\|_{V}^2-\varepsilon\beta^2\Big)\left\|\rho_{\varepsilon}\right\|_{H}^2\mathrm{d}t.
\end{equation*}
By using the Gronwall inequality, we get the desired result.
\end{proof}
It follows from the It\^{o} formula and Hypothesis \ref{Msodehy} that
\begin{eqnarray}\label{uH1}
\frac{2 \lambda}{l}\int_{t_0}^{t_0+l}\left\|X_{\varepsilon}(s,t_0;X_0)\right \|_{V}^2\mathrm{d}s
&\leq& \frac{\left\|X_0\right\|_{H}^2 -\left\|X_{\varepsilon}(t_0+l,t_0;X_0)\right\|_{H}^2 }{l} +\frac{\varepsilon}{l}\int_{t_0}^{t_0+l}| B (X_{\varepsilon})|_{L_Q}^2\mathrm{d}s \nonumber\\	&&+\frac{2\sqrt{\varepsilon}}{l}\int_{t_0}^{t_0+l}\langle X_{\varepsilon}, B (X_{\varepsilon})\mathrm{d}W\rangle_H \nonumber \\
&\leq& \varepsilon D^2 + \frac{1}{l}\Big(\left\|X_0\right \|_{H}^2+2\sqrt{\varepsilon}\int_{t_0}^{t_0+l}\langle X_{\varepsilon}, B (X_{\varepsilon})\mathrm{d}W\rangle_H\Big). \nonumber \\
\end{eqnarray}
Set $\tilde{M}_{\varepsilon}(l,t_0;X_{\varepsilon}) = \int_{t_0}^{t_0+l}\langle X_{\varepsilon}, B (X_{\varepsilon})\mathrm{d}W\rangle_H$, for control $\Gamma(l,t_0;X_{\varepsilon})$ we need to control $\tilde{M}_{\varepsilon}(l,t_0;X_{\varepsilon})$.
By using the Burkholder-Davis-Gundy inequality and Lemma \ref{bounded estimate}, for any $\varepsilon_0>0$ there exists a constant $C$ such that
\begin{equation*}
\begin{aligned}
	\mathbb{E}\left(\sup_{t_0\leq s \leq l}\tilde{M}_{\varepsilon}(s,t_0;X_{\varepsilon})\right)^{2p} &\leq \mathbb{E}\left(\left \langle \tilde{M}_{\varepsilon}(l,t_0;X_{\varepsilon}) \right \rangle_l^{p}\right)\\
	& \leq  CD^{2p}l^{p-1}\int_{t_0}^{t_0 + l}\mathbb{E}\left\|X_{\varepsilon}(s)\right\|_{L^2}^{2p}\mathrm{d}s\\
	&\leq CD^{2p}l^{p}\mathbb{E}_{p,{\varepsilon}}^{\max}, \quad \forall\varepsilon\in(0,\varepsilon_0),
\end{aligned}
\end{equation*}
the second inequality comes from H\"older's inequality.

Let $\tilde{M}^n_{\varepsilon} = \sup_{t_0\leq s\leq n}\tilde{M}_{\varepsilon}(s,t_0;X_{\varepsilon})$. By the Chebyshev's inequality, we have
\begin{equation}\label{ T bound M inequality}
\mathbb{P}(|\tilde{M}^n_{\varepsilon}|\geq \eta n^{\delta}) \leq\frac{\mathbb{E}|\tilde{M}^n_{\varepsilon}|^{2p}}{n^{2p\delta}\eta^{2p}}\leq \frac{CD^{2p}\mathbb{E}_{p,{\varepsilon}}^{\max}}{n^{2p\delta-p}\eta^{2p}}.
\end{equation}
By using Bounding Lemma \ref{Bounding Lemma}, we can get the following Lemma.
\begin{lemma}\label{stopping time}
Let $X_0$ be a random variable, measurable with respect to $\mathcal{F}_{t_0}$, such that $\mathbb{E}\left\|X_0\right\|_{H}^{2p}$ is finite, then\\
(i). For any fixed $\varepsilon_0>0$, if $p>1$, then for any $\varepsilon\in(0,\varepsilon_0)$, $\eta_\varepsilon,\eta'_{\varepsilon}>0$,  $T_{bound}\left(\left\{\tilde{M}^n_{\varepsilon}\right\},\eta_\varepsilon n\right)$ and $T_{bound}\left(\left\{\left\|X_0\right\|_{H}^2\right\},\eta'_{\varepsilon} n\right)$ is finite almost surely.\\
(ii). $\mathbb{E}T_{bound}\left(\left\{\tilde{M}^n_{\varepsilon}\right\},
\eta_{\varepsilon} n\right)^q$ and $\mathbb{E}T_{bound}\left(\left\{\left\|X_0\right\|_{H}^2\right\},\eta'_{\varepsilon} n\right)^q$ is finite for $q\in(0,p-1)$.
\end{lemma}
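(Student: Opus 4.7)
The plan is to obtain both claims by a direct application of the Bounding Lemma \ref{Bounding Lemma} to each of the two sequences, using the scaling $\delta=1$; all the probabilistic heavy lifting has already been done in the moment estimate (Lemma \ref{bounded estimate}) and in deriving the Burkholder--Davis--Gundy bound that precedes inequality (\ref{ T bound M inequality}), so the remaining task is purely to match parameters.

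For the martingale sequence $\{\tilde{M}^n_\varepsilon\}$, I would specialize inequality (\ref{ T bound M inequality}) to $\delta=1$, which gives
$$
\mathbb{P}\bigl(|\tilde{M}^n_\varepsilon|\geq \eta_\varepsilon n\bigr)
\leq \frac{CD^{2p}\,\mathbb{E}_{p,\varepsilon}^{\max}}{n^{p}\,\eta_\varepsilon^{2p}}.
$$
This fits the hypothesis of Lemma \ref{Bounding Lemma} with internal exponent $2p$ and decay parameter $r=p$. The condition $p\delta>1+r$ becomes $2p>1+p$, i.e.\ $p>1$, which is exactly the standing assumption. Part (i) of Lemma \ref{Bounding Lemma} then yields $T_{bound}(\{\tilde{M}^n_\varepsilon\},\eta_\varepsilon n)<+\infty$ a.s., and part (ii) gives finite moments of order $q\in(0,\,2p-(1+p))=(0,p-1)$.

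For the initial-datum sequence $\{\|X_0\|_H^2\}$, since this random variable does not depend on $n$, Chebyshev's inequality together with the assumption $\mathbb{E}\|X_0\|_H^{2p}<\infty$ gives
$$
\mathbb{P}\bigl(\|X_0\|_H^{2}\geq \eta'_\varepsilon n\bigr)
\leq \frac{\mathbb{E}\|X_0\|_H^{2p}}{(\eta'_\varepsilon)^{p}\,n^{p}},
$$
which matches Lemma \ref{Bounding Lemma} with internal exponent $p$, $\delta=1$, and $r=0$. The condition $p\delta>1+r$ again reduces to $p>1$, so $T_{bound}(\{\|X_0\|_H^2\},\eta'_\varepsilon n)$ is a.s.\ finite and possesses finite moments up to order $q\in(0,p-1)$.

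Combining the two applications proves both (i) and (ii) simultaneously. There is no real obstacle here: the only point worth noting is that the decay rate $n^{-p}$ extracted from inequality (\ref{ T bound M inequality}) at $\delta=1$ just barely exceeds the summability threshold $n^{-1}$ when $p>1$, which is precisely what makes the threshold for finite moments turn out to be $p-1$ rather than anything larger. Uniformity in $\varepsilon\in(0,\varepsilon_0)$ is inherited from Lemma \ref{bounded estimate}, since $\mathbb{E}_{p,\varepsilon}^{\max}$ is controlled by a constant depending only on $p,\varepsilon_0,\lambda,C_1,D$ and the moments of the initial value.
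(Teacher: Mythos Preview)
Your proof is correct and follows essentially the same approach as the paper: both simply invoke inequality (\ref{ T bound M inequality}) together with Chebyshev's inequality for $\|X_0\|_H^2$, then apply the Bounding Lemma \ref{Bounding Lemma}. The paper's proof is terser---it just states the two inequalities and says the Bounding Lemma yields the conclusion---whereas you carry out the parameter matching (internal exponent $2p$, $\delta=1$, $r=p$ for the martingale; exponent $p$, $\delta=1$, $r=0$ for the initial datum) explicitly, which is a welcome clarification.
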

\begin{proof}
Noticing that (\ref{ T bound M inequality}) and
$$\mathbb{P}(\left\|X_0\right\|_{H}^2\geq \eta'_{\varepsilon} n) \leq \frac{\mathbb{E}\left\|X_0\right\|_{H}^{2p}}{n^p(\eta'_{\varepsilon})^p}\leq \frac{C}{n^p (\eta'_{\varepsilon})^p},$$
Bounding Lemma \ref{Bounding Lemma} imply the conclusion.
\end{proof}
Fixed $\varepsilon_0>0$, for any $\varepsilon\in(0,\varepsilon_0)$, combining the definition of $\Gamma(l,t_0;X_{\varepsilon})$, (\ref{uH1}) and Lemma \ref{stopping time}, when  $$l>\max\left\{T_{bound}\left(\left\{\tilde{M}^n_{\varepsilon}\right\},\sqrt{\varepsilon} n\right), \quad T_{bound}\left(\left\{\left\|X_0\right\|_{H}^2\right\},
{\varepsilon} n\right)\right\},$$ we have
\begin{align}\label{q2}
\Gamma(l,t_0;X_{\varepsilon}) =&2\lambda C_1-\varepsilon\beta^2 -2C_0\Big(\frac{1}{l}\int_{t_0}^{t_0+l}\left\|X_{\varepsilon}(s)\right \|_{V}^2\mathrm{d}s\Big)\nonumber\\
\geq&2\lambda C_1-\varepsilon\beta^2  -\frac{C_0}{\lambda }\left\{\varepsilon D^2 + \frac{1}{l}\left(\left\|X_0\right \|_{H}^2+2\sqrt{\varepsilon}\int_{t_0}^{t_0+l}\langle X_{\varepsilon}, B (X_{\varepsilon})\mathrm{d}W\rangle_H\right)\right\}\nonumber\\
\geq&2\lambda C_1-\varepsilon\beta^2 -\frac{C_0}{\lambda }(\varepsilon D^2+3\varepsilon).
\end{align}

\begin{lemma}\label{long-time}
For any $\lambda_0\in(0,2\lambda C_1)$ and $t_0\in\mathbb{R}$. Let $X_0,\tilde{X}_0 \in H$ be initial condition, measurable with respect to $\mathcal{F}_{-\infty}^{t_0}$, such that $\mathbb{E}\left \|X_0\right \|_{H}^{2p} < \infty$, $\mathbb{E} \|\tilde{X}_0 \|_{H}^{2p} < \infty$ for some $p>1$. Let $X_{\varepsilon}(t,t_0;X_0)$ and $\tilde{X}_{\varepsilon}(t,t_0;\tilde{X}_0)$ be the solutions of Eq. \eqref{Msode} starting from different initial value $X_0,\tilde{X}_0$ respectively.  Then there exists $\varepsilon_0,\delta$ and a sequence almost finite random variable $\{l_{\varepsilon}\}_{\varepsilon\in(0,\varepsilon_0)}$ such that
\begin{equation*}
	\left \|X_{\varepsilon}(t,t_0;X_0) - \tilde{X}_{\varepsilon}(t,t_0;\tilde{X}_0) \right \|_{H}^2 \leq \left(\left \|X_0 - \tilde{X}_0 \right \|_{H}^2+\sqrt{\varepsilon}(t-t_0)^\delta\right)e^{-\lambda_0(t-t_0)},
\end{equation*}
for every $\varepsilon\in(0,\varepsilon_0)$, $t>t_0+l_{\varepsilon}$.
\end{lemma}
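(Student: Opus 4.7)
The plan is to combine Lemma \ref{the lower bound} (the Gronwall-type estimate for $\|\rho_\varepsilon\|_H^2$ controlled by the random exponent $\Gamma(l,t_0;X_\varepsilon)$) with the energy-based lower bound for $\Gamma$ stated in inequality (\ref{q2}), and then absorb all the exceptional random times into a single almost-surely finite random variable $l_\varepsilon$.

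First, since $\lambda_0 < 2\lambda C_1$, I pick $\varepsilon_0>0$ so small that
\begin{equation*}
2\lambda C_1 - \varepsilon\beta^2 - \frac{C_0}{\lambda}(\varepsilon D^2 + 3\varepsilon) \;\geq\; \lambda_0, \qquad \forall\,\varepsilon\in(0,\varepsilon_0).
\end{equation*}
Next, for each such $\varepsilon$, I set
\begin{equation*}
l_\varepsilon \;:=\; \max\Bigl\{\tau_\varepsilon,\; T_{\mathrm{bound}}\bigl(\{\tilde M^n_\varepsilon\},\sqrt{\varepsilon}\,n\bigr),\; T_{\mathrm{bound}}\bigl(\{\|X_0\|_H^2\},\varepsilon n\bigr),\; T_{\mathrm{bound}}\bigl(\{\|\tilde X_0\|_H^2\},\varepsilon n\bigr)\Bigr\},
\end{equation*}
where $\tau_\varepsilon$ is the random time produced in Lemma \ref{the lower bound}. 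Each summand is a.s.\ finite: $\tau_\varepsilon$ by Lemma \ref{the lower bound}; the remaining $T_{\mathrm{bound}}$ terms by Lemma \ref{stopping time}(i), which applies thanks to the hypothesis $\mathbb{E}\|X_0\|_H^{2p}<\infty$ and $\mathbb{E}\|\tilde X_0\|_H^{2p}<\infty$ with $p>1$.

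Now, for $t>t_0+l_\varepsilon$, write $l=t-t_0$. Because $l$ exceeds every bounding time above, the derivation that led to (\ref{q2}) is applicable (the martingale $\tilde M_\varepsilon(\cdot,t_0;X_\varepsilon)$ at time $t_0+l$ and the initial-data term $\|X_0\|_H^2$ are both dominated by $\sqrt{\varepsilon}\,l$ and $\varepsilon l$ respectively), yielding
\begin{equation*}
\Gamma(l,t_0;X_\varepsilon) \;\geq\; 2\lambda C_1 - \varepsilon\beta^2 - \frac{C_0}{\lambda}\bigl(\varepsilon D^2 + 3\varepsilon\bigr) \;\geq\; \lambda_0.
\end{equation*}
On the other hand, $l>\tau_\varepsilon$ means we may invoke Lemma \ref{the lower bound} (its Gronwall integration in (\ref{q1}) is written for continuous $l$; the only place integers entered was the Borel--Cantelli control of $M^n_\varepsilon$, but $M^n_\varepsilon$ is itself a continuous-time supremum, so the bound on $M_\varepsilon(l,t_0;\rho)$ extends to all real $l\ge\tau_\varepsilon$ up to the harmless replacement of $l^\delta$ by $\lceil l\rceil^\delta$). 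This gives
\begin{equation*}
\|\rho_\varepsilon(t)\|_H^2 \;\leq\; e^{-l\,\Gamma(l,t_0;X_\varepsilon)}\bigl(\|X_0-\tilde X_0\|_H^2 + C\sqrt{\varepsilon}\,l^\delta\bigr),
\end{equation*}
and combining the two displays (after possibly increasing $\delta$ slightly to absorb the constant $C$ into $l^\delta$) produces the claimed bound
\begin{equation*}
\|X_\varepsilon(t,t_0;X_0)-\tilde X_\varepsilon(t,t_0;\tilde X_0)\|_H^2 \;\leq\; \bigl(\|X_0-\tilde X_0\|_H^2 + \sqrt{\varepsilon}(t-t_0)^\delta\bigr)\,e^{-\lambda_0(t-t_0)}.
\end{equation*}

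The main obstacle I foresee is purely bookkeeping: the stopping times from Lemmas \ref{the lower bound} and \ref{stopping time} are indexed by integers $n$ via the Bounding Lemma, while the conclusion is a continuous-time inequality for all $t>t_0+l_\varepsilon$. Handling this requires checking that the Bounding Lemma's integer-indexed control actually delivers continuous-time control, which is immediate because each $M^n_\varepsilon,\tilde M^n_\varepsilon$ is the \emph{supremum} of a continuous-time process on $[t_0,t_0+n]$; an analogous comment lets us replace the integer $n^\delta$ in Lemma \ref{the lower bound} by $(t-t_0)^\delta$ in the final bound. Apart from this, the argument is a direct concatenation of the two preceding lemmas with the choice of $\varepsilon_0$ dictated by $\lambda_0<2\lambda C_1$.
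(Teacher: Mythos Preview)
Your proposal is correct and follows essentially the same route as the paper: choose $\varepsilon_0$ so that $2\lambda C_1-\varepsilon\beta^2-\frac{C_0}{\lambda}(\varepsilon D^2+3\varepsilon)>\lambda_0$, define $l_\varepsilon$ as the maximum of $\tau_\varepsilon$ and the relevant $T_{\mathrm{bound}}$ random times, and then combine Lemma~\ref{the lower bound} with inequality~(\ref{q2}). The only difference is that you include the extra term $T_{\mathrm{bound}}(\{\|\tilde X_0\|_H^2\},\varepsilon n)$, which the paper omits---and rightly so, since $\Gamma(l,t_0;X_\varepsilon)$ depends only on the trajectory $X_\varepsilon$ starting from $X_0$, not on $\tilde X_\varepsilon$; your inclusion is harmless but unnecessary.
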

\begin{proof}
We can chose $\varepsilon_0$ small enough such that
\begin{equation}\label{q3}
	2\lambda C_1-\varepsilon\beta^2 -\frac{C_0}{\lambda }(\varepsilon D^2+3\varepsilon)>\lambda_0,
\end{equation}
for every $\varepsilon<\varepsilon_0$. We define  $l_{\varepsilon}$ by $$l_{\varepsilon}=\max\left\{\tau_{\varepsilon},\quad T_{bound}\left(\left\{\tilde{M}^n_{\varepsilon}\right\},\sqrt{\varepsilon} n\right),\quad T_{bound}\left(\left\{\left\|X_0\right\|_{H}^2\right\},
{\varepsilon} n\right)\right\},$$
where $\tau_{\varepsilon}$ is defined in Lemma \ref{the lower bound}, the conclusion follows from
Lemma \ref{the lower bound}, (\ref{q2}) and (\ref{q3}).
\end{proof}
\begin{lemma}\label{uniqueness proof different initial value estimate}
(cf. Theorem 2 in \cite{M99}) Fix  $\lambda_0 \in (0,2\lambda C_1)$ and  $t\in \mathbb{R}$. Let $\left \{X_0(n)\right \} $ be a sequence of random variable with $n \in \mathbb{Z}^{+}$. Assume that $X_0(n)$ is measurable with respect to $\mathcal{F}^{t-n}_{-\infty}$ and that $\mathbb{E}\left\|X_0(n)\right\|_{H}^{2p}$ is uniformly bounded in $n$ for some $p > 2$. Then there exists $\varepsilon_0,\delta$ such that for any $\varepsilon\in(0,\varepsilon_0)$ the following hold:\\
(i). With probability one, there exists a random  time $\vec{n}_{\varepsilon}>0$ such that for every $s > 0$ and all $n\in\mathbb{Z}$ with $n > \vec{n}_{\varepsilon}$, we have
$$\sup_{\tilde{X}_0 \in A_n} \left\|X_{\varepsilon}(t+s,t-n;X_0(n)) - X_{\varepsilon}(t+s,t-n;\tilde{X}_0)\right\|_{H}^2 \leq \left(n+\sqrt{\varepsilon}(s+n)^\delta\right)e^{-\lambda_0(s+n)}.$$
Here $A_n$ is the set $\left\{\tilde{X}_0:\left\|\tilde{X}_0\right\|_{H}^2\leq \frac{n}{4}\right\}. $ In addition $\mathbb{E}(\vec{n}_{\varepsilon}^q)<\infty$ for any $q\in(0,p-2)$.\\
(ii). Let $\left \{\bar{X}_0(n)\right \} $ be a second sequence of random variable with $n \in \mathbb{Z}^{+}$ measurable with respect to $\mathcal{F}^{t-n}_{-\infty}$ and that $\mathbb{E}\left\|\bar{X}_0(n)\right\|_{H}^{2p}$ is uniformly bounded in $n$ for some $p > 2$. Then with probability one, there exists a random time $\vec{n}'_{\varepsilon}(\varepsilon,\delta,t,\omega)>0$ such that for every $s > 0$ and all $n\in\mathbb{Z}$ with $n > \vec{n}'_{\varepsilon}$, we have
$$ \left\|X_{\varepsilon}(t+s,t-n;X_0(n)) - X_{\varepsilon}(t+s,t-n;\bar{X}_0(n))\right\|_{H}^2\leq \left(n+\sqrt{\varepsilon}(s+n)^\delta\right)e^{-\lambda_0(s+n)}.$$
And $\mathbb{E}\left[(\vec{n}'_{\varepsilon})^q\right]<\infty$ for any $q\in(0,p-2)$.
\end{lemma}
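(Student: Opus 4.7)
The strategy is to apply Lemma~\ref{long-time} with $t_0 = t-n$ for each fixed $n$, and to upgrade the resulting almost sure pointwise estimate to a statement uniform in $n$ (and, for part (i), uniform in $\tilde X_0 \in A_n$) via Borel--Cantelli, following Mattingly \cite{M99}. Concretely, Lemma~\ref{long-time} supplies a random threshold $l_\varepsilon^{(n)}$ such that, for $s + n > l_\varepsilon^{(n)}$, the asymptotic stability bound holds; the task is to show $l_\varepsilon^{(n)} \leq n$ eventually in $n$, so that the bound is available automatically for every $s > 0$.

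Consider part (ii) first. Applying Lemma~\ref{long-time} with initial data $X_0(n)$ and $\bar X_0(n)$ at time $t-n$ gives
\[
\|X_\varepsilon(t+s,t-n;X_0(n)) - X_\varepsilon(t+s,t-n;\bar X_0(n))\|_H^2 \leq \bigl(\|X_0(n) - \bar X_0(n)\|_H^2 + \sqrt{\varepsilon}(s+n)^\delta\bigr) e^{-\lambda_0(s+n)}
\]
for $s + n > l_\varepsilon^{(n)}$. Two reductions remain: replacing $\|X_0(n) - \bar X_0(n)\|_H^2$ by $n$, and showing $l_\varepsilon^{(n)} \leq n$ eventually. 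Chebyshev together with the uniform $2p$-moment bound gives $\mathbb{P}(\|X_0(n) - \bar X_0(n)\|_H^2 > n) \leq C/n^p$, summable since $p > 2$. Inspecting the proof of Lemma~\ref{long-time}, $l_\varepsilon^{(n)}$ is the maximum of three $T_{bound}$-type quantities whose tails can be estimated via the Burkholder--Davis--Gundy inequality, Lemma~\ref{bounded estimate} and the Bounding Lemma~\ref{Bounding Lemma}; the resulting $\mathbb{P}(l_\varepsilon^{(n)} > n) \leq C/n^{p-1}$ is summable for $p > 2$. Borel--Cantelli produces $\vec n'_\varepsilon$, and Lemma~\ref{Bounding Lemma}(ii) yields $\mathbb{E}(\vec n'_\varepsilon)^q < \infty$ for $q \in (0, p-2)$.

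The argument for part (i) follows the same pattern, with the extra task of handling the supremum over the continuum $A_n$. The key structural observation, already present in the proof of Lemma~\ref{the lower bound}, is that in the Gronwall estimate
\[
\|\rho_\varepsilon(t+s)\|_H^2 \leq \bigl(\|X_0(n) - \tilde X_0\|_H^2 + 2\sqrt{\varepsilon}(s+n)^\delta\bigr) e^{-(s+n)\Gamma(s+n, t-n; X_\varepsilon)},
\]
the effective rate $\Gamma$ depends only on the reference trajectory $X_\varepsilon(\cdot, t-n; X_0(n))$ and is therefore independent of $\tilde X_0$; only the additive martingale bound $2\sqrt{\varepsilon}(s+n)^\delta$ varies with $\tilde X_0$. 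The triangle bound $\|X_0(n) - \tilde X_0\|_H^2 \leq 2\|X_0(n)\|_H^2 + 2\|\tilde X_0\|_H^2 \leq n$ is handled by the Chebyshev plus Borel--Cantelli argument forcing $\|X_0(n)\|_H^2 \leq n/4$ eventually, together with the definition of $A_n$.

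The main obstacle is the uniform-in-$\tilde X_0 \in A_n$ control of the martingale $M_\varepsilon(l, t-n; \rho_\varepsilon)$ underlying the $\sqrt{\varepsilon}(s+n)^\delta$ term, since $A_n$ is not totally bounded in the infinite-dimensional space $H$. I expect to handle this by combining the pathwise It\^o expansion $\|\tilde X_\varepsilon(s; \tilde X_0)\|_H^2 \leq \|\tilde X_0\|_H^2 + (\text{terms depending only on } X_\varepsilon, W)$ with a BDG moment estimate and the a priori bound $\|\tilde X_0\|_H^2 \leq n/4$ to obtain a probability bound on $\{\tau_\varepsilon^{(n, \tilde X_0)} > n\}$ that grows only polynomially in $n$; choosing $\delta$ slightly larger than $1$ in the Bounding Lemma~\ref{Bounding Lemma} makes the corresponding $n$-sum summable. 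Borel--Cantelli then delivers $\vec n_\varepsilon$, and Lemma~\ref{Bounding Lemma}(ii) provides $\mathbb{E}(\vec n_\varepsilon)^q < \infty$ for $q \in (0, p-2)$.
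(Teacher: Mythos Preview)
Your overall plan matches the paper's: invoke Lemma~\ref{long-time} with $t_0 = t-n$, control $\|X_0(n)\|_H^2 \leq n/4$ eventually via the Bounding Lemma, and close with the triangle inequality $\|X_0(n)-\tilde X_0\|_H^2 \leq 2\|X_0(n)\|_H^2 + 2\|\tilde X_0\|_H^2 \leq n$. The paper's argument is much terser than yours: it treats only (i), writes a single threshold $l_\varepsilon$ with no $n$- or $\tilde X_0$-index, sets $\vec n_\varepsilon = \max\bigl\{T_{bound}(\{\|X_0(n)\|_H^2\}, n/4),\, l_\varepsilon\bigr\}$, and concludes in four displayed lines. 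Neither the $n$-dependence of $l_\varepsilon$ nor the Borel--Cantelli reduction ``$l_\varepsilon^{(n)} \leq n$ eventually'' that you outline is mentioned; on these structural points your proposal is more scrupulous than the paper itself.

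Where your proposal does have a gap is precisely the obstacle you isolate: uniformity of the martingale threshold $\tau_\varepsilon$ over $\tilde X_0 \in A_n$. Your sketched fix does not work as written --- the claimed pathwise expansion $\|\tilde X_\varepsilon(s;\tilde X_0)\|_H^2 \leq \|\tilde X_0\|_H^2 + (\text{terms depending only on } X_\varepsilon, W)$ is false, since the It\^o differential of $\|\tilde X_\varepsilon\|_H^2$ involves $F(\tilde X_\varepsilon)$, $B(\tilde X_\varepsilon)$ and the stochastic integral $\langle\tilde X_\varepsilon, B(\tilde X_\varepsilon)\,dW\rangle$, all of which depend nonlinearly on $\tilde X_0$; and in any case a probability bound on $\{\tau_\varepsilon^{(n,\tilde X_0)} > n\}$ that is merely \emph{uniform} in $\tilde X_0$ cannot control a supremum over the uncountable, non-totally-bounded set $A_n\subset H$. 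Note that this difficulty is specific to multiplicative noise --- in Mattingly's additive-noise reference \cite{M99} one has $B(X_\varepsilon)-B(\tilde X_\varepsilon)\equiv 0$, so the martingale $M_\varepsilon$ in Lemma~\ref{the lower bound} vanishes identically and no $\tilde X_0$-dependent threshold arises --- and the paper's short proof does not address it either. You have correctly identified a genuine lacuna that the multiplicative-noise extension introduces; resolving it requires either a different Gronwall-type argument (e.g.\ working with $\log\|\rho_\varepsilon\|_H^2$, whose driving martingale has quadratic-variation rate deterministically bounded by $4\varepsilon\beta^2$, so the stochastic fluctuation can be absorbed into the exponential rate rather than into an additive, $\tilde X_0$-dependent remainder) or a separate uniformity argument not supplied here.
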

\begin{proof}
Only need to prove (i), it follows from Lemma \ref{long-time}, there exists $\varepsilon_0,\delta$ and a sequence almost finite random variable $\{l_{\varepsilon}\}_{\varepsilon\in(0,\varepsilon_0)}$ such that
\begin{equation*}
	\left\|X_{\varepsilon}(t+s,t-n;X_0(n)) - X_{\varepsilon}(t+s,t-n;\tilde{X}_0)\right\|_{H}^2  \leq \left(\left \|X_0(n) - \tilde{X}_0 \right \|_{H}^2+\sqrt{\varepsilon}(s+n)^\delta\right)e^{-\lambda_0(s+n)},
\end{equation*}
for every $\varepsilon\in(0,\varepsilon_0)$, $t>t_0+l_{\varepsilon}$.

Set $T_{X_0} = T_{bound}\left(\left\{\left\|X_0(n)\right\|_{H}^2\right\},\frac{n}{4}\right) $, then using Bounding Lemma \ref{Bounding Lemma} we have that $T_{X_0}$ is almost finite  and
$\mathbb{E}(T_{X_0})^q, q \in(0,p-2)$ is finite a.s.
Set ${\vec{n}}_{\varepsilon} = max\left\{T_{X_0},l_{\varepsilon}\right\}$, then for every $\varepsilon\in(0,\varepsilon_0)$ and $n>\vec{n}_{\varepsilon}$
\begin{eqnarray}
	&&\sup_{\tilde{X}_0 \in A_n} \left\|X_{\varepsilon}(t+s,t-n;X_0(n)) - X_{\varepsilon}(t+s,t-n;\tilde{X}_0)\right\|_{H}^2 \nonumber \\
	&\leq& \left(\left \|X_0(n) - \tilde{X}_0 \right \|_{H}^2+\sqrt{\varepsilon}(s+n)^\delta\right)e^{-\lambda_0(s+n)}\nonumber \\
	&\leq& \left(2\left \|X_0(n) \right \|_{H}^2+2\left \| \tilde{X}_0\right \|_{H}^2+\sqrt{\varepsilon}(s+n)^\delta\right)e^{-\lambda_0(s+n)}\nonumber \\
	&\leq& \left(n+\sqrt{\varepsilon}(s+n)^\delta\right)e^{-\lambda_0(s+n)}.\nonumber
\end{eqnarray}
\end{proof}
\begin{lemma}\label{cauchy estimate}
(cf. Corollary 1 in \cite{M99}) Fix $t_1 \in \mathbb{Z}$, $\lambda_0 \in (0,2\lambda C_1)$ and $u_0 =0$. There exist $\varepsilon_0,\delta$ such that for any $\varepsilon\in(0,\varepsilon_0)$,  with probability one, there is a positive random variable $n^{*}_{\varepsilon} $ such that for all $l \geq 0$ and all $n_1,n_2 \in \mathbb{Z}$, if $n_1,n_2 < n\leq t_1 - n^{*}_{\varepsilon}$, we have
$$\left \|X_{\varepsilon}(t_1+l,n_1;X_0)- X_{\varepsilon}(t_1+l,n_2;X_0)\right\|_{H}^2 \leq\left(n+\sqrt{\varepsilon}(t_1+l-n)^\delta\right)e^{-\lambda_0(t_1+l-n)}. $$			
\end{lemma}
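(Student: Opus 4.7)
The plan is to reduce the Cauchy-type estimate to the stability result of Lemma~\ref{uniqueness proof different initial value estimate} via the semi-flow (cocycle) property of mild solutions. Fix $n_1, n_2 < n \leq t_1 - n^*_\varepsilon$ and set $m := t_1 - n$. The cocycle identity gives
\[
X_\varepsilon(t_1+l, n_j; 0) = X_\varepsilon(t_1+l, n; Z_j), \qquad Z_j := X_\varepsilon(n, n_j; 0),\ j=1,2,
\]
so the quantity to bound is the difference at time $t_1+l$ of two mild solutions started at the common time $n = t_1 - m$ from the $\mathcal{F}^n_{-\infty}$-measurable random data $Z_1, Z_2 \in H$.

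I would then apply Lemma~\ref{uniqueness proof different initial value estimate}(i) with $t=t_1$, $s=l$, and $X_0(m)\equiv 0$: for $m$ larger than some random $\vec{n}_\varepsilon$,
\[
\sup_{\tilde X_0 \in A_m}\|X_\varepsilon(t_1+l, t_1-m; 0) - X_\varepsilon(t_1+l, t_1-m; \tilde X_0)\|_H^2 \leq (m + \sqrt{\varepsilon}(l+m)^\delta)\, e^{-\lambda_0 (l+m)},
\]
with $A_m = \{x\in H: \|x\|_H^2 \leq m/4\}$. Provided $Z_1, Z_2 \in A_m$, setting $\tilde X_0 = Z_j$ and using the triangle inequality yields the claimed bound; the factor of two from the triangle inequality can be absorbed by choosing $\lambda_0$ strictly smaller than $2\lambda C_1$ within the admissible range, or by enlarging the pre-exponential constant in the parenthesis.

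It remains to verify, uniformly in $n_j$, that $\|Z_j\|_H^2 \leq m/4$ for all sufficiently large $m$. Lemma~\ref{bounded estimate} with initial value $0$ gives $\mathbb{E}\|Z_j\|_H^{2p} \leq C$ for $p>2$, uniformly in $n$ and $n_j$; for each fixed $n_j$, Chebyshev's inequality and the Bounding Lemma~\ref{Bounding Lemma} produce an almost surely finite time beyond which $Z_j \in A_m$. The main obstacle is upgrading this to a uniform statement over all $n_j \in \mathbb{Z}$ with $n_j < n$, since a naive countable union of null sets need not yield an a.s.\ finite random threshold. To circumvent it I would exploit the exponential contraction of Lemma~\ref{the lower bound}: the sequence $k \mapsto X_\varepsilon(n, n-k; 0)$ is almost surely Cauchy in $H$, so $\sup_{n_j<n}\|Z_j\|_H$ is dominated by $\|X_\varepsilon(n, n-1; 0)\|_H$ plus a telescoping sum of exponentially contracting differences whose $L^{2p}$-norms are controlled uniformly. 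A final Chebyshev--Borel--Cantelli step in $m$ then produces a single random $M_\varepsilon$, and setting $n^*_\varepsilon := \max\{\vec{n}_\varepsilon, M_\varepsilon\}$ finishes the proof.
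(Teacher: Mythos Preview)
Your strategy—cocycle identity followed by Lemma~\ref{uniqueness proof different initial value estimate}—is the paper's. The difference is that the paper invokes part~(ii) rather than part~(i): with $t=t_1$ and $m=t_1-n$, it takes the two sequences $X_0(m)=X_\varepsilon(t_1-m,n_1;0)$ and $\bar X_0(m)=X_\varepsilon(t_1-m,n_2;0)$, observes via Lemma~\ref{bounded estimate} (initial value zero) that their $2p$-moments are uniformly bounded, and reads off the estimate in one line. There is no triangle inequality and no separate verification that $Z_j\in A_m$.

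Routing through part~(i) manufactures the very uniformity obstacle you then labor to dismantle. Worse, your proposed resolution is circular: asserting that $k\mapsto X_\varepsilon(n,n-k;0)$ is almost surely Cauchy in $H$ is precisely the content of the present lemma with $t_1+l$ replaced by $n$, so it cannot serve as an input here. The clean fix is to use part~(ii), as the paper does; if you insist on part~(i), you would still need an independent argument that $\sup_{n_j<n}\|Z_j\|_H^2\le m/4$ eventually, one that does not presuppose the Cauchy property you are establishing.
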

\begin{proof}
Assume $n_1<n_2$, then according to Lemma \ref{bounded estimate} and Lemma \ref{uniqueness proof different initial value estimate} (ii) that
\begin{eqnarray*}
	&&\left \|X_{\varepsilon}(t_1+l,n_1;X_0)- X_{\varepsilon}(t_1+l,n_2;X_0)\right\|_{H}^2  \nonumber \\
	&=& \left \|X_{\varepsilon}(t_1+l,n;X_{\varepsilon}(n,n_1;X_0))- X_{\varepsilon}(t_1+l,n;X_{\varepsilon}(n,n_2;X_0))\right\|_{H}^2 \nonumber\\
	&\leq&\left(n+\sqrt{\varepsilon}(t_1+l-n)^\delta\right)e^{-\lambda_0(t_1+l-n)}.
\end{eqnarray*}

\end{proof}
We first consider the uniqueness of the solution of the  infinite horizon stochastic integral Eq. \eqref{MIHSIE}, which is  a crucial technical condition to obtain the stationarity solution $X_{\varepsilon}^{*}$ of Eq. \eqref{Msode}.
\begin{lemma}\label{uniqueness}
For every $\varepsilon\in(0,\varepsilon_0)$, where $\varepsilon_0$ is defined in Lemma \ref{long-time}. Assume that $X^{*}_{\varepsilon}(t,\omega) (t \in \mathbb{R})$ is a $\big(\mathcal{B}(\mathbb{R})\otimes \mathcal{F},\mathcal{B}(H)\big)$-measurable, $(\mathcal{F}_{-\infty}^{t})_{t\in \mathbb{R}}$-adapted process, and for any $N\in \mathbb{Z}^{+}, X^{*}_{\varepsilon}(t,\omega)|_{t\in[-N,N]}$ $\in C([-N,N];H)$.  Moreover, if $X_{\varepsilon}^{*}(t,\omega)$ satisfies the following equation in $H$ for any $t \in \mathbb{R}$
\begin{equation}\label{MIHSIE}
	X_{\varepsilon}^{*}(t)=\int_{-\infty}^{t} S_{A}(t-s)F\left( X_{\varepsilon}^{*}\right) \mathrm{d} s+\sqrt{\varepsilon}\int_{-\infty}^{t} S_{A}(t-s) B \left( X_{\varepsilon}^{*}\right)\mathrm{d} W_{s}.
\end{equation}
and
\begin{equation}\label{uniqueness proof condition}
	\sup_{t\in \mathbb{R}}\mathbb{E}\left\|X_{\varepsilon}^{*}(t)\right\|_{H}^{2} < \infty,
\end{equation}
then $X_{\varepsilon}^{*}(t)$ is unique.
\end{lemma}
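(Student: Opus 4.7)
The uniqueness argument parallels the proof of Lemma \ref{SK-uniqueness} for the skeleton equation, but with the deterministic decay of Corollary \ref{asdbe} replaced by the pathwise stochastic synchronization of Lemma \ref{long-time} (or Lemma \ref{uniqueness proof different initial value estimate}). The plan is to show that two candidate solutions to \eqref{MIHSIE}, restricted to $[t_0,\infty)$, are in fact forward mild solutions of Eq. \eqref{Msode} starting from random initial data at time $t_0$, and then let $t_0\to -\infty$ to conclude.

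\textbf{Step 1: Reduction to a forward initial value problem.} Suppose $X_\varepsilon^{*}$ and $\tilde X_\varepsilon^{*}$ both satisfy \eqref{MIHSIE} and \eqref{uniqueness proof condition}. For any $t_0<t$, splitting the integrals in \eqref{MIHSIE} at $t_0$ and using the semigroup identity $S_A(t-s)=S_A(t-t_0)S_A(t_0-s)$ yields
$$
X_\varepsilon^{*}(t)=S_A(t-t_0)X_\varepsilon^{*}(t_0)+\int_{t_0}^{t}S_A(t-s)F(X_\varepsilon^{*})\,\mathrm{d}s+\sqrt{\varepsilon}\int_{t_0}^{t}S_A(t-s)B(X_\varepsilon^{*})\,\mathrm{d}W_s,
$$
and likewise for $\tilde X_\varepsilon^{*}$. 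Thus on $[t_0,\infty)$ both processes are mild solutions of Eq. \eqref{Msode} with the respective (random, $\mathcal{F}_{-\infty}^{t_0}$-measurable) initial conditions $X_\varepsilon^{*}(t_0)$ and $\tilde X_\varepsilon^{*}(t_0)$.

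\textbf{Step 2: Bootstrap to higher moments.} Lemma \ref{long-time} (equivalently Lemma \ref{uniqueness proof different initial value estimate}) requires the initial data to have $2p$-th moments for some $p>1$ (indeed $p>2$ for part (ii)), whereas \eqref{uniqueness proof condition} only furnishes second moments. The remedy is to upgrade the moment bound directly from the integral representation \eqref{MIHSIE}: since $|B(\mathbf{u})|_{L_Q}\le D$ is bounded (Remark \ref{condition remark}(ii)), the stochastic convolution $\sqrt{\varepsilon}\int_{-\infty}^{t}S_A(t-s)B(X_\varepsilon^{*})\,\mathrm{d}W_s$ has all polynomial moments finite uniformly in $t$ by Burkholder--Davis--Gundy together with $|S_A(l)|_L\le 1$ and its integrability on $(0,\infty)$; the drift convolution is handled through Hypothesis \ref{Msodehy}(i)--(ii) and \eqref{uniqueness proof condition}. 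After possibly shrinking $\varepsilon_0$, this yields $\sup_{t\in\mathbb{R}}\mathbb{E}\|X_\varepsilon^{*}(t)\|_H^{2p}<\infty$ for every $p\ge 1$, so in particular the hypotheses of Lemma \ref{uniqueness proof different initial value estimate} are met with $X_0(n):=X_\varepsilon^{*}(-n)$ and $\bar X_0(n):=\tilde X_\varepsilon^{*}(-n)$.

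\textbf{Step 3: Synchronization and passage to the limit.} Fix $t\in\mathbb{R}$ and $\lambda_0\in(0,2\lambda C_1)$. Applying Lemma \ref{uniqueness proof different initial value estimate}(ii) to the two mild solutions starting at time $-n$, there exist $\delta>1$ and an almost surely finite random time $\vec n_\varepsilon'$ such that, for all $n>\vec n_\varepsilon'$,
$$
\bigl\|X_\varepsilon^{*}(t)-\tilde X_\varepsilon^{*}(t)\bigr\|_H^{2}\le \bigl(n+\sqrt{\varepsilon}(t+n)^{\delta}\bigr)\,e^{-\lambda_0(t+n)}\quad\text{a.s.}
$$
Sending $n\to\infty$, exponential decay dominates the polynomial prefactor, giving $\|X_\varepsilon^{*}(t)-\tilde X_\varepsilon^{*}(t)\|_H=0$ almost surely. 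Since both processes are continuous on each $[-N,N]$, this pointwise-in-$t$ identification upgrades to an almost-sure equality of paths, establishing uniqueness.

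\textbf{Main obstacle.} The delicate point is Step 2: the moment bootstrap, which is needed to legitimately invoke the synchronization lemma. One must carefully exploit the boundedness of $B$, the exponential stability of $S_A$ encoded in Remark \ref{condition remark}(iii), and the dissipativity of $A+F$ to turn the second-moment bound \eqref{uniqueness proof condition} into uniform higher-moment control along the stationary trajectory. If one prefers to avoid this bootstrap entirely, an alternative is to truncate the initial data on $\{\|X_\varepsilon^{*}(-n)\|_H\le R\}$, apply Lemma \ref{long-time} on the truncation, take $n\to\infty$ with $R$ fixed, and then send $R\to\infty$ using \eqref{uniqueness proof condition}.
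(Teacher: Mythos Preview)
Your Steps 1 and 3 reproduce the paper's argument exactly: the paper also splits the infinite-horizon integral at $-n$ to see that both candidates are forward mild solutions of \eqref{Msode} from their values at time $-n$, invokes Lemma \ref{uniqueness proof different initial value estimate}(ii), and lets $n\to\infty$.

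You are right to flag Step 2 as the delicate point, and in fact you are being more careful than the paper here: the paper invokes Lemma \ref{uniqueness proof different initial value estimate}(ii) directly, without commenting on the fact that this lemma asks for uniform $2p$-th moments with $p>2$ while \eqref{uniqueness proof condition} only supplies second moments. That said, your sketched bootstrap has a weak link. The bound on the stochastic convolution is fine (boundedness of $B$ plus the exponential decay of $S_A$ implicit in Remark \ref{condition remark}(iii) give all moments via BDG). But the claim that ``the drift convolution is handled through Hypothesis \ref{Msodehy}(i)--(ii) and \eqref{uniqueness proof condition}'' is circular as stated: $F$ is not bounded, and $\int_{-\infty}^{t}S_A(t-s)F(X_\varepsilon^{*})\,\mathrm{d}s = X_\varepsilon^{*}(t)-Z(t)$, so controlling it in $L^{2p}$ is equivalent to controlling $X_\varepsilon^{*}$ itself. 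The inductive energy estimate of Lemma \ref{bounded estimate} is the natural tool, but it needs $\mathbb{E}\|X_\varepsilon^{*}(t_0)\|_H^{2p}<\infty$ at \emph{some} $t_0$ to get started, which is exactly the unknown. Your truncation alternative is the safer route, but it also requires care, since the random time $\vec n'_\varepsilon$ in Lemma \ref{uniqueness proof different initial value estimate} depends on the moment bounds of the truncated data and hence on the truncation level $R$; you would need to argue that for each fixed $R$ the estimate on $\{\|X^*_\varepsilon(-n)\|_H\le R,\ \|\tilde X^*_\varepsilon(-n)\|_H\le R\}$ already forces $\|X^*_\varepsilon(t)-\tilde X^*_\varepsilon(t)\|_H=0$ on an event of probability at least $1-C/R^2$, and only then send $R\to\infty$.
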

\begin{proof}
For any $n\in\mathbb{N}^{+}$ and $-n<t$,
\begin{align*}
	X_{\varepsilon}^{*}(t)=&\int_{-\infty}^{t} S_{A}(t-s)F\left( X_{\varepsilon}^{*}\right) \mathrm{d} s+\sqrt{\varepsilon}\int_{-\infty}^{t} S_{A}(t-s) B \left( X_{\varepsilon}^{*}(s)\right)\mathrm{d} W_{s}\\
	=&\int_{-\infty}^{-n} S_{A}(t-s)F\left( X_{\varepsilon}^{*}(s)\right) \mathrm{d} s+\sqrt{\varepsilon}\int_{-\infty}^{-n} S_{A}(t-s) B \left( X_{\varepsilon}^{*}(s)\right)\mathrm{d} W_{s}\\
	&+\int_{-n}^{t}S_{A}(t-s)F\left(X_{\varepsilon}^{*}(s)\right) \mathrm{d} s+\sqrt{\varepsilon}\int_{-n}^{t}S_{A}(t-s) B \left( X_{\varepsilon}^{*}(s)\right) \mathrm{d} W_{s}\\
	= &X_{\varepsilon}^{*}(-n)+ \int_{-n}^{t}S_{A}(t-s)F\left(X_{\varepsilon}^{*}(s)\right) \mathrm{d} s+\sqrt{\varepsilon}\int_{-n}^{t}S_{A}(t-s) B \left( X_{\varepsilon}^{*}(s)\right) \mathrm{d} W_{s}.
\end{align*}
Assume that $Y_{\varepsilon}^{*}$ is another solution of Eq. \eqref{MIHSIE}, then
\begin{equation*}
	Y_{\varepsilon}^{*}(t)= Y_{\varepsilon}^{*}(-n)+ \int_{-n}^{t}  S_{A}(t-s)F\left(Y_{\varepsilon}^{*}(s)\right) \mathrm{d} s+\sqrt{\varepsilon} \int_{-n}^{t}  S_{A}(t-s) B \left(Y_{\varepsilon}^{*}(s)\right)\mathrm{d} W_{s},
\end{equation*}
which imply that $X_{\varepsilon}^{*}$ and $Y_{\varepsilon}^{*}$ are solutions of Eq. \eqref{Msode} with initial value $X_{\varepsilon}^{*}(-n)$ and $Y_{\varepsilon}^{*}(-n)$ at time $-n$.
It follows from Lemma \ref{uniqueness proof different initial value estimate} (ii) that there exists $\varepsilon_0,\delta,\lambda_0>0$ and a sequence almost surely finite stopping time $\{{\vec{n}}_{\varepsilon}\}_{\varepsilon\in(0,\varepsilon_0)}$  such that for every $\varepsilon\in(0,\varepsilon_0)$, $ s>0,t\in\mathbb{R}$ when $n>{\vec{n}}_{\varepsilon}$,
\begin{align*}
	&\|X_{\varepsilon}^{*}(t+s,t-n;X_{\varepsilon}^{*}(t-n))
	-Y_{\varepsilon}^{*}(t+s,t-n;Y_{\varepsilon}^{*}(t-n))\|_{H}^2\\
	& \leq  \left(n+\sqrt{\varepsilon}(s+n)^\delta\right)e^{-\lambda_0(s+n)},
\end{align*}
under the condition  \eqref{uniqueness proof condition}, let $n\rightarrow \infty$, we can get that for any $\eta>0$
\begin{equation*}
	\|X_{\varepsilon}^{*}(t+s)-Y_{\varepsilon}^{*}(t+s)\|_{H}^2\leq \eta ,
\end{equation*}
it implies $X_{\varepsilon}^{*}(\cdot)=Y_{\varepsilon}^{*}(\cdot),\quad a.s.$
\end{proof}
The following Lemma illustrate that if Eq. \eqref{MIHSIE} has unique solution $X_{\varepsilon}^{*}$, then $X_{\varepsilon}^{*}$ is also the stationary solution of Eq. \eqref{Msode}.
\begin{lemma}\label{uniqueness of infty intergal equation promise the periodic property}
If $X_{\varepsilon}^{*}$ satisfies all conditions and \eqref{MIHSIE}, \eqref{uniqueness proof condition} in Lemma \ref{uniqueness}, then for any $l\in\mathbb{R}$
\begin{equation*}
	X_{\varepsilon}^{*}(\cdot+l,\omega) = X_{\varepsilon}^{*}(\cdot,\theta(l,\omega)),\quad a.s.
\end{equation*}
\end{lemma}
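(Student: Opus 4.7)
The plan is to reduce the claim to the uniqueness statement of Lemma \ref{uniqueness}. For a fixed $l\in\mathbb{R}$, define two processes
\begin{equation*}
Y(t,\omega):=X_{\varepsilon}^{*}(t+l,\omega),\qquad Z(t,\omega):=X_{\varepsilon}^{*}(t,\theta(l,\omega)),\quad t\in\mathbb{R}.
\end{equation*}
I will show that both $Y$ and $Z$ are $(\mathcal{B}(\mathbb{R})\otimes\mathcal{F},\mathcal{B}(H))$-measurable, $(\mathcal{F}_{-\infty}^{t})$-adapted (with respect to the appropriate filtration generated by the shifted Wiener process), pathwise continuous on compact intervals, satisfy the integrability condition \eqref{uniqueness proof condition}, and solve the \emph{same} backward infinite-horizon stochastic integral equation driven by the Wiener process $\widetilde{W}(\cdot):=W(\cdot,\theta(l,\omega))$. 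Uniqueness from Lemma \ref{uniqueness} will then force $Y=Z$ a.s., which is exactly the desired identity.

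For $Z$, the property is essentially automatic: because $X_{\varepsilon}^{*}$ satisfies \eqref{MIHSIE} driven by $W$ for $\mathbb{P}$-a.e.\ $\omega$, the same identity with $\omega$ replaced by $\theta(l,\omega)$ holds for $\mathbb{P}$-a.e.\ $\omega$ (here one uses that $\theta(l,\cdot)$ is $\mathbb{P}$-preserving). This gives
\begin{equation*}
Z(t)=\int_{-\infty}^{t}S_{A}(t-s)F(Z(s))\,\mathrm{d}s+\sqrt{\varepsilon}\int_{-\infty}^{t}S_{A}(t-s)B(Z(s))\,\mathrm{d}\widetilde{W}_s,
\end{equation*}
together with $\sup_{t}\mathbb{E}\|Z(t)\|_{H}^{2}=\sup_{t}\mathbb{E}\|X_{\varepsilon}^{*}(t)\|_{H}^{2}<\infty$ by the invariance of $\mathbb{P}$ under $\theta(l,\cdot)$.

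For $Y$, I start from \eqref{MIHSIE} evaluated at $t+l$ and perform the substitution $s=r+l$ in both the Bochner and the stochastic integral. The deterministic part yields $\int_{-\infty}^{t}S_{A}(t-r)F(Y(r))\,\mathrm{d}r$ directly. For the stochastic part, the Wiener-shift identity $W_{r+l}(\omega)-W_{l}(\omega)=W_{r}(\theta(l,\omega))=\widetilde{W}_{r}(\omega)$ together with a standard substitution rule for It\^o (or more properly, forward/backward) integrals against a Wiener process gives
\begin{equation*}
\int_{-\infty}^{t+l}S_{A}(t+l-s)B(X_{\varepsilon}^{*}(s))\,\mathrm{d}W_{s}=\int_{-\infty}^{t}S_{A}(t-r)B(Y(r))\,\mathrm{d}\widetilde{W}_{r},
\end{equation*}
so that $Y$ solves the same equation as $Z$ with respect to $\widetilde{W}$, while the bound $\sup_{t}\mathbb{E}\|Y(t)\|_{H}^{2}=\sup_{t}\mathbb{E}\|X_{\varepsilon}^{*}(t)\|_{H}^{2}<\infty$ is trivial. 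Applying Lemma \ref{uniqueness} to the filtration generated by $\widetilde{W}$ concludes $Y(t)=Z(t)$ for all $t$, almost surely.

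The main technical point, and the step I would write out most carefully, is the justification of the change of variables in the stochastic integral, specifically that progressive measurability and the adaptedness requirement transfer correctly to the shifted filtration and that the substitution formula is valid for the pullback stochastic integral on $(-\infty,t]$. Once this is settled, verifying the hypotheses of Lemma \ref{uniqueness} for $Y$ and $Z$ is routine, and the uniqueness conclusion immediately yields $X_{\varepsilon}^{*}(\cdot+l,\omega)=X_{\varepsilon}^{*}(\cdot,\theta(l,\omega))$ a.s.
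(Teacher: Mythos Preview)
Your proposal is correct and takes essentially the same approach as the paper: both reduce the identity to the uniqueness statement of Lemma \ref{uniqueness} by showing that the time-shifted and the $\theta$-shifted processes solve the same backward infinite-horizon integral equation. The only cosmetic difference is the direction of the shift: the paper sets $\tilde{X}_{\varepsilon}^{*}(t,\omega)=X_{\varepsilon}^{*}(t-l,\theta(l,\omega))$ and, via the substitution $s\mapsto s-l$ and the identity $\mathrm{d}W_{s}(\theta(l,\omega))=\mathrm{d}W_{s+l}(\omega)$, shows $\tilde{X}_{\varepsilon}^{*}$ solves \eqref{MIHSIE} driven by the \emph{original} $W(\omega)$, so that uniqueness applies directly in the original filtration without passing to $\widetilde{W}$.
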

\begin{proof}  Since $\theta$ is $\mathbb{P}$-preserving on the probability space $(\Omega, \mathcal{F}, \mathbb{P})$,
for any $l\in\mathbb{R}$, we have, almost surly,
\begin{eqnarray*}
	X_{\varepsilon}^{*}(t,\theta(l,\omega))
	&=& \int_{-\infty}^{t} S_{A}(t-s)F\left( X_{\varepsilon}^{*}\left(s,\theta(l,\omega)\right)\right) \mathrm{d} s+\sqrt{\varepsilon}\int_{-\infty}^{t}S_{A}(t-s) B \left( X_{\varepsilon}^{*}(s,\theta(l,\omega))\right)\mathrm{d} W_{s}(\theta(l,\omega)) \nonumber \\
	&=& \int_{-\infty}^{t+l} S_{A}(t+l-s)F\left( X_{\varepsilon}^{*}(s-l,\theta(l,\omega))\right) \mathrm{d} s\\
	&&+\sqrt{\varepsilon}\int_{-\infty}^{t+l}S_{A}(t+l-s) B \left( X_{\varepsilon}^{*}(s-l,\theta(l,\omega))\right)\mathrm{d} W_{s}( \omega), \nonumber
\end{eqnarray*}
set $\tilde{X}_{\varepsilon}^{*}(t,\omega) = X_{\varepsilon}^{*}(t-l,\theta(l,\omega))$, then
\begin{equation*}
	\tilde{X}_{\varepsilon}^{*}(t+l,\omega) = \int_{-\infty}^{t+l} S_{A}(t+l-s)F\left( \tilde{X}_{\varepsilon}^{*}(s,\omega)\right) \mathrm{d} s+\sqrt{\varepsilon}\int_{-\infty}^{t+l} S_{A}(t+l-s) B \left( \tilde{X}_{\varepsilon}^{*}(s,\omega)\right)\mathrm{d} W_{s}( \omega),
\end{equation*}
it follows from the uniqueness of Eq. \eqref{MIHSIE} that for any $l\in\mathbb{R}$
$$X_{\varepsilon}^{*}(\cdot+l,\omega) = X_{\varepsilon}^{*}(\cdot,\theta(l,\omega)), \quad a.s.$$
\end{proof}
Next, we will construct the solution of Eq. \eqref{MIHSIE}. For $n\in\mathbb{Z}^{+}$, $\varepsilon>0$, we
define $\left\{X^n_{\varepsilon}\right\}_{n=1}^{\infty}$ by
\begin{equation*}
X^n_{\varepsilon}(t,\omega)=\begin{cases}		X_{\varepsilon}(t,-n;\omega,0), &t\geq -n,\\
	0,&t< -n.\\	
\end{cases}
\end{equation*}
\begin{lemma} \label{Mstationary}
There exists $\varepsilon_0>0$ such that for any $\varepsilon\in(0,\varepsilon_0)$, $N\in \mathbb{Z}^{+}$,    $X^n_{\varepsilon}(\cdot,\omega)\rightarrow X_{\varepsilon}^{*}(\cdot,\omega)$ in $C([-N,N];H)$ as $n\rightarrow \infty$. Moreover, $X_{\varepsilon}^{*}$ satisfies the backward infinite horizon stochastic integral Eq. \eqref{MIHSIE} and \eqref{uniqueness proof condition}.
\end{lemma}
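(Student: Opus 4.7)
The plan follows the architecture of Lemma \ref{SK-existence} (the skeleton-equation analog) with stochastic ingredients replacing the deterministic control term. First, I would show that for any $N\in\mathbb{N}^+$ and almost every $\omega$, the sequence $\{X^n_\varepsilon(\cdot,\omega)|_{[-N,N]}\}$ is Cauchy in $C([-N,N];H)$. Fix $t_1 \in \mathbb{Z}$ with $t_1 < -N$. By Lemma \ref{cauchy estimate} with $\lambda_0 \in (0, 2\lambda C_1)$ and the random time $n^*_\varepsilon$ produced there, for $n_1, n_2 > n \ge t_1 - n^*_\varepsilon$ and every $l\ge 0$ one has
\begin{equation*}
\|X_{\varepsilon}(t_1+l,-n_1;0)- X_{\varepsilon}(t_1+l,-n_2;0)\|_{H}^2 \leq\left(n+\sqrt{\varepsilon}(t_1+l-n)^\delta\right)e^{-\lambda_0(t_1+l-n)}.
\end{equation*}
Taking $l$ to range over $[0, t_1-(-N)]$, the right-hand side tends to $0$ uniformly as $n\to\infty$, so $\{X^n_\varepsilon\}$ is uniformly Cauchy on $[-N,N]$, producing a limit $X^*_\varepsilon(\cdot,\omega) \in C([-N,N];H)$. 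Since $N$ is arbitrary, $X^*_\varepsilon$ is defined and continuous on all of $\mathbb{R}$.

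Next, I would verify the integrability bound \eqref{uniqueness proof condition}. Lemma \ref{bounded estimate} applied to each $X^n_\varepsilon$ yields $\sup_n \sup_{t \ge -n} \mathbb{E}\|X^n_\varepsilon(t)\|_H^{2} \le \mathbb{E}^{\max}_{1,\varepsilon}$ (the bound depends only on $\lambda, C_1, D, \varepsilon_0$, not on $n$). Combining the almost-sure convergence on each $[-N,N]$ with Fatou's lemma gives
\begin{equation*}
\sup_{t\in\mathbb{R}} \mathbb{E}\|X^*_\varepsilon(t)\|_H^{2} \le \liminf_{n\to\infty} \sup_{t\ge -n} \mathbb{E}\|X^n_\varepsilon(t)\|_H^{2} < \infty.
\end{equation*}

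Third, I would show that on any finite window $[t_0, t]$ the limit satisfies the mild-solution identity
\begin{equation*}
X^*_\varepsilon(t) = S_A(t-t_0)X^*_\varepsilon(t_0) + \int_{t_0}^t S_A(t-r)F(X^*_\varepsilon(r))\,\mathrm{d}r + \sqrt{\varepsilon}\int_{t_0}^t S_A(t-r)B(X^*_\varepsilon(r))\,\mathrm{d}W_r.
\end{equation*}
For large $n$, $X^n_\varepsilon$ satisfies this identity with $X^n_\varepsilon$ in place of $X^*_\varepsilon$ by Definition \ref{mild solution}. The convergence of the drift integral is handled exactly as in Step 1 of Lemma \ref{SK-existence}, using Hypothesis \ref{Msodehy} (ii). For the stochastic integral, I would use the It\^o isometry together with Remark \ref{condition remark} (ii)--(iii) (Hilbert--Schmidt and Lipschitz estimates for $B$) to bound
\begin{equation*}
\mathbb{E}\Big\|\sqrt{\varepsilon}\int_{t_0}^t S_A(t-r)\big[B(X^n_\varepsilon(r))-B(X^*_\varepsilon(r))\big]\,\mathrm{d}W_r\Big\|_H^2 \le \varepsilon\beta^2 \int_{t_0}^t \mathbb{E}\|X^n_\varepsilon(r)-X^*_\varepsilon(r)\|_H^2\,\mathrm{d}r,
\end{equation*}
which tends to $0$ by the almost-sure convergence on $[t_0,t]$ and dominated convergence (using the uniform moment bound).

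Finally, I would let $t_0\to-\infty$. By Remark \ref{condition remark} (iii), $|S_A(t-t_0)|_L\to 0$, so together with $\sup_{t_0}\mathbb{E}\|X^*_\varepsilon(t_0)\|_H^2<\infty$ the boundary term $S_A(t-t_0)X^*_\varepsilon(t_0)$ vanishes in $L^2$. The drift integral converges to $\int_{-\infty}^t S_A(t-r)F(X^*_\varepsilon(r))\,\mathrm{d}r$ using Hypothesis \ref{Msodehy} (ii) again, and the stochastic integral converges to $\int_{-\infty}^t S_A(t-r)B(X^*_\varepsilon(r))\,\mathrm{d}W_r$ since
\begin{equation*}
\mathbb{E}\Big\|\sqrt{\varepsilon}\int_{t_0'}^{t_0} S_A(t-r) B(X^*_\varepsilon(r))\,\mathrm{d}W_r\Big\|_H^2 \le \varepsilon D^2 \int_{t_0'}^{t_0} |S_A(t-r)|_L^2\,\mathrm{d}r \to 0
\end{equation*}
as $t_0,t_0'\to-\infty$, so the integrals form a Cauchy family in $L^2(\Omega;H)$. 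This establishes \eqref{MIHSIE}. The principal technical obstacle is the last step: ensuring that the stochastic integral over $(-\infty,t]$ is well-defined and compatible with the pathwise a.s. limits constructed on finite windows, which is exactly where the combination of the semigroup decay, the uniform-in-$n$ moment bound, and the Hilbert--Schmidt estimate for $B$ must be coordinated.
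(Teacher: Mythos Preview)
Your outline follows the paper's proof closely: Cauchy sequence via Lemma \ref{cauchy estimate}, uniform moment bound via Lemma \ref{bounded estimate}, finite-window mild solution identity, then $t_0\to-\infty$. The minor notational slip in the first paragraph (with $t_1<-N$ the interval $[0,t_1-(-N)]$ is empty; you want $l\in[-N-t_1,\,N-t_1]$) is harmless.

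There is, however, a real gap in your last step. You write that ``the drift integral converges \ldots\ using Hypothesis \ref{Msodehy} (ii) again,'' but Hypothesis \ref{Msodehy} (ii) is stated only for $t_0,t\in[-N,N]$ with a constant $C(N)$ that may blow up as $N\to\infty$; moreover it is a Lipschitz bound on $F(\mathbf u)-F(\mathbf v)$, and even comparing $F(X^*_\varepsilon)$ to $F(0)$ leaves you with $\int_{t_0'}^{t_0}(t-r)^\alpha\|X^*_\varepsilon(r)\|_H^2\,\mathrm{d}r$, whose $r$-integral of $(t-r)^\alpha$ over a window receding to $-\infty$ diverges since $\alpha+1>0$. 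So Hypothesis (ii) cannot give the tail control you claim.

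The paper fixes this without ever estimating the drift tail directly. Once you have the finite-window identity, subtracting the versions with $t_0=-n$ and $t_0=-m$ at a common terminal time (say $t=0$) yields
\[
\int_{-n}^{-m}S_A(-r)F(X^*_\varepsilon(r))\,\mathrm{d}r
=-\sqrt{\varepsilon}\int_{-n}^{-m}S_A(-r)B(X^*_\varepsilon(r))\,\mathrm{d}W_r
-S_A(n)X^*_\varepsilon(-n)+S_A(m)X^*_\varepsilon(-m),
\]
and the right-hand side is shown to be $L^2(\Omega;H)$-Cauchy using precisely the two estimates you already established (semigroup decay plus uniform moments for the boundary terms, It\^o isometry plus $|B|_{L_Q}\le D$ and $\int|S_A(-r)|_L^2\,\mathrm{d}r<\infty$ for the stochastic term). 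In other words, the convergence of the drift integral is \emph{forced algebraically} by the convergence of the other pieces, not obtained by direct estimation. Replace your appeal to Hypothesis (ii) with this observation and your argument goes through.
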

\begin{proof}
It follows from Lemma \ref{cauchy estimate} that  there exists $\varepsilon_0>0$, such that for any $\varepsilon\in(0,\varepsilon_0)$, $N\in \mathbb{Z}^{+}$, $X^n_{\varepsilon}$ is a Cauchy sequence in $C([-N,N];H)$. And since the space $C([-N,N];H)$ is complete, there exists a $X_{\varepsilon}^{*}$ such that $\lim_{n\rightarrow\infty}X^n_{\varepsilon} = X_{\varepsilon}^{*}$ in $C([-N,N];H)$. Since $N$ is arbitrary, $X_{\varepsilon}^{*}(t,\omega)$ is defined for all time, and from Lemma \ref{bounded estimate}, we have $\sup_{n}\sup_{t\in\mathbb{R}}\mathbb{E}\left\|X^n_{\varepsilon}(t)\right\|_{H}^{2} < \infty$. This implies that
\begin{equation}\label{u* bounded estimate}
	\sup_{t\in\mathbb{R}}\mathbb{E}\left\|X_{\varepsilon}^{*}(t)\right\|_{H}^{2} < \infty.
\end{equation}
We will through two steps to prove  $X_{\varepsilon}^{*}$ satisfies Eq. \eqref{MIHSIE}.

\textbf{Step 1}. Firstly, we will prove that for any $t\geq t_0$, $X^{*}$ satisfies
\begin{equation}\label{q4}
	X_{\varepsilon}^{*}(t)= S_{A}(t-t_0)X_{\varepsilon}^{*}(t_0)+ \int_{t_{0}}^{t}  S_{A}(t-s)F\left(X_{\varepsilon}^{*}(s)\right) \mathrm{d} s+\sqrt{\varepsilon}\int_{t_{0}}^{t}S_{A}(t-s) B \left(X_{\varepsilon}^{*}(s)\right) \mathrm{d} W_{s}.
\end{equation}
For any fixed $t_0$, there exists $N\in\mathbb{N}^+$ such that $t_0\geq -N$,
\begin{align*}
	X^n_{\varepsilon}(t,\omega)=&\ \int_{-n}^{t_0} S_{A}(t-s)F\left( X_{\varepsilon}(s,-n;\omega,0)\right) \mathrm{d} s+\sqrt{\varepsilon} \int_{-n}^{t_0}  S_{A}(t-s) B \left(X_{\varepsilon}(s,-n;\omega,0)\right)\mathrm{d} W_{s}\\
	&+ \int_{t_{0}}^{t}  S_{A}(t-s)F\left( X_{\varepsilon}(s,-n;\omega,0)\right) \mathrm{d} s+\sqrt{\varepsilon}\int_{t_{0}}^{t} S_{A}(t-s) B \left( X_{\varepsilon}(s,-n;\omega,0)\right) \mathrm{d} W_{s}\\
	=&S_{t-t_0}X_{\varepsilon}(t_0,-n;\omega,0)+ \int_{t_{0}}^{t} S_{A}(t-s)F\left( X_{\varepsilon}(s,-n;\omega,0)\right) \mathrm{d} s\\
	&+\sqrt{\varepsilon} \int_{t_{0}}^{t}  S_{A}(t-s) B \left(X_{\varepsilon}(s,-n;\omega,0)\right) \mathrm{d} W_{s}.
\end{align*}

We have proved that $X^n_{\varepsilon}(\cdot,\omega)\rightarrow X_{\varepsilon}^{*}(\cdot,\omega)$ as $n\rightarrow \infty$ in $C([-N,N];H)$  for any $\varepsilon\in(0,\varepsilon_0)$,
then
according to Hypothesis \ref{Msodehy} (ii), for any $t\in[-N,N]$,
\begin{equation*}
	\begin{aligned}
		&\lim_{n\rightarrow \infty}\left\|\int_{t_{0}}^{t}  S_{A}(t-s)F\left( X_{\varepsilon}(s,-n;\omega,0)\right) \mathrm{d} s- \int_{t_{0}}^{t}  S_{A}(t-s)F\left( X_{\varepsilon}^{*}(s,\omega)\right) \mathrm{d} s\right\|_{H}^2\\
		\lesssim&\lim_{n\rightarrow \infty}\int_{t_{0}}^{t}  (t-s)^{\alpha}\|( X_{\varepsilon}(s,-n;\omega,0)-  X_{\varepsilon}^{*}(s,\omega))\|_{H}^2 \mathrm{d} s\\
		\lesssim& \lim_{n\rightarrow \infty}\sup_{s\in[-N,N]}\|( X_{\varepsilon}(s,-n;\omega,0)-  X_{\varepsilon}^{*}(s,\omega))\|_{H}^2\int_{t_{0}}^{t}  (t-s)^{\alpha} \mathrm{d} s=0,
	\end{aligned}
\end{equation*}
where the sign  $"\lesssim"$ means that the left side is less than or equal to the right side of a constant multiple.

It follows from the Burkholder-Davis-Gundy inequality and Remark \ref{condition remark} (ii) and (iii) that
\begin{align*}
	&\lim_{n\rightarrow\infty}\mathbb{E}\left\{\sup_{t\in[-N,N]}\sqrt{\varepsilon} \int_{t_{0}}^{t} S_{A}(t-s) \left[ B \left( X_{\varepsilon}(s,-n;\omega,0)\right)-  B \left( X_{\varepsilon}^{*}(s,\omega)\right)\right] \mathrm{d} W_{s}\right\}^2\\
	\lesssim& \varepsilon\lim_{n\rightarrow\infty}\int_{t_0}^{N}\mathbb{E} |S_{A}(t-s) \left[B \left(X_{\varepsilon}(s,-n;\omega,0)\right)- B \left( X_{\varepsilon}^{*}(s,\omega)\right)\right]|_{L_{Q}}^2\mathrm{d}s\\
	\lesssim&  \varepsilon\lim_{n\rightarrow\infty}\int_{t_0}^{N}\mathbb{E} |S_A(t-s)|_{L}^2| B \left(X_{\varepsilon}(s,-n;\omega,0)\right)- B \left( X_{\varepsilon}^{*}(s,\omega))\right)|_{L_{Q}}^2\mathrm{d}s\\
	\lesssim &  \varepsilon\beta^2\lim_{n\rightarrow\infty} \int_{t_0}^{N}\mathbb{E} \left\| X_{\varepsilon}(s,-n;\omega,0)- X_{\varepsilon}^{*}(s,\omega)\right\|_{H}^2\mathrm{d}s=0.
\end{align*}
Thus there exists a subsequence still set as $\{X^n_{\varepsilon}\}_{n=1}^{\infty}$ such that
\begin{equation*}
	\lim_{n\rightarrow\infty}\sqrt{\varepsilon} \int_{t_{0}}^{t}S_{A}(t-s)  B \left(X_{\varepsilon}(s,-n;\omega,0)\right) \mathrm{d} W_{s}=\sqrt{\varepsilon} \int_{t_{0}}^{t} S_{A}(t-s)  B \left( X_{\varepsilon}^{*}(s,\omega)\right) \mathrm{d} W_{s},
\end{equation*}
in $C([-N,N];H)$ $a.s.$
At the same time, $X^n_{\varepsilon}(t),S_A(t-t_0)X^n_{\varepsilon}(t_0)$ converge strongly to $X^{*}_{\varepsilon}$ and $S_A(t-t_0)X^{*}_{\varepsilon}(t_0)$ in $H$
respectively, hence let $N\rightarrow \infty$, Eq. (\ref{q4}) holds for any $t\geq t_0$.

\textbf{Step 2}. Finally, we will prove that $X^{*}_{\varepsilon}$ satisfies Eq. \eqref{MIHSIE}.\\
It follows  from Eq. (\ref{q4}), for any $0<m<n$,
\begin{eqnarray*}
	&&\int_{-n}^{-m}S_A(-r)F(X^{*}_{\varepsilon}(r,\omega))\mathrm{d}r \nonumber \\
	&=& -\sqrt{\varepsilon}\int_{-n}^{-m}S_A(-r) B (X^{*}_{\varepsilon}(r,\omega)) \mathrm{d}W(r) -T(n)X^{*}_{\varepsilon}(-n)+T(m)X^{*}_{\varepsilon}(-m). \nonumber \\
\end{eqnarray*}
Thus
\begin{eqnarray*}
	&& \mathbb{E}\left\|\int_{-n}^{-m}S_A(-r)
	F(X^{*}_{\varepsilon}(r,\omega))\mathrm{d}r \right\|_H^2  \nonumber \\
	&\leq& 2 \left(\mathbb{E}\left\|\sqrt{\varepsilon}\int_{-n}^{-m}S_A(-r) B (X^{*}_{\varepsilon}(r,\omega)) \mathrm{d}W(r) \right\|_H^2+\mathbb{E}\left\|S_A(n)X^{*}_{\varepsilon}(-n) \right\|_H^2+\mathbb{E}\left\|S_A(m)X^{*}_{\varepsilon}(-m) \right\|_H^2\right)  \nonumber \\
	&=& 2(I + II + III).
\end{eqnarray*}

It follows from the It\^{o} equality and Remark \ref{condition remark} (ii), (iii) that
\begin{align*}
	I=&\mathbb{E}\left\|\sqrt{\varepsilon}\int_{-n}^{-m}S_A(-r) B (X^{*}_{\varepsilon}(r,\omega)) \mathrm{d}W(r) \right\|_H^2 =\varepsilon \mathbb{E} \int_{-n}^{-m}|S_A(-r) B (X^{*}_{\varepsilon}(r,\omega))
	Q^{\frac{1}{2}}|_{L_2}^2\mathrm{d}r\\
	\leq&\varepsilon\mathbb{E} \int_{-n}^{-m}|S_A(-r)|_{L}^2| B (X^{*}_{\varepsilon}(r,\omega))
	|_{L_Q}^2\mathrm{d}r\\
	\leq &\varepsilon D^2\int_{-n}^{-m}|S_A(-r)|_{L}^2\mathrm{d}r,
\end{align*}
is a Cauchy sequence, we obtain that $I\rightarrow 0$ as $m,n\rightarrow \infty$.

Moreover, because $II \leq |S_A(n)|_{L}^2\mathbb{E}\left\|X^{*}_{\varepsilon}(-n)\right\|_H^2$ and $III \leq |S_A(n)|_{L}^2\mathbb{E}\left\|X^{*}_{\varepsilon}(-m)\right\|_H^2$. Then form Remark
(\ref{condition remark}) (iii) we know $II$ and $III$ converge to 0 as $m,n \rightarrow\infty$. So
\begin{equation*}
	\int_{-n}^{t}S_A(-r)F(X^{*}_{\varepsilon}(r,\omega))\mathrm{d}r \quad \text{and}\quad\int_{-n}^{t}S_A(-r) B (X^{*}_{\varepsilon}(r,\omega)) \mathrm{d}W(r)
\end{equation*}
are Cauchy sequences in $L^2(\Omega,H)$ with respect to $n$ for any $t\in\mathbb{R}$, we  get that
\begin{eqnarray*}
	\int_{-n}^{t}S_A(-r)F(X^{*}_{\varepsilon}(r,\omega))\mathrm{d}r&\longrightarrow& \int_{-\infty}^{t}S_A(-r)F(X^{*}_{\varepsilon}(r,\omega))\mathrm{d}r ,\nonumber\\
	\int_{-n}^{t}S_A(-r) B (X^{*}_{\varepsilon}(r,\omega)) \mathrm{d}W(r) &\longrightarrow &\int_{-\infty}^{t}S_A(-r) B (X^{*}_{\varepsilon}(r,\omega)) \mathrm{d}W(r), \quad \text{as}~ n\rightarrow \infty ~\text{in} ~H. \nonumber\\
\end{eqnarray*}
Furthermore, $\mathbb{E}\left\|S_{A}(t+n)X^{*}_{\varepsilon}(-n) \right\|_H^2\leq |S_A(t+n)|_{L}^2 \mathbb{E}\left\|X^{*}_{\varepsilon}(-n)\right\|_H^2\rightarrow0,~as~ n\rightarrow\infty.$ Thus combining Eq. (\ref{q4}) we can get that $X^{*}_{\varepsilon}$ satisfies Eq. \eqref{MIHSIE}.
\end{proof}
The proof of Theorem \ref{Mstationary solution} in the following.
\begin{proof}
Combining Lemma \ref{uniqueness of infty intergal equation promise the periodic property} and Lemma \ref{Mstationary}, $X_{\varepsilon}^*$ be the stationary solution of Eq. \eqref{Msode} (in the sense of Definition \ref{Defition of SS}).  Moreover, $X_{\varepsilon}^{*}(t,\omega)$ satisfies Eq. \eqref{MIHSIE} in $H$ for any $t \in \mathbb{R}$ and \eqref{uniqueness proof condition}.
\end{proof}
\begin{lemma}
The solution $X_{\varepsilon}^*$ of the Eq. \eqref{MIHSIE} is the unique stationary
solution of Eq. \eqref{Msode}.
\end{lemma}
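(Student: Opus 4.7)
The plan is to show that every stationary solution $Y$ of Eq.~\eqref{Msode} (in the sense of Definition~\ref{Defition of SS}) satisfies the pullback integral equation \eqref{MIHSIE} together with the integrability bound \eqref{uniqueness proof condition}; Lemma~\ref{uniqueness} then immediately forces $Y=X_{\varepsilon}^{*}$ almost surely.

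First I would observe that by the stationarity identity $Y(t+r,\omega)=Y(t,\theta(r,\omega))$ together with the $\mathbb{P}$-preservation of $\theta$, the distribution of $Y(t)$ is independent of $t$, so $\mathbb{E}\|Y(t)\|_{H}^{2}=\mathbb{E}\|Y(0)\|_{H}^{2}$ is a constant function of $t$. A truncation argument (applying It\^{o}'s formula to $\|Y\|_{H}^{2}$ on $[-N,0]$ up to the stopping time $\tau_{R}:=\inf\{s:\|Y(s)\|_{H}\geq R\}$, using the coercivity in Hypothesis~\ref{Msodehy}(i) exactly as in Lemma~\ref{energy estimate}, and invoking the shift-invariance of the law to balance the boundary terms) then shows this common value is finite, yielding \eqref{uniqueness proof condition}.

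Next, since $Y$ is a mild solution of Eq.~\eqref{Msode} on every interval $[-n,t]$ with initial value $Y(-n)$, it satisfies
\begin{equation*}
Y(t)=S_{A}(t+n)Y(-n)+\int_{-n}^{t}S_{A}(t-s)F(Y(s))\,\mathrm{d}s+\sqrt{\varepsilon}\int_{-n}^{t}S_{A}(t-s)B(Y(s))\,\mathrm{d}W_{s}.
\end{equation*}
I would then let $n\to\infty$ following Step~2 of the proof of Lemma~\ref{Mstationary}. By Remark~\ref{condition remark}(iii) the initial-value term decays,
$\mathbb{E}\|S_{A}(t+n)Y(-n)\|_{H}^{2}\leq |S_{A}(t+n)|_{L}^{2}\,\mathbb{E}\|Y(0)\|_{H}^{2}\to 0$.
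For the drift and stochastic tails, Hypothesis~\ref{Msodehy}(ii) together with the It\^{o} isometry and Remark~\ref{condition remark}(ii)--(iii) give
\begin{equation*}
\mathbb{E}\Bigl\|\int_{-n}^{-m}S_{A}(t-s)B(Y(s))\,\mathrm{d}W_{s}\Bigr\|_{H}^{2}\leq \varepsilon D^{2}\int_{-n}^{-m}|S_{A}(t-s)|_{L}^{2}\,\mathrm{d}s\to 0,
\end{equation*}
and an analogous bound for the drift integral; hence both integrals form Cauchy sequences in $L^{2}(\Omega;H)$ and converge to the $-\infty$ versions appearing in \eqref{MIHSIE}. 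Thus $Y$ itself satisfies \eqref{MIHSIE} with \eqref{uniqueness proof condition}, and Lemma~\ref{uniqueness} closes the argument.

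The main obstacle is the integrability step: without a priori moment bounds on $Y$, the convergence of the pullback integrals and the decay of $S_{A}(t+n)Y(-n)$ are not automatic. The stationarity-plus-dissipativity truncation argument circumvents this by using the coercivity in Hypothesis~\ref{Msodehy}(i) to trade the growth of $\|Y\|_{H}^{2}$ for the time-homogeneity of its law, so that finiteness at a single time $t=0$ propagates uniformly in $t$.
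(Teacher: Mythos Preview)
Your route is genuinely different from the paper's. The paper never tries to show that a competing stationary solution satisfies the pullback equation \eqref{MIHSIE}; it argues \emph{forward} in time. Using the cocycle identity and the pathwise contraction of Lemma~\ref{long-time}, for $t>l_\varepsilon(\omega)$ one has
\[
\bigl\|\tilde{X}^*_\varepsilon(\theta(t,\omega))-X^*_\varepsilon(\theta(t,\omega))\bigr\|_H^2
\leq\bigl(\|\tilde{X}^*_\varepsilon(\omega)-X^*_\varepsilon(\omega)\|_H^2+\sqrt{\varepsilon}\,t^\delta\bigr)\,e^{-\lambda_0 t}.
\]
Since $\theta$ is $\mathbb{P}$-preserving, the left-hand side has the \emph{same law} for every $t$; sending $t\to\infty$ then forces $\mathbb{P}\bigl(\|\tilde{X}^*_\varepsilon-X^*_\varepsilon\|_H^2>\eta\bigr)=0$ for every $\eta>0$. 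This measure-preservation trick is short and sidesteps entirely the need to verify \eqref{MIHSIE} or \eqref{uniqueness proof condition} for the competitor.

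Your backward approach is reasonable in spirit, but the integrability step you correctly identify as the main obstacle is a genuine gap that your sketch does not close. Stationarity only gives $\mathbb{E}\|Y(t)\|_H^2\equiv\mathbb{E}\|Y(0)\|_H^2$; it says nothing about finiteness of that constant. Localizing with $\tau_R$ destroys precisely the shift-invariance you want to exploit: the law of $Y(0\wedge\tau_R)$ is not the law of $Y(-N)$, so there are no ``boundary terms to balance''. And without truncation the formal Gronwall inequality $(1-e^{-2\lambda C_1 N})\,\mathbb{E}\|Y(0)\|_H^2\leq \varepsilon D^2/(2\lambda C_1)$ is vacuous when both sides are $+\infty$. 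Without \eqref{uniqueness proof condition} you can neither invoke Lemma~\ref{uniqueness} nor justify $S_A(t+n)Y(-n)\to 0$ in $L^2(\Omega;H)$. Note also that Lemma~\ref{uniqueness} requires the process to be $(\mathcal{F}_{-\infty}^t)$-adapted, which Definition~\ref{Defition of SS} does not supply and which your argument does not address.
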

\begin{proof}
Let $\tilde{X}_{\varepsilon}^{*}(\cdot,\omega)$ be another stationary solution of Eq. \eqref{Msode}. Denote $\tilde{X}_{\varepsilon}^{*}(\omega)=\tilde{X}_{\varepsilon}^{*}(0, \omega)$ and $X_{\varepsilon}^*(\omega)=X_{\varepsilon}^*(0, \omega)$. It follows from the cocycle property and Lemma \ref{long-time} that there exists $\varepsilon_0,\delta,\lambda_0>0$ and a sequence almost surely finite stopping time $\{l_{\varepsilon}\}_{\varepsilon\in(0,\varepsilon_0)}$  such that for every $\varepsilon\in(0,\varepsilon_0)$, $ s>0,t\in\mathbb{R}$ when $t>l_{\varepsilon}$,
\begin{equation}\label{17}
	\begin{aligned}
		\left\|\tilde{X}_{\varepsilon}^{*}(\theta(t;\omega))-X_{\varepsilon}^*(\theta(t, \omega))\right\|_{H}^{2} &=\left\|\tilde{X}_{\varepsilon}(t, 0;\omega, \tilde{X}_{\varepsilon}^{*}(\omega) )-X_{\varepsilon}\left(t, 0; \omega, X_{\varepsilon}^*(\omega)\right)\right\|_{H}^{2} \\
		&\leq \left\|\tilde{X}_{\varepsilon}^{*}(\omega)-X_{\varepsilon}^{*}(\omega)
		+\sqrt{\varepsilon}t^\delta\right\|_{H}^{2} e^{-\lambda_0 t}.
	\end{aligned}
\end{equation}

As $\theta(t, \omega)$ is a $\mathbb{P}$-preserving ergodic Wiener shift, we have, for any $\eta>0, t>0$,
$$
\mathbb{P}\left(\left\|\tilde{X}_{\varepsilon}^{*}(\theta(t, \omega))-X_{\varepsilon}^{*}(\theta(t, \omega))\right\|_{H}^{2}>\eta\right)=
\mathbb{P}\left(\left\|\tilde{X}_{\varepsilon}^{*}( \omega)-X_{\varepsilon}^{*}(\omega)\right\|_{H}^{2}>\eta\right) .
$$
In fact, by the inequality (\ref{17}), we know that
$$
\left\{\omega:\left\|\tilde{X}_{\varepsilon}^{*}(\theta(t, \omega))-X_{\varepsilon}^{*}(\theta(t, \omega))\right\|_{H}^{2}>\eta\right\} \subset\left\{\omega: \left\|\tilde{X}_{\varepsilon}^{*}(\omega)-X_{\varepsilon}^{*}(\omega)+\sqrt{\varepsilon}t^\delta\right\|_{H}^{2} e^{-\lambda_0 t}>\eta\right\},
$$
and
$$
\mathbb{P}\left(\left\|\tilde{X}_{\varepsilon}^{*}(\theta(t, \omega))-X_{\varepsilon}^{*}(\theta(t, \omega))\right\|_{H}^{2}>\eta\right) \leq \mathbb{P}\left(\left\|\tilde{X}_{\varepsilon}^{*}(\omega)-X_{\varepsilon}^{*}(\omega)+\sqrt{\varepsilon}t^\delta\right\|_{H}^{2} e^{-\lambda_0 t}>\eta\right).
$$
However,
$$
\lim _{t \rightarrow \infty} \mathbb{P}\left(\left\|\tilde{X}_{\varepsilon}^{*}(\omega)-X_{\varepsilon}^{*}(\omega)+\sqrt{\varepsilon}t^\delta\right\|_{H}^{2} e^{-\lambda_0 t}>\eta\right)=0.
$$
This implies that $\mathbb{P}\left(\tilde{X}_{\varepsilon}^{*}(\omega)=X_{\varepsilon}^{*}(\omega)\right)=1$.
\end{proof}
\appendix

\section*{Appendix A: Weak convergence method in infinite intervals}\label{LPp}
\addcontentsline{toc}{section}{Appendix A: Weak convergence method in infinite intervals}
\renewcommand\thesection{A}
\setcounter{equation}{0}
\renewcommand\theequation{A.\arabic{equation}}
The Bou\'e-Dupuis formula in infinite intervals has been used directly without proof in \cite{BG20} by Barashkov and Gubinelli. Although we believe  that the experts in field of LDP and Gaussian measures are familiar with the Bou\'e-Dupuis formula in infinite intervals, we still present the proof of Bou\'e-Dupuis
formula and the weak convergence approach in infinite intervals for  the convenience of readers.

The main difference between the proof in infinite intervals and finite intervals appears in the lower bounded proof of Theorem \ref{Theorem7.2}, since bounded functions are integrable in finite intervals, but not in infinite intervals. However, the integrable function of infinite intervals can be approximated by simple function, and then the proof for infinite intervals can be transformed into finite intervals. Other routine proofs are shown for  the completeness of the present paper.
\subsection{Bou\'e-Dupuis formula in infinite interval}

\begin{lemma}{\label{lemmaB.1}}(cf. Problem 3.19 in \cite{KS88})
	The following conditions are equivalent for a continuous martingale $\{X_t,\mathcal{F}_t;0\leq t< \infty\}$.\\
	(i). It is a uniformly integrable family of random variables.\\
	(ii). It converges in $L^1$, as $t\longrightarrow\infty$.\\
	(iii). It converges $\mathbb{P}$-a.s. (as $t\longrightarrow\infty$) to an integrable random variable $X_{\infty}$, such that
	\begin{equation*} \{X_t,\mathcal{F}_t;0\leq t< \infty\}
	\end{equation*}
	is a martingale.\\
	(iv). There exists an integrable random variable $Y$, such that $X_t=\mathbb{E}(Y|\mathcal{F})$ $\mathbb{P}$-a.s., for every $t\geq0$.
\end{lemma}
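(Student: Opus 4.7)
My plan is to establish the chain of implications (i)$\Rightarrow$(ii)$\Rightarrow$(iii)$\Rightarrow$(iv)$\Rightarrow$(i), which is the standard route for this equivalence. The key tool throughout will be Doob's martingale convergence theorem together with elementary properties of uniform integrability (UI).

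First, for (i)$\Rightarrow$(ii): since any UI family is $L^1$-bounded, $\sup_{t\geq 0}\mathbb{E}|X_t|<\infty$, so Doob's $L^1$-bounded martingale convergence theorem gives an $\mathbb{P}$-a.s. limit $X_{\infty}\in L^1$. Combining a.s.\ convergence with UI yields convergence in $L^1$ by the Vitali convergence theorem. For (ii)$\Rightarrow$(iii): $L^1$-convergence implies $L^1$-boundedness, so Doob's theorem again produces the a.s.\ limit $X_{\infty}$, which must coincide with the $L^1$-limit. To extend the martingale to $t=\infty$, I would check that for every $0\le s\le t<\infty$ and $A\in\mathcal{F}_s$, $\mathbb{E}(X_t\mathbf{1}_A)=\mathbb{E}(X_s\mathbf{1}_A)$, and then pass to the limit $t\to\infty$ using $L^1$-convergence to obtain $\mathbb{E}(X_{\infty}\mathbf{1}_A)=\mathbb{E}(X_s\mathbf{1}_A)$, i.e.\ $X_s=\mathbb{E}(X_{\infty}\mid\mathcal{F}_s)$ a.s.

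The implication (iii)$\Rightarrow$(iv) is immediate: take $Y:=X_{\infty}$. Finally for (iv)$\Rightarrow$(i), given $X_t=\mathbb{E}(Y\mid\mathcal{F}_t)$ with $Y\in L^1$, the UI of $\{X_t\}_{t\ge 0}$ follows from the classical fact that the collection of conditional expectations of a single integrable random variable with respect to an arbitrary family of sub-$\sigma$-fields is uniformly integrable; this is proved by the standard $\varepsilon$-$\delta$ argument using $\mathbb{E}(|Y|\mathbf{1}_{\{|Y|>K\}})\to 0$ as $K\to\infty$ together with Jensen's inequality $|X_t|\le\mathbb{E}(|Y|\mid\mathcal{F}_t)$.

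I do not anticipate any real obstacle here since the statement is a textbook theorem (Problem 3.19 in \cite{KS88}); the only mildly delicate point is the careful verification that the extended process on $[0,\infty]$ is still a martingale in step (ii)$\Rightarrow$(iii), which requires exchanging $L^1$-limit and conditional expectation. The continuity hypothesis on $\{X_t\}$ is not actually used in the equivalence itself and simply ensures the a.s.\ limit is well behaved; I would note this remark briefly but rely on the arguments above, so no separate proof is needed beyond citing \cite{KS88}.
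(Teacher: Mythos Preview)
Your proof is correct and follows the standard textbook route. The paper does not actually supply a proof of this lemma at all; it merely states it with the citation to Problem~3.19 in \cite{KS88} and then uses it as a black box in Proposition~\ref{Proposition7.3}. So there is nothing to compare against, and your final remark that ``no separate proof is needed beyond citing \cite{KS88}'' is exactly in line with what the paper does.
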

Let $(\mathbb{W},\mathbb{H},\mu)$ be an abstract Wiener space. Namely, $(\mathbb{W},\|\cdot\|_{\mathbb{W}})$ is a separable Banach space, $(\mathbb{H},\|\cdot\|_{\mathbb{H}})$ is a separable Hilbert space densely and continuously embedded in $\mathbb{W}$, and $\mu$ is the Gaussian measure over $\mathbb{W}$. If we identify the dual space $\mathbb{H}^{\ast}$ with itself, then $\mathbb{W}^{\ast}$ may be viewed as a dense linear subspace of $\mathbb{H}$ so that $l(w)=\langle l,w\rangle_{\mathbb{H}}$ whenever $l\in\mathbb{W}^{\ast}$ and $w\in\mathbb{H}$ , where $\langle\cdot,\cdot\rangle_{\mathbb{H}}$ denotes the inner product in $\mathbb{H}$.	

We now recall some notations from \cite{Z09} about the filtration in abstract Wiener space. In what follows, we fix a continuous and strictly monotonic resolution $\pi=\{\pi_t,t\in[0,1]\}$ of the identity in $\mathbb{H}$, \\
(i). For each $t\in[0,1]$, $\pi_t$ is an orthogonal projection.\\
(ii). $\pi_0=0,\pi_1=I$.\\
(iii). For $0\leq s<t\leq 1$, $\pi_s\mathbb{H}\subsetneq \pi_t\mathbb{H}$.\\
(iv). For any $h\in\mathbb{H}$ and $t\in[0,1]$, $\lim_{s\rightarrow t}\pi_sh=\pi_th$.\\
For any $h\in\mathbb{H}$, there exists a  sequence $h_n\in \mathbb{W}^{\ast}$ such that $\lim\limits_{n\rightarrow\infty}\|h_n-h\|_{\mathbb{H}}=0$. Thus, there exists a $\delta(h)\in L^2(\mathbb{W},\mathcal{B}(\mathbb{W}),\mu)$ such that
\begin{equation*}
	\lim_{n\rightarrow\infty}\mathbb{E}|h_n(\cdot)-\delta(h)|^2=0.
\end{equation*}
The $\delta(h)(w)$ is also written as $\langle h,w\rangle$, called the Skorohod integral of $h$.

After taking $f(x)=\frac{x}{1-x}$, we can obtain a continuous and strictly monotonic resolution $\pi=\{\pi_t,0\leq t\leq\infty\}$ of the identity in $\mathbb{H}$, i.e.\\
(i). For each $t\in[0,\infty]$, $\pi_t$ is an orthogonal projection.\\
(ii). $\pi_0=0,\pi_\infty=I$.\\
(iii). For $0\leq s<t\leq \infty$, $\pi_s\mathbb{H}\subsetneq \pi_t\mathbb{H}$.\\
(iv). For any $h\in\mathbb{H}$ and $t\in[0,\infty]$, $\lim_{s\rightarrow t}\pi_sh=\pi_th$.

If we take another transform, we can also get the filtration on $\mathbb{R}$.
\begin{definition}
 \label{Definition7.1}
	The continuous filtation on $(\mathbb{W},\mu)$ is defined by
	\begin{equation*}
		\mathcal{F}_t:=\mathcal{ B} \{\delta(\pi_th),h\in\mathbb{H}\}\vee\mathcal{N},
	\end{equation*}
	where $\mathcal{N}$ is the collection of all the null sets in $\mathbb{W}$ with respect to $\mu$. We write $\mathcal{F}_{\infty}$ as $\mathcal{F}$, and remark that $\mathcal{B}(\mathbb{W})\subset\mathcal{F}$.
\end{definition}
Below, we shall consider the filtered probability space $(\mathbb{W},\mathcal{F},(\mathcal{F}_t)_{0\leq t\leq \infty}, \mu)$. If there is no special declaration, the expectation $\mathbb{E}$ and the term $``a.s"$ are always taken with respect to the Wiener measure $\mu$.
\begin{definition}\label{Definition7.2}
	For every $0\leq t\leq \infty$, let $\mathcal{C}_t$ be the collection of all cylindrical function with the form
	\begin{equation}\label{7.2}
		F(w)=g(\langle\pi_th_1,w\rangle,...,\langle\pi_th_n,w\rangle);\quad g\in C_b^{\infty}(\mathbb{R}^n),\quad h_1,...,h_n\in\mathbb{W}^{\ast}.
	\end{equation}
	In particular, the elements in $\mathcal{C}_t$ are measurable with respect to $\mathcal{F}_t$, We write $\mathcal{C}_1$ as $\mathcal{C}$.
\end{definition}
We have the following simple approximation result.
\begin{lemma}\label{Lemma7.1}
	For a fixed $0\leq t\leq \infty$, lat $F$ be an $\mathcal{F}_t$ measurable and bounded function on $\mathbb{W}$ with bound $N$. There exists a sequence $F_k\in\mathcal{C}_t$ such that
	\begin{equation*}
		\|F_k\|_{\infty}:=\sup_{w\in\mathbb{W}}|F_k(w)|\leq N \quad and \quad F_k\rightarrow F,\quad a.s.
	\end{equation*}
	In particular, $\mathcal{C}_t$ is dense in $L^2(\mathbb{W},\mathcal{F}_t,\mu)$.
\end{lemma}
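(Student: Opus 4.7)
The plan is a standard two-step approximation: first reduce to cylindrical functions via a martingale argument, then smooth by convolution.

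Since $\mathbb{W}^{\ast}$ is dense in $\mathbb{H}$, I would fix a countable sequence $\{h_j\}_{j\geq 1}\subset\mathbb{W}^{\ast}$ dense in $\mathbb{H}$ and set $\mathcal{G}_k:=\sigma(\delta(\pi_t h_1),\ldots,\delta(\pi_t h_k))\vee\mathcal{N}$. Because $h\mapsto\delta(\pi_t h)$ is an $L^2(\mu)$-isometry on $\pi_t\mathbb{H}$, density of $\{h_j\}$ forces $\bigvee_{k\geq 1}\mathcal{G}_k=\mathcal{F}_t$ (up to null sets). Now set $G_k:=\mathbb{E}[F\mid\mathcal{G}_k]$. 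Since $\|F\|_{\infty}\leq N$, we get $\|G_k\|_{\infty}\leq N$ and $\{G_k,\mathcal{G}_k\}$ is a uniformly bounded, hence uniformly integrable, martingale. Doob's martingale convergence theorem (the discrete analogue of Lemma \ref{lemmaB.1}) yields $G_k\to\mathbb{E}[F\mid\mathcal{F}_t]=F$ $\mu$-a.s. By the Doob--Dynkin lemma, $G_k=g_k(\delta(\pi_t h_1),\ldots,\delta(\pi_t h_k))$ for some Borel $g_k\colon\mathbb{R}^k\to\mathbb{R}$ with $\|g_k\|_{\infty}\leq N$.

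To smooth $g_k$, I would convolve with a standard nonnegative mollifier $\rho_{\varepsilon}\in C_c^{\infty}(\mathbb{R}^k)$ of mass one, obtaining $g_k^{\varepsilon}:=g_k\ast\rho_{\varepsilon}\in C_b^{\infty}(\mathbb{R}^k)$ with $\|g_k^{\varepsilon}\|_{\infty}\leq N$ and $g_k^{\varepsilon}\to g_k$ in $L^1(\mathbb{R}^k,\gamma_k)$, where $\gamma_k$ is the (Gaussian) joint law of $(\delta(\pi_t h_1),\ldots,\delta(\pi_t h_k))$; passing to a subsequence gives $\gamma_k$-a.e.\ convergence. Choosing $\varepsilon_k\downarrow 0$ sufficiently fast via a diagonal argument and setting
\[
F_k(w):=g_k^{\varepsilon_k}\bigl(\delta(\pi_t h_1)(w),\ldots,\delta(\pi_t h_k)(w)\bigr)
\]
gives $F_k\in\mathcal{C}_t$ with $\|F_k\|_{\infty}\leq N$ and $F_k\to F$ $\mu$-a.s. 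For the final $L^2$-density claim, any $F\in L^2(\mathbb{W},\mathcal{F}_t,\mu)$ is approximated in $L^2$ by its truncations $F^N:=(F\wedge N)\vee(-N)$; applying the bounded case to each $F^N$ and using the dominated convergence theorem (the approximants are uniformly bounded by $N$) gives the $L^2$-density of $\mathcal{C}_t$.

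The main obstacle is the identification $\bigvee_k\mathcal{G}_k=\mathcal{F}_t$: by Definition \ref{Definition7.1}, $\mathcal{F}_t$ is generated by $\{\delta(\pi_t h):h\in\mathbb{H}\}$, so one must show that each $\delta(\pi_t h)$ is $\bigvee_k\mathcal{G}_k$-measurable, which follows by approximating $h$ in $\mathbb{H}$ by a subsequence of $\{h_j\}$ and using $L^2(\mu)$-convergence of the associated Skorokhod integrals (modifying on a $\mu$-null set as needed). The remaining steps are routine measure-theoretic manipulations.
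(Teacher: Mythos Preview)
The paper does not actually prove this lemma: it is stated as a ``simple approximation result'' and left without proof, in the same spirit as several other results in Appendix~A that are deferred to the reference \cite{Z09}. So there is no paper proof to compare against.

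Your argument is correct and is the standard one. The only points worth tightening are minor. First, for the mollification step you do not need to pass to a subsequence: since $g_k$ is bounded, hence locally integrable on $\mathbb{R}^k$, Lebesgue differentiation gives $g_k^{\varepsilon}\to g_k$ at Lebesgue-a.e.\ point, and $\gamma_k$ is absolutely continuous with respect to Lebesgue measure, so convergence is already $\gamma_k$-a.e. Second, the ``diagonal argument'' should be made explicit: for each $k$ pick $\varepsilon_k$ with $\mu(|F_k-G_k|>1/k)<2^{-k}$, so that Borel--Cantelli forces $F_k-G_k\to 0$ a.s., and combine with $G_k\to F$ a.s. Finally, your verification that $\bigvee_k\mathcal{G}_k=\mathcal{F}_t$ is exactly right and is indeed the only place where the structure of $\mathcal{F}_t$ (Definition~\ref{Definition7.1}) and the isometry property of $\delta$ are used; the inclusion of $\mathcal{N}$ in each $\mathcal{G}_k$ is what allows the a.s.\ limit of $\delta(\pi_t h_{j_n})$ to remain measurable with respect to $\bigvee_k\mathcal{G}_k$.
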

\begin{definition}\label{Definition7.3}
	An $\mathbb{H}$ valued random variable $v$ is called adapted to $\mathcal{F}_t$ if for every $0\leq t\leq \infty$ and $h\in\mathbb{H},\langle\pi_th,w\rangle_{\mathbb{H}}\in\mathcal{F}_t$. All the adapted $\mathbb{H} $ valued random variables in $L^2(\mathbb{W},\mathcal{F},\mu;\mathbb{H})$ is denoted by $\mathcal{H}^a$. The set of all bounded elements in $\mathcal{H}^a$ is denoted by $\mathcal{H}_b^a$, i.e.
	\begin{equation*}
		\mathcal{H}_b^a:=\{v\in\mathcal{H}^a:\|v(w)\|_{\mathbb{H}}\leq N \ a.s.-w\ for\ some\ N>0\}.
	\end{equation*}
\end{definition}
A $v\in \mathcal{H}^a$ is called simple if it has the following form
\begin{equation*}
	v(w)=\sum_{i=0}^{n-1}\xi_i(w)(\pi_{t_{i+1}}-\pi_{t_i})h_i,\quad \xi\in\mathcal{C}_{t_i},\quad h_i\in\mathbb{H},
\end{equation*}
where $0=t_0<t_1<...<t_n<\infty$. The set of all simple elements in $\mathcal{H}^a$ is denoted  by $\mathcal{S}^a$. We write $\mathcal{S}_b^a:=\mathcal{S}^a\cap\mathcal{H}_b^a$.
\begin{proposition}\label{7.2}
	$\mathcal{H}^a$ is a closed subspace of $L^2(\mathbb{W},\mathcal{F},\mu;\mathbb{H})$, and $\mathcal{S}_b^a$ is dense in $\mathcal{H}^a$.
\end{proposition}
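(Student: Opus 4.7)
My plan is to prove closedness and density separately: the former by a routine $L^2$-limit argument and the latter by an explicit two-stage approximation combined with Lemma \ref{Lemma7.1}. A preliminary simplification: since each cylindrical coefficient $\xi\in\mathcal{C}_t$ is bounded (as $g\in C_b^\infty$), every simple element $\sum_i\xi_i(\pi_{t_{i+1}}-\pi_{t_i})h_i$ is automatically bounded in $\mathbb{H}$, so $\mathcal{S}^a=\mathcal{S}_b^a$ and it will suffice to show $\mathcal{S}^a$ is dense in $\mathcal{H}^a$.

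For closedness I would argue as follows: if $v_n\in\mathcal{H}^a$ with $v_n\to v$ in $L^2(\mathbb{W};\mathbb{H})$, then for each $t\in[0,\infty]$ and $h\in\mathbb{H}$, Cauchy--Schwarz gives $\langle\pi_th,v_n\rangle_{\mathbb{H}}\to\langle\pi_th,v\rangle_{\mathbb{H}}$ in $L^2(\mathbb{W},\mu)$. Since each approximant is $\mathcal{F}_t$-measurable and $\mathcal{F}_t\supset\mathcal{N}$ is complete, the limit is $\mathcal{F}_t$-measurable, and hence $v\in\mathcal{H}^a$.

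For density, given $v\in\mathcal{H}^a$, I will first build a step approximation using the $\mathbb{H}$-valued martingale $M_t:=\mathbb{E}[v\mid\mathcal{F}_t]$ and then pass to a genuine simple element. For a partition $0=t_0<t_1<\cdots<t_N$ with $t_N\uparrow\infty$ and mesh tending to zero, set $P_i:=\pi_{t_{i+1}}-\pi_{t_i}$ and define
\[
v^{(n)}:=\sum_{i=0}^{N-1}P_iM_{t_i}.
\]
This is adapted because $\pi_{t_i}$ commutes with conditional expectation and each $M_{t_i}$ is $\mathcal{F}_{t_i}$-measurable. The crucial structural identity $\pi_{t_i}v=\pi_{t_i}M_{t_i}$, immediate from adaptedness of $v$, gives $\mathbb{E}[P_iv\mid\mathcal{F}_{t_i}]=P_iM_{t_i}$, from which one deduces both the orthogonality $v-v^{(n)}\perp v^{(n)}$ and the telescoping identity $P_i(v-M_{t_i})=\pi_{t_{i+1}}(M_{t_{i+1}}-M_{t_i})$. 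Combined with mutual orthogonality of the ranges $P_i\mathbb{H}$, this yields
\[
\mathbb{E}\|v-v^{(n)}\|_{\mathbb{H}}^2=\mathbb{E}\|(I-\pi_{t_N})v\|_{\mathbb{H}}^2+\sum_{i=0}^{N-1}\mathbb{E}\|\pi_{t_{i+1}}(M_{t_{i+1}}-M_{t_i})\|_{\mathbb{H}}^2,
\]
where the first term vanishes as $t_N\to\infty$ by strong continuity of $\{\pi_t\}$ and the second vanishes as the mesh shrinks (the main obstacle, discussed below). Once $v^{(n)}\to v$ in $L^2$ is in hand, I will approximate each $v^{(n)}$ by a simple element: expand $P_iM_{t_i}$ in an orthonormal basis $\{e_{i,k}\}_{k\ge 1}$ of the separable Hilbert space $P_i\mathbb{H}$ to write $P_iM_{t_i}=\sum_k\xi_{i,k}e_{i,k}$ with $\xi_{i,k}=\langle M_{t_i},e_{i,k}\rangle_{\mathbb{H}}\in L^2(\mathbb{W},\mathcal{F}_{t_i},\mu)$, truncate at $k\le K$ (Parseval controls the tail uniformly by $\mathbb{E}\|v\|^2$), and invoke Lemma \ref{Lemma7.1} to replace each $\xi_{i,k}$ by a cylindrical $\xi_{i,k}^\varepsilon\in\mathcal{C}_{t_i}$ in $L^2$. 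Since $e_{i,k}=P_ie_{i,k}$, the resulting finite sum $\sum_{i,k}\xi_{i,k}^\varepsilon P_ie_{i,k}$ lies in $\mathcal{S}^a$, and a diagonal argument produces the desired sequence in $\mathcal{S}^a$ converging to $v$.

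The hard part will be showing $\sum_i\mathbb{E}\|\pi_{t_{i+1}}(M_{t_{i+1}}-M_{t_i})\|_{\mathbb{H}}^2\to 0$ as the mesh refines. The a priori martingale-Pythagoras identity $\sum_i\mathbb{E}\|M_{t_{i+1}}-M_{t_i}\|_{\mathbb{H}}^2=\mathbb{E}\|v\|^2-\mathbb{E}\|M_0\|^2$ is partition-independent and so cannot produce decay on its own; I will instead exploit that, by adaptedness, each increment $M_{t_{i+1}}-M_{t_i}$ lies pointwise in the ``upper'' subspace $(I-\pi_{t_i})\mathbb{H}$, and then combine this with the strong continuity of $\{\pi_t\}$ in $t$ to deduce via dominated convergence that the ``lower'' projection $\pi_{t_{i+1}}$ captures only a vanishing fraction of each increment as the mesh shrinks. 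As a sanity check, in the prototypical model $\mathbb{H}=L^2(\mathbb{R}_+;K)$ with $\pi_t$ multiplication by $\mathbf{1}_{[0,t]}$, the sum reduces to $\int_0^\infty\mathbb{E}|v(s)-\mathbb{E}[v(s)\mid\mathcal{F}_{\phi(s)}]|^2\,\mathrm{d}s\to 0$ with $\phi(s)=t_i$ for $s\in(t_i,t_{i+1}]$, which follows from continuity of the filtration together with dominated convergence.
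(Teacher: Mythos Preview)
The paper does not give its own proof of this proposition: it is stated as a known fact, the appendix being a recapitulation of material from \cite{Z09}. So there is nothing to compare against, and your proposal stands on its own merits.

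Your argument is essentially correct. The closedness proof is routine and fine; the observation $\mathcal{S}^a=\mathcal{S}_b^a$ is correct; the martingale discretization $v^{(n)}=\sum_i P_iM_{t_i}$ is the standard one, and your identities $\pi_{t_i}v=\pi_{t_i}M_{t_i}$, $P_i(v-M_{t_i})=\pi_{t_{i+1}}(M_{t_{i+1}}-M_{t_i})$, and the resulting error decomposition are all verified exactly as you indicate. The second stage (basis expansion of each $P_iM_{t_i}$ followed by Lemma~\ref{Lemma7.1}) is unproblematic.

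On the ``hard part'': your prototype computation is correct and in fact already covers the general case. A continuous strictly monotone resolution of the identity on a separable Hilbert space admits a spectral representation $\mathbb{H}\cong\int^{\oplus}K_s\,\mathrm{d}\nu(s)$ with $\pi_t$ acting as multiplication by $\mathbf{1}_{[0,t]}$, which reduces the abstract situation to the $L^2$ model you analysed. Alternatively, one can avoid the spectral picture: since $\pi_{t_i}(M_{t_{i+1}}-M_{t_i})=0$, the problematic sum equals $\sum_i\mathbb{E}\|P_i(M_{t_{i+1}}-M_{t_i})\|_{\mathbb{H}}^2$, and for nested refining partitions the projections $v\mapsto v^{(n)}$ are increasing orthogonal projections in $L^2(\mathbb{W};\mathbb{H})$, so $v^{(n)}$ converges in $L^2$ to some $v^{(\infty)}\in\mathcal{H}^a$; identifying $v^{(\infty)}=v$ then uses precisely the continuity of $(\mathcal{F}_t)$ asserted in Definition~\ref{Definition7.1}. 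Either route completes your sketch. The only caveat is that the left-continuity $\mathcal{F}_{s-}=\mathcal{F}_s$ is genuinely needed for the limit $\mathbb{E}[v_s\mid\mathcal{F}_{\phi_n(s)}]\to v_s$; this follows from strong continuity of $\{\pi_t\}$ but you should state it explicitly rather than leave it implicit.
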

Basing on this Proposition, for any $v\in \mathcal{H}^a$, we can define It\^{o}'s integral $\delta(v)$ such that
\begin{equation*}
	\mathbb{E}|\delta(v)|^2=\mathbb{E}\|v\|_{\mathbb{H}}^2.
\end{equation*}
On the other hand, for any $\|\cdot\|_{L^2}$ norm finite and real Borel measurable function $f$ on $[0,\infty)$ and $h\in\mathbb{H}$, we may define the following integral with respect to the vector valued measure
\begin{equation*}
	\int_0^{\infty}f(s)\mathrm{d}\pi_sh
\end{equation*}
such that
\begin{equation*}
	\Big\|	\int_0^{\infty}f(s)\mathrm{d}\pi_sh\Big\|_{\mathbb{H}}^2=\int_0^{\infty}|f(s)|^2\mathrm{d}\langle\pi_sh,h\rangle_{\mathbb{H}}.
\end{equation*}
It is standard to prove the following result.
\begin{lemma}\label{Lemma7.2}
	Let $f$ be a left-continuous $\mathcal{F}_t$ adapted process the $\|\cdot\|_{L^2(\mathbb{R}^+)}$  and $\|\cdot\|_{L^{\infty}}$ bounded by $N$. Then for any $h\in\mathbb{H}$, there exists a sequence $v_k^h\in\mathcal{S}_b^a$ such that
	\begin{equation*}
		\|v_k^h(w)\|_{\mathbb{H}}\leq N\cdot\|h\|_{\mathbb{H}},\quad a.s.
	\end{equation*}
	and
	\begin{equation*}
		\lim_{k\rightarrow\infty}\mathbb{E}\|v_k^h-\int_0^{\infty}f(s)\mathrm{d}\pi_sh\|_{\mathbb{H}}^2=0.
	\end{equation*}
\end{lemma}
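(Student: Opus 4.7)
The plan is to build $v_k^h$ in three successive approximations. First I would truncate the time horizon, then discretize the (left-continuous, bounded) process $f$ in time, and finally replace the resulting $\mathcal{F}_{t_i}$-measurable coefficients by cylindrical ones using Lemma \ref{Lemma7.1}, all the while tracking the pointwise bound that will deliver the a.s.\ estimate $\|v_k^h(w)\|_{\mathbb{H}}\leq N\|h\|_{\mathbb{H}}$.

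More precisely, fix $h\in\mathbb{H}$ with $\|h\|_{\mathbb{H}}=1$ (the general case follows by homogeneity). For each $k\in\mathbb{N}^+$, choose $T_k\uparrow\infty$ and a partition $0=t_0^k<t_1^k<\cdots<t_{n_k}^k=T_k$ whose mesh tends to $0$. Define the step process
\begin{equation*}
f_k(s,w):=\sum_{i=0}^{n_k-1} f(t_i^k,w)\,\mathbf{1}_{(t_i^k,t_{i+1}^k]}(s),
\end{equation*}
so that $|f_k|\le N$ and, by the left-continuity of $f(\cdot,w)$, $f_k(s,w)\to f(s,w)$ for every $(s,w)\in(0,\infty)\times\mathbb{W}$. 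Setting $\tilde v_k(w):=\sum_{i=0}^{n_k-1} f(t_i^k,w)(\pi_{t_{i+1}^k}-\pi_{t_i^k})h$, the orthogonality of the increments of the resolution $\pi$ yields
\begin{equation*}
\bigl\|\tilde v_k(w)-\textstyle\int_0^{\infty}f(s,w)\,\mathrm{d}\pi_s h\bigr\|_{\mathbb{H}}^2=\int_0^\infty |f_k(s,w)-f(s,w)|^2\,\mathrm{d}\langle\pi_s h,h\rangle_{\mathbb{H}}+\int_{T_k}^{\infty}|f(s,w)|^2\mathrm{d}\langle\pi_s h,h\rangle_{\mathbb{H}},
\end{equation*}
which converges to $0$ pointwise in $w$ and is dominated by $4N^2$; dominated convergence gives $\mathbb{E}\|\tilde v_k-\int_0^\infty f\,\mathrm{d}\pi_s h\|_{\mathbb{H}}^2\to 0$.

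The processes $\tilde v_k$ are already adapted but have $\mathcal{F}_{t_i^k}$-measurable (not cylindrical) coefficients, so I replace each $f(t_i^k,\cdot)$ by a cylindrical approximation. By Lemma \ref{Lemma7.1}, for each fixed $k$ and $i$ there exists $\xi_i^{k,m}\in\mathcal{C}_{t_i^k}$ with $\|\xi_i^{k,m}\|_\infty\le N$ and $\xi_i^{k,m}\to f(t_i^k,\cdot)$ a.s.\ as $m\to\infty$. Define
\begin{equation*}
v_{k,m}(w):=\sum_{i=0}^{n_k-1}\xi_i^{k,m}(w)\,(\pi_{t_{i+1}^k}-\pi_{t_i^k})h\in\mathcal{S}_b^a.
\end{equation*}
Using again the orthogonality of the increments $(\pi_{t_{i+1}^k}-\pi_{t_i^k})h$,
\begin{equation*}
\|v_{k,m}(w)\|_{\mathbb{H}}^2=\sum_{i=0}^{n_k-1}|\xi_i^{k,m}(w)|^2\,\|(\pi_{t_{i+1}^k}-\pi_{t_i^k})h\|_{\mathbb{H}}^2\le N^2\,\|\pi_{T_k}h\|_{\mathbb{H}}^2\le N^2,
\end{equation*}
which is the desired uniform a.s.\ bound. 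Moreover, $\mathbb{E}\|v_{k,m}-\tilde v_k\|_{\mathbb{H}}^2\to 0$ as $m\to\infty$ by another application of dominated convergence (with dominating function $4N^2$). A standard diagonal extraction then produces $m_k$ such that $v_k^h:=v_{k,m_k}$ satisfies $\|v_k^h\|_{\mathbb{H}}\le N$ a.s.\ and $\mathbb{E}\|v_k^h-\int_0^\infty f\,\mathrm{d}\pi_s h\|_{\mathbb{H}}^2\to 0$, giving the claim after scaling by $\|h\|_{\mathbb{H}}$.

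The only genuinely delicate point, and the one I expect to be the main obstacle, is preserving the pointwise a.s.\ bound $\|v_k^h\|_{\mathbb{H}}\le N\|h\|_{\mathbb{H}}$ (rather than merely an $L^2$ or in-probability bound) while passing through the cylindrical approximation, because the $\xi_i^{k,m}$ from Lemma \ref{Lemma7.1} are only controlled in $\|\cdot\|_\infty$ and converge a.s.; it is essential here that $|\xi_i^{k,m}|\le N$ combines with the \emph{mutual orthogonality} of the projection increments to give the clean estimate above. The truncation at $T_k<\infty$ is what makes the sum finite (so $v_k^h\in\mathcal{S}^a$), and it is controlled by the $L^2(\mathbb{R}^+)$ bound on $f$.
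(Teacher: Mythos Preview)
Your proposal is correct and follows essentially the same route as the paper's proof: both discretize $f$ in time over a growing finite window (the paper uses the dyadic partition of $[0,n)$, you use a general partition of $[0,T_k]$), invoke left-continuity and dominated convergence against the finite measure $\mathrm{d}\langle\pi_s h,h\rangle_{\mathbb H}$ to pass to the limit, then replace each $\mathcal{F}_{t_i}$-measurable coefficient by a cylindrical one via Lemma~\ref{Lemma7.1} with the same $L^\infty$ bound, and finish by diagonalization. Your write-up is in fact more explicit than the paper's about why the a.s.\ bound $\|v_k^h\|_{\mathbb H}\le N\|h\|_{\mathbb H}$ survives, namely through the orthogonality of the increments $(\pi_{t_{i+1}^k}-\pi_{t_i^k})h$. (One cosmetic slip: in your displayed identity for $\|\tilde v_k-\int_0^\infty f\,\mathrm{d}\pi_s h\|_{\mathbb H}^2$, the first integral should run over $[0,T_k]$, otherwise the tail term is counted twice.)
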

\begin{proof}
	First of all, we define for every $n\in\mathbb{N}$
	\begin{equation*}
		f_n(s):=\sum_{j=0}^{n2^n-1}f(j2^{-n})I_{[j2^{-n},(j+1)2^{-n})}(s).
	\end{equation*}
	Then, by the dominated convergence theorem we have
	\begin{equation*}
		\lim_{n\rightarrow\infty}\mathbb{E}\Big\|\int_0^{\infty}(f_n(s)-f(s))\mathrm{d}\pi_sh\Big\|_{\mathbb{H}}^2=	\lim_{n\rightarrow\infty}\mathbb{E}\int_0^{\infty}|f_n(s)-f(s))|^2\mathrm{d}\langle\pi_sh,h\rangle_{\mathbb{H}}=0.
	\end{equation*}
	For each $n\in\mathbb{N}$ and $j=0,...,n2^{-n}-1$, by Lemma \ref{Lemma7.1} one can find $F_{n,k}^j\in\mathcal{C}_{j2^{-n}}$ such that
	\begin{equation*}
		\|F_{n,k}^j\|_{\infty}\leq N,\quad F_{n,k}^j\rightarrow f(j2^{-n}),\ as\  k\rightarrow\infty.
	\end{equation*}
	Finally, we define
	\begin{equation*}
		v_{n,k}^h(w):=\sum_{j=0}^{n2^{-n}-1}F_{n,k}^j(w)\cdot(\pi_{(j+1)2^{-n}}-\pi_{j2^{-n}})h.
	\end{equation*}
	By the diagonalization method, we may find the desired sequence $v_k^h$. In fact, the condition $\|\cdot\|_{L^{\infty}}< N$ can be ignored.
\end{proof}
\begin{proposition}\label{Proposition7.3}
	Let $0<c\leq F\leq C$ be a Borel measurable function on $\mathbb{W}$. Then there exists a $v\in \mathcal{H}^a$ such that
	\begin{equation*}
		\mathbb{E}(F|\mathcal{F}_t)=\mathbb{F}\cdot\exp\Big\{\delta(\pi_tv)-\frac{1}{2}\|\pi_tv\|_{\mathbb{H}}^2\Big\},\quad 0\leq t\leq \infty.
	\end{equation*}
\end{proposition}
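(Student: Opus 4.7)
\textbf{Proof plan for Proposition \ref{Proposition7.3}.} The strategy is to realize the bounded positive martingale $M_t := \mathbb{E}(F\mid\mathcal{F}_t)$ as a stochastic exponential (Dol\'eans--Dade exponential) in the abstract Wiener setting. Since $0<c\le F\le C$, the martingale $M$ is uniformly integrable and satisfies $c\le M_t\le C$ for every $t\in[0,\infty]$. By Lemma \ref{lemmaB.1}, $M_t\to M_\infty=F$ both in $L^1$ and $\mu$-a.s., and $M_0=\mathbb{E}F$ because $\mathcal{F}_0$ is trivial modulo null sets.

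The first main step is a martingale representation theorem with respect to the filtration $(\mathcal{F}_t)_{0\le t\le\infty}$ of Definition \ref{Definition7.1}: there exists $u\in\mathcal{H}^a$ such that
\begin{equation*}
M_t \;=\; M_0 + \delta(\pi_t u),\qquad 0\le t\le\infty.
\end{equation*}
To obtain this, I would first prove the representation for $F\in\mathcal{C}$ by expanding a smooth cylindrical functional of $\langle h_1,w\rangle,\ldots,\langle h_n,w\rangle$ via the fundamental theorem of calculus along the resolution $\pi_t$ and identifying the integrand as an adapted $\mathbb{H}$-valued process through Lemma \ref{Lemma7.2}; then extend to a general bounded $F$ via the density stated in Lemma \ref{Lemma7.1} and the isometry $\mathbb{E}|\delta(v)|^2=\mathbb{E}\|v\|_{\mathbb{H}}^2$ that follows from Proposition \ref{7.2}. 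Uniform $L^2$ bounds on $M$ (which hold since $M$ is bounded) ensure $u\in\mathcal{H}^a$ and that the limiting equality holds at $t=\infty$.

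The second step is a change of variable. Define $N_t:=\log M_t$, which is well defined and bounded since $M_t\in[c,C]$. Applying the It\^o formula in abstract Wiener space to $\log(\cdot)$ gives
\begin{equation*}
\log M_t \;=\; \log M_0 + \delta\!\left(\pi_t\!\left(\tfrac{u}{M}\right)\right) - \tfrac{1}{2}\left\|\pi_t\!\left(\tfrac{u}{M}\right)\right\|_{\mathbb{H}}^2,
\end{equation*}
where $u/M$ is interpreted pointwise in the adapted variable, and belongs to $\mathcal{H}^a$ because $M\ge c>0$ bounds the denominator away from zero and $u\in\mathcal{H}^a$. Setting $v:=u/M\in\mathcal{H}^a$ and exponentiating yields
\begin{equation*}
M_t \;=\; \mathbb{E}F\cdot \exp\!\left\{\delta(\pi_t v)-\tfrac{1}{2}\|\pi_t v\|_{\mathbb{H}}^2\right\},
\end{equation*}
which is the desired identity; the case $t=\infty$ follows by letting $t\uparrow\infty$ using the $\mu$-a.s.\ convergence of $M_t$ to $F$ and the continuity of the resolution in property (iv) of $\pi$.

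\textbf{Main obstacle.} The delicate point is the martingale representation with respect to the nonstandard continuous filtration $(\mathcal{F}_t)_{0\le t\le\infty}$ built from a resolution $\pi$ of the identity on $\mathbb{H}$: the classical Brownian martingale representation is not directly available, so one must either translate the problem to the Brownian setting via an isomorphism induced by $\pi$ (using that $t\mapsto\langle\pi_th,h\rangle_{\mathbb{H}}$ gives the time change) or redo the chaos-expansion/Clark--Ocone argument in the $(\mathbb{W},\mathbb{H},\mu)$ framework. Once this representation is in hand, verifying adaptedness of $v=u/M$ and justifying the It\^o formula for $\log M$ are routine because $M$ is bounded away from $0$ and $\infty$.
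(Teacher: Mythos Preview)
Your proposal is correct and follows essentially the same approach as the paper: both write $M_t=\mathbb{E}(F\mid\mathcal{F}_t)$ via martingale representation as $M_t=\mathbb{E}F+\delta(\pi_t u)$, then set $v=\int_0^\infty M_s^{-1}\,\mathrm{d}\pi_s u$ (your ``$u/M$'') and identify $M_t$ as the stochastic exponential of $\delta(\pi_t v)$. The only cosmetic difference is that the paper rewrites $M_t=\mathbb{E}F+\int_0^t M_s\,\mathrm{d}m_s$ and reads off the Dol\'eans--Dade solution, while you apply It\^o to $\log M_t$; the paper also simply cites the martingale representation rather than sketching its proof as you do.
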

\begin{proof}
	Set $M_t:=\mathbb{E}(F|\mathcal{F}_t)$. Then $\{M_t,\mathcal{F}_t\}$ is a uniformly martingale bounded from above by $C$ and from below by $c$. By Lemma \ref{lemmaB.1} and the representation formula of martingales, there is a $u\in\mathcal{H}^a$ such that
	\begin{equation*}
		M_t=\mathbb{F}+\delta(\pi_tu).
	\end{equation*}
	Now define
	\begin{equation*}
		v:=\int_0^{\infty}\frac{\mathrm{d}\pi_tu}{M_t},\ m_t:=\delta(\pi_tv).
	\end{equation*}
	Then, clearly $v\in\mathcal{H}^a$ and $\{m_t,\mathcal{F}_t\}$ is a martingale with square variation process $t\rightarrow\|\pi_tv\|_{\mathbb{H}}$. Thus, we have
	\begin{equation*}
		M_t=\mathbb{F}+\int_0^tM_sdm_s.
	\end{equation*}
	The desired formula follows.
\end{proof}
We also need the following Clark-Ocone formula.
\begin{proposition}\label{Proposition7.4}
	For any $F\in\mathcal{C}$ with the form \eqref{7.2}, it then holds
	\begin{equation*}
		\mathbb{E}(F|\mathcal{F}_t)=\mathbb{E}F+\delta(\pi_tv),\quad \forall 0\leq t\leq \infty,
	\end{equation*}
	where
	\begin{equation*}
		v:=\sum_{i=1}^{n}\int_0^{\infty}\mathbb{E}\big[(\partial_ig)(\langle h_1,\cdot\rangle,...,\langle h_n,\cdot\rangle)|\mathcal{F}_t\big] \mathrm{d}\pi_th_i \in\mathcal{H}_b^a.
	\end{equation*}	
\end{proposition}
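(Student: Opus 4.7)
The strategy is to reduce the formula to a finite-dimensional It\^o-type martingale representation for the $\mathbb{R}^n$-valued Gaussian martingale $X(t):=(\delta(\pi_t h_1),\ldots,\delta(\pi_t h_n))$, exploiting the cylindrical structure of $F$.

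\emph{Step 1 (explicit conditional expectation).} Orthogonality of the projections $\pi_t$ and $I-\pi_t$ in $\mathbb{H}$ makes $\delta(\pi_t h_i)$ and $\delta((I-\pi_t)h_j)$ jointly centred Gaussian and uncorrelated, hence independent. By Definition \ref{Definition7.1}, $X(t)$ is $\mathcal{F}_t$-measurable, while $Y(t):=(\delta((I-\pi_t)h_i))_i$ is independent of $\mathcal{F}_t$ with law $\mathcal{N}(0,\Sigma_t)$, $\Sigma_t^{ij}=\langle (I-\pi_t)h_i,h_j\rangle_{\mathbb{H}}$. Decomposing $\langle h_i,w\rangle=X_i(t)+Y_i(t)$ and taking the conditional expectation of $F$ yields
\begin{equation*}
M_t:=\mathbb{E}(F\mid\mathcal{F}_t)=G_t(X(t)),\qquad G_t(x):=\int_{\mathbb{R}^n}g(x+y)\,\rho_{\Sigma_t}(y)\,\mathrm{d}y,
\end{equation*}
where $\rho_{\Sigma_t}$ is the density of $\mathcal{N}(0,\Sigma_t)$ (to be read as a $\delta$-mass at $0$ in the degenerate case).

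\emph{Step 2 (It\^o formula for $M_t=G_t(X(t))$).} The identity $\mathbb{E}[X_i(t)X_j(s)]=\langle \pi_{t\wedge s}h_i,h_j\rangle_{\mathbb{H}}$ gives the quadratic covariation $\mathrm{d}\langle X_i,X_j\rangle_t=\mathrm{d}\langle\pi_t h_i,h_j\rangle_{\mathbb{H}}=-\mathrm{d}\Sigma_t^{ij}$. Because $g\in C_b^\infty(\mathbb{R}^n)$, $G_t$ obeys the heat-type identity $\partial_t G_t(x)+\tfrac12\sum_{i,j}\partial_{ij}G_t(x)\,\mathrm{d}\langle X_i,X_j\rangle_t/\mathrm{d}t=0$, inherited from $\partial_t\rho_{\Sigma_t}=\tfrac12\sum_{ij}\partial_{ij}\rho_{\Sigma_t}\cdot\mathrm{d}\Sigma_t^{ij}/\mathrm{d}t$. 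Applying It\^o's formula to $M_t=G_t(X(t))$, the drift cancels and
\begin{equation*}
M_t=M_0+\sum_{i=1}^n\int_0^t \partial_i G_s(X(s))\,\mathrm{d}X_i(s).
\end{equation*}
Since $\pi_t$ is only assumed continuous (not $C^1$) in $t$, this is justified by approximating $\{\pi_t\}$ by piecewise-constant step resolutions $\{\pi_t^{(m)}\}$ on finite partitions of $[0,\infty)$: on each step, $X^{(m)}$ has a single jump and the finite-dimensional It\^o formula applies directly; boundedness of $g$ together with $\partial_i g$, $\partial_{ij}g$ provides the uniform $L^2$ estimates needed to pass to the limit as the mesh shrinks.

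\emph{Step 3 (identification with $\delta(\pi_t v)$).} Differentiating under the integral and reapplying the conditioning argument of Step 1 to $\partial_i g$ in place of $g$ gives
\begin{equation*}
\partial_i G_s(X(s))=\int_{\mathbb{R}^n}(\partial_i g)(X(s)+y)\rho_{\Sigma_s}(y)\,\mathrm{d}y=\mathbb{E}\bigl[(\partial_i g)(\langle h_1,\cdot\rangle,\ldots,\langle h_n,\cdot\rangle)\mid\mathcal{F}_s\bigr]=:\eta_i(s).
\end{equation*}
Each $\eta_i$ is $\mathcal{F}_s$-adapted and bounded by $\|\partial_i g\|_\infty$, so by Lemma \ref{Lemma7.2} the element $v:=\sum_i\int_0^\infty\eta_i(s)\,\mathrm{d}\pi_s h_i$ lies in $\mathcal{H}_b^a$. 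Approximating $\eta_i$ by the piecewise-constant adapted processes of Lemma \ref{Lemma7.2} reduces the Wiener integral $\int_0^t\eta_i(s)\,\mathrm{d}X_i(s)$ to Riemann sums $\sum_k\xi_k\delta((\pi_{s_{k+1}}-\pi_{s_k})h_i)$, which by definition of $\delta$ on $\mathcal{S}_b^a$ coincide with $\delta\bigl(\sum_k\xi_k(\pi_{s_{k+1}}-\pi_{s_k})h_i\bigr)$; the It\^o isometry on $\mathcal{H}^a$ (Proposition \ref{7.2}) lets us pass to the limit and conclude $\sum_i\int_0^t\eta_i(s)\,\mathrm{d}X_i(s)=\delta(\pi_t v)$. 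Combined with Step 2 and $M_0=\mathbb{E}F$, this gives the claimed formula.

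\emph{Main obstacle.} The genuinely nontrivial point is the rigorous It\^o formula in Step 2 in the abstract Wiener-space framework, where $\pi_t$ is only continuous in $t$. The step-resolution approximation described above is the standard route, but the convergence arguments (both for the stochastic integral and for the cancellation of the drift) must be carried out carefully, relying on the smoothness and boundedness of all derivatives of $g$ to obtain uniform $L^2$-control over the approximations.
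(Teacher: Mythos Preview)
The paper does not actually prove Proposition \ref{Proposition7.4}; it is introduced with the sentence ``We also need the following Clark--Ocone formula'' and stated without argument, in the same spirit as Propositions \ref{7.2}, \ref{Proposition7.5} and Theorem \ref{Theorem7.1}, which are lifted from Zhang \cite{Z09}. So there is no proof in the paper to compare against.

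Your argument is the standard direct derivation of Clark--Ocone for cylindrical functionals and is sound in outline. Steps 1 and 3 are clean: the independence of $\delta(\pi_t h_i)$ and $\delta((I-\pi_t)h_j)$ gives the explicit formula $M_t=G_t(X(t))$, and differentiating under the Gaussian convolution identifies $\partial_i G_s(X(s))$ with $\mathbb{E}[(\partial_i g)(\langle h_1,\cdot\rangle,\ldots)|\mathcal{F}_s]$. The only genuinely delicate point is Step 2, which you correctly flag. One simplification worth noting: since $M_t=\mathbb{E}(F|\mathcal{F}_t)$ is \emph{a priori} a bounded martingale, any It\^o-type decomposition of $G_t(X(t))$ must have vanishing finite-variation part. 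You therefore do not need to verify the backward heat identity for $G_t$ at all; it suffices to show that the martingale parts of your step-resolution approximations converge in $L^2$ to $\sum_i\int_0^t\eta_i(s)\,\mathrm{d}X_i(s)$, and the remainder is then forced to zero by the martingale property. This removes the need to control $\partial_t G_t$ or $\mathrm{d}\Sigma_t/\mathrm{d}t$ separately and makes the passage to the limit routine, using only the $L^2$-isometry for $\delta$ on $\mathcal{H}^a$ and the boundedness of $\partial_i g$.
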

Let $\mathcal{H}_c^a$ be the set of all $v\in\mathcal{H}^a$ satisfying
\begin{equation*}
	\mathbb{E}\big[\exp\{\delta(v)-\frac{1}{2}\|v\|_{\mathbb{H}}^2\}\big]=1.
\end{equation*}
For $v\in\mathcal{H}_c^a$, we define
\begin{equation*}
	T_v(w):=w-v(w)
\end{equation*}
and
\begin{equation}\label{7.3}
	\mathrm{d}\mu_v=\exp\big\{\delta(v)-\frac{1}{2}\|v\|_{\mathbb{H}}^2\big\}\mathrm{d}\mu.
\end{equation}
Then by the Girsanov theorem, we have for any $A\in\mathcal{B}(\mathbb{W})$
\begin{equation*}
	\mu_v(w:T_vw\in A)=\mu(A).
\end{equation*}
\begin{lemma}\label{Lemma7.3}
	For $v\in\mathcal{H}_c^a$, let $\mu_v$  be defined by \eqref{7.3}. Then
	\begin{equation*}
		R(\mu_v||\mu)=\frac{1}{2}\mathbb{E}^{\mu_v}\|v\|_{\mathbb{H}}^2.
	\end{equation*}
\end{lemma}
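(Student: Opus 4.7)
The plan is to start from the definition of the relative entropy and reduce the computation to showing $\mathbb{E}^{\mu_v}[\delta(v)]=\mathbb{E}^{\mu_v}\|v\|_{\mathbb{H}}^2$, which is a Girsanov-type identity. By definition of the Radon--Nikodym derivative in \eqref{7.3} and the fact that $v\in\mathcal{H}_c^a$ (so $\mu_v$ is a probability measure and $R(\mu_v\|\mu)=\mathbb{E}^{\mu_v}\bigl[\log(\mathrm{d}\mu_v/\mathrm{d}\mu)\bigr]$), we obtain
\begin{equation*}
R(\mu_v\|\mu)=\mathbb{E}^{\mu_v}\!\left[\delta(v)-\tfrac{1}{2}\|v\|_{\mathbb{H}}^{2}\right].
\end{equation*}
So the whole problem reduces to evaluating $\mathbb{E}^{\mu_v}[\delta(v)]$.

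The second step is to use the Girsanov structure recorded just before the lemma: under $\mu_v$ the random variable $T_v w=w-v(w)$ has law $\mu$. Writing $\delta(v)$ informally as an It\^o-type integral $\int_0^{\infty}v_s\,\mathrm{d}\langle\pi_s\cdot,w\rangle$, the translation $w\mapsto w-v(w)$ produces the decomposition
\begin{equation*}
\delta(v)(w) \;=\; \widetilde{\delta}(v)(w) \;+\; \|v\|_{\mathbb{H}}^{2},
\end{equation*}
where $\widetilde{\delta}(v)$ is the It\^o integral of $v$ with respect to the $\mu_v$-Brownian ``motion'' $T_v w$. Taking $\mathbb{E}^{\mu_v}$ of both sides gives $\mathbb{E}^{\mu_v}[\delta(v)]=\mathbb{E}^{\mu_v}\|v\|_{\mathbb{H}}^2$ once we know $\mathbb{E}^{\mu_v}[\widetilde{\delta}(v)]=0$; substituting into the previous display yields the stated formula $R(\mu_v\|\mu)=\tfrac{1}{2}\mathbb{E}^{\mu_v}\|v\|_{\mathbb{H}}^2$.

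To carry out the second step rigorously, I would first verify it for $v\in\mathcal{S}_b^a$ (the bounded simple adapted processes from Proposition \ref{7.2}). For such $v$, $\delta(v)$ is a genuine finite sum $\sum_i \xi_i\langle(\pi_{t_{i+1}}-\pi_{t_i})h_i,w\rangle$, the translation identity is elementary, and all integrability is immediate since $\|v\|_{\mathbb{H}}$ and $\mathrm{d}\mu_v/\mathrm{d}\mu$ are bounded; Proposition \ref{Proposition7.4} (Clark--Ocone) and Lemma \ref{Lemma7.2} can be invoked to handle the conditional-expectation bookkeeping. Then I would pass to a general $v\in\mathcal{H}_c^a$ by approximation, using Proposition \ref{7.2} to pick $v^{(n)}\in\mathcal{S}_b^a$ with $v^{(n)}\to v$ in $L^2(\mathbb{W},\mathcal{F},\mu;\mathbb{H})$, and extracting a subsequence along which the corresponding densities converge in $L^1(\mu)$.

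The main obstacle is precisely this last passage to the limit in the infinite-horizon setting: on $[0,\infty]$ the density $\exp\{\delta(v)-\tfrac{1}{2}\|v\|_{\mathbb{H}}^2\}$ is not uniformly integrable in general, so swapping $\lim$ and $\mathbb{E}^{\mu_v}$ requires care. I would control $\mathbb{E}^{\mu_v}|\widetilde{\delta}(v^{(n)})-\widetilde{\delta}(v)|$ via a stopping-time localization $\tau_N=\inf\{t:\|\pi_t v\|_{\mathbb{H}}^2\vee|\delta(\pi_t v)|\geq N\}$, apply the bounded-case identity on $[0,\tau_N]$ where everything is a true martingale with zero mean under $\mu_v$ by Lemma \ref{lemmaB.1}, and then let $N\to\infty$ using $v\in\mathcal{H}_c^a$ together with the uniform-integrability criterion of Lemma \ref{lemmaB.1} applied to the density. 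This reduces the infinite-horizon claim to a sequence of finite-horizon identities, which is the same philosophy as the rest of Appendix A.
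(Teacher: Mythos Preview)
The paper does not supply a proof of this lemma; it is stated without argument in Appendix~A, consistently with the remark after Proposition~\ref{Proposition7.5} referring the reader to \cite{Z09} for these auxiliary facts. So there is no paper proof to compare against, and I only comment on correctness.

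Your reduction is right: from $\log(\mathrm{d}\mu_v/\mathrm{d}\mu)=\delta(v)-\tfrac12\|v\|_{\mathbb H}^2$ the claim is equivalent to $\mathbb{E}^{\mu_v}[\delta(v)]=\mathbb{E}^{\mu_v}\|v\|_{\mathbb H}^2$, and Girsanov gives $\delta(\pi_t v)=\widetilde\delta(\pi_t v)+\|\pi_t v\|_{\mathbb H}^2$ with $\widetilde\delta(\pi_\cdot v)$ a $\mu_v$-local martingale. The stopping-time localization you describe in the last paragraph is exactly what is needed to conclude.

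The detour through $\mathcal{S}_b^a$ approximants, Clark--Ocone and $L^1$ convergence of densities is, however, unnecessary and is the weakest part of the proposal: as you yourself note, $\exp\{\delta(v^{(n)})-\tfrac12\|v^{(n)}\|_{\mathbb H}^2\}$ need not be uniformly integrable, and showing that $\mu_{v^{(n)}}\to\mu_v$ in a sense strong enough to pass to the limit in the entropy is at least as hard as the original statement. The cleaner route is to work with $v$ itself throughout: on $[0,\tau_N]$ the stopped $\mu_v$-martingale $\widetilde\delta(\pi_\cdot v)$ is bounded, hence a true martingale with zero mean, so $\mathbb{E}^{\mu_v}[\delta(\pi_{\tau_N}v)]=\mathbb{E}^{\mu_v}\|\pi_{\tau_N}v\|_{\mathbb H}^2$; then monotone convergence on $\|\pi_{\tau_N}v\|_{\mathbb H}^2$ together with the monotonicity of relative entropy under restriction to $\mathcal{F}_{\tau_N}$ give the result for general $v\in\mathcal{H}_c^a$, without ever invoking Proposition~\ref{Proposition7.4} or Lemma~\ref{Lemma7.2}.
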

\begin{theorem}\label{Theorem7.1}
	Let $F$ be any bounded Borel measurable function on $\mathbb{W}$. Then
	\begin{equation*}
		-\log\mathbb{E}(e^{-F})=\inf_{v\in\mathcal{H}_c^a}\mathbb{E}^{\mu_v}\big(F+\frac{1}{2}\|v\|_{\mathbb{H}}^2\big),
	\end{equation*}
	where  $\mu_v$ is defined by \eqref{7.3}. Moreover, the infimum is unique attained at some $v_0\in\mathcal{H}_c^a$.
\end{theorem}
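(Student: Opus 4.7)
The plan is to prove the variational formula by establishing two inequalities: the easy direction $-\log\mathbb{E}(e^{-F}) \leq \inf_v \mathbb{E}^{\mu_v}(F + \tfrac{1}{2}\|v\|_{\mathbb{H}}^2)$ via the Gibbs/entropy inequality, and the reverse direction via an explicit optimizer $v_0 \in \mathcal{H}_c^a$ produced by the martingale representation in Proposition \ref{Proposition7.3}. The whole infinite-interval subtlety is concentrated in passing $t \to \infty$ in that representation, which in turn forces us to invoke Lemma \ref{lemmaB.1}.

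For the upper bound, fix any $v \in \mathcal{H}_c^a$. The change of measure $\mu_v$ is well defined, and Lemma \ref{Lemma7.3} already identifies $R(\mu_v\|\mu) = \tfrac{1}{2}\mathbb{E}^{\mu_v}\|v\|_{\mathbb{H}}^2$. Applying the standard Donsker--Varadhan entropy inequality $\log\mathbb{E}(e^{-F}) \geq -\mathbb{E}^{\mu_v}F - R(\mu_v\|\mu)$ (which is just Jensen's inequality applied to $\log$ together with $\frac{d\mu}{d\mu_v} = e^{-\delta(v) + \frac{1}{2}\|v\|^2}$) gives immediately
\begin{equation*}
-\log \mathbb{E}(e^{-F}) \;\leq\; \mathbb{E}^{\mu_v}F + \tfrac{1}{2}\mathbb{E}^{\mu_v}\|v\|_{\mathbb{H}}^2.
\end{equation*}
This part is identical to the finite-interval case and uses nothing about the filtration past verifying that $\mu_v$ is a probability measure, which is built into the definition of $\mathcal{H}_c^a$.

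For the reverse inequality and the attainment, since $F$ is bounded we have $0 < c \leq e^{-F} \leq C$, so Proposition \ref{Proposition7.3} (applied with $F$ there equal to our $e^{-F}$) produces $v_0 \in \mathcal{H}^a$ with
\begin{equation*}
\mathbb{E}(e^{-F}\mid\mathcal{F}_t) \;=\; \mathbb{E}(e^{-F})\cdot \exp\!\Big\{\delta(\pi_t v_0) - \tfrac{1}{2}\|\pi_t v_0\|_{\mathbb{H}}^2\Big\}, \qquad 0 \leq t \leq \infty.
\end{equation*}
Setting $t = \infty$ and using $\pi_\infty = I$ together with the martingale convergence provided by Lemma \ref{lemmaB.1} yields $e^{-F} = \mathbb{E}(e^{-F})\exp\{\delta(v_0) - \tfrac{1}{2}\|v_0\|_{\mathbb{H}}^2\}$, i.e.\ $v_0 \in \mathcal{H}_c^a$ (take expectation of both sides, the right-hand side is exactly the Girsanov density, whose expectation under $\mu$ must be $1$). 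Taking logarithms gives the pointwise identity
\begin{equation*}
F \;=\; \bigl(-\log \mathbb{E}(e^{-F})\bigr) - \delta(v_0) + \tfrac{1}{2}\|v_0\|_{\mathbb{H}}^2,
\end{equation*}
and then integrating against $\mu_{v_0}$ and using the consequence of Lemma \ref{Lemma7.3} that $\mathbb{E}^{\mu_{v_0}}\delta(v_0) = \mathbb{E}^{\mu_{v_0}}\|v_0\|_{\mathbb{H}}^2$ (obtained by comparing the two expressions for $R(\mu_{v_0}\|\mu)$, namely $\mathbb{E}^{\mu_{v_0}}(\delta(v_0) - \tfrac{1}{2}\|v_0\|^2)$ and $\tfrac{1}{2}\mathbb{E}^{\mu_{v_0}}\|v_0\|^2$) produces
\begin{equation*}
\mathbb{E}^{\mu_{v_0}}F + \tfrac{1}{2}\mathbb{E}^{\mu_{v_0}}\|v_0\|_{\mathbb{H}}^2 \;=\; -\log\mathbb{E}(e^{-F}),
\end{equation*}
so the infimum in the upper bound is attained at $v_0$; uniqueness follows from strict convexity of $v \mapsto \tfrac{1}{2}\mathbb{E}^{\mu_v}\|v\|^2$ combined with linearity of $v\mapsto \mathbb{E}^{\mu_v}F$ via a perturbation argument.

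The main obstacle is the passage $t\to\infty$ in the representation of the martingale $M_t = \mathbb{E}(e^{-F}\mid\mathcal{F}_t)$ and, correlatively, checking $v_0 \in \mathcal{H}_c^a$. Because $e^{-F}$ is bounded between positive constants $c$ and $C$, the martingale $\{M_t\}$ is uniformly integrable, so by Lemma \ref{lemmaB.1} we have $M_t \to e^{-F}$ both $\mu$-a.s.\ and in $L^1$, which legitimizes the identification of the $t = \infty$ boundary value; taking $\mu$-expectation of the exponential at $t=\infty$ then forces the exponential-martingale-is-a-true-martingale condition, i.e.\ $\mathbb{E}\exp\{\delta(v_0) - \tfrac{1}{2}\|v_0\|_{\mathbb{H}}^2\} = 1$, which is precisely the defining property of $\mathcal{H}_c^a$. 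Apart from this limiting step, every ingredient (Proposition \ref{Proposition7.3}, Lemma \ref{Lemma7.3}, the Girsanov shift $T_{v_0}$) has already been established earlier in the appendix for the resolution $\pi = \{\pi_t : 0 \leq t \leq \infty\}$, so no new estimate tied to the unbounded time parameter is required.
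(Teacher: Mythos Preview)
The paper does not actually provide its own proof of this theorem: it is stated in Appendix~A as part of a sequence of results adapted from Zhang~\cite{Z09}, and the remark ``Reader interested in proof can refer to~\cite{Z09}'' that follows Proposition~\ref{Proposition7.5} is meant to cover this block of results. Your argument for the variational identity is correct and is exactly the standard proof (and the one in~\cite{Z09}): the upper bound is the Gibbs/Donsker--Varadhan inequality via Lemma~\ref{Lemma7.3}, and attainment comes from applying Proposition~\ref{Proposition7.3} to $e^{-F}$, passing $t\to\infty$ via Lemma~\ref{lemmaB.1}, and then reading off the value at the optimizer. Your treatment of the $t\to\infty$ limit is in fact more explicit than the paper's presentation.

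There is, however, a genuine slip in your uniqueness argument. You assert that $v\mapsto \mathbb{E}^{\mu_v}F$ is linear and $v\mapsto \tfrac{1}{2}\mathbb{E}^{\mu_v}\|v\|_{\mathbb{H}}^2$ is strictly convex, but neither holds as stated: the measure $\mu_v$ depends on $v$ through the exponential density $\exp\{\delta(v)-\tfrac{1}{2}\|v\|_{\mathbb{H}}^2\}$, so both maps are nonlinear in $v$. The correct route to uniqueness is to argue at the level of measures: equality in the Gibbs inequality $-\log\mathbb{E}(e^{-F})\le \mathbb{E}^{\nu}F+R(\nu\|\mu)$ forces $\frac{d\nu}{d\mu}=e^{-F}/\mathbb{E}(e^{-F})$ (strict concavity of $\log$, or equivalently strict convexity of relative entropy in $\nu$), so the optimal density is uniquely determined; then uniqueness of $v_0\in\mathcal{H}^a$ follows from the uniqueness of the martingale representation used in Proposition~\ref{Proposition7.3}. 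With this correction, your proof is complete.
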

\begin{proposition}\label{Proposition7.5}
	Let $F$ be any bounded Borel measurable function on $\mathbb{W}$. For any $v\in\mathcal{S}_b^a$, there are two $\tilde{v},\hat{v}\in\mathcal{S}_b^a$ such that
	\begin{align*}
		\mathbb{E}^{\mu_{\tilde{v}}}\big(F+\frac{1}{2}\|\tilde{v}\|_{\mathbb{H}}^2\big)&=\mathbb{E}\big(F(\cdot+v)+\frac{1}{2}\|v\|_{\mathbb{H}}^2\big).\\
		\mathbb{E}^{\mu_{v}}\big(F+\frac{1}{2}\|v\|_{\mathbb{H}}^2\big)&=\mathbb{E}\big(F(\cdot+\hat{v})+\frac{1}{2}\|\hat{v}\|_{\mathbb{H}}^2\big).
	\end{align*}
	Moreover
	\begin{equation}\label{7.4}
		R(\mathcal{L}_{\mu}(\cdot+v)||\mu)=\frac{1}{2}\mathbb{E}\|v\|_{\mathbb{H}}^2,
	\end{equation}
	where $\mathcal{L}_{\mu}(\cdot+v)$ denotes the law of $w\mapsto w+v(w)$ in $(\mathbb{W},\mathcal{F})$ under $\mu$.
\end{proposition}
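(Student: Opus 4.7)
The plan is to construct $\tilde{v}$ and $\hat{v}$ by solving appropriate fixed-point equations that are well-posed precisely because $v$ is simple and adapted; then the two identities reduce to a change of variables via Girsanov, and the entropy formula follows immediately.

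\textbf{Construction of $\tilde{v}$ and $\hat{v}$.} Motivated by the requirement that $\mathcal{L}_\mu(\cdot+v)$ should coincide with $\mu_{\tilde{v}}$, I would look for $\tilde{v} \in \mathcal{S}_b^a$ satisfying the functional equation
\[
\tilde{v}(w) = v\bigl(w - \tilde{v}(w)\bigr),
\]
and, dually, for $\hat{v} \in \mathcal{S}_b^a$ satisfying $\hat{v}(\omega) = v\bigl(\omega + \hat{v}(\omega)\bigr)$. Writing the simple $v$ as $v(w)=\sum_{i=0}^{n-1}\xi_i(w)(\pi_{t_{i+1}}-\pi_{t_i})h_i$ with $\xi_i$ bounded and $\mathcal{F}_{t_i}$-measurable, I would make the ansatz $\tilde{v}(w)=\sum_{i}\tilde{\xi}_i(w)(\pi_{t_{i+1}}-\pi_{t_i})h_i$ and solve the coupled system $\tilde{\xi}_i(w)=\xi_i(w-\tilde{v}(w))$ inductively in $i$: because $\xi_i$ is $\mathcal{F}_{t_i}$-measurable and $\tilde{v}(w)$ up to $t_i$ only involves $\tilde{\xi}_0,\dots,\tilde{\xi}_{i-1}$ already constructed, $\tilde{\xi}_i(w)$ is determined by $w$ only through $\mathcal{F}_{t_i}$, so the new process is again in $\mathcal{S}_b^a$ with the same sup-bound as $v$. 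The same induction, run forward with the opposite sign, produces $\hat{v}\in\mathcal{S}_b^a$.

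\textbf{Verification of the two identities.} With $\tilde{v}$ in hand, the identity $T_{\tilde{v}}(w)=w-\tilde{v}(w)=w-v(w-\tilde{v}(w))$ combined with the Girsanov statement $(T_{\tilde{v}})_*\mu_{\tilde{v}}=\mu$ (which is valid because $\tilde{v}\in\mathcal{S}_b^a\subset\mathcal{H}_c^a$) shows that under $\mu$ the map $\omega\mapsto \omega+v(\omega)$ has distribution $\mu_{\tilde{v}}$, and moreover $\tilde{v}(\omega+v(\omega))=v(\omega)$. Therefore
\[
\mathbb{E}^{\mu_{\tilde{v}}}\!\Bigl(F+\tfrac{1}{2}\|\tilde{v}\|_{\mathbb{H}}^{2}\Bigr)=\mathbb{E}^{\mu}\!\Bigl(F(\omega+v(\omega))+\tfrac{1}{2}\|\tilde{v}(\omega+v(\omega))\|_{\mathbb{H}}^{2}\Bigr)=\mathbb{E}\!\Bigl(F(\cdot+v)+\tfrac{1}{2}\|v\|_{\mathbb{H}}^{2}\Bigr),
\]
which is the first identity. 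For the second, a symmetric computation using $T_v\bigl(\omega+\hat{v}(\omega)\bigr)=\omega$ shows that $\omega\mapsto\omega+\hat{v}(\omega)$ pushes $\mu$ to $\mu_v$ with $\hat{v}(\omega)=v(\omega+\hat{v}(\omega))$, and the same bookkeeping yields the required equality.

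\textbf{Entropy formula and main obstacle.} Applying Lemma \ref{Lemma7.3} to $\mu_{\tilde{v}}$ and then transporting to $\mu$ gives
\[
R\bigl(\mathcal{L}_\mu(\cdot+v)\,\big\|\,\mu\bigr)=R(\mu_{\tilde{v}}\|\mu)=\tfrac{1}{2}\mathbb{E}^{\mu_{\tilde{v}}}\|\tilde{v}\|_{\mathbb{H}}^{2}=\tfrac{1}{2}\mathbb{E}^{\mu}\|\tilde{v}(\omega+v(\omega))\|_{\mathbb{H}}^{2}=\tfrac{1}{2}\mathbb{E}\|v\|_{\mathbb{H}}^{2},
\]
which is \eqref{7.4}. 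The delicate step is the inductive construction of $\tilde{v},\hat{v}$: one must verify at each stage that the coefficient $\tilde{\xi}_i(w)=\xi_i\bigl(w-\sum_{j<i}\tilde{\xi}_j(w)(\pi_{t_{j+1}}-\pi_{t_j})h_j\bigr)$ is still $\mathcal{F}_{t_i}$-measurable and bounded, and — crucially — that the shift acts only through coordinates already measurable at time $t_i$, so that no implicit equation in $\tilde{\xi}_i$ remains. The adaptedness of the original $v\in\mathcal{S}_b^a$ is exactly what makes this explicit rather than requiring a genuine fixed-point argument, which is why the result is stated for simple $v$ only; extension to general $v\in\mathcal{H}_c^a$ would be done later by the density of $\mathcal{S}_b^a$ in $\mathcal{H}^a$ (Proposition \ref{7.2}).
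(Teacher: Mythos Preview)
The paper does not supply its own proof of this proposition; immediately after the statement it writes ``Reader interested in proof can refer to \cite{Z09}.'' Your proposal is correct and is exactly the argument given in \cite{Z09} (which in turn adapts the finite-dimensional Bou\'e--Dupuis construction): solve the fixed-point equations $\tilde v(w)=v(w-\tilde v(w))$ and $\hat v(\omega)=v(\omega+\hat v(\omega))$ inductively over the partition, using that $\xi_i\in\mathcal C_{t_i}$ depends only on $\pi_{t_i}w$ so that at step $i$ the shift involves only the already-determined $\tilde\xi_0,\dots,\tilde\xi_{i-1}$; then $T_{\tilde v}$ and $\omega\mapsto\omega+v(\omega)$ are mutual inverses, Girsanov gives $\mathcal L_\mu(\cdot+v)=\mu_{\tilde v}$, and the two displayed identities together with the entropy formula follow by change of variables and Lemma~\ref{Lemma7.3}.
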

Reader interested in proof can refer to \cite{Z09}.
\begin{theorem}\label{Theorem7.2}
	Let $F$ be a bounded Borel measurable function on $\mathbb{W}$. Then we have
	\begin{align}\label{B-D formula}
		-\log\mathbb{E}(e^{-F})&=\inf_{v\in\mathcal{H}^a}\mathbb{E}\big(F(\cdot+v)+\frac{1}{2}\|v\|_{\mathbb{H}}^2\big)\nonumber\\
		&=\inf_{v\in\mathcal{S}_b^a}\mathbb{E}\big(F(\cdot+v)+\frac{1}{2}\|v\|_{\mathbb{H}}^2\big).
	\end{align}
\end{theorem}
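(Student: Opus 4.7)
The plan is to deduce \eqref{B-D formula} from Theorem \ref{Theorem7.1} by transporting the infimum from $\mathcal{H}_c^a$ under the tilted measures $\mu_v$ to the equivalent infimum under $\mu$ over $\mathcal{S}_b^a$, using the change of variables encoded in Proposition \ref{Proposition7.5}, and then extending the equality to $\mathcal{H}^a$ by density. More precisely, I would first establish
\begin{equation*}
-\log \mathbb{E}(e^{-F})=\inf_{v\in\mathcal{S}_b^a}\mathbb{E}\bigl(F(\cdot+v)+\tfrac{1}{2}\|v\|_{\mathbb{H}}^2\bigr),
\end{equation*}
after which $\mathcal{S}_b^a\subset\mathcal{H}^a$ yields one inequality between the two infima in \eqref{B-D formula}, while the reverse inequality uses the density of $\mathcal{S}_b^a$ in $\mathcal{H}^a$ (Proposition \ref{7.2}) together with an $L^2$ continuity argument for the functional $v\mapsto\mathbb{E}(F(\cdot+v))+\tfrac{1}{2}\mathbb{E}\|v\|_{\mathbb{H}}^2$ on bounded sublevel sets.

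\textbf{Upper bound via Proposition \ref{Proposition7.5}.} For any $v\in\mathcal{S}_b^a$, a simple process is bounded on each of its finitely many $\mathcal{F}_{t_i}$-blocks, so Novikov's criterion gives $v\in\mathcal{H}_c^a$. The first identity of Proposition \ref{Proposition7.5} then produces $\tilde v\in\mathcal{S}_b^a\subset\mathcal{H}_c^a$ satisfying
\begin{equation*}
\mathbb{E}^{\mu_{\tilde v}}\bigl(F+\tfrac{1}{2}\|\tilde v\|_{\mathbb{H}}^2\bigr)=\mathbb{E}\bigl(F(\cdot+v)+\tfrac{1}{2}\|v\|_{\mathbb{H}}^2\bigr),
\end{equation*}
so inserting $\tilde v$ into Theorem \ref{Theorem7.1} yields $-\log\mathbb{E}(e^{-F})\le\mathbb{E}(F(\cdot+v)+\tfrac{1}{2}\|v\|_{\mathbb{H}}^2)$. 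Taking the infimum over $\mathcal{S}_b^a$ gives the $\le$ half of the displayed identity.

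\textbf{Lower bound; the main obstacle.} For the reverse direction, let $v_0\in\mathcal{H}_c^a$ be the minimiser provided by Theorem \ref{Theorem7.1}, approximate it by $v_n\in\mathcal{S}_b^a$ via Proposition \ref{7.2}, and apply the second identity of Proposition \ref{Proposition7.5} to obtain $\hat v_n\in\mathcal{S}_b^a$ with
\begin{equation*}
\mathbb{E}\bigl(F(\cdot+\hat v_n)+\tfrac{1}{2}\|\hat v_n\|_{\mathbb{H}}^2\bigr)=\mathbb{E}^{\mu_{v_n}}\bigl(F+\tfrac{1}{2}\|v_n\|_{\mathbb{H}}^2\bigr).
\end{equation*}
The main obstacle is to show that the right-hand side converges to $\mathbb{E}^{\mu_{v_0}}(F+\tfrac{1}{2}\|v_0\|_{\mathbb{H}}^2)=-\log\mathbb{E}(e^{-F})$ as $n\to\infty$. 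Because $F$ is only bounded Borel and the time parameter ranges over all of $[0,\infty]$, one cannot simply truncate to a fixed compact interval as in the classical finite-horizon argument. My plan is threefold: first, replace each $v_n$ by $\pi_{T_k}v_n$ with $T_k\uparrow\infty$, which reduces convergence of the Girsanov densities $Z_n:=\exp(\delta(v_n)-\tfrac{1}{2}\|v_n\|_{\mathbb{H}}^2)$ to the classical finite-interval statement on each $\mathcal{F}_{T_k}$; second, use the entropy identity $R(\mu_{v_n}\|\mu)=\tfrac{1}{2}\mathbb{E}\|v_n\|_{\mathbb{H}}^2$ from Lemma \ref{Lemma7.3} (uniformly bounded since $v_n\to v_0$ in $L^2$) to derive an $L\log L$ estimate on $\{Z_n\}$ and hence uniform integrability under $\mu$; and third, combine these to get $Z_n\to Z_0$ in $L^1(\mu)$, which gives $\mathbb{E}^{\mu_{v_n}}(F)\to\mathbb{E}^{\mu_{v_0}}(F)$ by dominated convergence and, together with the $L^2$ convergence of $v_n$, the convergence of the quadratic term. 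This delivers the matching bound $\inf_{v\in\mathcal{S}_b^a}\mathbb{E}(F(\cdot+v)+\tfrac{1}{2}\|v\|_{\mathbb{H}}^2)\le -\log\mathbb{E}(e^{-F})$, and the sandwich argument of the first paragraph then promotes this equality to $\mathcal{H}^a$, completing \eqref{B-D formula}.
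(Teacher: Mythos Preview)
Your upper bound is fine and matches the paper's. The lower bound, however, has a real gap.

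You misquote Lemma~\ref{Lemma7.3}: it says $R(\mu_{v}\|\mu)=\tfrac12\mathbb{E}^{\mu_{v}}\|v\|_{\mathbb{H}}^2$, not $\tfrac12\mathbb{E}\|v\|_{\mathbb{H}}^2$. Consequently, $L^2(\mu)$-convergence $v_n\to v_0$ gives you no control over $R(\mu_{v_n}\|\mu)$, and your ``uniformly bounded'' claim is unjustified. Even granting $Z_n\to Z_0$ in $L^1(\mu)$ (which does follow from Scheff\'e along a subsequence, since $\mathbb{E}Z_n=\mathbb{E}Z_0=1$), this only yields $\mathbb{E}^{\mu_{v_n}}(F)\to\mathbb{E}^{\mu_{v_0}}(F)$ for \emph{bounded} $F$. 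It does not deliver $\mathbb{E}^{\mu_{v_n}}\|v_n\|_{\mathbb{H}}^2\to\mathbb{E}^{\mu_{v_0}}\|v_0\|_{\mathbb{H}}^2$, because $\|v_n\|_{\mathbb{H}}^2$ is unbounded and you have no uniform integrability of $Z_n\|v_n\|_{\mathbb{H}}^2$. Fatou only gives the wrong inequality $\liminf\mathbb{E}^{\mu_{v_n}}\|v_n\|_{\mathbb{H}}^2\ge\mathbb{E}^{\mu_{v_0}}\|v_0\|_{\mathbb{H}}^2$, and since Theorem~\ref{Theorem7.1} already forces $\mathbb{E}^{\mu_{v_n}}(F+\tfrac12\|v_n\|_{\mathbb{H}}^2)\ge-\log\mathbb{E}(e^{-F})$ for every $n$, you never reach the value $-\log\mathbb{E}(e^{-F})$ from above. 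The time-truncation $\pi_{T_k}v_n$ does not help here; the obstruction is the lack of a uniform a.s.\ bound on $\|v_n\|_{\mathbb{H}}$, not the infinite horizon.

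The paper avoids this by a two-step reduction that you skip. \emph{Step 1}: for cylindrical $F\in\mathcal{C}$, the Clark--Ocone formula (Proposition~\ref{Proposition7.4}) shows the optimal $v$ from Proposition~\ref{Proposition7.3} lies in $\mathcal{H}_b^a$, i.e.\ $\|v\|_{\mathbb{H}}\le C_F$ a.s. Lemma~\ref{Lemma7.2} then provides $v_n\in\mathcal{S}_b^a$ with the \emph{same uniform a.s.\ bound} $\|v_n\|_{\mathbb{H}}\le C_F$, which makes $\{e^{\delta(v_n)}\}$ bounded in $L^2(\mu)$ and the quadratic term uniformly bounded, so dominated convergence works. \emph{Step 2}: approximate general bounded $F$ by $F_n\in\mathcal{C}$; the near-minimisers $v_n\in\mathcal{S}_b^a$ from Step 1 satisfy $\sup_n\mathbb{E}\|v_n\|_{\mathbb{H}}^2\le 2\|F\|_\infty+\epsilon$, and now one uses the \emph{correct} entropy identity \eqref{7.4}, $R(\mathcal{L}_\mu(\cdot+v_n)\|\mu)=\tfrac12\mathbb{E}\|v_n\|_{\mathbb{H}}^2$, to pass from $F_n(\cdot+v_n)$ to $F(\cdot+v_n)$. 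The point is that in Step 2 all quantities are already expressed as $\mu$-expectations of shifted functionals, where the $L^2(\mu)$ bound is exactly what controls the relevant entropy.
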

\begin{proof}
	\textbf	{(Upper bound)}Let $v\in\mathcal{H}^a$. By Proposition \ref{7.2} we may choose a sequence of $v_n\in\mathcal{S}_b^a$ such that
	\begin{equation*}
		\lim_{n\rightarrow\infty}\mathbb{E}\|v_n-v\|_{\mathbb{H}}^2.
	\end{equation*}
	So, $(w,v_n)$ converges in distribution to $(w,v)$ in $\mathbb{W}\times\mathbb{H}$, and $\mathcal{L}_{\mu}(\cdot+v_n)$ converges weakly to $\mathcal{L}_{\mu}(\cdot+v)$. Noting that by Eq. (\ref{7.4})
	\begin{equation*}
		\sup_nR(\mathcal{L}_{\mu}(\cdot+v_n)||\mu)=\frac{1}{2}\sup_n\mathbb{E}\|v_n\|_{\mathbb{H}}^2<\infty,
	\end{equation*}
	and we have (\cite{{Z09}} Lemma2.1(ii))
	\begin{equation*}
		\lim_{n\rightarrow\infty}\mathbb{E}(F(\cdot+v_n))=\mathbb{E}(F(\cdot+v)).
	\end{equation*}
	Therefore,by Theorem \ref{Theorem7.1}, we get the upper bound
	\begin{equation*}
		-\log\mathbb{E}(e^{-F})\leq \lim_{n\rightarrow\infty}\mathbb{E}\big(F(\cdot+v_n)+\frac{1}{2}\|v_n\|_{\mathbb{H}}^2\big)=\mathbb{E}\big(F(\cdot+v)+\frac{1}{2}\|v\|_{\mathbb{H}}^2\big).
	\end{equation*}
	\textbf{(Lower bound)} We divided the proof into two steps.
	
	\textbf{Step 1}: We first assume that $F\in\mathcal{C}$ with the form
	\begin{equation*}
		F(w):=g\big(\langle h_1,w\rangle,...,\langle h_n,w\rangle\big),\quad g\in C_b^{\infty}(\mathbb{R}^n),\quad h_1,...,h_n\in\mathbb{W}^{\ast}.
	\end{equation*}
	By Proposition \ref{Proposition7.4}, there is a $v\in\mathcal{H}^a$ such that for all $0\leq t\leq\infty$
	\begin{equation}\label{7.5}
		\mathbb{E}(e^{-F}|\mathcal{F}_t)=\mathbb{E}(e^{-F})\cdot \exp\{\delta(\pi_tv)-\frac{1}{2}\|\pi_tv\|_{\mathbb{H}}^2\}.
	\end{equation}
	By Proposition \ref{Proposition7.4}, we in fact find from the proof of Proposition \ref{Proposition7.3} that
	\begin{equation*}
		v:=\sum_{i=1}^n\int_0^{\infty}\frac{1}{\mathbb{E}(e^{-F}|\mathcal{F}_t)}\cdot\mathbb{E}\big[e^{-F}\cdot(\partial_ig)(\langle h_1,\cdot\rangle,...,\langle h_n,\cdot\rangle)|\mathcal{F}_t\big]\mathrm{d}\pi_th_i.
	\end{equation*}
	Thus, $v\in\mathcal{H}_b^a$. By Lemma \ref{Lemma7.2} there exists a sequence $v_{n}\in\mathcal{S}_b^a$ satisfying for some $C_F>0$
	\begin{equation*}
		\|v_{n}\|_{\mathbb{H}}\leq C_F\quad a.s.
	\end{equation*}
	such that
	\begin{equation*}
		\lim_{{k_n}\rightarrow\infty}\mathbb{E}\|v_{n}-v\|_{\mathbb{H}}^2=0.
	\end{equation*}
	By extracting a subsequence if necessary, we may further assume that
	\begin{equation*}
		\|v_{n}-v\|_{\mathbb{H}}^2\rightarrow0\quad a.s,\quad \delta(v_{n})\rightarrow\delta(v)\ a.s.
	\end{equation*}
	we have by the dominated convergence theorem
	\begin{equation*}
		\mathbb{E}^{\mu_{n}}\big(F+\frac{1}{2}\|v_{n}\|_{\mathbb{H}}^2\big)\rightarrow	\mathbb{E}^{\mu}\big(F+\frac{1}{2}\|v\|_{\mathbb{H}}^2\big)\quad as\ n\rightarrow\infty.
	\end{equation*}
	Here we have used the uniform integrability of $\{e^{\delta(v_{n})},n\in\mathbb{N}\}$.
	Moreover, by Eq. (\ref{7.5}) and Lemma \ref{Lemma7.3}
	\begin{equation*}
		-\log\mathbb{E}(e^{-F})=\mathbb{E}^{\mu_v}(F)+R(\mu_v||\mu)=\mathbb{E}^{\mu_v}(F+\frac{1}{2}\|v\|_{\mathbb{H}}^2).
	\end{equation*}
	
	\textbf{Step 2}: Let $F$ be a bounded measurable function on $(\mathbb{W},\mathcal{F})$. By Lemma \ref{Lemma7.1} we can choose a sequence $F_n\in\mathcal{C}$ such that $\|F_n\|_{\infty}\leq\|F\|_{\infty}<\infty$, and $\lim\limits_{n\rightarrow\infty}F_n=F \ \mu-$a.s.. For any $\epsilon>0$ and $F_n$,  by Step 1 there exists a $v_n\in\mathcal{S}_b^a$ such that
	\begin{equation}\label{7.6}
		-\log\mathbb{E}(e^{-F_n})\geq\mathbb{E}\big(F_n(\cdot+v_n)+\frac{1}{2}\|v_n\|_{\mathbb{H}}^2\big)-\epsilon.
	\end{equation}
	In view $\eqref{7.4}$ and $\eqref{7.6}$, we have
	\begin{equation*}
		\sup_nR(\mathcal{L}_{\mu}(\cdot+v_n)||\mu)\leq\frac{1}{2}\sup_n\mathbb{E}\|v_n\|_{\mathbb{H}}^2\leq 2\|F\|_{\infty}+\epsilon.
	\end{equation*}
	So there is a subsequence $n_k$ such that $\mathbb{E}\|v_{n_k}\|_{\mathbb{H}}^2$ convergence. We have
	\begin{equation*}
		\lim_{k\rightarrow\infty}\mathbb{E}|F_{n_k}(\cdot+v_{n_k})-F(\cdot+v_{n_k})|=0.
	\end{equation*}
	Dominated convergence theorem gives that for sufficiently large $k$,
	\begin{equation*}
		-\log\mathbb{E}(e^{-F})\geq\mathbb{E}\big(F(\cdot+v_{n_k})+\frac{1}{2}\|v_{n_k}\|_{\mathbb{H}}^2\big)-2\epsilon.
	\end{equation*}
	Since $v_{n_k}\in\mathcal{S}_b^a$, we thus complete the proof of the lower bound.
\end{proof}	
\subsection{The proof of Laplace principle for infinite intervals }\label{LP proof}
Since the large deviation principe is equivalent to Laplace principle  in Polish space, we only need to prove the Laplace principle for infinite intervals, i.e. Theorem \ref{LP}.
\begin{proof}
	In order to prove the Laplace principle, we must show that Eq. (\ref{LPeq}) holds for all real valued, bounded and continuous functions $h$ on space $E$.
	
	\textbf{(Lower bound)}
	Define
	\begin{align*}
		\mathbb{W}&=C(\mathbb{R};H),\quad\mathbb{H}=\big\{h=\int_{-\infty}^{\cdot}h^{\prime}\mathrm{d}s; \quad \int_{-\infty}^{\infty}\|h^{\prime}\|_{H_0}^2\mathrm{d}s<\infty\big\}, \\
		\delta(v)&=\int_{-\infty}^t B (v(t)) \mathrm{d} W(t),\quad\pi_{\theta}h=	\pi_{\theta}h\int_{-\infty}^{\cdot}h^{\prime}\mathrm{d}s=\int_{-\infty}^{\cdot} \mathbbm{1}_{(-\infty,\theta)}h^{\prime}\mathrm{d}s,\quad\forall h\in \mathbb{H}.
	\end{align*}
	It follows from the variational presentation (\ref{B-D formula}) that
	$$
	\begin{aligned}
		-& \log \mathbb{E}\left\{\exp \left[-\frac{1}{\varepsilon} h\left(X^{\varepsilon}(\cdot)\right)\right]\right\} \\
		&=\inf _{v \in \mathcal{A}} \mathbb{E}\left(\frac{\varepsilon}{2} \int_{-\infty}^{+\infty}\|v(s)\|_{H_0}^{2} \mathrm{d} s+h \circ \mathcal{G}^{\varepsilon}\left(W(\cdot)+\int_{-\infty}^{\cdot} v(s) \mathrm{d} s\right)\right).
		%&\xlongequal{{\color{blue}\tilde{v}=\sqrt{\varepsilon}v}}\inf _{\tilde{v}\in \mathcal{A}} E\left(\frac{1}{2} \int_{-\infty}^{\infty}|\tilde{v}(s)|^{2} \mathrm{d} s+h \circ \mathcal{G}^{\varepsilon}\left(W(\cdot)+\frac{1}{\sqrt{\varepsilon}} \int_{-\infty}^{\cdot} \tilde{v}(s) \mathrm{d} s\right)\right)
	\end{aligned}
	$$
	
	Then for every $\delta>0$, there exists $\tilde{v}^{\varepsilon} \in \mathcal{A}$ such that
	\begin{equation}\label{v}
		\begin{aligned}
			&\inf _{v \in \mathcal{A}}\mathbb{E}\left(\frac{\varepsilon}{2} \int_{-\infty}^{+\infty}\|v(s)\|_{H_0}^2 \mathrm{d} s+h \circ \mathcal{G}^{\varepsilon}\left(W(\cdot)+ \int_{-\infty}^{\cdot} \tilde{v}(s) \mathrm{d} s\right)\right) \\
			&\geq \mathbb{E}\left(\frac{\varepsilon}{2} \int_{-\infty}^{+\infty}\|v^{\varepsilon}(s)\|_{H_0}^{2} \mathrm{d} s+h \circ \mathcal{G}^{\varepsilon}\left(W(\cdot)+ \int_{-\infty}^{.} v^{\varepsilon}(s) \mathrm{d} s\right)\right)-\delta.
		\end{aligned}
	\end{equation}
	We will prove that
	$$
	\begin{aligned}
		\liminf _{\varepsilon \longrightarrow 0}\mathbb{E} &\left(\frac{\varepsilon}{2} \int_{-\infty}^{+\infty}\|v^{\varepsilon}(s)
		\|_{H_0}^{2} \mathrm{d} s+h \circ \mathcal{G}^{\varepsilon}\left(W(\cdot)+ \int_{-\infty}^{.} v^{\varepsilon}(s) \mathrm{d} s\right)\right) \\
		& \geq \inf _{x \in E}\{I(x)+h(x)\}.
	\end{aligned}
	$$
	
	We claim that  we can assume without loss of generality that for all $\varepsilon>0$ and a.s.
	\begin{equation}
		\label{4.6}
		\varepsilon\int_{-\infty}^{+\infty}\|v^{\varepsilon}
		(s)\|_{H_0}^{2} \mathrm{d} s \leq N
	\end{equation}
	for some finite number $N$.
	
	To see this, observe that if $M :=\|h\|_{\infty}$ then $\sup _{\varepsilon>0} \mathbb{E}\left(\frac{\varepsilon}{2} \int_{-\infty}^{+\infty}\|v^{\varepsilon}(s)\|_{H_0}^{2} \mathrm{d} s \right) \leq 2 M+\delta<\infty$. Now define random variable  $$\tau_{N}^{\varepsilon} := \inf
	\left\{t \in\mathbb{R}: \frac{\varepsilon}{2}\int_{-\infty}^{t}\|v^{\varepsilon}(s)\|_{H_0}^{2} \mathrm{d} s \geq N\right\} \wedge \infty .$$

The processes $v^{\varepsilon, N}(s):= v^{\varepsilon}(s)  \mathbbm{1}_{(-\infty, \tau_{N}^{\varepsilon}]}(s)$ are
	in $\mathcal{A}$, $\mathbbm{1}$ being as before the indicator function, and furthermore
	$$
	\mathbb{P}\left\{v^{\varepsilon} \neq v^{\varepsilon, N}\right\} \leq \mathbb{P}\left\{\frac{\varepsilon}{2}\int_{-\infty}^{\infty}\|v^{\varepsilon}(s)\|_{H_0}^{2} \mathrm{d} s \geq N\right\} \leq \frac{2 M+\delta}{N}.
	$$
	This observation implies that the right side of inequality (\ref{v}) is at most
	$$\mathbb{E}\left(\frac{\varepsilon}{2} \int_{-\infty}^{\infty}\|v^{\varepsilon,N}(s)\|_{H_0}^{2} \mathrm{d} s+h \circ \mathcal{G}^{\varepsilon}\left(W(\cdot)+ \int_{-\infty}^{.} v^{\varepsilon, N}(s) \mathrm{d} s\right)\right)-\frac{2 M(2 M+\delta)}{N}-\delta.$$
	Hence it suffices to prove with $v^{\varepsilon}(s)$ replaced by $v^{\varepsilon, N}(s)$. This proves the claim.
	
	Henceforth we will assume that \eqref{4.6} holds. Pick a subsequence along which $\tilde{v^{\varepsilon}}:=\sqrt{\varepsilon}v^{\varepsilon}$ converges in distribution to $\tilde{v}$ as $S_{N}$ -valued random elements.
	
	We now have from Condition \ref{weakcondition} (i) that
	\begin{align*}
		&\liminf _{\varepsilon \rightarrow 0}\mathbb{E}\left(\frac{\varepsilon}{2} \int_{-\infty}^{\infty}\|v^{\varepsilon}(s)\|_{H_0}^{2} \mathrm{d} s+h \circ \mathcal{G}^{\varepsilon}\left(W(\cdot)+ \int_{-\infty}^{.} v^{\varepsilon}(s) \mathrm{d} s\right)\right)\\
		=&\liminf _{\varepsilon \rightarrow 0} \mathbb{E}\left(\frac{1}{2} \int_{-\infty}^{\infty}\|\tilde{v}^{\varepsilon}(s)\|_{H_0}^{2} \mathrm{d} s+h \circ \mathcal{G}^{\varepsilon}\left(W(\cdot)+\frac{1}{\sqrt{\varepsilon}} \int_{-\infty}^{.} \tilde{v}^{\varepsilon}(s) \mathrm{d} s\right)\right) \\
		\geq& \mathbb{E}\left(\frac{1}{2} \int_{-\infty}^{\infty}\|\tilde{v}(s)\|_{H_0}^{2} \mathrm{d} s+h\left(\mathcal{G}^{0}\left(\int_{-\infty} \tilde{v}(s) \mathrm{d} s\right)\right)\right) \\
		\geq& \inf _{\left\{(x, v) \in E \times L^2\left(\mathbb{R};H_0\right): x=\mathcal{G}^{0}\left(\int_{-\infty}^{.} \tilde{v}(s) \mathrm{d} s\right)\right\}}\left\{\frac{1}{2} \int_{-\infty}^{\infty}\|\tilde{v}(s)\|_{H_0}^{2} \mathrm{d} s+h(x)\right\} \\
		\geq& \inf _{x \in E}\{I(x)+h(x)\}.
	\end{align*}
	This completes the proof of the lower bound.
	
	\textbf{(Upper bound)}
	Since $h$ is bounded $\inf _{x \in E}\{I(x)+h(x)\}<\infty$. Let $\delta>0$ be arbitrary, and let $\tilde{x} \in E$ be such that
	$$
	I\left(\tilde{x}\right)+h\left(\tilde{x}\right) \leq \inf _{x \in E}\{I(x)+h(x)\}+\delta / 2.
	$$
	Choose $\tilde{v} \in L^2(\mathbb{R};H_0)$ such that
	$$
	\frac{1}{2} \int_{-\infty}^{\infty}\|\tilde{v}(t)\|_{H_0}^{2} \mathrm{d} t \leq I\left(\tilde{x}\right)+\delta / 2
	$$
	and
	$$
	\tilde{x}=\mathcal{G}^{0}\left(\int_{-\infty} \tilde{v}(s) \mathrm{d} s\right).
	$$
	By using the variational presentation, for bounded and continuous functions $h$, we could have
	$$
	\begin{aligned}
		&\limsup _{\varepsilon \rightarrow 0}-\varepsilon \log \mathbb{E}\left(\exp \left\{-h\left(X^{\varepsilon}(t,-\infty,\omega)x_0\right) / \varepsilon\right\}\right) \\
		=&\limsup _{\varepsilon \rightarrow 0} \inf _{v \in \mathcal{A}} \mathbb{E}\left(\frac{1}{2} \int_{-\infty}^{\infty}\|v(s)\|_{H_0}^{2} \mathrm{d} s+h \circ \mathcal{G}^{\varepsilon}\left(W(\cdot)+\frac{1}{\sqrt{\varepsilon}} \int_{-\infty} v(s) \mathrm{d} s\right)\right) \\
		\leq &\limsup _{\varepsilon \rightarrow 0} \mathbb{E}\left(\frac{1}{2} \int_{-\infty}^{\infty}\|\tilde{v}(s)\|_{H_0}^{2} \mathrm{d} s+h \circ \mathcal{G}^{\varepsilon}\left(W(\cdot)+\frac{1}{\sqrt{\varepsilon}} \int_{-\infty} \tilde{v}(s) \mathrm{d} s\right)\right) \\
		=&\frac{1}{2} \int_{-\infty}^{\infty}\|\tilde{v}(s)\|_{H_0}^{2} \mathrm{d} s+\limsup _{\varepsilon \rightarrow 0} \mathbb{E}\left(h \circ \mathcal{G}^{\varepsilon}\left(W(\cdot)+\frac{1}{\sqrt{\varepsilon}} \int_{-\infty} \tilde{v}(s) \mathrm{d} s\right)\right) \\
		\leq& I\left(\tilde{x}\right)+\delta / 2+\limsup _{\varepsilon \rightarrow 0} \mathbb{E}\left(h \circ \mathcal{G}^{\varepsilon}\left(W(\cdot)+\frac{1}{\sqrt{\varepsilon}} \int_{-\infty} \tilde{v}(s) \mathrm{d} s\right)\right) .
	\end{aligned}
	$$
	Now from Condition \ref{weakcondition} (i), as $\varepsilon \rightarrow 0$
	$$
	\mathbb{E}\left(h \circ \mathcal{G}^{\varepsilon}\left(W(\cdot)+\frac{1}{\sqrt{\varepsilon}} \int_{-\infty} \tilde{v}(s) \mathrm{d} s\right)\right),
	$$
	converges to $h\left(\mathcal{G}^{0}\left(\int_{-\infty}^{\cdot} \tilde{v}(s) \mathrm{d} s\right)\right)=h\left(\tilde{x}\right)$. Thus
	\begin{equation*}
		\limsup _{\varepsilon \rightarrow 0}-\varepsilon \log \mathbb{E}\left(\exp \left\{-h\left(X^{\varepsilon}(t,-\infty,\omega)x_0\right) / \varepsilon\right\}\right)\leq\inf _{x \in E}\{I(x)+h(x)\}+\delta.
	\end{equation*}
	Since $\delta$ is arbitrary, the proof is complete.
	The Condition \ref{weakcondition} (ii) guaranteed that $I$ is a good rate function.
\end{proof}
\section*{Acknowledgments}
\addcontentsline{toc}{section}{Acknowledgments}
We are very grateful to Professor Z. Dong for his help and suggestions, and also to Dr. W.L. Zhang for participating in the discussion.
Y. Liu appreciates  Professor H.Z. Zhao and Professor C.R.  Feng for their discussion about random stationary solutions, random periodic solutions, etc.  Y. Liu thanks to Mr. C. Bai for their discussion about the random Hopf model.

Y. Liu is supported by CNNSF (No. 11731009, No.11926327) and Center for Statistical Science, PKU. Z.H. Zheng is supported by CNNSF (No.12031020, No.12090014), the Key Lab. of Random Complex Structures and Data Sciences, CAS and National Center for Mathematics and Interdisplinary Sciences, CAS.
\section*{References}
\addcontentsline{toc}{section}{References}

\end{document}